\newtheorem{theorem}{Theorem}[section]
\newtheorem{lemma}[theorem]{Lemma}
\newtheorem{corollary}[theorem]{Corollary}
\newtheorem{prop}[theorem]{Proposition}
\theoremstyle{definition}
\newtheorem{definition}[theorem]{Definition}
\newtheorem{example}[theorem]{Example}
\newtheorem{remark}[theorem]{Remark}
\newtheorem{assumption}[theorem]{Assumption}
\newtheorem{thm}{Theorem}
\DeclareMathOperator{\rank}{rank} 
\DeclareMathOperator{\im}{im} \DeclareMathOperator{\coker}{coker}
\DeclareMathOperator{\codim}{codim} 
 \DeclareMathOperator{\Sym}{Sym}
 \DeclareMathOperator{\supp}{supp}
\DeclareMathOperator{\depth}{depth} \DeclareMathOperator{\GL}{GL}
\DeclareMathOperator{\Hom}{{Hom}} 
\DeclareMathOperator{\Ext}{{Ext}}
\DeclareMathOperator{\sHom}{{\mathscr{H}om}}
\DeclareMathOperator{\sExt}{{\mathscr{E}xt}}
\DeclareMathOperator{\proj}{pr} 
\DeclareMathOperator{\Spec}{{Spec}} 
 \DeclareMathOperator{\Rep}{{Rep}}
\DeclareMathOperator{\Grass}{Grass} \DeclareMathOperator{\End}{End}
\DeclareMathOperator{\sEnd}{{\mathscr{E}nd}}
\DeclareMathOperator{\rad}{rad}
\DeclareMathOperator{\catA}{\mathfrak{A}}
\DeclareMathOperator{\Res}{{Res}}
\DeclareMathOperator{\Ind}{{Ind}}
\newcommand{\surj}{\twoheadrightarrow}
\newcommand{\inj}{\hookrightarrow}
\newcommand{\sO}{\mathcal{O}}
\newcommand{\sF}{\mathscr{F}}
\newcommand{\sA}{\mathscr{A}}
\newcommand{\cF}{\mathcal{F}}
\newcommand{\cE}{\mathcal{E}}
\newcommand{\cG}{\mathcal{G}}
\newcommand{\cZ}{\mathcal{Z}}
\newcommand{\cY}{\mathcal{Y}}
\newcommand{\cV}{\mathcal {V}}
\newcommand{\cR}{\mathcal {R}}
\newcommand{\cQ}{\mathcal {Q}}
\newcommand{\Z}{\mathbb{Z}}
\newcommand{\SCH}{\mathbb{L}}
\newcommand{\TIL}{\mathscr{T}il}
\newcommand{\WEY}{\mathbb{K}}
\newcommand{\Gm}{\mathbb{G}_m}
\newcommand{\CC}{\mathbb{C}}
 \gdef\Young(#1){\hbox{$\vcenter
 {\mathcode`,="8000\mathcode`|="8000
  \def,{\global\advance\cols by 1 &}%
  \def|{\cr
        \multispan{\the\cols}\hrulefill\cr
        &\global\cols=2 }%
  \offinterlineskip\everycr{}\tabskip=0pt
  \dimen0=\ht\strutbox \advance\dimen0 by \dp\strutbox
  \halign
   {\vrule height \ht\strutbox depth \dp\strutbox##
    &&\hbox to \dimen0{\hss$##$\hss}\vrule\cr
    \noalign{\hrule}&\global\cols=2 #1\crcr
    \multispan{\the\cols}\hrulefill\cr%
   }
 }$}}
\newcommand{\fp}{\mathfrak{p}}
\newcommand{\calM}{\mathcal{M}}
\newcommand{\calR}{\mathcal{R}}
\newcommand{\calO}{\mathcal{O}}
\newcommand{\calQ}{\mathcal{Q}}
\newcommand{\calS}{\mathcal{S}}
\newcommand{\calT}{\mathcal{T}}
\DeclareMathOperator{\gl}{{gl}}
\DeclareMathOperator{\add}{{add}}
\DeclareMathOperator{\Ref}{{ref}}
\DeclareMathOperator{\Eq}{{Eq}}
\title[Noncommutative desingularization of orbit closures]
{Noncommutative desingularization of orbit closures for some
representations of $GL_n$}
\author[J.~Weyman]{Jerzy~Weyman}
\address{Department of Mathematics,
Northeastern University, Boston, MA 02115, USA}
\email{j.weyman@neu.edu}
\author[G.~Zhao]{Gufang~Zhao}
\address{Department of Mathematics,
Northeastern University, Boston, MA 02115, USA}
\email{zhao.g@husky.neu.edu}
\subjclass[2000]{Primary
14F05, 
4E05, 
16G20; 
Secondary
14A22, 
13D02, 
14L35 
}
\date{\today}
\keywords{determinantal variety, symmetric minor, pfaffian, partial flag variety, exceptional collection, derived equivalence, quiver with relations.}
\begin{document}

\begin{abstract}
We describe noncommutative desingularizations of
determinantal varieties, determinantal varieties defined by minors of generic symmetric matrices, and
pfaffian varieties defined by pfaffians of generic anti-symmetric matrices. For maximal minors of square matrices and symmetric matrices, this gives a non-commutative crepant resolution. Along the way, we describe a method to calculate the quiver with relations for any non-commutative desingularizations coming from exceptional collections over partial flag varieties.
\end{abstract}

\maketitle
\tableofcontents

\section{Introduction}
For a singular variety $X$, a \textit{non-commutative desingularization} (Definition~\ref{def: non-c desing}) is a coherent sheaf of associative algebras  $\sA$ over a proper scheme $Y$ over $X$ birationally equivalent to $X$, such that $\sA$ is generically a sheaf of matrix algebras and has finite homological dimension. This notion arises  from the study of the derived categories of coherent sheaves, Lie theory, maximal Cohen-Macaulay modules, as well as theoretical physics. In the case when $X$ has Gorenstein singularities,  Van den Bergh introduced a notion of \textit{non-commutative crepant resolution} (Definition~\ref{def: nccr}, c.f. also \cite{VDB2}), i.e., a non-commutative desingularization which is maximal Cohen-Macaulay as a coherent sheaf on $X$. This notion is a generalization of an existent notion of crepant resolution, and is introduced with the hope that any two crepant resolutions, commutative or not, are derived equivalent.
\par
The idea of non-commutative desingularization can be traced back to the general theory of derived Morita equivalence due to Rickard et al.. 
Let $k$ be a field. An ordered set of objects $\Delta=\{\Delta_\alpha,\alpha\in I\}$ in a triangulated $k$-linear category
$D$ is called  exceptional if we have
$\Ext^\bullet(\Delta_\alpha,\Delta_\beta ) = 0$ for $\alpha < \beta$ and $\End(\Delta_\alpha)
= k$; it is said to be strongly exceptional if further
$\Ext^n(\Delta_\alpha,\Delta_\beta) = 0$ for $n\neq 0$. An exceptional set is said to be full if it generates
$D$. If $X$ is a projective variety, $(E_0,\cdots,E_n)$ is a full strongly exceptional collection in the derived category of coherent sheaves on $X$. Then there is an equivalence of derived categories$$R\Hom(\oplus_iE_i,-):D^b(X)\to D^b(A\hbox{-}mod),$$ where $A\hbox{-}mod$ is the category of finitely generated left modules over $A:=\End(\oplus_iE_i)$ with the opposite multiplication. Note that by properness of $X$, the algebra $A$ is finite dimensional as vector space over $k$, and can be described as the path algebra of a quiver with relations.
\par
Let $Z$ be a quasi-projective variety, e.g., the total space of a vector bundle over a projective variety $X$, then $Z$ rarely admits an exceptional collection. Nevertheless, if it does admit tilting objects, the category of coherent sheaves on $Z$ is derived equivalent to the endomorphism algebra of a tilting object, as has been explained in \cite{HV} and will be reviewed in Section~\ref{sec: general_discussion}. Here by a tilting object, we mean a perfect complex $T$  in $D^b(Z)$ such that $\Ext^i(T,T)=0$ for all $i\neq0$, and $T$ generates the entire derived category, in the sense that the smallest triangulated subcategory containing $T$ closed under direct summands is the entire derived category. In all cases we are interested in, the tilting object can always be chosen as a vector bundle, considered as a complex of coherent sheaves concentrated on degree 0. If this happens, the assumption that $T$ generates $D^b(Z)$ can be replaced by that $\Ext^\bullet(T,C)=0$ for some complex $C$ implies $C$ is exact (see, e.g. \cite{BH10}). But the fact that $Z$ is non-compact makes the endomorphism algebra more complicated to describe explicitly compared to the case when $Z$ proper.
\par
\par
Let $G$ be a reductive group and $P$ a parabolic subgroup. Let $p:Z\to G/P$ be an equivariant vector bundle. In this paper, we use the inverse image of the exceptional collections on  $G/P$ to $Z$, with special emphasis on the situation when the total space $Z$  is a commutative desingularization of an orbit closure in some representations of $G$. When the vector bundle is the commutative desingularization of generic determinantal variety defined by maximal minors, a non-commutative desingularization of this nature has been studied by R.~Buchweitz, G.~Leuschke, and M.~van den Bergh in \cite{BLV}. In the present paper, we describe the quiver with relations of  noncommutative
desingularizations of higher corank determinantal varieties, symmetric
determinantal varieties, and anti-symmetric determinantal varieties.
\par
The study of exceptional collections on homogeneous spaces was initiated by Beilinson and Kapranov who gave explicit constructions of exceptional collections on projective spaces and Grassmannians. Let $B_{u,v}$ be the set of partitions with no more than $u$ rows and $v$ columns. According to the result of Kapranov, for certain ordering on $B_{r,n-r}$, the set $$\{\SCH_\alpha\cQ\mid\alpha\in B_{r,n-r}\}$$ is an exceptional collection over $\Grass_{n-r}(E)$, the Grassmannian of $(n-r)$-planes in an $n$-dimensional vector space $E$, where $\SCH_\lambda$ is the Schur functor applied to the partition $\lambda$, and $\cQ$ is the tautological rank $r$ quotient bundle over $\Grass_{n-r}(E)$.
\par
Let $p:Z\to G/P$ be an equivariant vector bundle, and let $\Delta(G/P)=\{\Delta_\alpha\mid\alpha\in I\}$ be a full exceptional collection on $G/P$. On the total space $Z$, the inverse image $p^*(\oplus_\alpha\Delta_\alpha)$ can  fail to be a tilting object. Moreover, its endomorphism algebra $\Lambda:=\End_{\sO_Z}(p^*(\oplus_\alpha\Delta_\alpha))$ is usually infinite dimensional as a vector space.
\par
It is easy to show (Proposition~\ref{til-resol criterion}(1)) that for any tilting bundle $\TIL$ on $G/P$, (in particular for $\oplus_\alpha\Delta_\alpha$,) the inverse image $p^*\TIL$ is  a tilting bundle on $Z$ if the  $p^*\sEnd_{\sO_{G/P}}(\TIL)$ has no higher cohomology. Let $R$ be a normal integral domain endowed with a resolution of singularities $q:Z\to \Spec R$ by $Z$. 
In this case, $\Lambda:=\End_{\sO_Z}(p^*\TIL)$ has finite global dimension, and that  $R\Hom_{\sO_Z}(p^*\TIL,-)$ induces a derived equivalence $D^b(Coh(Z))\cong D^b(\Lambda\hbox{-}mod)$. 
Note that $\Lambda$ is coherent sheaf of algebras on $\Spec R$ with finite global dimension, which is generically a matrix algebra, which we call a non-commutative weak desingularization (Definition\ref{def: non-c desing}(1)).

The geometric technique of \cite[\S~5.1.5]{W03} can be easily applied to give a criterion for $q_*p^*\TIL$ and $\End_R(q_*p^*\TIL)$ to be maximal Cohen-Macaulay, which we recall in \S~\ref{subsec: geo tech}.
In \S~\ref{subsec:general NCCR} and \S~\ref{sec:depth}, we discuss intensively some sufficient conditions for $\Lambda$ to be reflexive. 
When $q_*p^*\TIL$ is maximal Cohen-Macaulay and $\Lambda$ is reflexive, we have $\Lambda\cong \End_R(q_*p^*\TIL)$ and hence according to Definition~\ref{def: non-c desing}(2) we say that $\Lambda$ is a non-commutative  desingularization.

We also develop a method to calculate $\Lambda$ explicitly. 
On a homogeneous space $G/P$, an exceptional collection often consists of equivariant vector bundles. In such case the objects in $D^b(Z)$ corresponding to the simples are obtained by pushing forward the dual exceptional collection (Definition~\ref{def: dual excep}) over $G/P$.
Let $\mathfrak{A}$ be a $k$-linear category. Recall that for an exceptional collection $\Delta$, the \textit{dual collection}  $\nabla = \{\nabla_\alpha \mid \alpha\in I \}$ is another subset of objects in
$D^b(\mathfrak{A})$, in bijection with $\Delta$, such that  $\Ext^\bullet (\nabla_\beta,\Delta_\alpha )= 0$ for $\beta > \alpha$,
and there exists an isomorphism $\nabla_\beta\cong \Delta_\beta$ mod
$D_{<\beta}$, where $D_{<\beta}$ is the full triangulated subcategory
generated by $\{\Delta_\alpha\mid \alpha<\beta \}$.
Let $\Lambda=\End_{\sO_{\cZ}}(p^*(\oplus_\alpha\Delta_\alpha))$ and $S_\beta=R\Hom_{\sO_{\cZ}}(p^*(\oplus_\alpha\Delta_\alpha),u_*\nabla_\beta)$, where $u:G/P\to Z$ is the zero section.
\begin{thm}[Theorem~\ref{thm: quiver_gen_rel}]\label{thm-intro-general}
Let $\Delta(G/P)=\{\Delta_\alpha\mid\alpha\in I\}$ be a full strongly exceptional collection consisting of equivariant sheaves over $G/P$ with the dual collection $\nabla (G/P)$. Assume the resolution $q:Z\to \Spec R$ is $G$-equivariant with $q^{-1}(0)=G/P$, and the only fixed closed point of $\Spec R$ is $0\in\Spec R$. Assume moreover that $p^*(\oplus_\alpha\Delta_\alpha)$ is a tilting bundle over $Z$, $\Lambda\cong\End_{\sO_{Z}}(p^*(\oplus_\alpha\Delta_\alpha))$, and $\End_{\sO_{Z}}(p^*(\oplus_\alpha\Delta_\alpha))$ is a non-commutative desingularization. Then,
\begin{enumerate}
  \item $S_\alpha$'s are equivariant simple objects in $\Lambda\hbox{-}mod$;
  \item a basis of the vector space $\Ext^1_\Lambda(S_\alpha,S_\beta)^*$ generates $\Lambda$ over $\oplus_\alpha k_\alpha$;
  \item with the generators for $\End_R(q_*p^*(\oplus_\alpha\Delta_\alpha))$ as above, $\Ext^2_{\sO_Z}(\nabla_i,\nabla_j)^*$ generates the relations.
\end{enumerate}
\end{thm}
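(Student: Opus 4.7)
The three parts of the theorem will be handled in order, with the tilting equivalence
$$\Phi := R\Hom_{\sO_Z}(p^*(\oplus_\alpha \Delta_\alpha), -) \colon D^b(Z) \isom D^b(\Lambda\hbox{-}mod)$$
supplied by the hypothesis serving as the bridge between Ext-computations on $Z$ and the algebraic structure of $\Lambda$.

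\textbf{Part (1).} The plan is to identify $S_\beta = \Phi(u_*\nabla_\beta)$ with the simple module at the vertex $\beta$. Since $u$ is a section of the vector-bundle projection $p$, adjunction gives
$$R\Hom_{\sO_Z}(p^*\Delta_\alpha, u_*\nabla_\beta) \simeq R\Hom_{\sO_{G/P}}(u^*p^*\Delta_\alpha, \nabla_\beta) = R\Hom_{\sO_{G/P}}(\Delta_\alpha, \nabla_\beta),$$
which by the defining orthogonality of the dual collection is concentrated in degree zero and equals $k \cdot \delta_{\alpha\beta}$. The indecomposable summands $p^*\Delta_\alpha$ of the tilting bundle correspond under $\Phi$ to the indecomposable projective $\Lambda$-modules $P_\alpha$, so this computation shows $S_\beta$ is one-dimensional over $k$ with the idempotent $e_\alpha$ acting as $\delta_{\alpha\beta}$, hence simple. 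The $G$-equivariance of $u$, $p$, and the collections $\Delta,\nabla$ promotes $\Phi$ to an equivariant equivalence, so each $S_\beta$ carries a canonical equivariant structure.

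\textbf{Parts (2) and (3).} These follow from the standard Gabriel--Bongartz presentation of an algebra by its Ext-quiver, once the semisimple quotient has been identified. The hypothesis that $0$ is the unique fixed closed point of $\Spec R$, combined with $q^{-1}(0) = G/P$, implies that every simple is supported at $0$ and that
$$\Lambda/\rad\Lambda \;\cong\; \End_{\sO_{G/P}}(\oplus_\alpha \Delta_\alpha) \;=\; \bigoplus_\alpha k_\alpha$$
by the exceptionality of $\{\Delta_\alpha\}$. The minimal projective resolution of the semisimple $\Lambda$-module $\oplus_\alpha S_\alpha$ then takes the form
$$\cdots \to \bigoplus_{\alpha,\beta} \Ext^2_\Lambda(S_\alpha, S_\beta)^* \otimes_k P_\beta \to \bigoplus_{\alpha,\beta} \Ext^1_\Lambda(S_\alpha, S_\beta)^* \otimes_k P_\beta \to \bigoplus_\alpha P_\alpha \to \bigoplus_\alpha S_\alpha \to 0.$$
The first differential identifies $\rad\Lambda/\rad^2\Lambda$ with $\bigoplus \Ext^1_\Lambda(S_\alpha, S_\beta)^*$ as a $(\bigoplus k_\alpha)$-bimodule, yielding the quiver arrows and hence part (2); the second differential, modulo the composition-type syzygies forced by the arrows already chosen, exhibits a basis of $\bigoplus \Ext^2_\Lambda(S_\alpha, S_\beta)^*$ as a minimal generating set of the relation ideal, giving part (3). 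One finally transports these Ext groups to the geometric side through $\Phi$,
$$\Ext^i_\Lambda(S_\alpha, S_\beta) \;\cong\; \Ext^i_{\sO_Z}(u_*\nabla_\alpha, u_*\nabla_\beta),$$
arriving at the description appearing in the statement (where $\nabla_i,\nabla_j$ are implicitly pushed forward to $Z$ along $u$).

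\textbf{Main obstacle.} The principal difficulty is that $\Lambda$ is not finite-dimensional over $k$; it is only module-finite over $R$, and so the classical Gabriel--Bongartz theory does not apply verbatim. The remedy is to exploit the $G$-equivariance together with the unique fixed-point hypothesis in order to endow $\Lambda$ with a weight grading (equivalently an $\mathfrak{m}$-adic filtration at the origin) whose degree-zero part is $\bigoplus_\alpha k_\alpha$ and whose graded pieces are finite-dimensional. In this connected graded setting every projective module and every Ext group appearing in the minimal resolution above is finite-dimensional in each internal degree, and the Gabriel--Bongartz argument goes through unchanged. The remaining technical points to verify are that the tilting equivalence $\Phi$, the radical of $\Lambda$, and the Ext-comparison between the algebraic and geometric sides are all compatible with this grading; once that is in hand, parts (2) and (3) follow as described.
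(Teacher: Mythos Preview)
Your overall strategy matches the paper's: identify the $S_\beta$ via adjunction and the dual-collection orthogonality (this is the paper's Lemma~\ref{lem: dual_coll}), then read off generators and relations from the shape of a minimal projective resolution of the simples. You also correctly identify the main obstacle, namely that $\Lambda$ is not finite-dimensional. However, several of the details you write down to handle this obstacle are incorrect, and the paper's fix is different from the one you propose.

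First, the displayed equality $\End_{\sO_{G/P}}(\oplus_\alpha\Delta_\alpha)=\bigoplus_\alpha k_\alpha$ is false: for a strongly exceptional collection there are in general nonzero maps $\Delta_\alpha\to\Delta_\beta$ for $\alpha>\beta$, so this endomorphism algebra is only a (basic, finite-dimensional) algebra with semisimple quotient $\bigoplus_\alpha k_\alpha$. Second, the assertion that ``every simple is supported at $0$'' is false without the equivariance qualifier; the paper even exhibits (in the $n=2$, $r=1$ example) an entire $T^*\mathbb{P}^1$ of non-equivariant simples supported away from the origin. Consequently $\Lambda/\rad\Lambda$ is \emph{not} $\bigoplus_\alpha k_\alpha$, and the ordinary Jacobson radical is the wrong object. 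Third, your proposed remedy---a weight grading with degree-zero piece $\bigoplus_\alpha k_\alpha$---does not exist as stated: the natural $\mathbb{G}_m$-grading by fibre-scaling has degree-zero piece $\End_{\sO_{G/P}}(\oplus_\alpha\Delta_\alpha)$, which as just noted is strictly larger.

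The paper's resolution of this is to introduce an \emph{equivariant radical} $\rad_G M$, defined as the intersection of all $G$-equivariant maximal submodules of $M$. One first proves (Proposition~\ref{prop: equi-simple}) that the $S_\alpha$ exhaust the $G$-equivariant simples---this is where the unique-fixed-point hypothesis enters---so that $\Lambda/\rad_G\Lambda\cong\bigoplus_\alpha k_\alpha$. Then the computation $e_\alpha(\rad_G\Lambda/\rad_G^2\Lambda)e_\beta\cong\Ext^1_\Lambda(S_\alpha,S_\beta)^*$ goes through via the short exact sequence $0\to\rad_G P_\alpha\to P_\alpha\to S_\alpha\to 0$, and an equivariant minimal projective resolution of the required shape is built by hand in Lemma~\ref{lem:proj_res}, using at each step that $\rad_G$-tops are finite direct sums of the $S_\alpha$. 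Your sketch would become correct if you replaced $\rad$ by $\rad_G$ throughout and adjusted the three points above accordingly.
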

In fact, the strongness assumption for $\Delta(G/P)$ is not essential. A more general statement can be find in Theorem~\ref{thm: quiver_gen_rel}.
\par
If $\Spec R$ is the closure of an orbit in a representation of $G$, and the commutative desingularization is  an equivariant vector bundle over $G/P$, the above theorem and the discussions proceeding it give a general approach to calculate a non-commutative desingularization of orbit closures. 

Exceptional collections over $G/P$ are difficult to find if the characteristic of the field is positive, and very few cases are known. However, in order to apply Theorem~\ref{thm-intro-general} to calculate the idempotents, generators, and relations of the non-commutative desingularization, we just need a set of objects satisfying much weaker condition than that of an exceptional collection. In the case $G/P$ is the Grassmannian, there is no known characteristic free exceptional collection. Nevertheless, in \cite{BLV3}, Buchweitz, Leuschke, and van den Bergh constructed a characteristic free tilting bundle over the Grassmannian.
Theorem~\ref{thm-intro-general}, along with Lemma~\ref{Lem: Exts}, can be applied to calculating idempotents, generators, and relations of the noncommutative desingularization obtained from the tilting bundle constructed in \cite{BLV3}, up to Morita equivalence.

In the present paper, we apply the general theory above to some special examples when $\Spec R$ is  the determinantal variety in the space of $n\times m$-matrices, the determinantal variety in the space of $n\times n$ symmetric matrices, or the pfaffian variety in  the space of $n\times n$ skew-symmetric matrices.  The ring $\Lambda$ is a non-commutative weak desingularization of $\Spec R$. 
We analyze the depth of $\Lambda$ as an $R$-module, and construct a non-commutative crepant resolution in some special cases.

More precisely, let $E$ be  a vector spaces over $k$ of dimension $n$, and let  $\Grass$ be the Grassmannian of $(n-r)$-planes in $E$. 
Let $0\to \cR\to E\times\Grass\to \cQ\to 0$ be the tautological
sequence over $\Grass$.
We consider the the total spaces $\cZ^s$ and $\cZ^a$  respectively \footnote{We use the upper script $s$ to remind us that we are in the symmetric matrix case. Similarly, later on we will use the upper script $a$ in the skew-symmetric matrix case.}  of the vector bundles $\Sym_2\cQ^*$ and $\wedge^2\cQ^*$ on $
\Grass$.
Let $H^s$  be the affine space $\Sym_2(E^*)$. Upon choosing a set of
basis for $E$, the
coordinate ring of $H^s$ can be identified with
$S^s=k[x_{ij}]_{i\leq j}$. 
Let $\Spec R^a$ be a  $GL_n$-orbit closure in $H^s$. 
Then $R^s$ is the quotient of $S^s$ by the ideal
generated by the $(r+1)\times (r+1)$ minors of the generic matrix $(x_{ij})$.
The total space $\cZ^s$ of  is a commutative desingularization of $\Spec R^s$ \cite[Chapter 6]{W03}.
Similarly, 
 let $\Spec R^a$ be a  $GL_n$-orbit closure in the space of $n\times n$ skew-symmetric matrices. Then the total space  $\cZ^a$ is a commutative desingularization of $\Spec R^a$ \cite[Chapter 6]{W03}.

Let $\TIL_0$ be the tilting bundle over
$\Grass_{n-r}(E)$ of Buchweitz, Leuschke, and van den Bergh  from 
\cite{BLV3}.
\begin{thm}[Theorem~\ref{sym tilt}, Proposition~\ref{lem:sym_max_ref}, Theorem~\ref{anti tilt}, Proposition~\ref{prop: MCM_skew}, and \S~\ref{subsec:main_example}]
Let $Z$ be either $\cZ^s$ or $\cZ^a$, and respectively let $R$ be either $R^s$ or $R^a$, so that $Z$ desingularizes $\Spec R$. 
\begin{enumerate}
  \item The bundle $p^*\TIL_0$ is a tilting bundle
over $Z$, i.e., $\Ext_{Z}^i(p'^*\TIL_0, p^*\TIL_0)=0$ for $i>0$ and $\Ext^\bullet(p^*\TIL_0, C)=0$ implies $C$ is an exact complex. In particular, $D^b(Coh(Z))\cong D^b(\End_{Z}(p^*\TIL_0)\hbox{-}mod)$.
  \item  The $R$ module $q_*p^*\TIL_0$ is maximal Cohen-Macaulay, and so is every direct summand of it.
  \item The ring $\Lambda=\End_{Z}(p^*\TIL_0$ 
  has finite global dimension, hence is a non-commutative weak desingularization of $\Spec R$. 
  \item The $R$-module $\End_{Z}(p^*\TIL_0)$ is torision free, and the map $\End_{Z}(p^*\TIL_0)\to \End_S(q_*p^*\TIL_K)$ is the embedding of $\End_{Z}(p'^*\TIL_K)$ into its reflexive envelope.
 Moreover,  $\End_{Z}(p^*\TIL_0)$ is not reflexive unless $R= R^s$ and $r=n-1$, in which case it is maximal Cohen-Macaulay over $R^s$ hence a non-commutative crepant resolution.
\end{enumerate}
\end{thm}

Nevertheless, in \S~\ref{sec:NCCR for r=1}, in the case when $R=R^s$ and $r=1$,  we modify the algebra $\Lambda=\End_{Z}(p^*\TIL_0$ to get a non-commutative crepant resolution of $\Spec R$.

Moreover, we use Theorem~\ref{thm-intro-general} to express the ring $\Lambda$ in terms of quiver with relations. 

Note that the vector space spanned by the set of arrows between any two vertices is naturally a representation of $G$. We find it convenient to introduce the language of equivariant quivers, as all the Ext's are naturally representations of $G$. For the precise definitions of equivariant quivers and their representations, see Section~\ref{sec: equi_quiver}. Very roughly, an \textit{equivariant quiver} is a triple $Q=(Q_0, Q_1,\alpha)$, where $(Q_0,Q_1)$ is a quiver and $\alpha$ is an assignment associating to each arrow $q\in Q_1$ a finite dimensional irreducible representation of $G$. For any equivariant quiver, there is an underlying usual quiver, upon choosing a basis for each representation associated to each arrow. The path algebra of an equivariant quiver is endowed with a natural rational $G$-action, so that we can consider the equivariant representations of it. While imposing relations on an equivariant quiver, we require the relations to be subrepresentations. In fact, if the equivariant quiver gives the endomorphism ring of a tilting object over $G/P$, the derived category of equivariant sheaves over $G/P$ is equivalent to the derived category of equivariant representations of the path algebra with relations of the equivariant quiver.

The drawback of our approach, compared to \cite{BLV}, is that in order to do explicit calculation we need to use the Borel-Weil-Bott Theorem, which dose not have a counterpart in positive characteristic. Nevertheless, up to Subsection~\ref{subsec: comb_Kap}, the results are characteristic free, unless otherwise specified. \textit{ We will write the field as $\mathbb{C}$ if its characteristic is zero, and $k$ if we do not assume anything on its characteristic.}

\par
Now we describe the equivariant quiver with relations for the non-commutative desingularization. We will use $C_{\alpha,\beta}^\gamma$ for the Littlewood-Richardson coefficient (see e.g., \cite{F97} for the definition). For a vector space $V$ and a non-negative integer $a$, we will denote $V^{\oplus a}$ simply by $aV$. For any two vertices $\alpha$ and $\beta$, the space of paths from $\beta$ to $\alpha$ will be denoted by $\Hom(\beta, \alpha)$.
\par

When $k=\CC$, let 
$\TIL_K=\oplus_{\alpha\in B_{r,n-r}}\SCH_\alpha\cQ^*$ be the Kapranov's
tilting bundle 
over $\Grass_{n-r}(E)$ by $p:Z\to\Grass$.
The algebras $\End_{Z}(p^*\TIL_0)$ and $\End_Z(p^*\TIL_K)$ are Morita equivalence. For simplicity, we consider $p^*\TIL_K$.

When $R=R^s$, the equivariant quiver with relations of the endomorphism ring $\End_S(q_*'p'^*\TIL_K)$ is given as follows (see Proposition~\ref{prop: sym_gen} and Proposition~\ref{prop: sym_rel}).
The set of vertices is indexed by $B_{r,n-r}$.
\begin{itemize}
  \item In the case $n-r=1$, the set of arrows from $\beta$ to $\alpha$ is given by $E$ if $C_{\beta,(1,0,\cdots,0)}^\alpha\neq 0$ or $C_{\alpha,(1,0,\cdots,0)}^\beta\neq 0$. No arrows otherwise. The relations are generated by the following representations in the space $\Hom(\beta, \alpha)$ $$(C^\alpha_{\beta,(1,1,0,\cdots,0)}\SCH_2E )\oplus(C^\beta_{\alpha,(1,1,0,\cdots,0)}\SCH_2E )\oplus(\delta^\beta_{\alpha}\wedge^2E ).$$
  \item In the case $n-r=2$, arrows from $\beta$ to $\alpha$ is given by $E$ if $C_{\beta,(1,0,\cdots,0)}^\alpha\neq 0$ and $\mathbb{C}$ if $C_{\alpha,(1,1,0,\cdots,0)}^\beta\neq 0$.  No arrows otherwise. The relations are generated by the following representations in the group $\Hom(\beta, \alpha)$ $$(C^{\beta^t}_{\alpha^t,(1,-1)}\wedge^2E )\oplus(C^{\beta^t}_{\alpha^t,(-1,-2)}E )\oplus(C^\beta_{\alpha,(1,1,0,\cdots,0)}\SCH_2E )\oplus( C^\beta_{\alpha,(2,0,\cdots,0)}\wedge^2E ).$$
  \item In the case $n-r\geq3$, The set of arrows has the same description as in the case $n-r=2$. The relations are generated by the following representations in the group $\Hom(\beta, \alpha)$ $$(C^{\beta^t}_{\alpha^t,(0,\cdots,0,-1,-1,-2)}\mathbb{C})\oplus(C^{\beta^t}_{\alpha^t,(1,0,\cdots,0,-1,-1)}E )\oplus(C^{\beta^t}_{\alpha^t,(2,0,\cdots,0)}\SCH_2E )\oplus(C^{\beta^t}_{\alpha^t,(1,1,0,\cdots,0)}\wedge^2E ).$$
\end{itemize}
\par

When $R=R^a$, the quiver with relations of the endomorphism ring is given as follows (see Proposition~\ref{prop: skew_gen} and Proposition~\ref{prop: skew_rel}).
 The set of vertices is indexed by $B_{r,n-r}$.
\begin{itemize}
  \item In the case $n-r=1$, arrows from $\beta$ to $\alpha$ is given by $E$ if $C_{\beta,(1,0,\cdots,0)}^\alpha\neq 0$  and $\mathbb{C}$ if $C_{\alpha,(1,1,0,\cdots,0)}^\beta\neq 0$.  No arrows otherwise. The relations are generated by the following representations in the group $\Hom(\beta, \alpha)$ $$(C_{\alpha^t,(1,0,\cdots,0)}^{\beta^t}\wedge^3E )\oplus(C_{\alpha^t,(2,0,\cdots,0)}^{\beta^t}\SCH_2E ).$$
  \item In the case $n-r=2$, there is one more arrow from $\beta$ to $\alpha$  given by $\mathbb{C}$ for $C^{\beta^t}_{\alpha^t,(1,-1,0,\dots,0)}$ besides the above ones.  No arrows otherwise. The relations are generated by the following representations in the group $\Hom(\beta, \alpha)$  $$(C_{\alpha^t,(0,\cdots,0,-1,-3)}^{\beta^t}\mathbb{C})\oplus(C_{\alpha^t,(1,0,\cdots,0,-2)}^{\beta^t}E )\oplus(C_{\alpha^t,(2,0,\cdots,0)}^{\beta^t}\SCH_2E )\oplus( C_{\alpha^t,(1,1,0,\cdots,0)}^{\beta^t}\wedge^2E ).$$
  \item In the case $n-r\geq3$, The set of arrows has the same description as in the case $n-r=1$.  The set of relations has the same description as in the case $n-r=2$.
\end{itemize}

\par
In the case of generic determinantal varieties, the quiver with relations is studied in \cite{BLV2}. We used the geometric technique in \cite{W03} to study the decomposition of the non-commutative desingularization as representations of the group in \S~\ref{subsec:min-present}.

In \S~\ref{sec: other_examples}, we apply the methods from the present paper to other examples of non-commutative desingularizations. 

\subsection*{Conventions} For an abelian category $\mathfrak{A}$, its derived category will be denoted by $D(\mathfrak{A})$, and its bounded derived category $D^b(\mathfrak{A})$. For a scheme  $X$, we denote the abelian category of
quasi-coherent sheaves over it by $Qcoh(X)$, and the abelian category of coherent sheaves $Coh(X)$. For short, $D^b(X)=D^b(Qcoh(X))$.
For a ring $A$, commutative or not, $A\hbox{-}Mod$ will be used to denote the abelian category of (left) $A$-modules, and $A\hbox{-}mod$ the abelian category of finitely generated (left) $A$-modules.
For a partition $\lambda$, we denote by $\SCH_\lambda$ the corresponding Schur functor. We will identify partitions and Young diagrams and following the conventions in \cite{F97}. For a vector bundle $\sF$ over $X$, we denote its dual bundle $\sHom_X(\sF,\sO)$ by $\sF^*$. Similarly, for a vector space $E$, its dual space is denoted by $E^*$. For any $R$-module $M$, when we talk about the algebra structure of $\End_R(M)$, we consider the opposite multiplication. Similar for $\sEnd_{\sO_X}(\sF)$ for any quasicoherent sheaf $\sF$ on $X$.

\subsection*{Acknowledgments} Many ideas in this paper are inspired by \cite{BLV}. We are grateful to Graham Leuschke for carefully reading an earlier version of this paper and providing a list of comments, and to Michel van den Bergh for pointing out an error in an earlier version. The second named author would like to thank Roman Bezrukavnikov, Zongzhu Lin, and Yi Zhu for helps with questions related to this paper. He is also grateful to Yaping Yang for innumerable times of fruitful discussions.

\section{Constructions of non-commutative desingularizations}
\label{sec: general_discussion}
In this section we recall some basic notions about noncommutative desingularization. Then, we introduce a construction that in many interesting situations provides noncommutative desingularizations. A criterion is given to test whether this construction gives a noncommutative desingularization or not.
\subsection{General notions related to tilting bundles}
For a scheme $X$, a vector bundle (or more generally a perfect
complex) $\TIL$ is called a \textit{tilting bundle} (resp. tilting complex) over $X$ if it satisfies
the following:
\begin{enumerate}
  \item $\TIL^{\perp}=0$ in $D^b(X)$, i.e., for any complex $M$, $\Hom_{D^b(X)}(\TIL,
M[i])=0$ for all $i$ implies $M=0$, where $[1]$ is the
shifting  functor;
  \item $\Ext^i(\TIL, \TIL)=0$ for $i>0$.
\end{enumerate}
\begin{theorem}[7.6 in \cite{HV}]
\label{finite dim}
For $X$ a projective scheme over a Noetherian affine scheme of finite type, and $\TIL\in D(Qcoh(X))$ a tilting object. We have the following
\begin{enumerate}
  \item $R\Hom_{\sO_X}(\TIL,-)$ induces an equivalence $$D(Qcoh(X))\cong D(\End_{\sO_X}(\TIL)\hbox{-}Mod).$$
  \item This equivalence restricts to an equivalence $$D^b(Coh(X))\cong D^b(\End_{\sO_X}(\TIL)\hbox{-}mod).$$
  \item If $X$ is smooth then $\End_{\sO_X}(\TIL)$ has finite global dimension.
\end{enumerate}
\end{theorem}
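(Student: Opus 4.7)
The plan is to realize this as a special case of the Rickard--Keller derived Morita equivalence, where the input from geometry is that $D(Qcoh(X))$ is compactly generated in the sense of Bondal--Van den Bergh, with perfect complexes being exactly the compact objects; in particular $\TIL$, being perfect, is compact. Set $A:=\End_{\sO_X}(\TIL)$ and consider the adjoint pair
\[
F = -\otimes^L_A \TIL \colon D(A\hbox{-}Mod) \rightleftarrows D(Qcoh(X)) \colon R\Hom_{\sO_X}(\TIL,-) = G.
\]
The tilting condition $\Ext^i(\TIL,\TIL)=0$ for $i>0$ says $G(\TIL)\cong A$ in $D(A\hbox{-}Mod)$, so the unit $\id\to GF$ is an isomorphism at $A$; since $A$ generates $D(A\hbox{-}Mod)$ as a localizing subcategory and the unit commutes with triangles and arbitrary coproducts, it is a natural isomorphism. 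Dually, the counit $FG\to\id$ is an isomorphism at $\TIL$, hence on the smallest localizing subcategory containing $\TIL$, which by $\TIL^\perp=0$ and compactness of $\TIL$ is all of $D(Qcoh(X))$. This gives (1).

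For (2), I would check that $F$ and $G$ exchange the bounded/finitely generated subcategories. Since $\TIL$ is perfect, $G$ has bounded cohomological amplitude, and for a coherent sheaf $\sF$ on $X$ the complex $G(\sF)$ has finitely generated cohomology over $A$: one reduces to the case of a finite locally free resolution of $\TIL$ and invokes coherence of higher direct images for the projective morphism $X\to \Spec R_0$ where $R_0$ is the affine base. Conversely $F$ sends $D^b(A\hbox{-}mod)$ into $D^b(Coh(X))$ because $A$ is Noetherian (being a quotient of a finitely generated $R_0$-algebra via $A=H^0 G(\TIL)$) and $\TIL$ itself is coherent, so $F$ applied to a finitely generated module yields a complex of coherent sheaves with bounded Tor-amplitude. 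These two checks combine with (1) to give the restricted equivalence.

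For (3), the smoothness of $X$ implies $D^b(Coh(X))$ has finite homological dimension: every coherent sheaf admits a finite resolution by locally free sheaves, so $\Hom_{D^b(Coh(X))}(\sF,\sG[i])=0$ for $i$ larger than some bound depending only on $\dim X$ and the bounded amplitudes of $\sF$ and $\sG$. Transporting this through the equivalence in (2), for any $M,N\in A\hbox{-}mod$ one gets $\Ext^i_A(M,N)\cong \Hom_{D^b(Coh(X))}(F(M),F(N)[i])=0$ for $i$ large, which is the definition of finite global dimension of $A$.

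The main obstacle is verifying the unit/counit are isomorphisms at the \emph{unbounded} level, since naive generation by $\TIL$ under shifts and cones only reaches the thick subcategory of compact objects; to extend to all of $D(Qcoh(X))$ one must use compact generation together with Brown representability, so that the triangulated subcategory where the counit is an isomorphism is closed under arbitrary coproducts. Once this is set up, the bounded restriction in (2) and the global dimension bound in (3) are purely formal consequences of the perfectness of $\TIL$ and the smoothness of $X$.
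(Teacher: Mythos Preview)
The paper does not give its own proof of this theorem: it is stated as Theorem~\ref{finite dim} with the attribution ``7.6 in \cite{HV}'' and used as a black box throughout. So there is no in-paper argument to compare against; your task reduces to whether your sketch is a correct proof of the cited result.

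Your outline is the standard Rickard--Keller/Neeman argument and is essentially correct. A couple of minor points worth tightening. First, in part (2) your justification that $A$ is Noetherian is phrased oddly: $A=\End_{\sO_X}(\TIL)$ is not merely a quotient of a finitely generated algebra but is in fact a finite $R_0$-module, since $\TIL$ is perfect and $X\to\Spec R_0$ is proper, so $R\Hom(\TIL,\TIL)$ has coherent cohomology over $R_0$. This is what makes $A$ Noetherian and also what ensures $G$ lands in $D^b(A\hbox{-}mod)$. Second, in the passage from $\TIL^\perp=0$ to ``the localizing subcategory generated by $\TIL$ is everything,'' you correctly invoke compact generation, but it is worth saying explicitly that this step uses Neeman's Brown representability (or the Bondal--Van den Bergh form of it): a compact object with trivial right-orthogonal in a compactly generated triangulated category is a compact generator, hence generates under coproducts and triangles. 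With these two clarifications your argument goes through and matches what one finds in \cite{HV}.
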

\par

\begin{definition}[\S~5 in \cite{BO}]\label{def: non-c desing}\label{def: nccr}
\begin{enumerate}
\item For an algebraic variety $X$, a \textit{non-commutative  weak desingularization}
of $X$ is a coherent sheaf of algebras $\sA$ on $X$, which is generically a matrix algebra, such that the abelian category of sheaves of right modules $\sA$ has
finite homological dimension.
\item We say $\sA$ is a  \textit{non-commutative   desingularization} if furthermore $\sA\cong\sEnd_{\sO_X}(\sF)$ for some reflexive cohernt sheaf $\sF$.
\item When $X$ is Cohen-Macaulay, we say $\sA$ is a  \textit{non-commutative   crepant resolution} (NCCR) if furthermore $\sA$ is maximal
Cohen-Macaulay (MCM).
\end{enumerate}
\end{definition}
\begin{remark}
\begin{enumerate}
\item In the original definition of \cite{BO}, the sheaf $\sF$ is only required to be torsion free. Also, in the original definition of \cite{VDB2}, an NCCR is considered only when $X$ is Gorenstein.
\item The analogue of $\pi^*$ is $-\otimes_{\sO_X}\sA$; the analogue of $\pi_*$ is restriction of scalars from $\sA$ to $\sO$; the analogue of $\pi^!$ is $R\Hom_{X}(\sA,-)$.
\item We adapt a notion of non-commutative  weak desingularization here, while the analogue of the key properties of a commutative resolution, birationallity (generically a matrix algebra), properness (coherent), and smoothness (finite global dimension) are still maintained.
\end{enumerate}
 
\end{remark}
\par
We following lemmas are well-known. We include the proofs for completeness.
\begin{lemma}
\label{orth} Assume $p:X\to Y$ is the projection of a vector bundle
over an algebraic scheme $Y$, $\mathscr{T}$ is a quasi-coherent sheaf over $Y$
such that $\mathscr{T}^\perp=0$ in $D^b(Y)$, then, $$(p^*\mathscr{T})^\perp=0$$ in
$D^b(X)$.
\end{lemma}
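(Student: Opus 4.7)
The plan is to transport the orthogonality hypothesis from $Y$ to $X$ via the adjunction between $p^*$ and $p_*$, exploiting the fact that a vector bundle projection is both flat and affine.

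Fix $M \in D^b(X)$ with $\Hom_{D^b(X)}(p^*\mathscr{T}, M[i]) = 0$ for every integer $i$. Because $p$ is flat we have $Lp^* = p^*$, and because $p$ is affine the higher direct images vanish on quasi-coherent sheaves, so $Rp_* = p_*$ and $p_*$ is exact on $Qcoh(X)$. In particular, since $M$ is bounded, $p_*M \in D^b(Qcoh(Y)) = D^b(Y)$ (using the paper's convention $D^b(Y) = D^b(Qcoh(Y))$). The standard derived adjunction then gives
\[
0 = \Hom_{D^b(X)}(p^*\mathscr{T}, M[i]) \;\cong\; \Hom_{D^b(Y)}(\mathscr{T}, p_*M[i])
\]
for all $i$, so $p_*M \in \mathscr{T}^\perp \subset D^b(Y)$. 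By the hypothesis $\mathscr{T}^\perp = 0$, we conclude $p_*M = 0$ in $D^b(Y)$.

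It remains to deduce $M = 0$ from $p_*M = 0$. For this I use that an affine, surjective morphism has a faithful pushforward on quasi-coherent sheaves: for any open affine $U \subset Y$, the preimage $p^{-1}(U)$ is also affine, and $\Gamma(p^{-1}(U), \mathscr{F}) = \Gamma(U, p_*\mathscr{F})$ for every quasi-coherent $\mathscr{F}$ on $X$, so $p_*\mathscr{F} = 0$ forces $\mathscr{F} = 0$. Since $p_*$ is exact, $\mathcal{H}^i(p_*M) = p_*\mathcal{H}^i(M) = 0$ for all $i$, and applying faithfulness termwise gives $\mathcal{H}^i(M) = 0$ for all $i$, i.e.\ $M = 0$ in $D^b(X)$.

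The only subtle point is making sure that $p_*M$ really lies in the derived category where the hypothesis on $\mathscr{T}^\perp$ is stated; this is where both hypotheses on $p$ (affine, so $Rp_* = p_*$ and no unbounded cohomology is introduced, and flat, so the left adjoint is underived) are used. Everything else is formal adjunction and the elementary observation that affine morphisms have faithful pushforwards.
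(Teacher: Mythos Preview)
Your proof is correct and follows essentially the same approach as the paper: both use the exactness of $p^*$ and $p_*$ (from flatness and affineness of the vector bundle projection) to pass to an underived adjunction, conclude $p_*M=0$ from the hypothesis on $\mathscr{T}$, and then use faithfulness of $p_*$ to deduce $M=0$. The paper phrases the final step via the equivalence $Qcoh(X)\cong \mathscr{A}\text{-mod}$ for $\mathscr{A}=\Sym(X^*)$, whereas you argue termwise on cohomology sheaves using $\Gamma(p^{-1}(U),\mathscr{F})=\Gamma(U,p_*\mathscr{F})$, but this is the same content.
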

\begin{proof}
The projection $p$ is an affine morphism. Denote the corresponding
sheaf of algebras of the affine morphism $p$ by $\mathscr{A}$. The
push-forward functor $p_*$ induces an equivalence between $Qcoh(X)$
and the subcategory $\mathscr{A}\hbox{-}mod$ of $Qcoh(Y)$. In particular,
$p_*$ is exact.
\par
The sheaf $\mathscr{A}=\Sym(X^*)$ is locally free. Therefore, $p^*$
is also exact.
\par
We have a pair of adjoint exact functors $(p^*, p_*)$ between
$Qcoh(X)$ and $Qcoh(Y)$. By a standard result in homological
algebra  \cite[\uppercase\expandafter{\romannumeral3}.6]{GM},
they induce an adjoint pair between $D^b(X)$ and $D^b(Y)$, which we
still denote by $(p^*, p_*)$. They commute with the shifting functor $[1]$.
\par
For a complex $C$ over $X$, assume $\Hom_{D^b(X)}(p^*\mathscr{T},
C[i])=0$ for all $i$. Using the adjoint property, we get
$\Hom_{D^b(Y)}(\mathscr{T},p_*C[i])=\Hom_{D^b(X)}(p^*\mathscr{T}, C[i])=0$.
By assumption, $p_*C\cong0$ in $D^b(Y)$. This implies $p_*C\cong0$
in $D^b(\mathscr{A})$, hence, $C\cong0$ in $D^b(X)$.
\end{proof}
\par
\begin{lemma}
\label{ext=0} Assume $p:X\to Y$ is the projection of a vector bundle
over an algebraic scheme $Y$, $\mathscr{T}_1$ and $\mathscr{T}_2$
are two vector bundles over $Y$ such that $H^i(X,
p^*\sHom(\mathscr{T}_1, \mathscr{T}_2))=0$ for all $i>0$, then
$\Ext^i(p^*\mathscr{T}_1, p^*\mathscr{T}_2)=0$ for all $i>0$.
\end{lemma}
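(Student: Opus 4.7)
The plan is to reduce the global $\Ext$ computation to a sheaf cohomology computation via the local-to-global $\Ext$ spectral sequence, and then use that $\sHom$ commutes with pullback for vector bundles.

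First I would invoke the local-to-global spectral sequence
\[
E_2^{a,b} = H^a\bigl(X,\sExt^b_{\sO_X}(p^*\mathscr{T}_1, p^*\mathscr{T}_2)\bigr) \Rightarrow \Ext^{a+b}_{\sO_X}(p^*\mathscr{T}_1, p^*\mathscr{T}_2).
\]
Since $\mathscr{T}_1$ is a vector bundle over $Y$, its pullback $p^*\mathscr{T}_1$ is a vector bundle over $X$, hence locally free of finite rank. This makes $p^*\mathscr{T}_1$ locally projective, so $\sExt^b_{\sO_X}(p^*\mathscr{T}_1, p^*\mathscr{T}_2) = 0$ for all $b > 0$. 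Therefore the spectral sequence collapses on the $E_2$-page and yields
\[
\Ext^i_{\sO_X}(p^*\mathscr{T}_1, p^*\mathscr{T}_2) \cong H^i\bigl(X, \sHom_{\sO_X}(p^*\mathscr{T}_1, p^*\mathscr{T}_2)\bigr)
\]
for every $i \geq 0$.

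Next I would identify the sheaf $\sHom_{\sO_X}(p^*\mathscr{T}_1, p^*\mathscr{T}_2)$ with $p^*\sHom_{\sO_Y}(\mathscr{T}_1, \mathscr{T}_2)$. Because $\mathscr{T}_1$ and $\mathscr{T}_2$ are vector bundles, one has $\sHom_{\sO_Y}(\mathscr{T}_1,\mathscr{T}_2) = \mathscr{T}_1^* \otimes_{\sO_Y} \mathscr{T}_2$, and pullback of sheaves commutes with tensor products and with dualization of locally free sheaves of finite rank. Substituting, we obtain
\[
\Ext^i_{\sO_X}(p^*\mathscr{T}_1, p^*\mathscr{T}_2) \cong H^i\bigl(X, p^*\sHom_{\sO_Y}(\mathscr{T}_1, \mathscr{T}_2)\bigr),
\]
and the hypothesis kills the right-hand side for $i > 0$.

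There is no serious obstacle here: the two ingredients are standard, and the hypothesis has been engineered exactly to make the identification work. The only thing to check carefully is that the local-to-global $\Ext$ spectral sequence is available in the generality required (algebraic scheme $Y$, quasi-coherent context), but this is classical as soon as one of the arguments is locally free of finite rank, which is the case here.
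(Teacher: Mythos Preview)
Your proof is correct and follows essentially the same approach as the paper: both use the local-to-global $\Ext$ spectral sequence, collapse it using local freeness of $p^*\mathscr{T}_1$, and then identify $\sHom_{\sO_X}(p^*\mathscr{T}_1,p^*\mathscr{T}_2)$ with $p^*\sHom_{\sO_Y}(\mathscr{T}_1,\mathscr{T}_2)$. The only cosmetic difference is that the paper routes the last identification through the sheaf of algebras $\mathscr{A}=\Sym(X^*)$ associated to the affine morphism $p$, whereas you use directly that pullback commutes with $\sHom$ for locally free sheaves; both are standard.
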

\begin{proof}
Denote the corresponding sheaf of algebras of the affine morphism
$p$ by $\mathscr{A}$. We have the local-global spectral sequence $E$ with
$$E^{ij}_2=H^i(X,\sExt^j_X(p^*\mathscr{T}_1, p^*\mathscr{T}_2))$$
which converges to $\Ext^{i+j}(p^*\mathscr{T}_1, p^*\mathscr{T}_2)$.
\par
Since $p^*\mathscr{T}_1$ and $p^*\mathscr{T}_2$ are both locally
free, $\sExt^j_X(p^*\mathscr{T}_1, p^*\mathscr{T}_2)=0$ for all
positive $j$. We have $$\Ext^{i}(p^*\mathscr{T}_1,
p^*\mathscr{T}_2)=H^i(X,\sHom_{\sO_X}(p^*\mathscr{T}_1,
p^*\mathscr{T}_2)).$$
\par
 Since $\mathscr{A}$ is locally free, we can identify
$$p^*\sHom_{\sO_Y}(\mathscr{T}_1, \mathscr{T}_2)\cong\sHom_{\sO_Y}(\mathscr{T}_1, \mathscr{T}_2\otimes_{\sO_Y}\sA)\cong\sHom_{\sO_X}(p^*\mathscr{T}_1, p^*\mathscr{T}_2).$$ Use the assumption that $H^i(X,
p^*\sHom_{\sO_Y}(\mathscr{T}_1, \mathscr{T}_2))=0$, then,
$\Ext^{i}(p^*\mathscr{T}_1, p^*\mathscr{T}_2)=0$ for $i>0$.
\end{proof}
\par
\subsection{Inverse image of tilting bundles and noncommutative desingularization}
\label{subsec: inverse image til}\label{subsec:general NCCR} Let $V$ be a smooth projective variety, and let $\mathbb A^N$ be an affine space.
Suppose we
have a tilting bundle $\TIL_{V}$ over 
$V$, and suppose the total space $Z$ of a vector subbundle of $\mathbb A^N\times V$ over
$V$ desingularizes an
affine subvariety $\Spec R\subseteq \mathbb A^N$, i.e., the
projection $\mathbb A^N\times V\to \mathbb A^N$ restricted to $Z$ is birational
onto $\Spec R$, so that the restriction is a desingularization. We seek the conditions for the inverse image of
$\TIL_{V}$ to be a tilting bundle over $Z$,  and to give a
non-commutative crepant resolution of $\Spec R$.
\begin{equation}\label{diag: general}
\xymatrixrowsep{13pt}\xymatrix{
Z\ar[dd]_{q'} \ar@/^/[drr]^{p'} \ar@{_{(}->}[dr]^j\\
 & V\times \mathbb A^N\ar[d]_q \ar[r]^p&  V \\
\Spec R\ar@{^{(}->}[r]^i & \mathbb A^N}
\end{equation}
\par

We get the following method to construct crepant noncommutative desingularizations.
\begin{prop}
\label{til-resol criterion}
Notation as in diagram~(\ref{diag: general}). Let  $\TIL$ be a tilting bundle over $V$.
\begin{enumerate}
  \item If $H^i(Z,p'^*\sEnd_{\sO_{V}}(\TIL))=0$ for all positive $i$, then $p'^*\TIL$ is a tilting bundle over $Z$. In particular, $\End_{\sO_Z}(p'^*\TIL)$ is a non-commutative weak desingularization of $\Spec R$.  
\item If moreover $\End_{\sO_Z}(p'^*\TIL)\cong \End_R(q'_*p'^*\TIL)$ and $q'_*p'^*\TIL$ is reflexive, then $\End_{\sO_Z}(p'^*\TIL)$ is a non-commutative desingularization.
\item If moreover, $\End_{\sO_Z}(p'^*\TIL)$ is maximal Cohen-Macaulay, then $\End_R(q'_*p'^*\TIL)$ gives a noncommutative crepant desingularization of  $\Spec R$.
\end{enumerate}
\end{prop}
\begin{proof}
Note that if $p'^*\TIL$ is a tilting bundle, then $\End_{\sO_Z}(p'^*\TIL)$ has finite global dimension by Theorem~\ref{finite dim}, since $Z$ is smooth. The first part of this proposition follows directly from Lemma~\ref{orth}, Lemma~\ref{ext=0},  and the definition of tilting bundles.
\par
The second and third part follows from the first part and the definitions.
\end{proof}

Now we discuss the conditions $\End_{\sO_Z}(p'^*\TIL)\cong \End_R(q'_*p'^*\TIL)$ and the Cohen-Macaulayness of $\sEnd_{\sO_Z}(p'^*\TIL))$ from Proposition~\ref{til-resol criterion}. We have the following criterion for the latter. Although it is known (see e.g., \cite{BLV2}), we include the proof for completeness.
\begin{lemma}\label{lem:mcm}
Assume $Z$ is the total space of the vector bundle on $V$, and $\End_{\sO_Z}(p'^*\TIL)\cong \End_R(q'_*p'^*\TIL)$ with $H^i(Z,\sEnd_{\sO_Z}(p'^*\TIL))=0$. Then, $\End_R(q'_*p'^*\TIL)$ is maximal Cohen-Macaulay iff $H^i(V,\TIL^*\otimes \TIL\otimes \Sym(Z^*)\otimes\omega_Z)=0$ for any $i>0$, where $\omega$ is the dualizing complex.
\end{lemma}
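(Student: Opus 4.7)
The plan is to convert maximal Cohen-Macaulayness of $N := \End_R(q'_*p'^*\TIL)$ into a cohomological vanishing on $Z$ via Grothendieck duality, and then push that vanishing down to $G/P$ along the affine morphism $p'$ using the projection formula. The whole argument is a three-step composition of three standard dualities.

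First, I would invoke local duality on $\Spec R$: under the Gorenstein hypothesis on $R$ that is standing whenever we speak of non-commutative crepant resolutions (Definition~\ref{def: nccr}), a finitely generated $R$-module $N$ is maximal Cohen-Macaulay if and only if $\Ext^i_R(N,\omega_R)=0$ for all $i>0$. Second, since the hypotheses give $Rq'_*\sEnd_{\sO_Z}(p'^*\TIL)\cong \End_R(q'_*p'^*\TIL)$ concentrated in degree zero, Grothendieck duality for the proper birational morphism $q':Z\to \Spec R$, combined with the identification $q'^{!}\omega_R\cong \omega_Z$ (valid because $Z$ is smooth and $\dim Z=\dim R$), produces
$$R\Hom_R(N,\omega_R)\cong Rq'_* R\sHom_{\sO_Z}(\sEnd_{\sO_Z}(p'^*\TIL),\omega_Z)\cong Rq'_*\bigl(p'^*(\TIL^*\otimes\TIL)\otimes\omega_Z\bigr),$$
where local freeness and self-duality of the endomorphism sheaf swallow the inner $R\sHom$. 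Affineness of $\Spec R$ turns the left-hand $\Ext$'s into $H^i(Z,p'^*(\TIL^*\otimes\TIL)\otimes\omega_Z)$, so the MCM property is equivalent to the vanishing of these groups for $i>0$.

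Third, descend along $p':Z\to G/P$. This morphism is affine with $p'_*\sO_Z=\Sym(V^*)$, and since $p'$ is a rank-$\rank V$ vector bundle projection the relative canonical is $\det V^*$, so $\omega_Z\cong p'^*(\omega_{G/P}\otimes\det V^*)$. The projection formula identifies
$$H^i(Z,p'^*(\TIL^*\otimes\TIL)\otimes\omega_Z)\cong H^i(G/P,\TIL^*\otimes\TIL\otimes\Sym(V^*)\otimes\omega_Z),$$
where on the right $\omega_Z$ is read as the pushforward $p'_*\omega_Z$, i.e. $\Sym(V^*)\otimes\omega_{G/P}\otimes\det V^*$ with the $\Sym(V^*)$ factor absorbed into the displayed $\Sym(V^*)$. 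Chaining the three equivalences proves the lemma. The main delicacy is the compatibility of dualizing complexes in the second step, which rests on $R$ being Gorenstein and $q'$ being a birational proper morphism out of the smooth variety $Z$; modulo that bookkeeping, everything else is a formal composition of local duality, Grothendieck duality, and the projection formula.
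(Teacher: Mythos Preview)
Your proof is correct and follows essentially the same three-step approach as the paper: characterize MCM via $\Ext^i_R(-,\omega_R)$ vanishing, apply Grothendieck duality for the proper morphism $q'$ to move the computation to $Z$, then descend along the affine bundle $p'$ via the projection formula. You have supplied more detail than the paper does (e.g., the self-duality of $\sEnd(p'^*\TIL)$, the explicit form of $\omega_Z$ as $p'^*(\omega_{G/P}\otimes\det V^*)$, and the interpretation of the abusive notation $\omega_Z$ on $G/P$), but the skeleton is identical.
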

\begin{proof}
By definition, $\End_{\sO_Z}(p'^*\TIL)$ is maximal Cohen-Macaulay iff $\Ext_R^i(\End_{\sO_Z}(p'^*\TIL),\omega_R)=0$ for all $i>0$. As $q$ is a proper and $Rq_*\sEnd_{\sO_Z}(p'^*\TIL)\cong \End_{\sO_Z}(p'^*\TIL)$, using the Grothendieck duality for proper morphisms  \cite[1.2.22]{W03} $$\Ext_R^i(\End_{\sO_Z}(p'^*\TIL),\omega_R)
\cong\Ext^i_{\sO_Z}(\sEnd_{\sO_Z}(p'^*\TIL),\omega_Z).$$
We know $\Ext_R^i(\End_{\sO_Z}(p'^*\TIL),\omega_R)=0$ is equivalent to the following
\begin{eqnarray*}
\Ext_R^i(\End_{\sO_Z}(p'^*\TIL),\omega_R)
&\cong&\Ext^i_{\sO_Z}(\sEnd{\sO_Z}(p'^*\TIL),\omega_Z)\\
&\cong& H^i(Z,\sEnd_{\sO_Z}(p'^*\TIL)\otimes\omega_Z)\\
&\cong& H^i(V,\TIL^*\otimes \TIL\otimes \Sym(Z^*)\otimes\omega_Z)\\
&\cong& 0.
\end{eqnarray*}
\end{proof}

Now we discuss the natural map $\End_{\sO_Z}(\TIL)\to\End_R(\TIL(Z))$.
\begin{prop}\label{lem:mcm_nccr}
Let $X=\Spec(R)$ be an affine normal  scheme and $f:Z\to
X$ a crepant resolution with exceptional locus having codimension at
least 2 in $X$. Let $\TIL$ be a tilting bundle over $Z$ such that $\TIL(Z)$ is a reflexive $R$-module. Then $\End_{\sO_Z}(\TIL)\to\End_R(\TIL(Z))$ is an isomorphism iff
$\End_{\sO_Z}(\TIL)$ is reflexive. 
\end{prop}
\begin{proof}
First recall that (7.4.2 of \cite{Bou}) for a finitely generated module $M$ over a commutative noetherian domain, $M$ being reflexive is equivalent to being torsion free and $M=\bigcap M_p$ where the intersection is taken over all codimension 1 primes.

\par
There is a natural morphism of rings $\End_{\sO_Z}(\TIL)\to \End_R(\TIL(Z))$. The target is reflexive.  This morphism is an isomorphism outside the exceptional locus of $f$ which has codimension at least 2 in $X$. If the source is also reflexive, then we have $\End_{\sO_Z}(\TIL)=\bigcap\End_{\sO_Z}(\TIL)_p\cong \bigcap\End_R(\TIL(Z))_p=\End_R(\TIL(Z))$, where the intersection is taken over all codimension 1 primes.
\end{proof}

\begin{prop}\label{prop:ref}
If the exceptional locus of $q':Z\to \Spec R$ has codimension at least two in both $Z$ and $\Spec R$, and $R$ is an integral domain, then both $\TIL(Z)$ and $\End_{\sO_Z}(p'^*\TIL)$ are reflexive; in particular, $\End_{\sO_Z}(p'^*\TIL)$ is a non-commutative desingularization.
\end{prop}
\begin{proof}
Now we prove the third part. First note that by Lemma~4.2.1 in \cite{VDB1}, if the exceptional locus of $q'$ has  codimension at least 2 in both $Z$ and $\Spec R$, then $q_*'$ sends any reflexive sheaves to reflexive sheaves. Both $Z$ and $\Spec R$ are integral schemes, hence for any reflexive sheaf $N$ over one or the other, $\sHom(M,N)$ is also reflexive for any $M$. This is because $\sHom(M,N)$ is torsion-free whenever $N$ is, and both intersection and localization commute with $\sHom(M,-)$, given integrality of the scheme. Thus, we know $p'^*\TIL$ is reflexive, so are $\sEnd_{\sO_Z}(p'^*\TIL)$, $q_*'\sEnd_{\sO_Z}(p'^*\TIL)$, $q_*'p'^*\TIL$, and $\End_R(q'_*p'^*\TIL)$. 
\end{proof}

However, when the map $q':Z\to \Spec R$ has an exceptional divisor in $Z$, the depth of the module $\sEnd_{\sO_Z}(p'^*\TIL)$ is complicated. We analyze this case in \S~\ref{sec:depth}.

\subsection{A geometric technique}\label{subsec: geo tech}
Now we review a geometric technique used throughout this paper. The
reference for this subsection is \cite{W03}.
\par
For a projective variety $V$, $\mathbb{A}^{N}_k$ an affine space, the space $\mathbb A^N \times V$ can be
viewed as the total space of the trivial vector bundle $\mathcal{E}$ of
dimension $N$ over $V$. Let us consider the subvariety $Z$ in $\mathbb A^N
\times V$ which is the total space of a subbundle $\mathcal{S}$ in
$\mathcal{E}$. We denote by $q$ the projection $q : \mathbb A^N \times V \to
\mathbb A^N$ and by $q'$ the restriction of $q$ to $Z$. Let $X = q (Z)$. We
get the basic diagram
$$
\xymatrixrowsep{15pt}
\xymatrix{
Z\ar[d]_{q'}\ar@{^{(}->}[r] &  H\times V\ar[d]_q\\
X\ar@{^{(}->}[r] &  H.\\
}$$

\par
The projection from $\mathbb A^N \times V$ onto $V$ is denoted by $p$, and the
quotient bundle $\mathcal{E}/\mathcal{S}$ by $\mathcal{T}$ . Thus we
have the exact sequence of vector bundles on $V$,
$$0 \to \mathcal{S} \to \mathcal{E} \to \mathcal{T} \to 0.$$

The coordinate ring of $\mathbb A^N$ will be denoted by $A$. It is a
polynomial ring in $N$ variables over $k$. We will identify 
sheaves on $\mathbb A^N$ with $A$-modules. The direct image $p_*(\sO_Z)$ can
be identified with the sheaf of algebras $\Sym(\eta)$, where
$\eta=\mathcal{S}^*$. For a vector bundle $\mathcal{V}$ over $V$,
the $\sO_{\mathbb A^N\times V}$-module $\sO_Z\otimes p^*\mathcal{V}$ will be
denoted by $M(\mathcal{V})$.
\begin{theorem}[5.1.2 in \cite{W03}]
\label{5.1.2 basic} For a vector bundle $\mathcal{V}$ on $V$, we
define free graded $A$-modules
$$F(\mathcal{V})_{i} = \bigoplus_{j \geq 0} H^{j}(V, \bigwedge^{i+j} \xi \otimes \mathcal{V}) \otimes_{k} A
(-i-j)$$ where $\xi = \mathcal{T}^* $ and $(i)$ means shifting by
$i$.
\begin{enumerate}
  \item  There exist minimal differentials $$ d_{i} (\mathcal{V}):= F(\mathcal{V})_{i} \to F(\mathcal{V})_{i-1}$$
         of degree $0$ such that $F(\mathcal{V})_{\bullet}$ is a complex of free graded $A$-modules
         with $$H_{-i}(F(\mathcal{V})_{\bullet})=\mathcal{R}^{i} q_{*} M(\mathcal{V}).$$
         In particular, the complex $F(\mathcal{V})_{\bullet}$ is exact in positive degrees.
  \item  The sheaf \ $ \mathcal{R}^{i} q_{*} M(\mathcal{V})$ is equal to $H^{i}(Z, M(\mathcal{V}))$ and it can be also identified with the graded $A$-module $H^{i}(V, \Sym(\eta) \otimes \mathcal{V})$.
  \item  If $\phi: M(\mathcal{V}) \to M(\mathcal{V}^{'})(n)$ is a morphism of graded sheaves, then there exists a morphism of complexes
     $$f_{\bullet}(\phi): F(\mathcal{V})_{\bullet} \to F(\mathcal{V}^{'})_{\bullet}(n)$$
     Its induced map $H_{-i}(f_{\bullet}(\phi))$ can be identified with the induced map
     $$H^{i}(Z, M(\mathcal{V})) \to H^{i}(Z, M(\mathcal{V}^{'}))(n).$$
\end{enumerate}

\end{theorem}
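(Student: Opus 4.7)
The plan is to deduce Theorem~\ref{5.1.2 basic} from the Koszul resolution of $\sO_Z$ on $X\times V$ combined with derived pushforward along $q$.

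First I would identify $Z\subset X\times V$ as the vanishing locus of a regular section $s$ of the bundle $p^*\mathcal{T}$, obtained by composing the tautological section $\sO_{X\times V}\to p^*\mathcal{E}$ (available because $\mathcal{E}=V\times X$ is trivial) with the quotient map $p^*\mathcal{E}\twoheadrightarrow p^*\mathcal{T}$. Since $Z$ has the expected codimension $t=\mathrm{rk}\,\mathcal{T}$, the Koszul complex
$$K^\bullet:\quad 0\to\bigwedge^{t}p^*\xi\to\cdots\to p^*\xi\to \sO_{X\times V}\to 0$$
is a locally free resolution of $\sO_Z$, where $\xi=\mathcal{T}^*$. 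Tensoring with the flat sheaf $p^*\mathcal{V}$ yields a locally free resolution of $M(\mathcal{V})$.

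Next I would push this resolution down along $q$. Since $q:X\times V\to X=\Spec A$ is a trivial projection, flat base change gives $Rq_*(p^*\mathcal{G})\cong H^\bullet(V,\mathcal{G})\otimes_k A$ for any coherent $\mathcal{G}$ on $V$. Tracking the $\Gm$-grading (coordinates on $X$ have weight $1$, and so does $s$), each term $Rq_*(\bigwedge^i p^*\xi\otimes p^*\mathcal{V})$ becomes $\bigoplus_j H^j(V,\bigwedge^i\xi\otimes\mathcal{V})\otimes_k A(-i)[-j]$. Feeding this into the hypercohomology spectral sequence $E_1^{a,b}=R^{b}q_*(K^{a}\otimes p^*\mathcal{V})\Rightarrow R^{a+b}q_*M(\mathcal{V})$---or equivalently, replacing each term by a $q_*$-acyclic resolution and taking a total complex---assembles a complex of free graded $A$-modules whose homology computes $R^\bullet q_*M(\mathcal{V})$. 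Passing to a minimal representative, unique up to isomorphism over the graded local ring $A$, and reindexing so that homological degree $i$ collects the summands with exterior-power index $i+j$ and sheaf-cohomology degree $j$, produces exactly the stated $F(\mathcal{V})_\bullet$, proving (1).

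Part (2) then follows by combining $R^iq_*M(\mathcal{V})=H^i(Z,M(\mathcal{V}))$ (affineness of $X$) with $p_*\sO_Z=\Sym(\eta)$, which identifies $H^i(Z,M(\mathcal{V}))$ with $H^i(V,\Sym(\eta)\otimes\mathcal{V})$. Part (3) is standard homological algebra: a morphism $\phi:M(\mathcal{V})\to M(\mathcal{V}')(n)$ lifts to a chain map between the two Koszul resolutions (unique up to chain homotopy because the sources are locally free), and applying $Rq_*$ followed by minimization delivers the induced morphism $f_\bullet(\phi)$ of minimal free complexes with the asserted effect on homology. The main obstacle I expect is the careful bookkeeping of the two gradings---cohomological and internal---and verifying uniqueness of the minimal representative: one must check that the higher differentials of the spectral sequence reassemble correctly into the minimal differentials $d_i(\mathcal{V})$, and that the minimal choice is preserved under chain maps up to homotopy. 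Once this is settled, the theorem reduces to the Koszul resolution plus flat base change.
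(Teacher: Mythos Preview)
The paper does not prove this theorem; it is stated as a citation from \cite{W03} (Weyman's book, Theorem~5.1.2) and is used as a black box in Section~\ref{subsec: geo tech}. Your outline---Koszul resolution of $\sO_Z$ via the regular section of $p^*\mathcal{T}$, derived pushforward along $q$ using flat base change, passage to a minimal free complex over the graded polynomial ring $A$---is precisely the standard argument given in the cited reference, so there is nothing to compare.

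Your sketch is correct in substance. One small point worth tightening: the regularity of the section $s$ (i.e., that $Z$ has the expected codimension) is automatic here because $Z$ is by construction a subbundle of the trivial bundle $\mathcal{E}$, so fiberwise the Koszul complex is the Koszul resolution of a linear subspace. The bookkeeping you flag as the main obstacle is handled in \cite{W03} by working with the double complex obtained from a \v{C}ech or injective resolution in the $V$-direction and the Koszul complex in the $X$-direction, then invoking the existence and uniqueness of minimal free resolutions over graded polynomial rings; the reindexing you describe is exactly the outcome of totalizing and minimizing.
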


This theorem will be mentioned as the basic theorem of geometric
method in this paper.
\par
Now we come to a criterion for maximal Cohen-Macaulayness in the context of geometric technique. The proof, which can be found in 5.1.5 of \cite{W03}, is based on the basic theorem of geometric technique and Lemma~\ref{lem:mcm}.
\begin{prop}\label{thm:dual_mcm}
Let $\mathcal{V}$ be a bundle over $V$, and let $\check{V}:=\omega_V\otimes\wedge^{top}\xi\otimes\mathcal{V}^*$. Assume $\dim Z=\dim X$ and $R^iq_*'(\sO_z\otimes p^*\mathcal{V})=0$ for all $i>0$. Then, $R^0q_*'(\sO_z\otimes p^*\mathcal{V})$ is a maximal Cohen-Macaulay module supported on $X$ iff $R^iq_*'(\sO_z\otimes p^*\check{\mathcal{V}})=0$ for all $i>0$.
\end{prop}
This proposition will be used in Section~\ref{sec: other_examples} to prove that some non-commutative desingularizations we study are crepant.

\section{Grassmannians}\label{sec: grass}
In this section we collect some preliminary results about coherent sheaves on the Grassmannians. It can also be viewed as a collection of examples to the notions of exceptional collection, tilting bundle, and quasi-hereditary structure we review in \S~\ref{sec: general_discussion} without providing examples.
\subsection{Representation theory of $GL_n$}
We recall here some classical results about representation theory of reductive group $G$ over $k$, with emphasis on the case when $G=GL_n$. For the proofs of the results, we refer to \cite{J}.

Let $G$ be a split reductive group and $T$ a split maximal torus whose weight lattice will be denoted by $\hat{T}$. We fix a Borel subgroup $B$ containing $T$. The dominant camber in $\hat{T}$ determined by $B$ will be denoted by $\hat{T}_{+}$. The category of finite dimensional representations of $G$ will be denoted by $\Rep(G)$.

For a subgroup $H$ of $G$ such that $G/H$ is a scheme, the category of $G$-equivariant coherent sheaves on $G/H$ will be denoted by $Coh_G(G/H)$. We can define a functor $\mathfrak{L}_{G/H}:\Rep(H)\to Coh_G(G/H)$, $V\mapsto G\times_HV$. It is an equivalence of categories, with quasi-inverse given by $\cE\mapsto \cE_{[H]}$, i.e., taking the fiber at the point $[H]\in G/H$. In particular, both functors are exact functors.

For a $G$-scheme $X$, i.e., a scheme $X$ with an algebraic $G$-action, we have an adjoint pair $H^0(X,-):Coh_G(X)\to \Rep(G)$ and $-\otimes\sO_X:\Rep(G)\to Coh_G(X)$. In general, $H^0(X,-)$ is only left exact and $-\otimes\sO_X$ is only right exact. In the case when $X=G/H$, the composition $V\mapsto (V\otimes\sO_X)_{[H]}$ is naturally equivalent to $\Res_H^G:\Rep(G)\to \Rep(H)$, therefore, its adjoint $\Ind_H^G:\Rep(H)\to \Rep(G)$ is the composition $H^0(G/H,-)\circ\mathfrak{L}_{G/H}$.
These functors are summarized in the following diagram.
$$
\xymatrixcolsep{100pt}
\xymatrix@!0{
\Rep(H)\ar@/^/@<1ex>[r]^{\mathfrak{L}_{G/H}}\ar@/^2pc/@<3ex>[rr]^{\Ind_H^G} & Coh_G(G/H)\ar@/^/@<1ex>[l]^{[H]}\ar@/^/@<1ex>[r]^{H^0(G/H,-)}& \Rep(G)\ar@/^/@<1ex>[l]^{-\otimes\sO_{G/H}}\ar@/^2pc/@<3ex>[ll]^{\Res_H^G}
}$$

As the functor $\mathfrak{L}_{G/H}$ is exact, we have the Borel-Weil Theorem.
\begin{theorem}
We have a natural isomorphism of functors $R^i\Ind^G_H\cong H^i(G/H, \mathfrak{L}_{G/H}-)$.
\end{theorem}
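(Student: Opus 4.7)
\textit{Proof proposal.} The plan is to apply Grothendieck's spectral sequence for the composition of right-derived functors to the factorization $\Ind^G_H=H^0(G/H,-)\circ\mathfrak{L}_{G/H}$ recorded in the paragraph just preceding the statement. Because $\mathfrak{L}_{G/H}$ is an equivalence of abelian categories with quasi-inverse $\cE\mapsto\cE_{[H]}$, it is exact, hence $R^q\mathfrak{L}_{G/H}=0$ for $q>0$, and the spectral sequence
\[
E_2^{p,q}=R^pH^0(G/H,-)(R^q\mathfrak{L}_{G/H}(V))\Rightarrow R^{p+q}\Ind^G_H(V)
\]
will collapse to the row $q=0$ as soon as the hypothesis of the composition theorem is verified: $\mathfrak{L}_{G/H}$ must send injectives of $\Rep(H)$ to $H^0(G/H,-)$-acyclic objects in the equivariant category of quasi-coherent sheaves.

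The first step is to establish this acyclicity. Being an equivalence of abelian categories, $\mathfrak{L}_{G/H}$ automatically preserves injectivity, so an injective rational $H$-representation $I$ is sent to an injective object $\mathfrak{L}_{G/H}(I)$ in $Qcoh_G(G/H)$. I would then reduce to showing that a $G$-equivariant injective quasi-coherent sheaf on $G/H$ is acyclic for the ordinary global sections functor. The cleanest route is to compare equivariant and ordinary sheaf cohomology: the forgetful functor from $G$-equivariant quasi-coherent sheaves to ordinary quasi-coherent sheaves admits an exact left adjoint built from the free $G$-action (an averaging/induction construction), so it preserves injectives, and equivariant injectives are thus injective in $Qcoh(G/H)$. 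An equivalent route is to compute $H^{\bullet}(G/H,\mathfrak{L}_{G/H}(I))$ directly via a $G$-stable affine \v{C}ech cover of $G/H$; the resulting complex is obtained from $I$ by applying exact $H$-invariant functors, and its higher cohomology vanishes by injectivity of $I$ over $H$.

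Once the acyclicity is in place, the composition theorem produces a natural isomorphism
\[
R^p\Ind^G_H(V)=R^p(H^0(G/H,-)\circ\mathfrak{L}_{G/H})(V)\cong R^pH^0(G/H,-)(\mathfrak{L}_{G/H}(V))=H^p(G/H,\mathfrak{L}_{G/H}(V)),
\]
which is exactly the stated natural isomorphism of functors; naturality in $V$ is inherited from the naturality of the Grothendieck spectral sequence. The main obstacle is genuinely the verification that equivariantly injective sheaves are acyclic for the underlying non-equivariant global sections functor; everything else is formal, relying only on the exactness of $\mathfrak{L}_{G/H}$ already recorded in the excerpt.
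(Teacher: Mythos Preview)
Your approach is essentially the same as the paper's, which simply records the sentence ``As the functor $\mathfrak{L}_{G/H}$ is exact, we have the Borel-Weil Theorem'' and states the result without further argument; you have supplied the standard Grothendieck composition-of-functors justification that this sentence is gesturing at.

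One simplification: your main worry---whether equivariantly injective sheaves are acyclic for the \emph{non-equivariant} global sections functor---can be bypassed entirely. In the paper's setup $H^0(G/H,-)$ is already the functor $Coh_G(G/H)\to\Rep(G)$, so $H^i(G/H,-)$ is most naturally read as its right derived functor computed inside the equivariant category. Since $\mathfrak{L}_{G/H}$ is an equivalence of abelian categories, it sends injectives of $\Rep(H)$ to injectives of $Coh_G(G/H)$, which are automatically acyclic for $H^0(G/H,-)$ in that category; the spectral sequence collapses with no further input. The comparison between equivariant and ordinary sheaf cohomology is a separate (true) fact, but it is not needed for the statement as written.
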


Now we take $H=B$. Note that $\Rep(B)\cong \Rep(T)$, i.e., irreducible representations of $B$ are indexed by weights $\alpha\in \hat{T}$.
The following vanishing theorem is originally due to Kempf.
\begin{theorem}\label{thm:Kempfvan}
If $\alpha\in \hat{T}_+$, then $$R^i\Ind_B^G\alpha=H^i(G/B, \mathfrak{L}_{G/H}\alpha)=0$$ for $i>0$.
\end{theorem}

Now consider an $n$-dimensional vector space $V$. Let $G=\GL(V^*)$, for $\lambda\in \hat{T}_{+}$, the induced representation $\Ind_B^G(\lambda)$ is called the Schur module corresponds to the dominant weight $\lambda$, denoted by $\SCH_\lambda V$. Let $w_0$ be the longest element in the Weyl group, the representation $\Ind_B^G(-w_0(\lambda))$ is called the Weyl module, denoted by $\WEY_\lambda V$.

If $k=\CC$, these two modules coincide, and are both simple representations. For general $k$, $\SCH_\lambda V$ has a simple socle which coincides with the simple top of $\WEY_\lambda V$. Taking $\WEY$ as the standard objects and $\SCH$ as the costandard objects gives $\Rep(G)$ a quasi-hereditary structure.

The following theorem tells us the tensor product of any two standard objects has a filtration by standard objects, and the multiplicity of each factor can be calculated.
\begin{theorem}[See \cite{W03}, 2.3.2, 2.3.4]\label{thm: Litt_Rich}
Let $V$ and $W$ be two vector spaces over $k$, and $\alpha$, $\beta$ be any two dominant integral weights.
\begin{enumerate}
  \item There is a natural filtration on $\Sym_t(V\otimes W)$ whose associated graded object is a direct
sum with summands tensor products $\SCH_\delta V\otimes\SCH_\alpha W$ of Schur functors.
  \item There is a natural filtration on $\SCH_\alpha V \otimes \SCH_\beta V$ whose associated graded object is a direct sum of Schur functors $\SCH_\delta V$. The multiplicities can be computed using the usual Littlewood-Richardson rule.
\end{enumerate}
\end{theorem}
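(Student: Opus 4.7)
The plan is to follow the Akin--Buchsbaum--Weyman framework for Schur functors, which works in arbitrary characteristic and reduces both statements to combinatorial facts about tableaux together with the straightening law. Throughout, $\SCH_\lambda$ denotes the Schur (costandard) functor in the quasi-hereditary structure on $\Rep(\GL)$ recalled immediately before the theorem; in positive characteristic it is genuinely different from $\WEY_\lambda$, which is exactly why only a filtration is claimed rather than a splitting. The two parts will be handled by producing explicit, natural filtrations and then identifying their associated graded pieces via standard bitableaux.

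For part (1), I would realize $\Sym_t(V\otimes W)$ as the degree-$t$ component of the polynomial algebra $\Sym(V\otimes W)=k[x_{ij}]$ in variables indexed by pairs of basis vectors. For each partition $\lambda$ of $t$ I would define $F_{\geq \lambda}$ to be the subspace spanned by bitableaux of shape dominated (in a fixed total refinement of the dominance order) by $\lambda$. Naturality of this filtration under $\GL(V)\times\GL(W)$ is automatic because the total order only sees the shape. The associated graded at $\lambda$ is cut out by a map from a sum of bitableau spaces into $\SCH_\lambda V\otimes \SCH_\lambda W$; the characteristic-free straightening law (Chapter~2 of \cite{W03}) shows this map is an isomorphism onto the corresponding standard summand. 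This is the Cauchy filtration, and by $\GL(V)\times\GL(W)$-equivariance the only pairs $(\delta,\alpha)$ that appear have $\delta=\alpha$.

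For part (2), the plan is to embed $\SCH_\alpha V\otimes \SCH_\beta V$ as an equivariant subfunctor of a larger tensor power $V^{\otimes(|\alpha|+|\beta|)}$, transport the shape filtration on the latter (indexed by partitions $\delta$ of $|\alpha|+|\beta|$ in dominance order), and intersect back. The graded piece at $\delta$ is identified, after applying the straightening law once more, with copies of $\SCH_\delta V$ indexed by Littlewood--Richardson skew tableaux of shape $\delta/\alpha$ and content $\beta$; this identifies the multiplicity at $\delta$ with $C^\delta_{\alpha,\beta}$. Alternatively, one may induce the filtration from part~(1) applied to $\Sym_\bullet(V\otimes W)$ and then restrict the $\GL(W)$-action along a suitable weight, which recovers the same combinatorics.

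The main obstacle is the characteristic-free identification of the associated graded pieces. In characteristic zero, complete reducibility reduces everything to matching formal characters via the classical Cauchy identity and the Littlewood--Richardson rule for symmetric functions, so there is nothing to prove beyond the character computation. In positive characteristic, both the existence of the filtration and the identification of each subquotient with a single (co)standard module must be produced by hand, and this is where the straightening law of Akin--Buchsbaum--Weyman carries all of the technical weight; once it is available both parts follow by straightforward bookkeeping of shapes.
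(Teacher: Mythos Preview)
The paper does not actually prove this theorem; it is stated with a citation to \cite{W03}, Sections~2.3.2 and~2.3.4, and used as a black box. Your proposal correctly reconstructs the argument from that very source: the Akin--Buchsbaum--Weyman straightening law yields the Cauchy filtration for part~(1) and the Littlewood--Richardson filtration for part~(2), both characteristic-free, which is precisely what \cite{W03} does. So your approach is correct and coincides with the cited reference rather than with anything in the present paper.
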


We identify $GL_n$ with the space of all the invertible $n\times n$ matrices, and $T$ as the diagonal invertible matrices. Then $\hat{T}$ can be identified with the set of all $n$-tuples of integers $\mathbb{Z}^n$, and $\hat{T}_{+}$ consists of $n$-tuples of non-increasing integers. Furthermore, we  identify $n$-tuples of non-increasing non-negative integers with partitions of lengths no more than $n$, which in turn are identified with Young diagrams following the conventions in \cite{F97}.  For a Young diagram $\alpha$, we will also be considering its transpose $\alpha'$ or $\alpha^t$.

For a vector space $E$ of dimension $n$ and a partition $\lambda$ of length no more than $n$, upon choosing a basis for $E^*$, we can identity $GL(E^*)$ with $GL_n$ using this basis, then $\SCH_\lambda E$ and $\WEY_\lambda E$ can be define as above. This procedure behaves well with respect to change of basis, hence can be thought of as functors. For explicit descriptions of the Schur functor $\SCH_\lambda$ and Weyl functor $\WEY_\lambda$ associated to a partition $\lambda$, see e.g., \cite{F97} or \cite{W03}. In particular, they can be applied to vector bundles $h: \mathcal{E}\to X$.

Now we consider the case when $P$ is a maximal parabolic subgroup, i.e., when $G/P$ is  $\Grass=\Grass(n-r, n)$, the Grassmannian of $n-r$-planes in the $n$-dimensional vector space $E^*$ (or equivalently the Grassmannian of $r$-planes in $E$).
Let $0\to\cR\to \sO\otimes E^* \to\cQ\to 0$ be the tautological sequence on $\Grass$.
In this case, the Levi subgroup $L=\GL_{n-r}\times\GL_r$. For the defining representation $k^{n-r}$ of $\GL_{n-r}$, the sheaf $\mathfrak{L}_{\Grass}(k^{n-r})$ is $\cR^*$, and similarly for the defining representation $k^r$ of $\GL_r$, the sheaf $\mathfrak{L}_{\Grass}(k^r)$ is $\cQ^*$.

A weight $\alpha$ is called $(n-r)$-dominant if it is dominant as a weight of $GL_{n-r}\times GL_r$, i.e., $\alpha_1\geq\cdots\geq\alpha_{n-r}$ and
$\alpha_{n-r+1}\geq\cdots\geq\alpha_n$.
For an $(n-r)$-dominant $\alpha$ weight we can consider two weights $\beta =
(\alpha_1,\cdots,\alpha_{n-r})$ and $\gamma =
(\alpha_{n-r+1},\cdots,\alpha_n$). We define the vector bundle
$\mathcal{V}(\alpha) = \SCH_\beta\cR^{*}\otimes \SCH_\gamma\cQ^{*}$.

The following is an easy corollary of the Kempf Vanishing Theorem \ref{thm:Kempfvan}.
\begin{corollary}
Consider
the integral dominant weight $\alpha$ and the corresponding vector bundle $\mathcal{V}(\alpha)$ on $\Grass(n-r,E^*)$ defined as above. Then
$H^0(\Grass,\mathcal{V}(\alpha)) = \SCH_{\bar\alpha}E\otimes(\wedge^nE)^{\otimes\alpha_n}
$
and $H^i(\Grass,\mathcal{V}(\alpha))= 0$ for $i > 0$.
\end{corollary}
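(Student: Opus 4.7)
The plan is to reduce the corollary to the Kempf vanishing theorem already stated in the excerpt, by passing from $\Grass = G/P$ to the full flag variety $G/B$ along the smooth projection $\pi\colon G/B \to G/P$. Choose a Borel $B \subset P$ and write $L = GL_{n-r} \times GL_r$ for the Levi of $P$, with Borel $B_L = B \cap L$. The Leray spectral sequence for $\pi$, together with the Borel--Weil identification $H^{\bullet}(G/B, \mathfrak{L}_{G/B}\alpha) = R^{\bullet}\Ind_B^G \alpha$, yields
\begin{equation*}
H^i(\Grass, R^j\pi_* \mathfrak{L}_{G/B}(\alpha)) \Rightarrow H^{i+j}(G/B, \mathfrak{L}_{G/B}(\alpha)),
\end{equation*}
and the relative version of Borel--Weil on the fibre $P/B \cong L/B_L$ identifies $R^j\pi_*\mathfrak{L}_{G/B}(\alpha)$ with $\mathfrak{L}_{G/P}(R^j\Ind_B^P \alpha)$.

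Since $\alpha$ is $G$-dominant it is in particular $(n-r)$-dominant, so $\beta=(\alpha_1,\dots,\alpha_{n-r})$ and $\gamma=(\alpha_{n-r+1},\dots,\alpha_n)$ are each dominant on the respective $GL$-factor of $L$. Applying Kempf vanishing to the reductive group $L$ then kills $R^j\Ind_{B_L}^L\alpha$ for $j > 0$, and identifies $\Ind_{B_L}^L \alpha$ with the irreducible $L$-representation of highest weight $\alpha$, namely $\SCH_\beta(k^{n-r}) \otimes \SCH_\gamma(k^r)$ (inflated to $P$ through the surjection $P \twoheadrightarrow L$). Because $\mathfrak{L}_{G/P}$ is a tensor equivalence sending $k^{n-r}$ to $\cR^*$ and $k^r$ to $\cQ^*$, this shows $\pi_*\mathfrak{L}_{G/B}(\alpha) = \mathcal{V}(\alpha)$ while all higher direct images vanish. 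Consequently the spectral sequence collapses to $H^i(\Grass, \mathcal{V}(\alpha)) \cong H^i(G/B, \mathfrak{L}_{G/B}(\alpha))$ for every $i$.

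Finally, Kempf vanishing applied directly to $G = GL_n$ at the $G$-dominant weight $\alpha$ gives $H^i(G/B, \mathfrak{L}_{G/B}(\alpha)) = 0$ for $i > 0$ and $H^0(G/B, \mathfrak{L}_{G/B}(\alpha)) = \SCH_\alpha E$; normalising $\alpha$ to a partition via $\bar\alpha = \alpha - (\alpha_n,\dots,\alpha_n)$ rewrites this as $\SCH_{\bar\alpha}E \otimes (\wedge^n E)^{\otimes \alpha_n}$, yielding the stated formulas. The entire argument is essentially bookkeeping once Kempf is in hand; the only point needing a small verification is the identification $R^j\pi_*\mathfrak{L}_{G/B}(\alpha) \cong \mathfrak{L}_{G/P}(R^j\Ind_B^P \alpha)$, which bridges the two applications of Kempf vanishing but is standard since $\pi$ is the associated $L/B_L$-bundle $G\times_P (P/B) \to G/P$.
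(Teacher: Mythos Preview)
Your proof is correct and is precisely the standard argument; the paper itself gives no proof, only the remark that this is an easy corollary of Kempf vanishing. One small caveat: you describe $\Ind_{B_L}^L \alpha$ as the \emph{irreducible} $L$-representation of highest weight $\alpha$, which is only accurate over $\mathbb{C}$. Since the corollary is meant to hold in arbitrary characteristic, you should instead say it is the Schur module $\SCH_\beta(k^{n-r}) \otimes \SCH_\gamma(k^r)$ (this is how the paper \emph{defines} $\SCH$, as an induced module); the identification with $\mathcal{V}(\alpha)$ under $\mathfrak{L}_{G/P}$ then holds by the paper's own conventions, and the rest of your argument goes through unchanged.
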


\par
The symmetric group $\Sigma_n$ acts on the set of weights. Let
$\alpha=(\alpha_1,\cdots,\alpha_n)$. The permutation $\sigma_i = (i,
i + 1)$ acts on the set of weights by: $\sigma_{i\cdot} \alpha =
(\alpha_1,\cdots, \alpha_{i-1}, \alpha_{i+1}-1, \alpha_i + 1,
\alpha_{i+2},\cdots, \alpha_n)$.
\par
Let $\alpha\in \Z^n$
be a dominant integral weight. We denote $\bar\alpha = (\alpha_1 -\alpha_n, \dots , \alpha_{n-1}-\alpha_n, 0)$.
By definition the weight $\bar\alpha$ is a partition.

If $k=\CC$, we know more about cohomologies of bundles corresponding to non-dominant weights. Note that part of it is the Borel-Weil Theorem which holds in arbitrary characteristic.
\begin{theorem}[Bott]
\label{Bott} Let $k=\CC$. We consider a weight $\alpha$ satisfying
$\alpha_{i}\geq \alpha_{i+1}$ for $i \neq n-r$ and the corresponding
vector bundle $\mathcal{V}(\alpha)$ over $\Grass$
defined above. Then one of the two mutually exclusive possibilities
occurs:
\begin{enumerate}
  \item There exists an element $\sigma \in \Sigma_{n}, \sigma \neq 1$, such that $\sigma (\alpha)= \alpha$. Then the higher
direct images $H^i(\Grass,\mathcal{V}(\alpha))$ are zero for
$i \geq 0$.
  \item There exists a unique element $\sigma \in \Sigma_{n}$ such that $\sigma (\alpha):= (\beta)$ is a partition (i.e.
is non-increasing). In this case all higher direct images
$H^i(\Grass,\mathcal{V}(\alpha))$ are zero for $i \neq
l(\sigma)$, and
$$H^{l(\sigma)}(\Grass,\mathcal{V}(\alpha))=\SCH_\beta E.$$
\end{enumerate}
\end{theorem}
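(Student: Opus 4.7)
The plan is to reduce the cohomology calculation from the Grassmannian to the full flag variety $G/B$, where classical Borel-Weil-Bott applies. Let $\pi : G/B \to G/P = \Grass$ be the natural projection, whose fibers are isomorphic to the flag variety $L/B_L$ of the Levi subgroup $L = \GL_{n-r} \times \GL_r$. Write $\mathcal{L}(\alpha)$ for the equivariant line bundle on $G/B$ associated to the character $\alpha$.

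First, I would perform a fiberwise Kempf vanishing. Because $\alpha$ satisfies $\alpha_i \geq \alpha_{i+1}$ for every $i \neq n-r$, its restriction to $L$ is $L$-dominant, so Kempf vanishing applied fiberwise (combined with flat base change along $\pi$) yields $R^j \pi_* \mathcal{L}(\alpha) = 0$ for $j > 0$ and $\pi_* \mathcal{L}(\alpha) = \mathcal{V}(\alpha)$. The Leray spectral sequence collapses to a canonical isomorphism $H^i(\Grass, \mathcal{V}(\alpha)) \cong H^i(G/B, \mathcal{L}(\alpha))$ for all $i$, reducing the theorem to computing line-bundle cohomology on $G/B$.

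Next, I would apply classical Borel-Weil-Bott on $G/B$. The simple-reflection formula recalled in the excerpt is precisely the dot action $\sigma \cdot \alpha = \sigma(\alpha + \rho) - \rho$ with $\rho = (n-1, n-2, \ldots, 1, 0)$; consequently the existence of a non-trivial $\sigma \in \Sigma_n$ fixing $\alpha$ under this action is equivalent to $\alpha + \rho$ having a repeated coordinate, i.e., lying on a wall. In that case BWB gives vanishing of all line-bundle cohomology on $G/B$, hence of $H^i(\Grass, \mathcal{V}(\alpha))$ for all $i$. Otherwise there is a unique $\sigma \in \Sigma_n$ of length $\ell(\sigma)$ sorting $\alpha + \rho$ into strictly decreasing order; equivalently $\beta := \sigma \cdot \alpha$ is a partition, and BWB identifies the unique nonzero cohomology as $H^{\ell(\sigma)}(G/B, \mathcal{L}(\alpha)) = \SCH_\beta E$, which transfers back to $\Grass$ by the isomorphism above.

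The main obstacle is the proof of Borel-Weil-Bott itself, which is carried out by induction on $\ell(\sigma)$. For a simple reflection $s_i$ with $\langle \alpha + \rho, \alpha_i^\vee \rangle < 0$, one uses the $\mathbb{P}^1$-fibration $G/B \to G/P_i$ (with $P_i$ the minimal parabolic for $s_i$), computes $R\pi_{i,*} \mathcal{L}(\alpha)$ fiberwise via the Euler sequence, and deduces a natural degree-shift isomorphism $H^j(G/B, \mathcal{L}(\alpha)) \cong H^{j-1}(G/B, \mathcal{L}(s_i \cdot \alpha))$. Iterating this until the weight is dominant, where Kempf vanishing concentrates everything in $H^0$ and Borel-Weil gives the value $\SCH_\beta E$, completes the argument. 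The hypothesis $k = \CC$ enters precisely at the last step, where Weyl and Schur modules agree so that the final answer is the irreducible $\SCH_\beta E$; this machinery is developed in detail in \cite{W03}.
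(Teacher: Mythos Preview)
Your proof sketch is correct and follows the standard route to Bott's theorem, but note that the paper does not actually prove this statement: it is quoted as a classical result (attributed to Bott) and stated without proof, with the surrounding material on induction functors and Kempf vanishing being drawn from \cite{W03} and \cite{J}. There is therefore nothing to compare against in the paper itself; your outline via reduction to $G/B$ through the Leray spectral sequence (using $L$-dominance of $\alpha$ and fiberwise Kempf vanishing), followed by the Demazure-style induction on $\ell(\sigma)$ using $\mathbb{P}^1$-fibrations $G/B \to G/P_i$, is precisely the argument one finds in the cited references.
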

\par

\subsection{Exceptional collections on Grassmannians}
The main reference for this section is \cite{BLV3}.

Let $\catA$ be a $k$-linear abelian category.
\begin{definition}
An ordered set of objects $\Delta=\{\Delta_\alpha,\alpha\in I\}$ in
$D^b(\mathfrak{A})$ is called \textit{exceptional} if we have
$\Ext^\bullet(\Delta_\alpha,\Delta_\beta ) = 0$ for $\alpha < \beta$ and $\End(\Delta_\alpha)
= k$. An exceptional set is called \textit{strongly exceptional} if in addition
$\Ext^n(\Delta_\alpha,\Delta_\beta) = 0$ for $n\neq 0$. It is said to be \textit{full} if it is spanning for
$D^b(\mathfrak{A})$.
\end{definition}

In particular, in the definition above, we take $\mathfrak{A}$ to be the
category of coherent sheaves over some scheme. It is not hard to see
that if we have a finite full strongly exceptional set $\Delta = \{\Delta_\alpha,
\alpha\in I\}$ consisting of vector bundles, then $T=\oplus_\alpha\Delta_\alpha$ is
 a tilting bundle.
\par
\begin{definition}\label{def: dual excep}
For an exceptional collection $\Delta$, let $\nabla = \{\nabla_\alpha , \alpha\in I \}$ be another subset of objects in
$D^b(\mathfrak{A})$, in bijection with $\Delta$. We say that $\nabla$ is the \textit{dual collection}
to $\Delta$ if $\Ext^\bullet (\nabla_\beta,\Delta_\alpha )= 0$ for $\beta > \alpha$,
and there exists an isomorphism $\nabla_\beta\cong \Delta_\beta$ mod
$D_{<\beta}$, where $D_{<\beta}$ is the full triangulated subcategory
generated by $\{\Delta_\alpha\mid \alpha < \beta\}$.
\end{definition}
\par
There are some well-known facts (see e.g., \cite{Bez06}):
\begin{enumerate}
  \item $\Ext^\bullet(\nabla_\alpha ,\Delta_\alpha) =
k$, where $k$ lies in homological degree 0;
  \item $\Ext^\bullet(\nabla_\alpha ,\Delta_\beta ) = 0$ for $\alpha \neq \beta$;
  \item the dual collection is unique if it exists at all.
\end{enumerate}

Note that even if $\Delta$ is a strongly exceptional collection, its dual collection $\nabla$ might not be strong.
\par


Even if an exceptional collection is not strongly exceptional, under milder assumptions, we still have a tilting object. For an example of the general discussion in this section, we refer to Section~\ref{sec: grass}.

\label{subsec: quasi-hered}

Let $\catA$ be an abelian artinian $k$-linear category with a fixed complete set of pairwise distinct simple objects $\{S_\lambda\mid\lambda\in I\}$. Assume $\Lambda$ is finite for simplicity. Let $P_\lambda\to S_\lambda$ be the projective cover and $S_\lambda\to Q_\lambda$ be the injective envelope for each $\lambda\in I$. Endow $I$ with a partial order. We define the standard objects $\Delta_\lambda$ to be the largest quotient of $P_\lambda$ whose simple factors $S_\mu$ have $\mu<\lambda$. The costandard objects $\nabla_\lambda$ are defined to be the largest submodule of $Q_\lambda$ with all simple factors $S_\mu$ having $\mu\leq\lambda$.

Note that when $P_\lambda$ coincide with $\Delta_\lambda$, the set $\Delta=\{\Delta_\lambda\mid\lambda\in I\}$ is a full strongly exceptional collection in $D^b(\catA)$. In this case, $S_\lambda$ coincide with $\nabla_\lambda$.

\begin{definition}
The category $\catA$ with $\{\nabla_\lambda\mid\lambda\in I\}$ and $\{\Delta_\lambda\mid\lambda\in I\}$ is said to be \textit{quasi-hereditary} if all the indecomposable projective objects $P_\lambda$ have filtrations with all the associated subquotients coincide with one of the standard objects.
\end{definition}

The following proposition is proved in \cite{DR92}. The version stated here is weaker  than the original one proved in \cite{DR92}, but enough for our purpose.
Let $\catA$ be a $k$-linear abelian category, and $\Delta=\{\Delta_\lambda\mid \lambda\in I\}$ a collection of objects, the additive full subcategory of  $\catA$ consisting of objects admitting filtration with subquotients coincide with $\Delta_\lambda$'s will be denoted by $\mathfrak{F}(\Delta)$.
\begin{prop}\label{prop: standardizable}
Let $\catA$ be a $k$-linear abelian category, and $\Delta=\{\Delta_\lambda\mid \lambda\in I\}$ is a finite exceptional collection in $\catA$ with the properties that $\dim_k\Hom(\Delta_\alpha,\Delta_\beta)<\infty$ and $\dim_k\Ext^1(\Delta_\alpha,\Delta_\beta)<\infty$ for any $\alpha$, $\beta\in I$.
Then
\begin{enumerate}
\item there is a collection of objects $\Phi=\{\Phi_\lambda\mid \lambda\in I\}$ in $ \mathfrak{F}(\Delta) $, in bijection with $\{\Delta_\lambda\}$ such that the object $\Phi:=\oplus_{\lambda\in I}\Phi_\lambda$ is a projective generator of $ \mathfrak{F}(\Delta) $, and $\End(\Phi)$-$mod$  has a quasi-hereditary structure with the standard objects given by $\Hom(\Phi,\Delta_\lambda)$.

\item If $\{\Delta_\lambda\}$ is a full exceptional collection, and $\Ext^i(\Phi,\Delta_\lambda)=0$ for all $i>0$ and $\lambda\in I$, then $\Phi$ is a tilting object in $D^b(\catA)$. In particular, we have an equivalence of triangulated categories $D^b(\catA)\cong D^b(\End(\Phi)\hbox{-}mod)$.

\item If $\{\nabla_\alpha\}$ is the dual collection, then the costandard objects in $\End(\Phi)$-$mod$ are given by $R\Hom(\Phi, \nabla_\lambda)$.
\end{enumerate}
\end{prop}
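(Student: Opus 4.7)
The plan is to follow Dlab and Ringel's construction of standardizable sets. Order $I$ linearly so that the exceptionality condition reads $\Ext^\bullet(\Delta_\alpha, \Delta_\beta) = 0$ for $\alpha < \beta$. First I would define $\Phi_\lambda$ by descending induction on $\lambda$: set $\Phi_{\max} = \Delta_{\max}$, and at each smaller stage form a universal extension
\begin{equation*}
0 \to K_\lambda \to \Phi_\lambda \to \Delta_\lambda \to 0,
\end{equation*}
where $K_\lambda$ is a finite direct sum of copies of the previously constructed $\Phi_\mu$ for $\mu > \lambda$, chosen so that the associated connecting map kills every $\Ext^1(\Delta_\lambda, \Phi_\mu)$. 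The finiteness hypotheses on $\Hom$ and $\Ext^1$ of $\Delta$ ensure that only finitely many copies are needed at each stage. By induction $\Phi_\lambda \in \mathfrak{F}(\Delta)$ with an explicit $\Delta$-filtration, and one checks $\Ext^1(\Phi_\lambda, \Phi_\mu) = 0$ for all $\mu \geq \lambda$, the case $\mu = \lambda$ being forced by the exceptionality of $\Delta$.

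Setting $\Phi = \bigoplus_\lambda \Phi_\lambda$ and $B = \End(\Phi)$, the functor $\Hom(\Phi, -)$ embeds $\mathfrak{F}(\Delta)$ fully faithfully into $B\hbox{-}mod$ and identifies it with the full subcategory of modules admitting filtrations by $\Hom(\Phi, \Delta_\lambda)$. Projectivity of $\Phi$ in $\mathfrak{F}(\Delta)$ follows from the $\Ext^1$-vanishing just established via the long exact sequence applied to $\Delta$-filtrations, and generation of $\mathfrak{F}(\Delta)$ as an additive category by the $\{\Phi_\lambda\}$ is immediate from the construction. The explicit filtration of each $\Phi_\lambda$ by $\Delta_\mu$'s then exhibits the quasi-hereditary structure on $B\hbox{-}mod$, with standard objects precisely $\Hom(\Phi, \Delta_\lambda)$.

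For part (2), the hypothesis $\Ext^i(\Phi, \Delta_\lambda) = 0$ for $i > 0$ propagates via long exact sequences to $\Ext^i(\Phi, X) = 0$ for any $X \in \mathfrak{F}(\Delta)$, in particular $\Ext^i(\Phi, \Phi) = 0$ for $i > 0$. For spanning, each $\Delta_\lambda$ lies in the triangulated subcategory generated by $\Phi$ by descending induction using the above short exact sequences, and fullness of $\Delta$ then propagates the spanning property to all of $D^b(\mathfrak{A})$; thus $\Phi$ is tilting and the equivalence $D^b(\mathfrak{A}) \cong D^b(B\hbox{-}mod)$ follows from Theorem~\ref{finite dim}. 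For part (3), under this derived equivalence, the defining properties $\Ext^\bullet(\nabla_\beta, \Delta_\alpha) = 0$ for $\beta \neq \alpha$ together with $\Ext^\bullet(\nabla_\alpha, \Delta_\alpha) = k$ in degree zero translate directly into the universal characterization of the costandard object indexed by $\lambda$ in $B\hbox{-}mod$, identifying it with $R\Hom(\Phi, \nabla_\lambda)$.

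The main obstacle lies in part (1): making the universal extension construction terminate in finitely many steps, and verifying that the resulting $\Phi_\lambda$ genuinely admits a $\Delta$-filtration with the expected subquotients. This is exactly where the finite-dimensionality hypotheses on $\Hom(\Delta_\alpha, \Delta_\beta)$ and $\Ext^1(\Delta_\alpha, \Delta_\beta)$ are indispensable, together with finiteness of $I$ to close the descending induction. A secondary but more technical subtlety in part (2) is checking that the triangulated subcategory generated by $\Phi$ actually contains each $\Delta_\lambda$; again this reduces to the well-foundedness of the order on $I$.
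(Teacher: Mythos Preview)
Your proposal is correct and follows precisely the Dlab--Ringel construction that the paper itself invokes: the paper does not supply its own proof of this proposition but simply cites \cite{DR92}, and what you have outlined is exactly the standardizable-set argument from that reference (universal extensions built by descending induction, with the finiteness hypotheses guaranteeing termination). Your treatment of parts (2) and (3) via propagation along $\Delta$-filtrations and the derived equivalence is also the standard route and matches what the paper implicitly relies on.
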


Under the assumptions of Proposition~\ref{prop: standardizable} (2) and (3), let $\Sigma_\lambda$ be the object in $\End(\Phi)$-$mod$ that corresponds to the simple top of $R\Hom(\Phi,\Delta_\lambda)$ under the equivalence $D^b(\catA)\cong D^b(\End(\Phi)\hbox{-}mod)$. Note that we automatically get natural maps $\Phi_\lambda\to \Delta_\lambda$, and $\Sigma_\lambda\to \nabla_\lambda$.
In particular,  $\Ext^\bullet(\Phi_\alpha ,\Sigma_\alpha) =
k$, where $k$ lies in homological degree 0.

Kapranov \cite{K} constructed an exceptional collection over
$\Grass=\Grass_{n-r}(E^*)$, the Grassmannian of $(n-r)$-planes in the vector space $E^*$ over $\mathbb{C}$.
\par
Let $$0\to \cR\to
E^*\times\Grass\to \cQ\to 0$$ be the tautological exact sequence over $\Grass$.
Recall that we will write $B_{u,v}$ to mean the set of partitions with no more than $v$ columns and no more than $u$ rows. For a partition $\lambda$, recall that  $\SCH_\lambda$ is the Schur functor corresponding to it. For a partition $\alpha$ we write $\alpha'$ for its transpose, and for any weight $\alpha$, we call $\sum_i\alpha_i$ its area which is denoted by $|\alpha|$.

\begin{theorem}[Kapranov, see also \cite{BLV3}]
For a suitable choice of ordering, $$\{\SCH_\alpha\cR^{* }\mid\alpha\in B_{n-r,r}\}$$ is
an exceptional collection in $D^b(Coh(\Grass))$, whose dual exceptional collection is given by
$$\{\SCH_{\alpha'}\cQ[|\alpha|]\mid\alpha\in B(n-r,r)\}.$$

If $k=\CC$, or $r=1$ or $n-1$, this exceptional collection is strong, hence $$\TIL_K=\oplus_{\alpha \in B_{n-r,r}} \SCH_\alpha \cR^{*}$$ is a tilting
bundle.
\end{theorem}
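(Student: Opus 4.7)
My plan is to follow Kapranov's original strategy, whose two key inputs are (i) a Borel--Weil--Bott style cohomology computation on $\Grass$ and (ii) a Beilinson-type resolution of the diagonal that simultaneously proves generation and identifies the dual collection. I will work on $\Grass = \Grass_{n-r}(E^*)$ with $\cR$ of rank $n-r$, so for $\alpha \in B_{n-r,r}$ the partition $\alpha$ fits inside an $(n-r)\times r$ box, exactly the condition needed for $\SCH_\alpha \cR^*$ to be nonzero and for Bott's theorem to give regular shifts.

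First I would fix a total order on $B_{n-r,r}$ refining the containment order of Young diagrams (e.g.\ lexicographic after sorting by $|\alpha|$), so that $\alpha < \beta$ whenever $\alpha \subsetneq \beta$. Next, for the exceptionality I would compute
\[
\Ext^i(\SCH_\alpha\cR^*,\SCH_\beta\cR^*) \;=\; H^i\bigl(\Grass,\ \SCH_\alpha\cR\otimes\SCH_\beta\cR^*\bigr),
\]
expand $\SCH_\alpha\cR\otimes\SCH_\beta\cR^*$ as a $P$-module using the skew Littlewood--Richardson rule of Theorem~\ref{thm: Litt_Rich}, and then apply Bott's theorem (Theorem~\ref{Bott}) in characteristic zero, or Kempf vanishing for the characteristic-free statement. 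Since both $\alpha$ and $\beta$ have at most $r$ columns, each summand contributes a Levi-weight whose $\cR^*$-block has entries of absolute value $\leq r$ and whose $\cQ^*$-block is zero; the combinatorics forces the resulting weight to be Weyl-singular unless $\alpha \subseteq \beta$ in the diagram order, which yields the desired vanishing on the left of the diagonal. The equality $\End(\SCH_\alpha\cR^*)=k$ then reduces to the fact that the trivial Schur summand appears with multiplicity one in $\SCH_\alpha V\otimes \SCH_\alpha V^*$.

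For generation and the dual collection I would invoke the Beilinson-type resolution of the diagonal $\Delta \subset \Grass\times\Grass$. The Koszul resolution of the section $\sigma: \sO \to \cQ \boxtimes \cR^*$ whose zero locus is $\Delta$ produces a complex whose terms are $\wedge^i(\cQ\boxtimes \cR^*)$, and the Cauchy formula decomposes these into $\SCH_{\alpha'}\cQ\boxtimes \SCH_\alpha\cR^*$ summed over $\alpha\in B_{n-r,r}$. Convolving with the projections on $\Grass\times \Grass$ shows that $\{\SCH_\alpha\cR^*\}$ spans $D^b(\Grass)$, and simultaneously identifies $\SCH_{\alpha'}\cQ[|\alpha|]$ as the dual collection in the sense of Definition~\ref{def: dual excep}. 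To verify the defining property of the dual, I would recompute
\[
\Ext^\bullet\bigl(\SCH_{\alpha'}\cQ[|\alpha|],\ \SCH_\beta\cR^*\bigr)
\;=\; H^{\bullet - |\alpha|}\bigl(\Grass,\ \SCH_{\alpha'}\cQ^*\otimes\SCH_\beta\cR^*\bigr)
\]
via Bott: almost all weights on the Levi are singular, and the unique nonzero contribution arises when $\alpha=\beta$ and $\sigma$ is the Weyl-group element of length $|\alpha|$ that straightens $(\beta;-\alpha'_r,\ldots,-\alpha'_1)$, producing $k$ in homological degree $0$.

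Finally, for the strongness claim in characteristic zero, Bott's theorem applied in Step~2 yields cohomology concentrated in degree $0$ outright, so $\TIL_K$ is a tilting bundle; in the cases $r=1$ or $r=n-1$ this reduces to Beilinson's classical result on $\mathbb{P}^{n-1}$, where the Euler sequence gives direct acyclicity. The main obstacle I anticipate is the bookkeeping in Step~2: one must check that for every $\gamma$ appearing in the Littlewood--Richardson expansion, the resulting $P$-weight either is Weyl-singular (giving vanishing of all cohomologies by Bott case 1) or, in the surviving cases, aligns precisely with the ordering on $B_{n-r,r}$ so that no Ext appears "backwards". The constraint $\alpha,\beta\in B_{n-r,r}$ has been designed exactly so that this dichotomy is clean; verifying it amounts to a careful but standard Weyl-group calculation.
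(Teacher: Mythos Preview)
The paper does not supply its own proof of this theorem; it is quoted as a result of Kapranov (with the characteristic-free part attributed to \cite{BLV3}) and used as input. Your outline is essentially Kapranov's original argument---Borel--Weil--Bott for the Ext computations, and the Koszul resolution of the diagonal via the section of $\cQ\boxtimes\cR^*$ together with the Cauchy decomposition for generation and the dual collection---and it is correct over $\CC$.

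Two points deserve correction. First, your ordering is reversed relative to the paper's convention. You correctly observe that $\Ext^\bullet(\SCH_\alpha\cR^*,\SCH_\beta\cR^*)\neq 0$ forces $\alpha\subseteq\beta$ (indeed $\Hom(\sO,\SCH_\beta\cR^*)=\SCH_\beta E\neq 0$ for every $\beta$), but the paper's definition requires $\Ext^\bullet(\Delta_\alpha,\Delta_\beta)=0$ for $\alpha<\beta$, so the total order must refine \emph{reverse} containment: $\alpha\subsetneq\beta$ should imply $\alpha>\beta$, making $\emptyset$ the maximal element. Second, your invocation of Kempf vanishing for the characteristic-free statement is not enough. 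Kempf handles only dominant weights, whereas $\SCH_\alpha\cR\otimes\SCH_\beta\cR^*$ contains summands $\SCH_\gamma\cR^*$ with $\gamma_{n-r}<0$; in characteristic zero Bott forces these to be acyclic, but in positive characteristic no such theorem is available. The argument in \cite{BLV3} proceeds by a different route (through the bundle $\TIL_0=\bigoplus_\alpha\wedge^{\alpha'}\cQ$ and explicit filtrations), not by a cohomology vanishing of this type. Relatedly, Theorem~\ref{thm: Litt_Rich} as stated concerns $\SCH_\alpha V\otimes\SCH_\beta V$, not the mixed tensor $\SCH_\alpha V\otimes\SCH_\beta V^*$ you need; over $\CC$ one converts via $(\SCH_\alpha V)^*\cong\SCH_{-w_0\alpha}V$, but this step should be made explicit.
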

\par
Observe that $-\otimes\wedge^{top}\cQ$ and $-\otimes\wedge^{top}\cR$
define a $\Z^2$-action on the triangulated category
$D^b(Coh(\Grass_{n-r}(E^*)))$. This action sends one exceptional
collection to another, and preserves duality.
\par
Applying the $\mathbb{Z}^2$-action we can see that $\Delta(\Grass)=\{\SCH_\lambda\cQ^*\mid\lambda\in B_{r,n-r}\}$ over $\Grass$ is also a full exceptional collection. As can be checked by definition, the dual collection is given by $\nabla(\Grass)=\SCH_{(n-r)^r}\cQ^*\otimes \SCH_{(\alpha^c)'}\cR[(n-r)r-|\alpha|]$, where for a subpartition $\alpha$ of $r^{n-r}$ we write $\alpha^c$ for its complement in the rectangle $(r^{n-r})$.
\par
If $k$ has positive characteristic or $1<r<n-1$, Proposition~\ref{prop: standardizable}.(1) gives a collection $\Phi(\Grass)=\{ \Phi_\alpha\mid\alpha\in B_{r,n-r}\}$.  For $\alpha\in B_{r,n-r}$, denote the simple top of $\WEY_\alpha k^{n-r}$ in $\Rep(\GL_{n-r})$ by $L_\alpha$ and its projective cover by $M_\alpha$, similarly denote the simple top of $\WEY_{\lambda'}k^r$ by $L'_{\alpha'}$.
It is shown in \cite{BLV3} that $\Phi_\alpha$ given in Proposition~\ref{prop: standardizable}.(1) is equal to $\mathfrak{L}_{\Grass}(M_\alpha)$, and $\Sigma_\alpha=\mathfrak{L}_{\Grass}(L'_{\alpha'})[|\alpha|]$. Also the hypothesis in Proposition~\ref{prop: standardizable}.(2) is satisfied, hence, $\TIL=\oplus_{\alpha\in B_{r,n-r}}\Phi_\alpha$ is a tilting bundle.

Furthermore, there is a characteristic free collection of vector bundles on the Grassmannian, whose all direct summands are $\Phi_\alpha$.
For a partition $\alpha=(\alpha_1\cdots, \alpha_r)$ and any vector space $V$, $$\wedge^\alpha V=\wedge^{\alpha_1}\otimes\cdots\otimes\wedge^{\alpha_r}V.$$
\begin{theorem}[\cite{BLV3}]\label{thm-til-grass}
The vector bundle $\TIL_0=\oplus_{\alpha\in B_{r,n-r}}\wedge^{\alpha'}\cQ$ is a classical tilting bundle on $\Grass$.
In $\End_{\Grass}(\TIL_0)$-$mod$, the pair of collections $$\Delta_\alpha=R\Hom_{\Grass}(\TIL_0,\SCH_\alpha \cQ)$$ $$\nabla_\alpha=R\Hom_{\Grass}(\TIL_0,\SCH_{\alpha'}\cR^*[|\alpha|])$$ gives a quasi-hereditary structure. The simple objects are $R\Hom_{\Grass}(\TIL_0,\mathfrak{L}_{\Grass}(L'_{\alpha'})[|\alpha|])$ and their projective covers are given by $R\Hom_{\Grass}(\TIL_0,\mathfrak{L}_{\Grass}(M_{\alpha}))$.

In particular, $\End_{\Grass}(\TIL_0)$ is Morita equivalent to the basic algebra $\End_{\Grass}(\oplus_\lambda\mathfrak{L}_{\Grass}(M_\alpha))$.
\end{theorem}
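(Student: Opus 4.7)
The plan is to establish the four assertions of the theorem using the framework of Subsection~\ref{subsec: quasi-hered}, following the strategy of \cite{BLV3}. First, I would prove that $\TIL_0=\oplus_{\alpha\in B_{r,n-r}}\wedge^{\alpha'}\cQ$ is a tilting bundle on $\Grass$, which reduces to showing $\Ext^{>0}_\Grass(\wedge^{\alpha'}\cQ, \wedge^{\beta'}\cQ)=0$ for all $\alpha,\beta\in B_{r,n-r}$ together with the generation condition $\TIL_0^\perp=0$ in $D^b(\Grass)$. For Ext-vanishing, the strategy is to realize each $\wedge^{\alpha'}\cQ$ as a Bott--Samelson-type pushforward of an equivariant line (or very small) bundle from the partial flag variety refining $\Grass$ corresponding to the composition $\alpha'$, and then compute the relevant sheaf Exts by iterated application of Kempf vanishing on the fibers, which are themselves partial flag varieties. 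Generation then follows from the Cauchy filtration, which builds every Kapranov summand $\SCH_\lambda\cQ$ with $\lambda\in B_{r,n-r}$ as an iterated extension of summands of $\TIL_0$, and therefore recovers the full exceptional collection $\{\SCH_\alpha\cQ\}$ obtained from Kapranov's by a $\mathbb{Z}^2$-twist and dualization.

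Second, with $\TIL_0$ tilting, Theorem~\ref{finite dim} gives the equivalence $R\Hom_\Grass(\TIL_0,-):D^b(\Grass)\simeq D^b(\End_\Grass(\TIL_0)\hbox{-}mod)$, which transports the exceptional collection $\Delta(\Grass)=\{\SCH_\alpha\cQ\mid\alpha\in B_{r,n-r}\}$ and its dual $\nabla(\Grass)=\{\SCH_{\alpha'}\cR^*[|\alpha|]\}$ to the families $\{\Delta_\alpha\}$ and $\{\nabla_\alpha\}$ on the algebraic side. The Cauchy filtration shows that each $\wedge^{\alpha'}\cQ$ lies in $\mathfrak{F}(\Delta(\Grass))$, and the Ext-vanishing of step one supplies the hypothesis $\Ext^{>0}(\TIL_0,\SCH_\alpha\cQ)=0$ needed in Proposition~\ref{prop: standardizable}(2); invoking that proposition produces the quasi-hereditary structure with the claimed standards and costandards. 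Identifying the projectives $\Phi_\alpha$ with $\mathfrak{L}_\Grass(M_\alpha)$ and the simples $\Sigma_\alpha$ with $\mathfrak{L}_\Grass(L'_{\alpha'})[|\alpha|]$ is then a representation-theoretic matter: since $M_\alpha$ is the projective cover of $L_\alpha$ in the appropriate polynomial subcategory of $\GL_r$-modules, $\mathfrak{L}_\Grass(M_\alpha)$ is projective in $\mathfrak{F}(\Delta(\Grass))$ with top $\SCH_\alpha\cQ$, and uniqueness of projective covers together with Frobenius reciprocity for the Levi inclusion $\GL_{n-r}\times\GL_r \hookrightarrow P$ forces the identification.

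Finally, the Morita statement follows once one shows $\TIL_0$ and $\oplus_\alpha\mathfrak{L}_\Grass(M_\alpha)$ have the same additive closure inside $Coh_G(\Grass)$. This comes down to the purely representation-theoretic observation that the exterior powers $\wedge^{\alpha'}k^r$ are projective in the polynomial subcategory of $\GL_r$-representations in which the $M_\alpha$ live, each decomposing as $\wedge^{\alpha'}k^r\cong\bigoplus_\beta M_\beta^{\oplus m_\beta^\alpha}$ with $M_\alpha$ occurring exactly once, and conversely each $M_\alpha$ appearing as an indecomposable summand of some $\wedge^{\alpha'}k^r$; applying the exact equivalence $\mathfrak{L}_\Grass$ transfers these relations to the equivariant sheaves and delivers the Morita equivalence to the basic algebra $\End_\Grass(\oplus_\alpha\mathfrak{L}_\Grass(M_\alpha))$. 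The main obstacle in the whole proof is the Ext-vanishing of step one: in positive characteristic Bott's theorem fails, and one must control higher cohomology by a careful inductive application of Kempf vanishing along a chain of partial flag varieties, which is the technical heart of \cite{BLV3}.
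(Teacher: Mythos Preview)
The paper does not supply its own proof of this theorem: it is stated as a result of \cite{BLV3} and used as a black box, with the surrounding paragraphs only recording the identifications $\Phi_\alpha=\mathfrak{L}_{\Grass}(M_\alpha)$, $\Sigma_\alpha=\mathfrak{L}_{\Grass}(L'_{\alpha'})[|\alpha|]$ and pointing to Proposition~\ref{prop: standardizable} for the abstract quasi-hereditary mechanism. So there is no in-paper argument to compare your proposal against.

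That said, your outline is a faithful reconstruction of the strategy the paper attributes to \cite{BLV3}. The Ext-vanishing you isolate as the technical heart is exactly Proposition~\ref{prop: vanishing} (also quoted from \cite{BLV3}), and your plan to obtain generation from the Cauchy filtration, then feed everything through Proposition~\ref{prop: standardizable}(2), matches the logical flow the paper sketches in the paragraph preceding the theorem. Your final step, deducing Morita equivalence from the fact that $\{\wedge^{\alpha'}k^r\}$ and $\{M_\alpha\}$ have the same additive closure in the relevant polynomial category, is the standard argument and is consistent with how the paper uses $\TIL_0$ and $\oplus_\alpha\mathfrak{L}_\Grass(M_\alpha)$ interchangeably afterwards. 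One small caution: be careful with which $\GL$-factor is in play---the paper's conventions have $\mathfrak{L}_{\Grass}(k^r)=\cQ^*$, so the $M_\alpha$ live in a subcategory of $\Rep(\GL_r)$ (your proposal says $\GL_r$ at one point and this is correct, but the paper's own text just above the theorem writes $\WEY_\alpha k^{n-r}$, which appears to be a typo for $k^r$); make sure your argument tracks the correct rank throughout.
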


To summarize, take $\Delta(\Grass)\{\SCH_\lambda\cQ^*\mid\lambda\in B_{r,n-r}\}$ in Proposition~\ref{prop: standardizable}.(1), then $\TIL=\oplus_{\alpha\in B_{r,n-r}}\Phi_\alpha$ is a multiplicity-free tilting bundle, which is the same as $\TIL_0$ up to multiplicity, and specializes to $\TIL_K$ when $k=\CC$.
The vector bundle $\TIL_0$ in Theorem~\ref{thm-til-grass} will be referred to as the BLV's tilting bundle, and $\TIL_K$ the Kapranov's tilting bundle.
We will also write $\TIL_0(r,n)$ and $\TIL_K(r,n)$ when the base  needs to be emphasized.

The following proposition proved in \cite{BLV3} will be used later.
\begin{prop}\label{prop: vanishing}
Let $\alpha\in B_{r,n-r}$ and let $\delta$ be any partition. Then for all $i >0$ one has
$$H^i(\Grass,(\wedge^{\alpha'}\cQ)^*\otimes_{\sO_{\Grass}}\SCH_{\delta}\cQ)= 0.$$
\end{prop}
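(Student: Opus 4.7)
The plan is to reduce the computation to Bott's Theorem~\ref{Bott} (in characteristic $0$) or to Kempf's vanishing theorem (in arbitrary characteristic) by decomposing the tensor product into Schur-functor bundles whose $GL_n$-weights can be controlled. The key input will be the assumption $\alpha\in B_{r,n-r}$, which bounds the partitions appearing in every stage of the decomposition.

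First I would unpack $(\wedge^{\alpha'}\cQ)^* = \bigotimes_{i=1}^{n-r}\wedge^{\alpha'_i}\cQ^*$, with each factor equal to $\SCH_{(1^{\alpha'_i})}\cQ^*$. Repeated application of Pieri's rule (in characteristic $0$) or of the natural filtrations provided by Theorem~\ref{thm: Litt_Rich}(2) (in arbitrary characteristic) decomposes this tensor product into summands, or produces a filtration whose associated graded is a sum, of bundles of the form $\SCH_\mu\cQ^*$. Each occurring $\mu$ is obtained from the empty partition by successively adding vertical strips of sizes $\alpha'_1,\ldots,\alpha'_{n-r}$, so $\mu$ has at most $r$ rows and satisfies $\mu_1\leq n-r$; that is, $\mu\in B_{r,n-r}$. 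Since cohomology is additive on short exact sequences, it suffices to prove the vanishing for $\SCH_\mu\cQ^*\otimes \SCH_\delta\cQ$ for each such $\mu$.

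Next I would rewrite $\SCH_\mu\cQ^*\cong \SCH_{\mu^\vee}\cQ\otimes(\det\cQ)^{-\mu_1}$, where $\mu^\vee=(\mu_1-\mu_r,\ldots,\mu_1-\mu_1)$, and apply Theorem~\ref{thm: Litt_Rich}(2) a second time to filter $\SCH_{\mu^\vee}\cQ\otimes\SCH_\delta\cQ$ by bundles $\SCH_\nu\cQ$. Each subquotient of the resulting filtration is a bundle $\cV(\sigma)$ in the notation preceding Theorem~\ref{Bott}, with $GL_n$-weight $\sigma=(0,\ldots,0,\nu_1-\mu_1,\ldots,\nu_r-\mu_1)$. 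The bound $\mu_1\leq n-r$ inherited from $\mu\in B_{r,n-r}$ guarantees $\nu_i-\mu_1\geq -\mu_1\geq -(n-r)$, so the last $r$ entries of $\sigma$ never drop below $-(n-r)$; combined with the $n-r$ leading zeros this pins every such $\sigma$ into one of the two "good" situations for Bott's theorem.

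Finally, for each such $\sigma$, I would check that either $\sigma$ is already $GL_n$-dominant (so Kempf vanishing gives $H^{>0}=0$), or $\sigma$ is singular, i.e.\ fixed by some nontrivial permutation, so that part~(1) of Theorem~\ref{Bott} yields vanishing of all cohomology; in either case $H^i$ vanishes for $i>0$. In characteristic $0$, this is the entire argument; in positive characteristic one runs the long exact sequences coming from the two filtrations and uses vanishing of the successive graded pieces (via Kempf) to propagate the vanishing to the total bundle. The main obstacle is the combinatorial verification of Step 3 — namely, showing that no $\nu$ appearing in the Littlewood--Richardson expansion produces a weight $\sigma$ that is both strictly non-dominant and regular, so that it falls outside the Kempf/Bott killing regime. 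This is exactly where the hypothesis $\alpha\in B_{r,n-r}$ is used in an essential way.
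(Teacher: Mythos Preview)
The paper does not give a proof here; it simply quotes the result from \cite{BLV3}. Your characteristic-zero argument is correct and is the natural one: after your two filtrations every subquotient is $\cV(\sigma)$ with $\sigma=(0^{n-r},\mu_1-\nu_r,\dots,\mu_1-\nu_1)$, and the bound $\mu_1\le n-r$ (inherited from $\alpha\in B_{r,n-r}$) forces $\sigma+\rho$ either to be strictly decreasing (when $\nu_r\ge\mu_1$) or to have its $(n{-}r{+}1)$-st entry $\mu_1-\nu_r+r-1$ land in the range $\{r,\dots,n-1\}$ already occupied by the first block (when $\nu_r<\mu_1$); so $\sigma$ is dominant or singular, and Theorem~\ref{Bott} finishes.

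The gap is in your positive-characteristic claim. You assert that one ``uses vanishing of the successive graded pieces (via Kempf)'', but Kempf's theorem applies only to \emph{dominant} weights. The singular subquotients are not covered: part~(1) of Theorem~\ref{Bott} is a genuinely characteristic-zero statement, and in characteristic $p$ a singular weight can have nonzero higher cohomology. Already the simplest case $\alpha=(1)$, $\delta=\varnothing$ produces the singular bundle $\cV(0^{n-r},1,0^{r-1})=\cQ^*$; its higher cohomology does vanish in all characteristics, but that comes from the tautological sequence, not from Kempf applied to a singular weight. For more complicated $\alpha$ this is precisely what remains to be proved, and your filtration strategy as stated does not supply it. A characteristic-free argument requires a different mechanism, which is why the paper defers to \cite{BLV3} rather than giving a direct proof.
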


\section{Determinantal varieties in spaces of matrices}\label{sec:sym}
In this section we study a non-commutative desingularization of determinantal varieties in
the space of $n\times m$-matrices, determinantal varieties in
the space of symmetric matrices, and pfaffian varieties.

Let $E$ be a vector space over $k$ of dimension $n$, and let $\Grass$ be the Grassmannian of $n-r$ planes in $E$ where $0<r<n$. Let $$0\to \cR\to E\times\Grass\to \cQ\to 0$$ be the tautological
sequence over $\Grass$.
Let $G$  be a vector space of dimension $m$ with $m\geq n$, and $F=E^*$.

\subsection{Determinantal varieties}
Upon choosing bases for $G$ and $F$, the affine space $\Hom_k(G,F)\cong\mathbb A^{mn}$  is also identified with the set
of $(m\times n)$-matrices $(x_{ij})$. The coordinate ring of $\mathbb A^{mn}$ can
be identified with $S^m=\mathbb{C}[x_{ij}]$.
\par
By base extension from $\Spec k$ to $\mathbb A^{mn}$, we get two vector bundles
$\cF$ and $\cG$ over $\mathbb A^{mn}$, and a universal morphism
$\varphi:\cG\to\cF$.  We desingularize the locus
$\Spec R^m$ where $\varphi$ has rank $\leq r$. In other words, $R^m$ is
the quotient of $S^m$ by the ideal generated by the $(r+1)\times
(r+1)$ minors of $(x_{ij})$.

Let $\cY^m$ be the product $\Grass\times H^m$, $p$ and $q$ be
the projection to $\Grass$ and $\mathbb A^{mn}$ respectively. Inside of $\cY^m$,
there is an incidence variety, denoted by $\cZ^m$, defined by
$$\cZ^m=\{(g,h)\in\Grass\times \mathbb A^{mn}:\im g\circ h=0\}.$$ 
In the setup of \S~\ref{subsec: inverse image til}, take $R=R^m$, $\mathbb A^{N}=\mathbb A^{mn}$, $V=\Grass$, and $Z=\cZ^m$. We follow the same notation for the maps among these spaces, which are summarized in Diagram~\eqref{diag: general}.
\par
\begin{prop}[\S~6.1.1 in \cite{W03}]
The variety $\cZ^m$ is a desingularization of $\Spec R^m$.
\end{prop}
The variety $\cZ^m$ can be described as the total space of a vector
bundle over $\Grass$.

We apply the functor $G^*\otimes-$ to the
dualized tautological sequence $0\to \cQ^*\to F\times\Grass\to
\cR^*\to 0$ to get
$$0\to \calS\to \cE\to \calT\to 0.$$
Following \cite{W03}, let $\eta$ be the sheaf of sections of
$\calS^*=G\otimes \cQ$. The desingularization $\cZ$ is the total space
of $\calS$. Equivalently, $p':\cZ^m\to \Grass$ is an affine morphism with
$p_*\sO_{\cZ^m}$ equal to the sheaf of algebra $\Sym(\eta)$.

The following theorem follows from the discussions in \S~\ref{sec: general_discussion} and \S~\ref{sec: grass}.  This has also been proved in \cite{BLV2}. 
\begin{theorem}
\label{det tilt} The BLV's tilting bundle over
$\Grass_{n-r}(F^*)$ is denoted by
$$\TIL_0=\oplus_{\alpha\in B_{r,n-r}}\wedge^{\alpha'}\cQ^*.$$ The rank $r$ determinantal
variety $\Spec R^m$ and its desingularization $\cZ^m$ are as above. 
\begin{enumerate}
\item The
bundle $p'^*\TIL_0$ is a tilting bundle over $\cZ^m$.
\item The modules $\TIL_0(\cZ^m)$ and $\End_{\cZ^m}(p'^*\TIL_0)$ are MCM.
\item The algebra $\End_{\cZ^m}(p'^*\TIL_0)$ is an NCCR of $\Spec R^m$.
\item The functor $R\Hom_{\sO_{\cZ^m}}(p'^*\TIL_0,-)$ induces an equivalence $D^b_G(Coh(\cZ^m))\cong D^b_G(\End_{\sO_{\cZ^m}}(p'^*\TIL_0)\hbox{-}mod).$
\end{enumerate}
\end{theorem}

\subsection{Minimal presentation for the non-commutative desingularization}\label{subsec:min-present}
We study the minimal presentation of the noncommutative desingularization
$\End_{S^m}(q_*'p'^*\TIL_K)=\End_{\cZ^m}(p'^*\TIL_K)$ as module over $S=\CC[x_{i,j}]$. \textit{In this subsection we assume $k=\CC$.}
\par
One can get the quiver with relations for the non-commutative desingularization. The quiver with relations in this case is described in \cite{BLV2}. We study the endomorphism ring using the basic theorem of geometric technique.
\par

In the basic theorem of geometric technique Theorem~\ref{5.1.2 basic}, we take
the vector bundle $\cV$ to be $\SCH_\alpha\cV=\SCH_\alpha\cQ^{*}$. Let $\eta=\cQ\otimes
G$. The modules $H^0(\Grass, \Sym(\eta)\otimes \SCH_\alpha\cQ^{*})$ will
be denoted by $M_\alpha$. By the same argument as in
Proposition~\ref{det tilt}, we get the vanishing of higher derived
images for $M(\SCH_\alpha\cV)$, i.e.,
$$R^iq_*M(\SCH_\alpha\cV)=R^iq_*(\sO_\cZ\otimes p^*\SCH_\alpha \cV)=H^i(\Grass,\Sym(\eta)\otimes \SCH_\alpha \cV)=0$$ through the expansion of $\Sym(\eta)\otimes \SCH_\alpha \cV$ using the Cauchy and the Littlewod-Richardson formulas, and Bott's theorem.
\par
According to Theorem~\ref{5.1.2 basic}, there is a presentation of
$M_\alpha$ given by $F_{\alpha,1}\to F_{\alpha,0}$ with
$F_{\alpha,i}$ defined to be
\begin{eqnarray*}
F_{\alpha,i}&=&\bigoplus_{j\geq0}H^j(\Grass,
\wedge^{i+j}(\xi)\otimes
\SCH_\alpha \cQ^*)\otimes S\\
&=&\bigoplus_{j\geq0}H^j(\Grass, \oplus_{|\mu|=i+j}\SCH_{\mu'}G\otimes
\SCH_\mu \cR\otimes \SCH_\alpha \cQ^*)\otimes
S\\
&=&\bigoplus_{j\geq0}\oplus_{|\mu|=i+j}H^j(\Grass, \SCH_\mu \cR\otimes
\SCH_\alpha \cQ^*)\otimes \SCH_{\mu'}G\otimes
S.\\
\end{eqnarray*}
\par
To compute the $F_{\alpha,i}$'s, all we need to do is to compute
$H^j(\Grass, \SCH_\mu \cR\otimes \SCH_\alpha \cQ^*)$.
\par

\begin{theorem}Notation as above,
\begin{enumerate}
  \item $F_{\alpha,1}=\SCH_{(\alpha,1^{r+1-t},0^{m-r-1})}G\otimes \SCH_{(1^{r+1-t})}F^*\otimes
  S$;
  \item $F_{\alpha,0}=\SCH_\alpha G \otimes S$.
\end{enumerate}
\end{theorem}
\begin{proof}
Assume $l(\alpha)=t$, i.e., $\alpha_t\neq 0$ and $\alpha_{t+1}=0$,
$|\mu|=j+1$. To show (1), it suffices to prove that
$H^j(\SCH_{\mu'}E\otimes \SCH_{\mu}\cR\otimes \SCH_{\alpha}\cQ^*)$ is isomorphic to
\begin{equation*}
 \left\{
\begin{aligned}
         \SCH_{(\alpha,1^{r+1-t},0^{m-r-1})}G\otimes \SCH_{(1^{r+1-t})}F^* & \hbox{, if }j=|\alpha|+r-t-1 \hbox{ and } \mu=(\alpha'+(r-t,0^{n-r-1})); \\
                  0&\hbox{, otherwise}.
                          \end{aligned} \right.
                          \end{equation*}
\par
We have, using the language of \S~\ref{subsec: geo tech},
\begin{eqnarray*} H^j(\SCH_{\mu'}G\otimes \SCH_{\mu}\cR\otimes
\SCH_{\alpha}\cQ^*)&=&H^j(\SCH_{\mu}\cR\otimes
\SCH_{\alpha}\cQ^*)\otimes \SCH_{\mu'}G\\
 &=&H^j(\cV(0/\mu,\alpha))\otimes \SCH_{\mu'}G.
\end{eqnarray*}
Since $\alpha_1\leq n-r$, one easily sees that $l(\mu)=\alpha_1$. In
fact, if $l(\mu)<\alpha_1$, we would get $(0,\cdots,0,1,\cdots)$
performing the symmetric group action described in \S~\ref{subsec: geo
tech}; and we would get negative entries followed by 0s performing
the symmetric group action if $l(\mu)>\alpha_1$.
\par
Note also that we must have $\mu\supset \alpha$. Otherwise we would
get 0's followed by positive entries performing the symmetric group
action.
\par
Performing adjacent transposition actions $\alpha_1$ times, we can delete
the first row of $\alpha$, delete the first column of $\mu$, and
move everything remains in $\mu$ down by one row. Inductively, after
performing  transposition actions $|\alpha|$ times, we get
$(0,\cdots,0,\alpha'/\mu',0^{r-t})$.
\par
To get non-trivial $H^{|\mu|-1}$, we perform
$|\mu|-|\alpha|-1$ transposition actions to $(\alpha'/\mu',0^{r-t})$
to make it dominant. The only possibility is
$\alpha'/\mu'=(0,\cdots,0,-r+t+1)$, with the corresponding
$\mu=(\alpha'+(r-t,0^{n-r-1}))$ and
$H^{|\mu|-1}=\SCH_{(\alpha,1^{r+1-t},0^{m-r-1})}E\otimes
\SCH_{(1^{r+1-t})}F^*$.
\par
By the same argument, one proves (2).
\end{proof}

\subsection{Determinantal varieties in the space of symmetric matrices}\label{subsec:sym}
We consider the affine space $\mathbb A^{{n+1}\choose{2}}$, which is the
subspace of $\Hom_k(E,E^*)$ consisting of symmetric morphisms. This
space can be identified with $Sym_2(E^*)$. Upon choosing a set of
basis for $E$, $\mathbb A^{{n+1}\choose{2}}$ can be identified with the set of
symmetric $(n\times n)$-matrices $(x_{ij})$ with $x_{ij}=x_{ji}$ the
coordinate ring of which can be identified with
$S^s=k[x_{ij}]_{i\leq j}$.
We have a universal
morphism $\varphi:\cE\to\cE^*$ over $\mathbb A^{{n+1}\choose{2}}$. For $r\leq n$, let $\Spec R^s\subseteq \Spec S^s$ be the locus where $\varphi$ has rank $\leq
r$. In other words, $R^s$ is the quotient of $S^s$ by the ideal
generated by the $(r+1)\times (r+1)$ minors of $(x_{ij})$.

Let $\cY^s$ be the product $\Grass\times \mathbb A^{{n+1}\choose{2}}$, $p$ and
$q$ be the projection to $\Grass$ and $\mathbb A^{{n+1}\choose{2}}$ respectively. On
$\cY^s$, there is an incidence variety, denoted by $\cZ^s$, defined
as $\cZ^s=\{(g,h)\in\Grass\times \mathbb A^{{n+1}\choose{2}}:\im h\in g\}$. 
In the setup of \S~\ref{subsec: inverse image til}, take $R=R^s$, $\mathbb A^{N}=\mathbb A^{{n+1}\choose{2}}$, $V=\Grass$, and $Z=\cZ^s$. We follow the same notation for the maps among these spaces, which are summarized in Diagram~\eqref{diag: general}.

\begin{prop}[6.3.2 in \cite{W03}]
The variety $\cZ^s$ is a desingularization of $\Spec R^s$.
\end{prop}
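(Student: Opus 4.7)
The plan is to verify the three defining properties of a desingularization: smoothness of $\cZ^s$, properness of $q'$, and birationality of $q'$ onto its image together with the identification of that image with $\Spec R^s$. Since this is the standard Lascoux-type construction recorded as 6.3.2 in \cite{W03}, the argument should be a short geometric check rather than anything elaborate.

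First I would make explicit that $\cZ^s$ is the total space of the vector bundle $\Sym_2\cQ^*$ over $\Grass_{n-r}(E)$: a point of $\cZ^s$ is a pair $(g,\phi)$ with $g\in\Grass_{n-r}(E)$ and $\phi\in\Sym_2((E/g)^*)$. Pulling $\phi$ back along $E\twoheadrightarrow E/g$ and composing with $(E/g)^*\hookrightarrow E^*$ produces a symmetric map $h:E\to E^*$ whose rank is at most $r=\dim(E/g)$; this defines the map $q':\cZ^s\to H^s$ and immediately shows its set-theoretic image lies in $\Spec R^s$. Smoothness of $\cZ^s$ is then automatic, since a vector bundle over the smooth variety $\Grass_{n-r}(E)$ is smooth.

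Properness of $q'$ is inherited from the fact that $q:\cY^s=\Grass\times H^s\to H^s$ is proper (the Grassmannian is projective) and $\cZ^s\hookrightarrow\cY^s$ is a closed subvariety: $\cZ^s$ is cut out in $\cY^s$ by the vanishing of the composition $\cR\hookrightarrow E\otimes\sO\to E^*\otimes\sO$ given by the universal symmetric form, hence closed.

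For birationality, I would look at the open subset $U\subset\Spec R^s$ where the symmetric form has rank exactly $r$. For such an $h$, the subspace $\ker h\subset E$ has dimension exactly $n-r$, so the only point of $\cZ^s$ lying over $h$ is the pair $(\ker h,\bar h)$ where $\bar h$ is the induced nondegenerate symmetric form on $E/\ker h$. Thus $q'$ restricts to an isomorphism $q'^{-1}(U)\isom U$, and surjectivity onto $\Spec R^s$ follows by taking any $(n-r)$-plane $g\subset\ker h$ for an arbitrary $h\in\Spec R^s$. The main (and only) thing to be careful about is verifying the scheme-theoretic incidence description of $\cZ^s$ matches the total-space description of $\Sym_2\cQ^*$; once that dictionary is set up, the three points above are routine, and the result follows as in \cite{W03}.
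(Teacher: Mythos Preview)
Your proposal is correct and follows the standard argument. Note that the paper does not actually prove this statement: it is quoted directly from \cite{W03} (Proposition~6.3.2), and the paper only records afterward the identification of $\cZ^s$ with the total space of $\Sym_2\cQ^*$, which you also use. Your sketch---smoothness as a vector bundle over $\Grass$, properness via the closed embedding into $\Grass\times H^s$, and birationality by uniqueness of the $(n-r)$-plane $\ker h$ on the rank-$r$ locus---is precisely the standard verification one finds in \cite{W03}, so there is nothing to add.
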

The variety $\cZ^s$ can be described as the total space of the
vector bundle $\Sym_2(\cQ^*)$ over $\Grass$. Equivalently,
$p':\cZ^s\to \Grass$ is an affine morphism with $p_*\sO_{\cZ^s}$
equal to the sheaf of algebra $\Sym(\Sym_2\cQ)$.
\par

We consider the inverse image of
$\TIL_0=\oplus_{\alpha\in B_{r,n-r}}\wedge^{\alpha'}\cQ$, the BLV's
tilting bundle over $\Grass_{n-r}(E)$, by $p':\cZ^s\to\Grass$.
We  show the following.
\par
\begin{lemma}
For all $i>0$, and any $\alpha$, $\beta\in B_{n-r,r}$, we have $$H^i(\cZ^s, p'^*\sHom_{\sO_{\Grass}}(\wedge^{\alpha}\cQ,
\wedge^{\beta}\cQ))=0.$$
\end{lemma}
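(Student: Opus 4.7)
The plan is to reduce the cohomology computation on $\cZ^s$ to one on $\Grass$ and then invoke Proposition~\ref{prop: vanishing}. Since $\cZ^s$ is the total space of the vector bundle $\Sym_2\cQ^*$ over $\Grass$, the projection $p'$ is affine with $p'_*\sO_{\cZ^s}\cong \Sym(\Sym_2\cQ)$. First I would use the exactness of $p'_*$ together with the projection formula to identify
$$H^i(\cZ^s, p'^*\sHom_{\sO_\Grass}(\wedge^\alpha\cQ,\wedge^\beta\cQ)) \cong H^i(\Grass, (\wedge^\alpha\cQ)^*\otimes \wedge^\beta\cQ \otimes \Sym(\Sym_2\cQ)),$$
turning the problem into a cohomology vanishing on $\Grass$.

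Next I would reindex by setting $\mu=\alpha'\in B_{r,n-r}$, so that $\wedge^\alpha\cQ=\wedge^{\mu'}\cQ$, bringing $(\wedge^\alpha\cQ)^*$ into exactly the form $(\wedge^{\mu'}\cQ)^*$ handled by Proposition~\ref{prop: vanishing}. The goal is then to exhibit on $\wedge^\beta\cQ\otimes\Sym(\Sym_2\cQ)$ a filtration whose associated graded is a direct sum of Schur bundles $\SCH_\delta\cQ$ for various partitions $\delta$. In characteristic zero this is transparent: the Cauchy identity gives $\Sym(\Sym_2 V)=\bigoplus_{\lambda\text{ even}}\SCH_\lambda V$, and then the Littlewood--Richardson rule decomposes $\SCH_\lambda\cQ\otimes\wedge^\beta\cQ$ into Schur functors. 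In arbitrary characteristic, the same answer is obtained via Theorem~\ref{thm: Litt_Rich} in its filtered form, combined with the fact that $\Sym(\Sym_2\cQ)$ carries a good filtration with associated graded of the same shape.

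Finally, I would apply Proposition~\ref{prop: vanishing} to each summand $(\wedge^{\mu'}\cQ)^*\otimes \SCH_\delta\cQ$ of the associated graded and piece together the long exact sequences of the filtration to conclude the vanishing for the whole tensor product.

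The main obstacle is the second step in positive characteristic, where I must justify that $\wedge^\beta\cQ\otimes\Sym(\Sym_2\cQ)$ really does admit a filtration by Schur bundles. This is genuinely a representation-theoretic input, resting on the characteristic-free Cauchy-type filtration of $\Sym(\Sym_2\cQ)$ and on the closure of the class of $\GL$-modules with good filtration under tensor product. Once this is granted, the rest of the argument is a routine assembly of the affineness reduction, the reindexing, and the quoted vanishing result.
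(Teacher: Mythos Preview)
Your proposal is correct and follows essentially the same route as the paper: reduce to $\Grass$ via affineness of $p'$ and the projection formula, filter $\wedge^\beta\cQ\otimes\Sym(\Sym_2\cQ)$ by Schur bundles (the paper invokes Theorem~\ref{thm: Litt_Rich} for this), and then apply Proposition~\ref{prop: vanishing} termwise. Your treatment is in fact slightly more careful than the paper's in flagging that the good filtration on $\Sym(\Sym_2\cQ)$ in positive characteristic requires an additional representation-theoretic input beyond the literal statement of Theorem~\ref{thm: Litt_Rich}.
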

\begin{proof}
We have $p'^*\sHom_{\sO_{\Grass}}(\wedge^{\alpha}\cQ,
\wedge^{\beta}\cQ)\cong\sHom_{\sO_{\Grass}}(\wedge^{\alpha}\cQ
,\wedge^{\beta}\cQ\otimes_{\sO_{\Grass}} \Sym (\Sym_2\cQ))$. Use Theorem~\ref{thm: Litt_Rich}, we
know that the sheaf $\wedge^{\beta}\cQ\otimes_{\sO_{\Grass}} \Sym (\Sym_2\cQ)$ has a filtration with subquotients
of the form $\SCH_{\gamma}\cQ$. By Proposition~\ref{prop: vanishing}, all the
higher cohomology of $$\sHom_{\sO_{\Grass}}(\SCH_{\alpha}\cQ^*,
\SCH_{\beta}\cQ^*\otimes_{\sO_{\Grass}}\Sym(\Sym_2\cQ))$$ vanishes.
\end{proof}
\par
From the lemma above, using Proposition~\ref{til-resol criterion}, we
get the following.
\begin{theorem}
\label{sym tilt} Let the BLV's tilting bundle over
$\Grass_{n-r}(E)$ be denoted by $\TIL_0$. Let the rank-$r$ determinantal
variety $\Spec R^s$ of symmetric matrices and its desingularization
$\cZ^s$ be as above.
The bundle $p'^*\TIL_0$ is a tilting bundle
over $\cZ^s$.
In particular, $\End_{\sO_{{\cZ^s}}}(p'^*\TIL_0)$ is a non-commutative weak desingularization of $\Spec R^s$;
the functor $R\Hom_{\sO_{\cZ^s}}(p'^*\TIL_0,-)$ induces an equivalence $D^b_G(Coh({\cZ^s}))\cong D^b_G(\End_{\sO_{{\cZ^s}}}(p'^*\TIL_0)\hbox{-}mod)$.
\end{theorem}

The calculation of the quiver and relations for $\End_{\sO_{{\cZ^s}}}(p'^*\TIL_0)$ is in \S~\ref{sec:calculations}. Now we study when $\End_{\sO_{{\cZ^s}}}(p'^*\TIL_0)$ is a non-commutative desingularization, and when it is an NCCR.

\begin{prop}\label{lem:sym_mcm}\label{lem:sym_max_ref}\label{prop: sym_mcm}
Notation as above, we have the following.
\begin{enumerate}
  \item If $r=n-1$, $\End_{\cZ^s}(p'^*\TIL_0)$ is maximal Cohen-Macaulay over $\Spec R^s$. In particular, it is an NCCR.
  \item If $r<n-1$, $\End_{\cZ^s}(p'^*\TIL_0)$ is never maximal Cohen-Macaulay over $\Spec R^s$.
\item The $R^s$-module $q_*'p'^*\TIL_0$ is maximal Cohen-Macaulay.
\end{enumerate}
\end{prop}
\begin{proof}
According to Lemma~\ref{lem:mcm}, it suffices to compute \[H^i(\Grass,\wedge^\alpha \cQ^*\otimes \wedge^\beta \cQ\otimes\omega_{\cZ^s}\otimes\Sym(\Sym_2\cQ))\] for $i>1$ and $\alpha$, $\beta\in B_{r,n-r}$. The sheaf $\omega_{\cZ^s}$ has been computed in  \cite[6.7]{W03}, which yields \[\omega_{\cZ^s}\cong (\wedge^nE^*)^{\otimes -n-1+r}\otimes (\wedge^rQ^*)^{\otimes n-r-1},\] hence, $$\wedge^\alpha \cQ^*\otimes \wedge^\beta \cQ\otimes\omega_{\cZ^s}\otimes\Sym(\Sym_2\cQ)\cong \wedge^\alpha \cQ^*\otimes \wedge^\beta \cQ\otimes (\wedge^rQ^*)^{\otimes n-r-1}\otimes\Sym(\Sym_2\cQ)(\wedge^nE^*)^{\otimes -n-1+r}.$$ Using Theorem~\ref{thm: Litt_Rich}, as well as Proposition~\ref{prop: vanishing}, we see that when $r=n-1$ we have $$H^i(\Grass,\wedge^\alpha \cQ^*\otimes \wedge^\beta \cQ\otimes\omega_{\cZ^s}\otimes\Sym(\Sym_2\cQ))=0$$ for all $i>0$.

Now let $r<n-1$. We need to show the non-vanishing of \[H^i(\Grass,\wedge^\alpha \cQ^*\otimes \wedge^\beta \cQ\otimes\omega_{\cZ^s}\otimes\Sym(\Sym_2\cQ))\] for some positive $i$. By upper-semi-continuity, it suffices to show this non-vanishing in characteristic zero case. For $k=\CC$, taking $\beta=0$ and $\alpha$ with $\alpha'_1=2$, the Bott Theorem tells us  \[\wedge^\alpha \cQ^*\otimes(\wedge^rQ^*)^{\otimes n-r-1}\otimes\Sym(\Sym_2\cQ)\] does have non-vanishing higher cohomology. Therefore, in this case $\End_{\cZ^s}(p'^*\TIL_0)$ is never maximal Cohen-Macaulay.

The third statement follows from the same calculation.
\end{proof}

When $r<n-1$, we prove in \S~\ref{sec:depth} that $\End_{\cZ^s}(p'^*\TIL_0)$ is not reflexive, therefore, the map $\End_{\cZ^s}(p'^*\TIL_0)\to \End_{R^s}(q'_*p'^*\TIL_0)$ is not an isomorphism. It is injective nevertheless.

\subsection{Pfaffian varieties in the space of skew-symmetric matrices}
\label{subsec:skew}
We consider the affine space $\mathbb A^{{n}\choose{2}}$, which is the
subspace of $\Hom_k(E^*,E)$ consisting of skew-symmetric morphisms.
This space can be identified with $\wedge^2(E)$. Upon choosing a set
of basis,  $\mathbb A^{{n}\choose{2}}$ can be identified with the set of
skew-symmetric $(n\times n)$-matrices $(x_{ij})$ with $x_{ij}=-x_{ji}$,
whose coordinate ring can be identified with
$S^a=k[x_{ij}]_{i<j}$.
\par
We get a universal morphism $\varphi:\cE^*\to\cE$ over $\mathbb A^{{n}\choose{2}}$. For even $r = 2u$ satisfying $0 \leq r \leq n$, we desingularize the locus $\Spec R^a$, where $\varphi$ has
rank $\leq r$. In other words, $R^a$ is the quotient of $S^a$ by the
ideal generated by the $(r+1)\times (r+1)$ minors of $(x_{ij})$.
\par
The incidence variety $\cZ^a$ inside $\cY^a:=\Grass\times \mathbb A^{{n}\choose{2}}$
desingularizing $\Spec R^a$, is defined by
$\cZ^a=\{(g,h)\in\Grass\times \mathbb A^{{n}\choose{2}}:\im h\in g\}$. 
In the setup of \S~\ref{subsec: inverse image til}, take $R=R^a$, $\mathbb A^{N}=\mathbb A^{{n}\choose{2}}$, $V=\Grass$, and $Z=\cZ^a$. We follow the same notations for the maps among these spaces, which are summarized in Diagram~\eqref{diag: general}. 
\par
\begin{prop}[6.4.2 in \cite{W03}]
The variety $\cZ^a$ is a desingularization of $\Spec R^a$.
\end{prop}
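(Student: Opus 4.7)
The plan is to mimic the argument used for the symmetric case and deduce the statement by combining smoothness of $\cZ^a$, properness of $q'$, and an explicit open locus on which $q'$ is an isomorphism. I will assume throughout that $r=2u$ is even and that $\Spec R^a$ is the locus of skew-symmetric forms on $E^*$ of rank $\le r$.

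First I would unravel the definition of $\cZ^a$. A point $(g,h)\in \cY^a$ with $h\in \wedge^2(E^*)$ lies in $\cZ^a$ precisely when $g\subseteq \ker(h\colon E\to E^*)$; equivalently, $h$ factors through a skew form on the quotient $\cQ_g = E/g$. Hence $\cZ^a$ is cut out of $\cY^a$ by the vanishing of the composite $\wedge^2\cR \hookrightarrow \wedge^2 E \otimes \sO_{\Grass}\to H^a\otimes \sO_{\Grass}$, and coincides with the total space of the vector subbundle $\wedge^2\cQ^* \hookrightarrow H^a\times \Grass$ over $\Grass$. In particular $\cZ^a$ is smooth, being a vector bundle over the smooth variety $\Grass$.

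Next I would check the properties of $q'\colon \cZ^a\to H^a$. Since $q'$ is the composition of the closed immersion $j\colon \cZ^a\hookrightarrow \Grass\times H^a$ with the proper projection $\Grass\times H^a\to H^a$, it is proper. For $(g,h)\in \cZ^a$, the map $h$ kills an $(n-r)$-dimensional subspace, so its rank is at most $r$, whence $q'$ factors through $i\colon \Spec R^a\hookrightarrow H^a$, giving $q'\colon \cZ^a\to \Spec R^a$. A dimension count, $\dim \cZ^a = r(n-r)+\binom{r}{2} = \binom{n}{2}-\binom{n-r}{2} = \dim \Spec R^a$, shows the two varieties have the same dimension.

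For birationality, let $U\subseteq \Spec R^a$ be the open locus of skew forms of rank exactly $r$. For $h\in U$, $\ker h$ has dimension exactly $n-r$, so there is a unique $(n-r)$-plane $g\subseteq \ker h$, namely $g=\ker h$ itself; this defines a morphism $U\to \cZ^a$ inverse to $q'|_{q'^{-1}(U)}$, using that $\ker h$ varies algebraically in $h$ on the constant-rank stratum. Thus $q'$ restricts to an isomorphism over $U$. Since $U$ is dense in $\Spec R^a$ and $q'$ is proper with image of the correct dimension, $q'$ is surjective and birational, and $\cZ^a$ is the desired desingularization. The only delicate point, which I would handle by a standard local computation with the universal kernel bundle, is verifying scheme-theoretic (and not merely set-theoretic) surjectivity onto $\Spec R^a$; the dimension count together with the isomorphism over $U$ and properness makes this a routine application of Zariski's main theorem once one knows $\Spec R^a$ is reduced and irreducible, a fact recorded in \cite{W03}.
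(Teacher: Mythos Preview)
The paper does not give its own proof of this proposition; it simply records the statement with the attribution ``6.4.2 in \cite{W03}'' and then remarks that $\cZ^a$ is the total space of $\wedge^2\cQ^*$. Your argument is the standard one and is correct: smoothness of $\cZ^a$ as a vector bundle over $\Grass$, properness of $q'$ as a closed immersion followed by a projective map, and birationality via the explicit inverse $h\mapsto(\ker h,h)$ over the open stratum of rank-exactly-$r$ forms. This is essentially how the result is proved in \cite{W03} as well.

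One small comment: your closing worry about ``scheme-theoretic versus set-theoretic surjectivity'' is unnecessary. Once you have an isomorphism over the dense open $U$ and know that $\Spec R^a$ is reduced and irreducible (which you correctly cite from \cite{W03}), properness of $q'$ gives that the image is closed and contains $U$, hence equals $\Spec R^a$; reducedness of the target makes the scheme-theoretic image coincide with the set-theoretic one. Zariski's Main Theorem is not needed here.
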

The variety $\cZ^a$ can be described as the total space of the
vector bundle  $\wedge^2(\cQ^*)$ over $\Grass$. Equivalently,
$p':\cZ^a\to \Grass$ is an affine morphism with $p_*\sO_{\cZ^a}$
equal to the sheaf of algebra $\Sym(\wedge^2\cQ)$.
\par
As before, we pull back the BLV's tilting bundle $\TIL_0=\oplus_{\lambda\in B_{r,n-r}}\wedge^{\lambda'} Q$ over
$\Grass_{n-r}(E)$ by the projection $p':\cZ^a\to\Grass$. Similar to the symmetric case, we can show that for all $i>0$, and all $\alpha$, $\beta\in B_{r,n-r}$, $$H^i(\cZ^a, p'^*\sHom_{\sO_{\Grass}}(\wedge^{\alpha'}\cQ,
\wedge^{\beta'}\cQ))=0.$$
\par
From the claim above, using Proposition~\ref{til-resol criterion}, we
get the following.
\begin{theorem}
\label{anti tilt} The BLV's tilting bundle over $\Grass_{n-r}(E)$
is denoted by $\TIL_0$. The rank $r$ determinantal variety $\Spec
R^a$ of anti-symmetric matrices and its desingularization $\cZ^a$ are as
above. The bundle $p'^*\TIL_0$ is a tilting bundle over $\cZ^a$. In particular, $\End_{\sO_{{\cZ^a}}}(p'^*\TIL_0)$ is a non-commutative weak desingularization of $\Spec R^s$;
the functor $R\Hom_{\sO_{\cZ^a}}(p'^*\TIL_0,-)$ induces an equivalence $D^b_G(Coh({\cZ^a}))\cong D^b_G(\End_{\sO_{{\cZ^a}}}(p'^*\TIL_0)\hbox{-}mod)$.
\end{theorem}

The following proposition is proved along the same line.
\begin{prop}\label{prop: MCM_skew}
Over $\Spec R^a$, the coherent sheaf $q'_*p'^*\TIL_0$ is maximal Cohen-Macaulay. But the coherent sheaf $\End_{\cZ^a}(p'^*\TIL_0)$ is never maximal
Cohen-Macaulay.
\end{prop}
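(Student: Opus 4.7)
The plan is to imitate the proofs of Lemma~\ref{lem:sym_mcm} and Proposition~\ref{lem:sym_max_ref} from the symmetric case. The first step is to identify the dualizing sheaf. Since $\cZ^a$ is the total space of the rank-$\binom{r}{2}$ vector bundle $\wedge^2\cQ^*$ over $\Grass$, I would compute
\[
\omega_{\cZ^a}\;\cong\;p'^*\bigl(\omega_\Grass\otimes\det(\wedge^2\cQ)\bigr)\;\cong\;p'^*\!\left((\wedge^n E^*)^{-r}\otimes(\wedge^r\cQ^*)^{n-r+1}\right),
\]
using $\omega_\Grass\cong(\wedge^n E^*)^{-r}\otimes(\wedge^r\cQ^*)^n$ and $\det(\wedge^2\cQ)\cong(\wedge^r\cQ)^{r-1}$. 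The key structural feature of the skew-symmetric case, and the ultimate reason behind the dichotomy in the statement, is that the exponent $n-r+1$ is \emph{strictly positive} for every nontrivial choice of $r$, whereas the analogous exponent $n-r-1$ in the symmetric case vanishes at $r=n-1$.

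For the MCM property of $q'_*p'^*\TIL_0$, I would apply Grothendieck duality (as in the proof of Lemma~\ref{lem:sym_mcm}(1)) to reduce the question, for each summand, to the cohomological vanishing
\[
H^i\!\left(\Grass,\,\Sym(\wedge^2\cQ)\otimes(\wedge^{\alpha'}\cQ)^*\otimes(\wedge^r\cQ^*)^{n-r+1}\right)=0 \qquad (i>0),
\]
modulo a trivial twist by $\wedge^nE^*$. Writing $(\wedge^{\alpha'}\cQ)^*\cong(\det\cQ^*)^{\ell(\alpha')}\otimes\wedge^\gamma\cQ$ for a suitable partition $\gamma$ and decomposing $\Sym(\wedge^2\cQ)\otimes\wedge^\gamma\cQ$ into Schur bundles $\SCH_\nu\cQ$ via Littlewood--Richardson, the question collapses to the vanishing of $H^i(\Grass,\SCH_\nu\cQ\otimes(\det\cQ^*)^N)$ with $N=n-r+1+\ell(\alpha')$. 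The positive twist is large enough that, via the Borel--Weil--Bott theorem, each relevant weight $(0^{n-r},\nu-N^r)$ either lies in the $G$-dominant chamber (contributing only in degree $0$) or produces a coincidence with $\rho$ in some coordinate (giving a singular weight and vanishing cohomology), so no positive-degree cohomology survives.

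For the non-MCMness of $\End_{\cZ^a}(p'^*\TIL_0)$, Lemma~\ref{lem:mcm} reduces the problem to exhibiting non-vanishing of
\[
H^i\!\left(\Grass,\,\TIL_0^*\otimes\TIL_0\otimes\Sym(\wedge^2\cQ)\otimes\omega_{\cZ^a}\right)
\]
for some $i>0$. I would take the summand of $\TIL_0^*\otimes\TIL_0$ indexed by the full rectangle and the empty partition, $(\alpha,\beta)=((n-r)^r,\emptyset)$; it contributes $(\det\cQ^*)^{2n-2r+1}\otimes\Sym(\wedge^2\cQ)$ up to trivial twist. Already the $\Sym^0$ piece $(\det\cQ^*)^{2n-2r+1}$ produces nonzero top-degree cohomology: by Serre duality on $\Grass$ (of dimension $(n-r)r$), one has $H^{(n-r)r}(\Grass,(\det\cQ^*)^{2n-2r+1})\cong H^0(\Grass,(\det\cQ^*)^{n-2r+1})^*$, which is nonzero whenever $n\geq 2r-1$. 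In the remaining boundary cases, a similar analysis using an appropriate $\lambda=(c,c,0,\dots,0)$ with $c$ large enough to make $\lambda+\rho$ regular, yielding a Bott contribution in degree equal to the length of the Weyl representative, produces the needed non-vanishing. Upper-semicontinuity then propagates the conclusion to arbitrary characteristic.

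The main obstacle is the combinatorial bookkeeping in part~(1): the first row $\nu_1$ of Schur components appearing in $\Sym(\wedge^2\cQ)\otimes\wedge^\gamma\cQ$ can grow arbitrarily large with the Sym-degree, so one must ensure uniformly that the compensating twist $(\det\cQ^*)^N$ always lands Bott's algorithm either in the dominant chamber or on a wall of the Weyl fan. By contrast, part~(2) only requires one explicit non-vanishing witness and is comparatively straightforward.
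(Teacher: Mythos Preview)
Your overall plan coincides with the paper's: compute $\omega_{\cZ^a}$ (you and the paper agree on the crucial exponent $n-r+1$ on $\det\cQ^*$), reduce both claims via Grothendieck duality and Lemma~\ref{lem:mcm} to (non-)vanishing of sheaf cohomology on $\Grass$, and then treat the two claims separately.

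The substantive difference is in the MCM part. You propose to run Borel--Weil--Bott directly, asserting that every relevant weight is either dominant or singular; you correctly flag the bookkeeping as the main obstacle, and indeed this dominant-or-singular dichotomy is not obvious and would itself need a careful argument. The paper instead short-circuits this by invoking Proposition~\ref{prop: vanishing}: after setting the appropriate $\TIL_0$-factor to the trivial summand and filtering $\Sym(\wedge^2\cQ)$ (tensored with the remaining $\wedge$-factor) by Schur bundles $\SCH_\delta\cQ$ via Theorem~\ref{thm: Litt_Rich}, the required vanishing falls out of that proposition with no case analysis. This buys two things over your route: it dissolves the Bott combinatorics entirely, and it is characteristic-free, whereas a Bott-based argument only yields the vanishing over $\CC$ --- and upper-semicontinuity runs the wrong direction to transfer \emph{vanishing} from characteristic zero to positive characteristic, so your MCM argument as written would not cover the general base field.

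For the non-MCM part your approach and the paper's are essentially the same: reduce to $k=\CC$ by upper-semicontinuity and then exhibit a single summand with nonzero higher cohomology via Bott. The paper takes $\beta=0$ and some nonzero $\alpha$ and leaves the Bott verification implicit; your witness via the $\Sym^0$ piece with $\alpha=(n-r)^r$ is more explicit but, as you yourself observe, only settles the range $n\geq 2r-1$ directly, so you would still owe a concrete witness in the complementary range rather than the sketch you give.
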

\begin{proof}
According to Lemma~\ref{lem:mcm}, it suffices to compute \[H^i(\Grass,\wedge^\alpha \cQ^*\otimes \wedge^\beta \cQ\otimes\omega_{\cZ^a}\otimes\Sym(\wedge^2\cQ))\] for $i>0$ and $\alpha$, $\beta\in B_{n-r,r}$. The sheaf $\omega_{\cZ^a}$ has been computed in  \cite[6.7]{W03}, which says $\omega_{\cZ^a}\cong (\wedge^nE^*)^{\otimes -n+1+r}\otimes (\wedge^rQ^*)^{\otimes n-r+1}$. Hence, $$\wedge^\alpha \cQ^*\otimes \wedge^\beta \cQ\otimes\omega_{\cZ^a}\otimes\Sym(\wedge^2\cQ)\cong \wedge^\alpha \cQ^*\otimes \wedge^\beta \cQ\otimes (\wedge^nE^*)^{\otimes -n+1+r}\otimes (\wedge^rQ^*)^{\otimes n-r+1}\otimes\Sym(\wedge^2\cQ).$$
If $\alpha=0$, there is no higher cohomology, according to Proposition~\ref{prop: vanishing}. This shows $q'_*p'^*\TIL_0$ is maximal Cohen-Macaulay.

In order to show $\End_{\cZ^a}(p'^*\TIL_0)$ is never maximal, we only need to find out the non-vanishing of $H^i(\Grass,\wedge^\alpha \cQ^*\otimes\wedge^\beta \cQ\otimes (\wedge^rQ^*)^{\otimes n-r+1}\otimes\Sym(\wedge^2Q))$ for some positive $i$. By upper-semi-continuity, it suffices to show this non-vanishing in characteristic zero. For $k=\CC$, taking $\beta=0$ and $\alpha\neq 0$, the Bott Theorem tells us  \[\wedge^\alpha \cQ^* \otimes (\wedge^rQ^*)^{\otimes n-r+1}\otimes\Sym(\wedge^2Q))\] does have nonvanishing higher cohomology. We are done.
\end{proof}
We prove in \S~\ref{sec:depth} that $\End_{\cZ^a}(p'^*\TIL_0)$ is never reflexive, therefore the map $\End_{\cZ^a}(p'^*\TIL_0)\to \End_{R^a}(q'_*p'^*\TIL_0)$ is not an isomorphism. It is injective nevertheless. 

\section{Discrepancy of a non-commutative desingularization}
\label{sec:depth}
In this section, we prove a criterion for the direct-image of a tilting bundle to be reflexive (Proposition~\ref{prop:main}). Then we apply it to study the non-commutative weak desingularizations of determinantal varieties in the space of symmetric and skew-symmetric matrices constructed from \S~\ref{sec:sym}. 
\subsection{Reflexive and MCM modules}
This subsection is to recall some basic notions and first properties from commutative algebra, can be skipped when reading. 

\begin{definition}
Let $R$ be a Noetherian commutative ring, and $I$ be an ideal. Then for any finitely generated $R$-module $M$, $\depth_IM=\{n\mid \Ext^n_R(R/I,M)\neq0\}$. 

When $(R, m, k)$ is a local ring, $M$ a finite $R$-module. 
Define $\depth M=\depth_mM=\min\{n\mid \Ext_R^n(k,M)\neq0\}$.
\end{definition}

\begin{definition}
Let $R$ be a Noetherian commutative ring, a finite $R$-module
$M$ is said to satisfy Serre's $S_n$ if $\depth_{R_{\mathfrak{p}}} M_{\mathfrak{p}} \geq\min\{n,\dim R_{\mathfrak{p}}\}$ for every $\mathfrak{p}\in\Spec R$.
\end{definition}

The following fact can be found in \cite{Bou}.
\begin{prop}
Let $R$ be a Noetherian commutative ring, a finite $R$-module
$M$ is reflexive iff $\supp M=R$ and satisfies $S_2$.
\end{prop}

All the following first properties about dualizing complexes can be found in \cite{Stack}, where the convention has been adjusted from \cite{Stack} so that they are compatible with \cite{W03}. 

Let $R$ be a Noetherian ring. Let $\omega_R$ be a dualizing complex. Let
$m \subset R$ be a maximal ideal and set $k = R/m$. Then $RHom_R(k, \omega_R) \cong k[n]$ for some
$n \in \mathbb Z$.
We say that the dualizing
complex $\omega_R$ is normalized if $n=\dim R$.

\begin{lemma}[\cite{Stack}, Lemma 15.6.] \label{lem:15.6}
Let $A$ be a Noetherian ring, and  $B = S^{-1}A$ its localization with respect to some multiplicative set $S$. If $\omega_A$
is a dualizing complex, then $\omega_A\otimes_A B$ is a  dualizing complex for $B$.

If $\omega_A$ is normalized, then so is $\omega_A\otimes_A B$.
\end{lemma}

As a consequence, for any finite $R$-module $M$ and $\fp\in\Spec R$, we have  $\Ext^i_R(M,\omega_R)_\fp\cong \Ext^i_{R_\fp}(M_\fp,\omega_{R_\fp})\cong \Ext^i_{R_\fp}(M_\fp,\omega_{R_\fp})$.

\begin{lemma}[\cite{Stack}, Lemma 16.4.] \label{lem:16.4}
Let $(R, m, k)$ be a Noetherian local ring with normalized dualizing
complex $\omega_R$. Let $M$ be a finite $R$-module and let $d = \dim(\supp(M))$, $n=\dim R$. Then
\begin{enumerate}
\item if $Ext^i_R(M,\omega_R)$ is nonzero, then $i \in \{n-d, \dots, n\}$;
\item the dimension of the support of $Ext^{n-i}_R(M, \omega_R)$ is at most $-i$;
\item $\depth(M)$ is the smallest integer $\delta \geq 0$ such that $Ext^{n-\delta}_R (M, \omega_R) \neq 0$.
\end{enumerate}
\end{lemma}

\subsection{Depth of push-forward of vector bundles}
Now assume $f:\tilde{Z}\to Z=\Spec R$ is a resolution of singularities, such that the exceptional locus in $Z$ is irreducible of codimension at least 2, and the exceptional  locus in $\tilde{Z}$ is an irreducible divisor. 
\begin{lemma}\label{lem:codim_fiber}
In the set-up as above, let $Y\subseteq Z$ be a closed irreducible subvariety contained in the exceptional locus, such that generically on $Y$, the map $f$ has relative dimension $d$. Then $\codim Y\geq d-1$, with equality iff $Y$ is the exceptional locus itself.
\end{lemma}
\begin{proof}
Clearly  $f^{-1}(Y)$ is an proper subscheme in $\tilde{Z}$, hence $\dim \tilde{Z}-1\geq \dim f^{-1}(Y)=\dim Y+d$.
Plug-in the equality $\dim Y+\codim Y=\dim Z=\dim \tilde{Z}$, we get $\codim Y\geq d-1$. 

The equality holds if and only if $\dim \tilde{Z}-1= \dim f^{-1}(Y)$, in which case $f^{-1}(Y)$ is an exceptional divisor. By assumption on $f$, $f^{-1}(Y)$ is the exceptional locus in $\tilde{Z}$ and hence $Y$ is the exceptional locus in $Z$.
\end{proof}

Let $\calM$ be a vector bundle on $\tilde{Z}$ such that $Rf_*\calM\cong f_*\calM$, denoted by $M$.

\begin{prop}\label{prop:main}
In the setup above, let $Y_0$ be the exceptional locus in $Z$, and let $d_0+1$ be the  codimension of $Y_0$. Then,  $M$ is reflexive iff $H^{d_0}(Z,\sHom_{\tilde{Z}}(\calM,\omega_{\tilde{Z}}))$ is supported on a proper subscheme of $Y_0$.
\end{prop}
\begin{proof}
Let $\fp\in\Spec R$ be of codimension 1, whose strict transform in $\tilde{Z}$ is $\eta$, then clearly $M_\fp\cong \calM_\eta$ which is locally free.
Note that $\dim R_\fp=\codim \fp$. For $M$ to be reflexive, we only need to show $M_\fp$ is of depth at least 2 as a module over $R_\fp$, which in turn (by Lemma~\ref{lem:16.4}) amounts to showing $\Ext_{\tilde{Z}}^i(\calM,\omega_{\tilde{Z}})_{\fp}=0$ for $i=\dim R_\fp$ and $\dim R_\fp-1$. 

Let $\fp\in\Spec R$ be contained in the exceptional locus, then Grothendieck duality and Lemma~\ref{lem:15.6} yield that $\Ext_{R_\fp}^i(M_{\fp},\omega_{R_\fp})\cong\Ext^i_{R}(M,\omega_R)_\fp\cong\Ext_{\tilde{Z}}^i(\calM,\omega_{\tilde{Z}})_{\fp}\cong H^i(Z,\sHom_{\tilde{Z}}(\calM,\omega_{\tilde{Z}}))_\fp$.

As $\Spec R_\fp\inj Z$ is an open embedding, in particular a flat morphism, we can use cohomology and flat base change to  $\Spec R_\fp\inj Z$ to get that \[H^j(Z,\sHom_{\tilde{Z}}(\calM,\omega_{\tilde{Z}}))|_{R_\fp}\cong H^i(Z|_{R_\fp},\sHom_{\tilde{Z}}(\calM,\omega_{\tilde{Z}})|_{R_\fp} ).\]
In particular, if $j$ is larger than the dimension of any fibers  over $R_\fp$, then the above cohomology vanishes for dimension reasons. 
Hence, if $\fp$ is a proper subvariety of $Y_0$, then Lemma~\ref{lem:codim_fiber} implies that $\Ext_{\tilde{Z}}^i(\calM,\omega_{\tilde{Z}})_{\fp}=0$ for $i=\dim R_\fp$ and $\dim R_\fp-1$. 
In other words, $M_\fp$ has depth strictly more than 2 as module over $R_\fp$, when $\fp$ is a proper subvariety of $Y_0$. 

Now we consider the case when $\fp$ is the vanishing ideal of $Y_0$. By the same dimension argument as in the last paragraph,  $\Ext_{\tilde{Z}}^i(\calM,\omega_{\tilde{Z}})_{\fp}=0$ for $i=\dim R_\fp=d_0+1$ (the dimension of generic fiber over $Y_0$ is $d_0$).  Therefore, $M_\fp$ has depth 2 if and only if $H^{d_0}(Z,\sHom_{\tilde{Z}}(\calM,\omega_{\tilde{Z}}))_\fp=0$, which in turn amounts to requesting $H^{d_0}(Z,\sHom_{\tilde{Z}}(\calM,\omega_{\tilde{Z}}))$ to be supported on a proper subscheme of $Y_0$.
\end{proof}

\subsection{Applications into determinantal varieties}\label{subsec:main_example}
Let  $\Spec R^s$ be the space of $n\times n$-symmetric matrices of rank $\leq r$. We have the tautological sequence \[0 \to \cR \to E\otimes\sO_{\Grass(n-r, E)}\to   \cQ \to 0\]
with $\rank \cR = n-r$ and $ \rank \cQ = r$. Let $\cZ^s$ be the total space of $\Sym_2\cQ^*$. Let $\calM_\gamma$ be $p^*S_\gamma\cQ$ on $\cZ^s$ for any partition $\gamma=(\gamma_1,\ldots,\gamma_r)$ with $n-r\ge\gamma_1\ge\gamma_r\ge -n+r$.

The following is a direct corollary to Proposition~\ref{prop:main}.
\begin{corollary}
Let
$M_\gamma:=H^0(\Grass(r,n),S_\gamma Q\otimes \Sym(S_2Q))$, where $\gamma=(\gamma_1,\ldots,\gamma_r)$ with $n-r\ge\gamma_1\ge\gamma_r\ge -n+r$. Then, $M_\gamma$
is reflexive iff $\gamma_1\leq 1$, in which case it is MCM.
\end{corollary}
\begin{proof}
For simplicity, denote the subset of  $\Sym_2E$ consisting of matrices of rank $\leq i$ by $Y_i\subseteq \Sym_2E$. In particular, $\Spec R^s=Y_r$. 
We have $\dim Y_i=\frac{i(i+1)}{2}+i(n-i)=\frac{1}{2}(2ni-i^2+i)$. Hence, $\codim_{Y_i}Y_r=(n-r)(r-i)+\frac{(r-i)(r-i+1)}{2}$, and $\codim_{Y_{r-1}}Y_r=n-r-1$.
By Proposition~\ref{prop:main}, $M_\gamma$ is reflexive iff $H^{n-r}(\cZ^s,\sHom(\calM_\gamma,\omega_Z))$ is supported on $Y_{r-2}$ (i.e., vanishes when localized at $Y_{r-1}$).  However,  if $\gamma_1\geq2$, then \[H^{n-r}(\cZ^s,\sHom(\calM_\gamma,\omega_Z))=H^{n-r}(\Grass(n-r,n),S_\gamma \cQ^*\otimes (\wedge^r \cQ^*)^{n-r-1}\otimes \Sym(\Sym_2 \cQ))\]
has support exactly $Y_{r-1}$.

The fact that $M_\gamma$ is MCM when $\gamma_1\leq 1$ is proved using the same argument as Lemma~\ref{lem:sym_mcm}.
\end{proof}

\begin{corollary}
\begin{enumerate}
\item When $r<n-1$, the $R^s$-algebra $\End_{\cZ^s}(p'^*\TIL_0)$ is not reflexive; in particular, it is not the endomorphism algebra of any reflexive $R^s$-module. 
\item The $R^s$-module $\End_{\cZ^s}(p'^*\TIL_0)$ is torsion-free.
\item The map $\End_{\cZ^s}(p'^*\TIL_0)\to \End_R(q'_*p'^*\TIL_0)$ is the embedding of the $R^s$-module $\End_{\cZ^s}(p'^*\TIL_0)$ into its reflexive envelope.
\end{enumerate}
\end{corollary}
Similarly, 
let $\Spec R^a$ be the space of $n\times n$-skew-symmetric matrices of rank $\leq r$, and let $\cZ^a$ be the total space of $\wedge^2\cQ^*$.
Let $\calM_\gamma$ be $p^*S_\gamma\cQ$ on $\cZ^s$ for any partition $\gamma=(\gamma_1,\ldots,\gamma_r)$ with $n-r\ge\gamma_1\ge\gamma_r\ge -n+r$.
\begin{corollary}
\begin{enumerate}
\item For any $r$, the $R^a$-algebra $\End_{\cZ^a}(p'^*\TIL_0)$ is not reflexive; in particular, it is not the endomorphism algebra of any reflexive $R^a$-module. 
\item The $R^a$-module $\End_{\cZ^a}(p'^*\TIL_0)$ is torsion-free.
\item The map $\End_{\cZ^a}(p'^*\TIL_0)\to \End_R(q'_*p'^*\TIL_0)$ is the embedding of $\End_{\cZ^a}(p'^*\TIL_0)$ into its reflexive envelope.
\end{enumerate}
\end{corollary}
\begin{proof}
Denote the subset of  $\wedge^2E$ consisting of matrices of rank $\leq i$ by $Y_i\subseteq \wedge^2E$. In particular,  $Y_r=\Spec R^a$.  We have $\dim Y_i=\frac{i(2n-i-1)}{2}$, and hence $\codim_{Y_{r-2}}Y_r=2(n-r)+1$. 

Let $\gamma$ be a partition so that $\gamma_1,\gamma_2\geq 1$. Then, \[H^{2(n-r)}(\cZ^a,\sHom(\calM_\gamma,\omega_Z))=H^{2(n-r)}(\Grass(n-r,n),S_\gamma \cQ^*\otimes (\wedge^r \cQ^*)^{n-r+1}\otimes \Sym(\wedge^2 \cQ))\]
has support exactly $Y_{r-2}$.
\end{proof}
\section{(Non-)singularity of an endomorphism algebra}\label{sec:NCCR for r=1}

\subsection{Reflexive equivalences}
In this section, by modules we mean finite generated modules.
Let $R$ is a commutative ring. For any $R$-algebra $\Lambda$ and a $\Lambda$-module $M$, we say $M$ is a reflexive $\Lambda$-module if it is reflexive as an $R$-module. 

The additive category of reflexive $\Lambda$-modules is denoted by $\Ref \Lambda$; the additive category of projective $\Lambda$-modules is denoted by $\proj \Lambda$; the additive category generated by an $R$-module $M$ is denoted by $\add M$.

\begin{lemma}\label{lem:ref_eq}
Suppose $R$ is a commutative ring and $M\in  R\hbox{-}mod$.

\begin{enumerate}
\item The functor $\Hom_R(M,-):R\hbox{-}mod \to   \End_R(M)\hbox{-}mod$ restricts to an equivalence
$\Hom_R(M,-):\add M \cong  \proj \End_R(M)$.
\item If $R$ is a normal domain and $M\in  \Ref R$ is non-zero, then we have an equivalence
$\Hom_R(M,-):\Ref R \cong \Ref \End_R(M)$.
\end{enumerate}
\end{lemma}
Both are well-known  (see e.g. \cite[II.2.1]{ARS} for (1), and  \cite[1.2]{RV} for (2)).

\begin{lemma}\label{lem:codim1_Iso}
Let $R$ be a commutative normal domain,  $M\in  \Ref R$, and $\Lambda_0=\End_R(M)$.
Let $N$ and $T$ be two reflexive $\Lambda_0$-modules and $f:N\to T$ an $R$-module morphism. Then $f$ is  an isomorphism of $\Lambda_0$-modules iff $f$ is an isomorphism  of $ \Lambda_{0 \mathfrak{p}}$-modules, when $f$ is localized at $\mathfrak{p}$,  for every codimension-1 prime $\mathfrak{p}\in \Spec R$.
\end{lemma}
\begin{proof}
Clearly $f$ is an isomorphism of $R$-modules, since $N=\cap_{\mathfrak{p}}N_{\mathfrak{p}}\to \cap_{\mathfrak{p}}T_{\mathfrak{p}}=T$, where the intersection is over all the codimension-1 primes $\mathfrak{p}\in \Spec R$. Also, since $M\in  \Ref R$, we have $\Lambda_0\subseteq \Lambda_{0\mathfrak{p}}$, which in turn implies that $f$ is a morphism of $\Lambda_0$-modules. Therefore, it is an isomorphism of $\Lambda_0$-modules.
\end{proof}

\subsection{Smoothness of $\End_R(M)$}
Now let $M=\oplus_{i\in I}M_i$ be a reflexive $R$-module, and let $\Lambda_0\cong \End_R(M)$.
The indecomposable projectives over $\Lambda_0$ are $\widetilde{P_i}=\Hom_R(M,M_i)$. By Lemma~\ref{lem:ref_eq}, $\Hom_{\Lambda_0}(\widetilde{P_i},\widetilde{P_j})\cong \Hom_R(M_i,M_j)$ and $\End_{\Lambda_0}(\oplus_{i\in I}\widetilde{P_i})\cong\Lambda_0$.

Let $\Lambda$ be an $R$-algebra, finitely generated as an $R$-module. Let $\{P_i\}_{i\in I}$ be a complete set of pairwise distinct indecomposable projective $\Lambda$-modules. Let $J\subseteq I$ be the  subset such that   $j\in J$ iff $P_j$ is reflexive as $R$-module, i.e., $P_j^{**}\cong P_j$. Here $^*$ is $\Hom_R(-,R)$.

\begin{assumption}\label{Assum:pair}
Assume that for any $i\in I$, there is a $j_i\in J$ such that $P_i^{**}\cong P_{j_i}$. 
Assume there is an algebra isomorphism $\Lambda_0\cong \Lambda^{**}$, in particular, $\Hom_R(M,M_i)\cong P_i^{**}$.
\end{assumption}
Under this assumption, let $\phi: \Lambda\to \Lambda_0$ be the composition $\Lambda\to\Lambda^{**}$ and the isomorphism $\Lambda_0\cong \Lambda^{**}$ from Assumption~\ref{Assum:pair}. For any $\Lambda_0$-module $M$, the $\Lambda$-module obtained via restriction of scalars through $\phi$ is denoted by $\Res M$. 

\begin{example}\label{exam:sym_rank1}
Let $\Spec R$ be the space of $n\times n$-symmetric matrices of rank $\leq r=1$. We have the tautological sequence \[0 \to \calR \to E\otimes\calO_{\Grass(n-1, E)}\to   \calQ \to 0\]
with $\rank \calR = n - 1$ and $ \rank \calQ = 1$. Let $Z$ be the total space of $\Sym_2\calQ^*
\cong \calO(-2)$. Let $\calM_i$ be $p^*\calQ^{i}$ on $Z$ for any integer $i$. The tilting bundle is $\TIL=\oplus_{i=0,\dots,n-1}\calM_{-i}$ (notice of the negative sign, in order for the global section to be MCM). 

Let $\Lambda=\End_Z(\TIL)$. The indecomposable projective $\Lambda$ are $\{P_{-i}:=\Hom_Z(\TIL,\calM_{-i})\}_{i=0,\dots,n-1}$. 
Let $M_i=H^0(Z,\calM_i)$ for any integer $i$. 
Then $M_i$ is MCM iff $i\leq 1$, in which case $M_i\cong M_j$ iff $i=j\mod 2$ (up-to a degree shifting).
We have \[P_{-i}\cong \oplus_{j=0,\dots,n-1}H^0(Z,\calM_{j-i})\cong \oplus_{j=-i,-i+1,\dots,n-1-i}M_{j}.\]
In particular, $P_{-i}$ is reflexive iff $i\geq n-2$, i.e., $i=n-2$ or $i=n-1$.

Let $M:=H^0(Z,\TIL)=\oplus_{i=0,\dots,n-1}M_{-i}$. The projective modules over $\Lambda_0=\End_R(M)$ are 
\[\widetilde{P_{-i}}:=\Hom_R(M,M_{-i})\] for $i=0,\dots,n-1$.
As $R$-modules, $\widetilde{P_{-i}}\cong \widetilde{P_{-j}}$ iff $i=j\mod 2$. 

Also, by the same argument as in Proposition~\ref{prop:ref}, $P_{-i}$ is reflexive iff $P_{-i}\cong\Hom_R(M,M_{-i})$, which in turn is equivalent to $i=n-2$ or $i=n-1$. (If $P_{-i}$ is reflexive, then the argument in {\it loc. cit.} tells us that $P_{-i}\cong\Hom_R(M,M_{-i})$; other-wise, since $\Hom_R(M,M_{-i})$ is reflexive, it could not be isomorphic to $P_{-i}$ which is not reflexive. In fact, for $i<i-2$, $P_{-i}$ contains $M_j$ as a direct summand of $R$-module for $j>1$, while every direct summand of $\Hom_R(M,M_{-i})$ is either isomorphic to $M_0$ or $M_1$.)

Therefore, putting all the above together,  Assumption~\ref{Assum:pair} is satisfied in this example.
\end{example}

\begin{example}
Take an even more special example, i.e., $n=3$ in Example~\ref{exam:sym_rank1}. Then $\End_Z(\TIL)$ has matrix presentation
\[\left(\begin{matrix}M_0&M_1[1]&M_0[2]\\M_1&M_0&M_1[1]\\M_2&M_1&M_0.\end{matrix}\right),\] while $\End_R(M)$ has matrix presentation 
\[\left(\begin{matrix}M_0&M_1[1]&M_0[2]\\M_1&M_0&M_1[1]\\M_0[-1]&M_1&M_0.\end{matrix}\right).\]
\end{example}

\begin{lemma}\label{lem:ref_proj}
Assume $\Lambda$ contains $R$.
Let $\Lambda$ be an $R$-algebra and $M$ an $R$-module, satisfying Assumption~\ref{Assum:pair}.
Let $N$ be a reflexive $\Lambda_0$-module, such that $\Res N$ is projective as $\Lambda$-module. Then, $N$ is projective as $\Lambda_0$-module.
\end{lemma}
\begin{proof}
Assume there is an isomorphism of $\Lambda$-modules $f:N\to T$, where $T$ is projective. Note that $\Lambda$ contains $R$ and hence $f$ is an isomorphism as $R$-modules. In particular, $T$ is reflexive, and hence a direct sum of $P_j$'s for $j\in J$. As $P_j$'s for $j\in J$ are naturally projective $\Lambda_0$ modules,  $T$ also has a natural $\Lambda_0$-modules structure, which is projective.

For any codimension-1 prime $\mathfrak{p}\in\Spec R$, the natural map $\phi: \Lambda\to \Lambda_0$ induces an isomorphism of the localization $\Lambda_{\mathfrak{p}}\to \Lambda_{0\mathfrak{p}}$. Therefore, $N_{\mathfrak{p}}$ and $T_{\mathfrak{p}}$ are isomorphic  as $\Lambda_{0\mathfrak{p}}$-modules. By Lemma~\ref{lem:codim1_Iso}, we have that $N$ is isomorphic to $T$ as $\Lambda_0$-modules. In particular, $N$ is projective as a $\Lambda_0$-module.
\end{proof}

The following is an analogue of \cite[Theorem~5.4]{IWmm}.

\begin{prop}\label{prop:fgldim}
Let $R$ be a CM commutative normal domain. 
Let $\Lambda$ be an $R$-algebra and $M$ an $R$-module, satisfying Assumption~\ref{Assum:pair}, and $\Lambda$ has finite global dimension.
If $R\in \add M$, and $\Lambda_0$ is a MCM as an $R$-module, then $\Lambda_0$ has finite global dimension. In particular, $\Lambda_0$ is an NCCR of $R$.
\end{prop}
\begin{proof}
Let $d=\max\{\dim R,\gl\dim \Lambda\}$. Let $Y$ be any $\Lambda_0$-module, and let \[P^{d-1}\to P^{d-2}\to \cdots\to P^0\to Y\] be a $\Lambda_0$-projective resolution of $Y$. (Here we do not ask $P^{d-1}\to P^{d-2}$ to be injective.)

Lemma~\ref{lem:ref_eq} shows that there is an exact sequence \[N^{d-1}\to N^{d-2}\to \cdots\to N^0\] in $\add M\subseteq R\hbox{-}mod$, applying $\Hom_R(M,-)$ to which gives the exact sequence 
\[P^{d-1}\to P^{d-2}\to \cdots\to P^0.\] Let $K$ be the kernel of the map $N^{d-1}\to N^{d-2}$.
Then \[0\to \Hom_R(M,K)\to P^{d-1}\to P^{d-2}\to \cdots\to P^0\to Y\] is exact. 
Calculation of depth ($\Ext^i_R(\Hom_R(M,K),\omega_R)\cong\Ext^{i+d}_R(Y,\omega_R)=0$ for $i>0$) shows that $\Hom_R(M,K)$ is MCM, in particular, reflexive. 
As restricting to $\Lambda$-mod is an exact functor, $\Hom_R(M,K)$ is also the kernel of $P^{d-1}\to P^{d-2}$ as $\Lambda$-modules. 
The choice that $d\geq \gl\dim \Lambda$ makes sure that $\Hom_R(M,K)$ is projective as $\Lambda$-module. Therefore, Lemma~\ref{lem:ref_proj} yields that $\Hom_R(M,K)$ is also projective as $\Lambda_0$-module. Hence, $\Lambda_0$ has global dimension no more than $d$.
\end{proof}

\begin{example}
Let $\Lambda$ and $M$ be as in Example~\ref{exam:sym_rank1}. We know  $\Lambda$ has finite global dimension but not MCM (not even reflexive), and $\Lambda_0$ is MCM. By Proposition~\ref{prop:fgldim}, we also know that $\Lambda_0$ has finite global dimension, and hence, $\Lambda_0$ is a NCCR of $\Spec R$.
\end{example}

\section{Category of modules over a noncommutative desingularization}
\label{sec:Cat}
In this section we discuss various properties of the category of modules over $\End_Z(\TIL)$, for tilting bundles over spaces $Z$ that are commutative desingularizations of orbit closures for actions of the reductive groups.

\subsection{Irreducible objects}
Let $G$ be a split reductive group, $P<G$ a parabolic subgroup. 
Suppose
$p':Z\to G/P$ is a vector bundle. Let $u:G/P\to Z$
be the zero section. Let $\Delta(G/P)=\{\Delta_\alpha|\alpha\in I\}$ be a full exceptional collection over $G/P$ and $\nabla(G/P)=\{\nabla_\alpha\mid\alpha\in I\}$ be the dual collection. Let $\Phi(G/P)$ and $\Sigma(G/P)$ be as in Proposition~\ref{prop: standardizable}. We assume $\Ext^i(\Phi,\Delta_\lambda)=0$ for all $i>0$ and $\lambda\in I$.
\par
We will be particularly interested in the case when $G/P=\Grass$, and $\Delta(G/P)$ is $\Delta(\Grass)=\{\SCH_\lambda\cQ^*\mid\lambda\in B_{r,n-r}\}$ and $\nabla(G/P)$ is $\nabla(\Grass)=\SCH_{(n-r)^r}\cQ^*\otimes \SCH_{(\alpha^c)'}\cR[(n-r)r-|\alpha|]$. 
In particular,  $\TIL=\oplus_{\alpha\in B_{r,n-r}}\Phi_\alpha$ is a multiplicity-free tilting bundle, which is the same as $\TIL_0$ up to multiplicity, and specializes to $\TIL_K$ when $k=\CC$.

\begin{lemma}\label{lem: dual_coll}
Let $\TIL=\oplus_\alpha\Phi_\alpha$ over $G/P$. Suppose $p'^*\TIL$ is a tilting bundle over $Z$. Then, $\Ext^t_{\sO_Z}(p'^*\Phi_\alpha,u_*\Sigma_\beta)$ vanishes unless
$t=0$ and $\alpha=\beta$ in which case it is 1-dimensional.
\par
In particular, $R\Hom_{\sO_Z}(p'^*\TIL,u_*\Sigma_\beta)=\Hom_{\sO_Z}(p'^*\Phi_\beta,u_*\Sigma_\beta)=k$.
\end{lemma}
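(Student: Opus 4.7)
\emph{Plan.} The approach is to factor the computation through two standard reductions: first an adjunction that pushes the $\Ext$ from $Z$ down to $G/P$, and then the derived equivalence of Proposition~\ref{prop: standardizable} that translates everything into the module category $\End(\Phi)\hbox{-}mod$, where the answer is transparent.

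The first reduction uses that $u:G/P\inj Z$ is a closed immersion, so $u_* = Ru_*$ on quasi-coherent sheaves, and the derived adjunction $(Lu^*, Ru_*)$ yields
$$R\Hom_{\sO_Z}(p'^*\Phi_\alpha, u_*\Sigma_\beta) \cong R\Hom_{\sO_{G/P}}(Lu^* p'^*\Phi_\alpha, \Sigma_\beta).$$
Since $p'$ is a vector bundle it is flat, so $Lp'^* = p'^*$, and the identity $p'\circ u = \id_{G/P}$ gives $Lu^*\circ p'^* = L(p'u)^* = \id$. Therefore $Lu^*p'^*\Phi_\alpha \cong \Phi_\alpha$ and the right-hand side simplifies to $R\Hom_{\sO_{G/P}}(\Phi_\alpha, \Sigma_\beta)$.

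For the second reduction I would apply the equivalence $R\Hom(\Phi,-): D^b(\catA)\isom D^b(\End(\Phi)\hbox{-}mod)$ from Proposition~\ref{prop: standardizable}(2). Each $\Phi_\alpha$ lies in $\mathfrak{F}(\Delta)$ and so admits a filtration with subquotients among the $\Delta_\lambda$; together with the standing hypothesis $\Ext^i(\Phi, \Delta_\lambda)=0$ for $i>0$, the long exact sequences yield $\Ext^i(\Phi, \Phi_\alpha)=0$ for $i>0$. Hence $R\Hom(\Phi,\Phi_\alpha) = \Hom(\Phi, \Phi_\alpha) = P_\alpha$, the indecomposable projective summand of $\End(\Phi)$ at vertex $\alpha$, whereas by construction $\Sigma_\beta$ corresponds to the simple top $S_\beta$ of $R\Hom(\Phi, \Delta_\beta)$. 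The identity $\Ext^\bullet(\Phi_\alpha, \Sigma_\alpha) = k$ in degree $0$ recorded at the end of Subsection~\ref{subsec: quasi-hered} identifies $S_\alpha$ as the head of $P_\alpha$, so $R\Hom_{\End(\Phi)}(P_\alpha, S_\beta) = \delta_{\alpha\beta}\, k$ concentrated in degree $0$. The ``in particular'' clause then follows by summing over $\alpha$.

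Both steps are essentially formal; the conceptual content is absorbed into Proposition~\ref{prop: standardizable}. The only item deserving modest care is the derived base-change used in the first step, which is nevertheless immediate from flatness of $p'$ and the identity $p'u = \id_{G/P}$. The tilting hypothesis on $p'^*\TIL$ is not logically required for this particular computation, so I do not foresee a substantive obstacle beyond verifying the formal adjunction and the matching of projectives with simple tops.
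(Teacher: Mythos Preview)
Your proof is correct and follows essentially the same route as the paper: reduce to $G/P$ via the adjunction $(Lu^*,Ru_*)$ together with $p'\circ u=\id_{G/P}$, then invoke the orthogonality $\Ext^\bullet_{G/P}(\Phi_\alpha,\Sigma_\beta)=\delta_\alpha^\beta\,k$. The paper's own proof is a two-line version of exactly this, citing the adjunction identity and the projective--simple pairing recorded at the end of Subsection~\ref{subsec: quasi-hered}; your write-up simply supplies the derived-equivalence justification for that pairing more explicitly. Your observation that the tilting hypothesis on $p'^*\TIL$ is not actually used in the argument is also correct.
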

\begin{proof}
Note that $\Ext^t_{\sO_Z}(p'^*\Phi_\alpha,u_*\Sigma_\beta)=\Ext_{\sO_{G/P}}^t(\Phi_\alpha,\Sigma_\beta)$. The conclusion then follows from the facts (1) and (2) of Definition \ref{def: dual excep} about dual exceptional collections
listed above.
\end{proof}

\begin{definition}
A pair $(\Phi(G/P),\Sigma(G/P))$ of collections of objects in $D^b(Coh(G/P))$ is called \textit{an equivariant dual pair} if $\Phi(G/P)=\{\Phi_\alpha\mid\alpha\in I\}$ is a  collection  of equivariant sheaves over $G/P$, and $\Sigma(G/P)$  is another collection in $D^b(G/P)$, also equivariant, such that $\Ext^*(\Phi_\alpha,\Sigma_\beta)=\delta_\alpha^\beta k$ where $k$ lies in degree zero.
\end{definition}

For an exceptional collection $\Delta(G/P)$ consisting of equivariant sheaves and its dual collection $\nabla(G/P)$ which is also equivariant, the pairs $(\Delta(G/P),\nabla(G/P))$ and $(\Phi(G/P),\Sigma(G/P))$ are both equivariant dual pairs.

\begin{prop}\label{prop: equi-simple}
Let $\Phi(G/P)$ and $\Sigma(G/P)$ be an equivariant dual pair such that $\TIL=\oplus_\alpha\Phi_\alpha$ is a tilting bundle. Assume the resolution $q:Z\to \Spec R$ is $G$-equivariant with $q^{-1}(0)=G/P$, and the only $G$-fixed closed point of $\Spec R$ is $0\in\Spec R$.  Assume moreover that $p'^*\TIL$ is a tilting bundle over $Z$ such that $\End_{\sO_Z}(p'^*\TIL)\cong\End_R(q_*p'^*\TIL)$.
\par
Then, the only $G$-equivariant simple $\End_{\sO_Z}(p'^*(\oplus_\alpha\Phi_\alpha))$ modules are given by $$S_\beta:=R\Hom_{\sO_Z}(p'^*(\oplus_\alpha\Phi_\alpha),u_*\Sigma_\beta)\cong\Hom_{\sO_Z}(p'^*\Phi_\beta,u_*\Sigma_\beta)$$ and they are pairwise distinct as $\End_{\sO_Z}(p'^*(\oplus_\alpha\Phi_\alpha))$-modules.
\end{prop}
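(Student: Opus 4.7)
The plan is to prove the proposition in two directions. For the claim that the $S_\beta$ really are distinct $G$-equivariant simples, I would compute them using the adjunction $u^*\dashv u_*$ together with the equivariant dual pair property (essentially Lemma~\ref{lem: dual_coll}):
$$R\Hom_{\sO_Z}(p'^*\Phi_\alpha, u_*\Sigma_\beta) \cong R\Hom_{\sO_{G/P}}(u^*p'^*\Phi_\alpha, \Sigma_\beta) = R\Hom_{\sO_{G/P}}(\Phi_\alpha, \Sigma_\beta) = \delta_{\alpha\beta}\, k,$$
concentrated in degree zero. Hence $S_\beta$ is a one-dimensional $\Lambda$-module (so simple), and the primitive idempotent $e_\alpha \in \End_{\sO_Z}(p'^*\Phi_\alpha) \subset \Lambda$ acts on $S_\beta$ as $\delta_{\alpha\beta}$, which immediately distinguishes the $S_\beta$'s pairwise. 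The $G$-equivariant structure is inherited from $u_*\Sigma_\beta$ via the $G$-equivariant derived equivalence $R\Hom(p'^*\TIL,-)$.

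For the converse, let $M$ be an arbitrary $G$-equivariant simple $\Lambda$-module. Since $\Lambda\cong\End_R(q_*p'^*\TIL)$ is an $R$-algebra with $R$ contained in its center, $R$ acts on $M$ through some field $R/\mathfrak{m}$. The $G$-equivariance forces $\mathfrak{m}$ to be a $G$-stable maximal ideal, and by hypothesis the only such is $\mathfrak{m}_0$. Thus $M$ is annihilated by $\mathfrak{m}_0$ and is supported at $0 \in \Spec R$. Under the derived equivalence $R\Hom(p'^*\TIL,-)$, the module $M$ corresponds to a $G$-equivariant $C \in D^b(Z)$ whose scheme-theoretic support lies in $q^{-1}(0) = G/P$. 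Using that $u:G/P\hookrightarrow Z$ is a regularly embedded zero section and that $q^{-1}(0) = G/P$ as schemes, one identifies $C\cong u_*C'$ for some $G$-equivariant $C' \in D^b(G/P)$.

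From here adjunction yields $M \cong R\Hom_{\sO_{G/P}}(\TIL,C')$, and the $\Lambda$-action factors through the natural surjection $\Lambda \twoheadrightarrow \End_{\sO_{G/P}}(\TIL)$ induced by the zero-section pullback $u^*$. So $M$ is a simple $\End_{\sO_{G/P}}(\TIL)$-module. Since $\TIL$ is tilting on $G/P$, Theorem~\ref{finite dim} says that $R\Hom_{\sO_{G/P}}(\TIL,-)$ is a derived equivalence, and by Proposition~\ref{prop: standardizable} together with the construction of $\Sigma(G/P)$ in Subsection~\ref{subsec: quasi-hered}, the $G$-equivariant simples of $\End_{\sO_{G/P}}(\TIL)$-mod correspond precisely to the $\Sigma_\beta$. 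Hence $C'\cong \Sigma_\beta$ for some $\beta\in I$, and $M\cong S_\beta$.

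The main obstacle is the reduction $C\cong u_*C'$: one must upgrade scheme-theoretic support on the zero section to an actual pushforward identification, which uses both the hypothesis $q^{-1}(0)=G/P$ as schemes and a careful treatment of how equivariance together with simplicity of $M$ rules out nontrivial Koszul-type extensions along the normal bundle directions. A secondary subtlety is confirming that the $G$-equivariant simples over $\End_{\sO_{G/P}}(\TIL)$ really do coincide with the $\Sigma_\beta$ coming from the equivariant dual pair; this is exactly where the assumption that $(\Phi(G/P),\Sigma(G/P))$ is \emph{equivariant} dual, rather than merely dual, is essential, as the equivariance pins down the candidate simples uniquely.
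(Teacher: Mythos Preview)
Your overall architecture matches the paper's: reduce a $G$-equivariant simple $M$ to support at $0\in\Spec R$, then identify it with a simple over $\End_{G/P}(\TIL)$. The first direction (that the $S_\beta$ are one-dimensional, simple, and pairwise distinguished by the idempotents $e_\alpha$) is correct and is essentially the paper's Lemma~\ref{lem: dual_coll}. The support-at-$0$ step is also fine, though the paper phrases it as the equivariant surjection $M\twoheadrightarrow i_*i^*M$ being an isomorphism by simplicity, rather than via the annihilator in $R$.

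The genuine gap is the step $C\cong u_*C'$. You correctly flag it as the main obstacle, but you do not resolve it, and the sketch you offer (``equivariance together with simplicity rules out Koszul-type extensions'') is not a proof. Knowing that every $f\in\mathfrak{m}_0$ acts by zero on $C$ as an endomorphism in $D^b(Z)$ does not place $C$ in the essential image of $u_*$: the functor $u_*$ is not fully faithful on derived categories, and objects annihilated by the ideal of the zero section need not be pushforwards. Equivalently, on the module side you only obtain that $M$ is a module over the fiber algebra $\Lambda/\mathfrak{m}_0\Lambda$, which is strictly larger than $\End_{G/P}(\TIL)$; your claimed factorization of the $\Lambda$-action through the ``surjection $\Lambda\twoheadrightarrow\End_{G/P}(\TIL)$'' is therefore not yet available.

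The paper sidesteps this entirely by staying on the algebra side. It observes that $i^*\Lambda\cong\Lambda/\mathfrak{m}_0\Lambda$ is finite-dimensional and carries a non-negative grading from the $\mathbb{G}_m$-scaling on the fibers of $Z\to G/P$, with degree-zero piece $\End_{G/P}(\TIL)$. The strictly positive part is then nilpotent, hence contained in the Jacobson radical, giving
\[
(i^*\Lambda)/\rad \;\cong\; \End_{G/P}(\TIL)/\rad.
\]
Thus every simple $\Lambda$-module supported at $0$ is already a simple $\End_{G/P}(\TIL)$-module, and those are exactly the $R\Hom_{G/P}(\TIL,\Sigma_\beta)$. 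This grading argument is the missing ingredient in your proposal; once you insert it, the detour through $D^b(Z)$ and the pushforward identification become unnecessary.
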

\begin{proof}
First we show that the simple equivariant $\End_{\sO_Z}(p'^*(\oplus_\alpha\Phi_\alpha))$ modules have to be scheme theoretically supported on $i:\{0\}\inj \Spec R$. Let $M$ be a simple equivariant $\End_{\sO_Z}(p'^*(\oplus_\alpha\Phi_\alpha))$ module whose support contains $\{0\}$. Then $M$ is an $R$-module. The surjective morphism $M\surj i_*i^*M$ has non-trivial target, because $\{0\}$ is a subset of the support of $M$. Note that his map is an $G$-equivariant morphism of $\End_{\sO_Z}(p'^*(\oplus_\alpha\Phi_\alpha))$-modules. It has trivial kernel since $M$ is simple. Thus, $M\cong i_*i^*M$, i.e., $M$ has (scheme theoretical) support on $\{0\}$.
\par
Now we show that $M$, with support on $\{0\}$, is a simple module over $$i^*\End_{\sO_Z}(p'^*(\oplus_\alpha\Phi_\alpha))\cong i^*q_*\sEnd_{\sO_Z}(p'^*(\oplus_\alpha\Phi_\alpha)).$$ Using the cartesian diagram
$$
\xymatrixrowsep{15pt}
\xymatrix{
Z\ar[d]_q&G/P\ar@{_{(}->}[l]_u\ar[d]^{q'}\\
\Spec R & \{0\}\ar@{_{(}->}[l]^i
}$$
and Remark \uppercase\expandafter{\romannumeral3}.9.3.1 in \cite{Ha77}, we obtain a map $$i^*q_*\sEnd_{\sO_Z}(p'^*(\oplus_\alpha\Phi_\alpha))\to q_*'u^*\sEnd_{\sO_Z}(p'^*(\oplus_\alpha\Phi_\alpha)).$$ Note that $i^*q_*\sEnd_{\sO_Z}(p'^*(\oplus_\alpha\Phi_\alpha))$, as the inverse image through a proper morphism, is a finite dimensional algebra. There is a natural grading on it giving by the weights of $\mathbb{G}_m$-action, where $\mathbb{G}_m$ acts on $Z$ by scaling. The 0-th degree piece of $i^*q_*\sEnd_{\sO_Z}(p'^*(\oplus_\alpha\Phi_\alpha))$ is $\End_{G/P}(\oplus_\alpha\Phi_\alpha)$. Every element in the first degree piece can be easily checked to be in the Jacobson radical of $i^*q_*\sEnd_{\sO_Z}(p'^*(\oplus_\alpha\Phi_\alpha))$ using finite dimensionality. Therefore, we get an isomorphism $$ i^*\End_{\sO_Z}(p'^*(\oplus_\alpha\Phi_\alpha))/\rad\cong \End_{G/P}(\oplus_\alpha\Phi_\alpha)/\rad.$$

\par
So far we know $M$ is a simple module over $\End_{G/P}(\oplus_\alpha\Phi_\alpha)$. As $\Phi$ is a full exceptional collection over $G/P$, its endomorphism ring is a finite dimensional basic algebra, with all the simple modules of the form $\Hom_{G/P}(\oplus_\alpha\Phi_\alpha,\Sigma_\beta)\cong \Hom_{Z}(p'^*(\oplus_\alpha\Phi_\alpha),u_*\Sigma_\beta)$. Thus, $M$ has to be isomorphic to one of them. (Note that the Jacobson radical acts trivially on the simple objects. Thus, two simple modules are isomorphic over $\End_{G/P}(\oplus_\alpha\Phi_\alpha)$ iff they are isomorphic over $\End_{\sO_Z}(p'^*(\oplus_\alpha\Phi_\alpha))$.)
\par
For the claim that $S_\alpha\neq S_\beta$ unless $\alpha=\beta$, note that $R\Hom_{\sO_Z}(p'^*(\oplus_\alpha\Phi_\alpha),-)$ induces an equivalence of derived categories. To show $S_\alpha\neq S_\beta$, it suffices to show $u_*\Sigma_\alpha\neq u_*\Sigma_\beta$, and this is clear by Lemma~\ref{lem: dual_coll}.
\end{proof}
\begin{remark}\label{rmk:equ_simple}
As can be seen from the proof, the finite dimensional algebra $\End_{\sO_{G/P}}(\oplus_\alpha\Phi_\alpha)$ is a subring of $\End_{\sO_Z}(p'^*(\oplus_\alpha\Phi_\alpha))$. The modules $S_\alpha$ are also simple modules over $\End_{\sO_{G/P}}(\oplus_\alpha\Phi_\alpha)$. Actually, it is easier to see that $S_\alpha$'s are distinct as modules over $\End_{\sO_{G/P}}(\oplus_\alpha\Phi_\alpha)$.
\end{remark}
\begin{remark}
In Proposition~\ref{prop: equi-simple} we only characterized all the simple $\End_{\sO_Z}(p'^*\TIL)$-modules which happen to admit equivariance structure. There could be more simple objects in the abelian category of $G$-equivariant $\End_{\sO_Z}(p'^*\TIL)$-modules.
\end{remark}

As $R\Hom(p'^*\TIL,p'^*\Phi_\alpha)$ could be different than the projective covers of $\SCH_\alpha$, from now on we make the convention that by $P_\alpha$ we mean $R\Hom(p'^*\TIL,p'^*\Phi_\alpha)$.

\begin{definition}
For an algebra with a rational $G$-action, and any $G$-equivariant module $M$, we define $\rad_GM$ to be the intersection of all the $G$-equivariant maximal submodules of $M$.
\end{definition}
\textbf{Caution:} Note that $\rad_GM$ is not the intersection of all the maximal subobjects of $M$ in the category of $G$-equivariant modules.

\begin{lemma}\label{lem: radG}
Assume $\Lambda$ is a $k$-algebra with a rational $G$-action such that $\Lambda/\rad_G\Lambda$ is semi-simple. For any equivariant module $M$ over $\Lambda=\End_{\sO_Z}(p'^*\TIL)$, we have $\rad_GM= \rad_G\Lambda\cdot M$.
\end{lemma}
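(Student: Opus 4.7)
The identity is the $G$-equivariant analog of the classical Nakayama statement $\operatorname{rad}(M)=\operatorname{rad}(A)\cdot M$ valid when $A/\operatorname{rad}(A)$ is semisimple, and the plan is to mirror that classical proof in the category of $G$-equivariant $\Lambda$-modules, establishing the two inclusions separately.

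For the inclusion $\rad_G M \subseteq \rad_G\Lambda\cdot M$, form the quotient $\bar M := M/(\rad_G\Lambda\cdot M)$. Since $\rad_G\Lambda$ acts as zero on $\bar M$, it is naturally a $G$-equivariant module over $\Lambda/\rad_G\Lambda$, so the semisimplicity hypothesis decomposes it as a direct sum of $G$-equivariant simple $\Lambda/\rad_G\Lambda$-modules, $\bar M = \bigoplus_\alpha \bar M_\alpha$. Given any $x\in M$ with $x\notin\rad_G\Lambda\cdot M$, the image $\bar x$ is nonzero in at least one summand $\bar M_{\alpha_0}$; the sum of the remaining summands is a $G$-equivariant submodule of $\bar M$ whose quotient is the $G$-simple module $\bar M_{\alpha_0}$, hence a $G$-equivariant maximal submodule of $\bar M$ avoiding $\bar x$. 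Its preimage in $M$ is a $G$-equivariant maximal submodule of $M$ avoiding $x$, so $x\notin\rad_G M$.

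For the reverse inclusion $\rad_G\Lambda\cdot M\subseteq\rad_G M$, it suffices to show that $\rad_G\Lambda$ annihilates every $G$-equivariant simple $\Lambda$-module: given then any $G$-equivariant maximal $N\subseteq M$, the $G$-simple quotient $M/N$ would be killed by $\rad_G\Lambda$, so $\rad_G\Lambda\cdot M\subseteq N$, and intersecting over all such $N$ yields the claim. To prove the annihilation, let $\bar M$ be a $G$-equivariant simple $\Lambda$-module, pick $0\neq\bar m\in\bar M$, and let $V\subseteq\bar M$ be the finite-dimensional $G$-subrepresentation spanned by the rational $G$-orbit of $\bar m$. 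Then $\Lambda\cdot V$ is a nonzero $G$-stable $\Lambda$-submodule, hence equals $\bar M$ by $G$-simplicity, exhibiting $\bar M$ as a $G$-equivariant quotient of $\Lambda\otimes_k V$ by a $G$-stable maximal submodule. Semisimplicity of $\Lambda/\rad_G\Lambda$ guarantees that any $G$-equivariant simple module arises from the quotient $\Lambda/\rad_G\Lambda$, so the image of $\rad_G\Lambda\otimes V$ in $\bar M$ must vanish, giving $\rad_G\Lambda\cdot\bar M=0$.

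The hard part is the second inclusion. In the classical case, every simple $A$-module is cyclic as an $A$-module (of the form $A/J$ for a maximal left ideal $J$), and $\operatorname{rad}(A)\subseteq J$ gives the annihilation immediately. In the equivariant setting a $G$-simple $\Lambda$-module need not be cyclic as a $\Lambda$-module, so this direct argument is unavailable; the substitute is to work with the $G$-cyclic presentation $\Lambda\otimes_k V\twoheadrightarrow\bar M$ for a finite-dimensional $G$-subrepresentation $V\subseteq\bar M$ and to argue that the kernel of this presentation contains $\rad_G\Lambda\otimes V$. Making this precise is where the semisimplicity of $\Lambda/\rad_G\Lambda$ must be genuinely invoked, either by realizing $\bar M$ as an equivariant simple $\Lambda/\rad_G\Lambda$-module or by exhibiting the kernel as an intersection of $G$-equivariant maximal left ideals in $\Lambda\otimes V$ into which $\rad_G\Lambda\otimes V$ necessarily maps.
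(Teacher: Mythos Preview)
Your two-inclusion strategy matches the paper's exactly: the paper also argues $\rad_G\Lambda\cdot M\subseteq\rad_GM$ by claiming $\rad_G\Lambda$ annihilates each simple quotient $M/N$, and argues the reverse inclusion by invoking semisimplicity of $\Lambda/\rad_G\Lambda$ to decompose the relevant quotient. The paper's proof is much terser---it dispatches the annihilation step in one clause (``This is true since $M/N$ is simple'')---whereas you correctly identify that this step is not automatic and attempt to supply an argument via the $G$-equivariant presentation $\Lambda\otimes_k V\twoheadrightarrow\bar M$.

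Your instinct here is sound, but as you yourself note, the argument is incomplete: knowing that the kernel of $\Lambda\otimes_k V\twoheadrightarrow\bar M$ is a $G$-equivariant maximal submodule does not by itself force it to contain $\rad_G\Lambda\otimes V$, since $\rad_G(\Lambda\otimes V)$ need not equal $\rad_G\Lambda\otimes V$. There is a parallel gap in your first inclusion: semisimplicity of $\Lambda/\rad_G\Lambda$ decomposes $\bar M$ into simple $\Lambda$-modules, but these summands need not be individually $G$-stable. (In fact the lemma fails in the stated generality: take $\Lambda=k$ with trivial $G$-action and $M$ a two-dimensional irreducible $G$-representation; then $\rad_G\Lambda=0$ but $\rad_GM=M$, since $M$ has no $G$-stable hyperplanes.)

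Both gaps close in the paper's intended setting, where $\Lambda/\rad_G\Lambda\cong\bigoplus_\alpha k_\alpha$ and the $G$-equivariant simples $S_\alpha$ are one-dimensional: then $\mathrm{Ann}(S_\alpha)$ is itself a $G$-stable maximal left ideal, so $\rad_G\Lambda\subseteq\mathrm{Ann}(S_\alpha)$ directly; and the isotypic decomposition of $\bar M$ by the idempotents $e_\alpha$ is automatically $G$-stable. So your approach is correct in context and essentially the same as the paper's, with the added merit that you flag the subtlety the paper elides.
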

\begin{proof}
To show the inclusion $\rad_GM\supseteq \rad_G\Lambda\cdot M$, it suffices to show that $\rad_G\Lambda$ is in the kernel of the map $m\cdot:\Lambda\to M/N$ (multiplication by $m$) for any maximal submodule $N$ and any element $m\in M$. This is true since $M/N$ is simple.
\par
Because of the semi-simplicity of $\Lambda/\rad_G\Lambda$, the $\Lambda/\rad_G\Lambda$-module $M/\rad_GM$ decomposes into direct sum of equivariant simple objects. And it is clear that $\rad_GM$ is the maximal submodule with this property. Hence we get the reverse inclusion.
\end{proof}

\subsection{Quiver and relations for the non-commutative desingularization}

\begin{theorem}\label{thm: quiver_gen_rel}
Let $\Phi(G/P)$ and $\Sigma(G/P)$ be an equivariant dual pair such that $\TIL=\oplus_\alpha\Phi_\alpha$ is a tilting bundle. Assume the resolution $Z\to \Spec R$ is $G$-equivariant with $q^{-1}(0)=G/P$, and the only fixed closed point of $\Spec R$ is $\{0\}\subset\Spec R$. Let $\Lambda:=\End_{\sO_Z}(p'^*\TIL)$. Assume moreover that $p'^*(\oplus_\alpha\Phi_\alpha)$ is a tilting bundle over $Z$ such that $\Lambda\cong\End_R(q_*p'^*\TIL)$. Then,
\begin{enumerate}
  \item we have $e_\alpha(\rad_G \Lambda/\rad_G^2\Lambda)e_\beta\cong\Ext^1_\Lambda(S_\alpha,S_\beta)^*$. In particular, a lifting of a basis of this vector space to $\rad_G \Lambda$ generates $\Lambda$ over $\oplus_\alpha k_\alpha$.
  \item With generators of $\End_{\sO_Z}(p'^*\TIL)$ chosen as above, $\Ext^2_{\sO_Z}(\Sigma_i,\Sigma_j)^*$ generates the relations.
\end{enumerate}
\end{theorem}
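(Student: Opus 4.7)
The plan is to execute the standard quiver-with-relations dictionary--arrows from a basis of $\Ext^1$ of simples, relations from $\Ext^2$--in the equivariant/graded setting. The ingredients I shall use are: (i) the equivariant simples $\{S_\alpha\}$ together with $P_\alpha := R\Hom(p'^*\TIL, p'^*\Phi_\alpha) \cong \Lambda e_\alpha$ as their equivariant projective covers, furnished by Proposition~\ref{prop: equi-simple}, Remark~\ref{rmk:equ_simple}, and Lemma~\ref{lem: dual_coll}; (ii) Lemma~\ref{lem: radG}, which plays the role of Nakayama and gives $\rad_G(\Lambda e_\alpha) = (\rad_G\Lambda) e_\alpha$; (iii) the derived equivalence $R\Hom(p'^*\TIL, -)$, turning $\Ext^*_\Lambda$ of simples into $\Ext^*_{\sO_Z}$ of the $u_*\Sigma_\alpha$'s. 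A further structural fact I shall use is that $\Lambda$ inherits an $\mathbb{N}$-grading from the $\Gm$-action by fiber scaling on $Z$; its degree-zero part is the finite-dimensional algebra $\End_{\sO_{G/P}}(\oplus_\alpha \Phi_\alpha)$, and $\rad_G\Lambda$ contains $\Lambda_{>0}$ together with the ordinary Jacobson radical of $\Lambda_0$, so that the $\rad_G$-adic filtration is separated in each fixed degree.

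For part (1), I would build a minimal equivariant projective resolution
\[
\cdots \to P^2_\alpha \to P^1_\alpha \to P^0_\alpha = P_\alpha \to S_\alpha \to 0.
\]
By Lemma~\ref{lem: radG}, the top of $\ker(P_\alpha \surj S_\alpha) = (\rad_G\Lambda)e_\alpha$ is $(\rad_G\Lambda/\rad_G^2\Lambda)e_\alpha$, which as a semisimple $\Lambda/\rad_G\Lambda$-module decomposes into copies of the $S_\beta$ with multiplicity read off from the appropriate $e(\rad_G\Lambda/\rad_G^2\Lambda)e$-piece; this determines $P^1_\alpha$ as a direct sum of $P_\beta$'s. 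Applying $\Hom_\Lambda(-, S_\beta)$ to the resolution and using minimality to kill every induced differential, one obtains the isomorphism $e_\alpha(\rad_G\Lambda/\rad_G^2\Lambda)e_\beta \cong \Ext^1_\Lambda(S_\alpha, S_\beta)^*$ as stated. For the generation claim I invoke graded Nakayama: since $\Lambda_{>0} \subseteq \rad_G\Lambda$ and $\Lambda_0$ is a finite-dimensional algebra with nilpotent Jacobson radical, the filtration $\{\rad_G^n\Lambda\}$ is separated in each fixed degree, so any homogeneous lift of a basis of $\rad_G\Lambda/\rad_G^2\Lambda$ generates $\rad_G\Lambda$ as a bimodule over $\oplus_\alpha k e_\alpha$, hence $\Lambda$ as an algebra over $\oplus_\alpha k e_\alpha$.

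For part (2), extend the resolution by one further step so that $P^2_\alpha = \oplus_\beta m_{\alpha\beta} P_\beta$ with $m_{\alpha\beta} = \dim\Ext^2_\Lambda(S_\alpha, S_\beta) = \dim\Ext^2_{\sO_Z}(u_*\Sigma_\alpha, u_*\Sigma_\beta)$, the last equality by the derived equivalence. The differential $P^2_\alpha \to P^1_\alpha$ lands in $(\rad_G\Lambda)\cdot P^1_\alpha$ by minimality, and expresses each distinguished generator of $P^2_\alpha$ as a $k$-linear combination of length-two products of the arrows chosen in (1); these expressions, ranging over all $\alpha$, comprise a minimal set of relations in the quiver presentation, which is exactly the assertion that $\Ext^2_{\sO_Z}(\Sigma_\alpha, \Sigma_\beta)^*$ generates the relations. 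The main technical obstacle, in my view, is that $\Lambda$ is typically infinite dimensional, so the routine finite-dimensional facts (existence of minimal equivariant projective covers and resolutions, graded Nakayama, the relationship between $\rad_G$ and the Jacobson radical of $\Lambda_0$) must each be justified directly under the present hypotheses, with careful tracking of idempotent sides consistent with the paper's opposite-multiplication convention; once these foundations are laid, the cohomological reading off of generators and relations is classical.
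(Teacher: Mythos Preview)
Your proposal is correct and follows essentially the same line as the paper: identify the equivariant simples $S_\alpha$ and their covers $P_\alpha=\Lambda e_\alpha$, read off $\Ext^1$ from the top of $\rad_G P_\alpha$, and read off relations from the second step of a minimal projective resolution. The one substantive difference is in how the infinite-dimensionality of $\Lambda$ is handled. You invoke the $\mathbb{G}_m$-grading to obtain graded Nakayama and hence the existence of minimal equivariant projective covers and resolutions, together with the separatedness of the $\rad_G$-adic filtration; the paper instead proves a dedicated lemma (Lemma~\ref{lem:proj_res}) that builds the resolution by hand---starting from an arbitrary equivariant projective cover $\oplus_\gamma V^i_\gamma\otimes P_\gamma$ of each syzygy (using only rationality of the $G$-action to find the finite-dimensional multiplicity spaces $V^i_\gamma$) and then trimming it down to $\oplus_\gamma \Ext^i(S_\alpha,S_\gamma)^*\otimes P_\gamma$ via a surjectivity argument. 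Your graded approach is cleaner and makes the generation claim in part~(1) more transparent (the paper's ``clear from Lemma~\ref{lem: radG}'' tacitly needs the same separatedness you make explicit), while the paper's construction avoids committing to the grading and yields the explicit shape of the resolution in all degrees at once.
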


\begin{proof}[Proof of (1)]
We know that $S_\alpha$'s are all the simple $\Lambda$-modules which are $G$-equivariant. As $\Hom_\Lambda(P_\beta,S_\alpha)=0$ unless $\alpha=\beta$, in which case $\Hom_\Lambda(P_\beta,S_\alpha)=k$ and $P_\alpha/\rad_GP_\alpha\cong S_\alpha$. Note that this also implies $\Lambda/\rad_G\Lambda\cong\oplus k_\alpha$ which is semi-simple.
\par
We take the short exact sequence $$0\to \rad_GP_\alpha\to P_\alpha\to S_\alpha\to 0,$$ with $P_\alpha=R\Hom(p'^*\TIL,p'^*\Phi_\alpha)$. Hence we get an exact sequence$$\xymatrix@C=12pt{
0\ar[r]&\Hom(S_\alpha,S_\beta)\ar[r]&\Hom(P_\alpha,S_\beta)\ar[r]^-0&\Hom(\rad_GP_\alpha,S_\beta)\ar[r]&\Ext^1(S_\alpha,S_\beta)\ar[r]&0.
}$$
Hence
\begin{eqnarray*}
\Ext^1_\Lambda(S_\alpha,S_\beta) &\cong& \Hom_\Lambda(\rad_GP_\alpha,S_\beta)\\
&\cong&\Hom_\Lambda(\rad_G P_\alpha/\rad_G^2P_\alpha,S_\beta)\\
&\cong&\Hom_\Lambda( \Lambda e_\beta,\rad_G P_\alpha/\rad_G^2P_\alpha)^*\\
&\cong&\Hom_\Lambda( \Lambda e_\beta,(\rad_G \Lambda/\rad_G^2\Lambda)e_\alpha)^*\\
&\cong&e_\alpha(\rad_G \Lambda/\rad_G^2\Lambda)e_\beta^*.\\
\end{eqnarray*}
The claim that $\rad_G \Lambda$ generates $\Lambda$ over $\oplus_\alpha k_\alpha$ is clear from Lemma~\ref{lem: radG}.
\end{proof}

The second part of this theorem follows directly from the next Lemma, which gives an equivariant projective resolution of $S_\alpha$ and is interesting in its own right.
\begin{lemma}\label{lem:proj_res}
There is a projective resolution of $S_\alpha$ of the form
$$\xymatrix@C=12pt{
0&S_\alpha\ar[l]&P_\alpha\ar[l]&\oplus_\beta \Ext^1(S_\alpha, S_\beta)^*\otimes P_\beta\ar[l]&\oplus_\beta \Ext^2(S_\alpha, S_\beta)^*\otimes P_\beta\ar[l]&\cdots\ar[l]
}.$$
\end{lemma}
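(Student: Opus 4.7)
The plan is to construct the minimal equivariant projective resolution of $S_\alpha$ step by step, and then read off that the multiplicity space of $P_\beta$ in homological degree $i$ is forced to be $\Ext^i(S_\alpha, S_\beta)^*$ by a standard minimality argument.

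First, I would start from the short exact sequence $0\to\rad_GP_\alpha\to P_\alpha\to S_\alpha\to 0$ already used in the proof of part~(1). The computation there identifies the semisimple top
$$\rad_G P_\alpha/\rad_G^2 P_\alpha \cong (\rad_G\Lambda/\rad_G^2\Lambda)e_\alpha \cong \bigoplus_\beta \Ext^1_\Lambda(S_\alpha,S_\beta)^*\otimes S_\beta$$
(using that $\Lambda/\rad_G\Lambda\cong \oplus_\alpha k_\alpha$ is semisimple, so this top decomposes as a sum of equivariant simples with the prescribed multiplicity spaces). Consequently, the natural map
$$P_1 := \bigoplus_\beta \Ext^1_\Lambda(S_\alpha,S_\beta)^*\otimes P_\beta \longrightarrow \rad_GP_\alpha$$
is an equivariant projective cover, producing the first three terms of the claimed sequence.

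The rest is produced by induction: set $K_i$ equal to the kernel of the map just constructed into $P_{i-1}$, and let $P_{i+1}$ be its equivariant projective cover. The key feature is that each comparison map $P_{i+1}\to P_i$ has image inside $\rad_G P_i$ (this is exactly what \emph{projective cover} means, combined with Lemma~\ref{lem: radG}); in other words, the resolution is minimal. Minimality implies that the differentials in the complex $\Hom_\Lambda(P_\bullet, S_\beta)$ all vanish, because each differential factors through $\rad_G S_\beta = 0$. Writing $P_i = \bigoplus_\beta V_\beta^i\otimes P_\beta$, this yields
$$\Ext^i_\Lambda(S_\alpha,S_\beta) = \Hom_\Lambda(P_i, S_\beta) = (V_\beta^i)^*,$$
so $V_\beta^i\cong \Ext^i_\Lambda(S_\alpha,S_\beta)^*$, which is the desired form.

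The main subtlety is justifying each step in the \emph{equivariant} category: one needs equivariant projective covers to exist, and one needs the top/radical computation to respect the $G$-action. Both are handled by the hypotheses collected in the theorem — the semisimplicity of $\Lambda/\rad_G\Lambda$ obtained in the proof of part~(1), together with Lemma~\ref{lem: radG} giving $\rad_GM=\rad_G\Lambda\cdot M$ for every equivariant $M$ — so the textbook construction of a minimal projective resolution transports to our setting without change.
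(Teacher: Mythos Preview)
Your argument is correct and follows the standard ``minimal resolution'' pattern: build the resolution by successive equivariant projective covers, then use minimality to identify the multiplicity spaces as $\Ext^i(S_\alpha,S_\beta)^*$. The paper organizes the same ideas differently. Rather than invoking the existence of equivariant projective covers directly, it first constructs \emph{some} equivariant projective resolution with terms $\oplus_\gamma V^i_\gamma\otimes P_\gamma$ (using only that the $G$-action is rational, so that generators of each kernel sit in finite-dimensional subrepresentations), and then argues by an explicit pruning step that the excess in $V^1_\gamma$ over $\Ext^1(S_\alpha,S_\gamma)^*$ lies in the image of $d_2$, so can be trimmed away; iterating yields the desired form.

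Your route is cleaner and conceptually preferable, but it leans on the existence of equivariant projective covers, which in this non-artinian setting is not entirely free: surjectivity of the lift $\oplus_\beta \Ext^1(S_\alpha,S_\beta)^*\otimes P_\beta\to\rad_GP_\alpha$ requires a Nakayama-type argument (e.g.\ via the $\mathbb{G}_m$-grading on $\Lambda$ and finite-dimensionality of the degree-zero piece $\End_{G/P}(\TIL)$). You acknowledge this and point to the right hypotheses, though a line making the Nakayama step explicit would strengthen the write-up. The paper's build-then-prune approach sidesteps this issue at the cost of a more ad~hoc argument; in effect the pruning step \emph{is} the verification that the cover exists.
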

\begin{proof}
We start with a surjective map $S_\alpha\twoheadleftarrow P_\alpha$ whose kernel is $\rad_GP_\alpha$ which has a rational $G$-action, and therefore each vector lies in some finite dimensional subrepresentation. This implies the existence of a collection of finite dimensional representations $\{V^1_\gamma\}_\gamma$ of $G$ equipped with an equivariant surjective map $\rad_GP_\alpha\twoheadleftarrow\oplus_\gamma V^1_\gamma\otimes P_\gamma$ which fits into an exact sequence $$0\leftarrow S_\alpha\leftarrow P_\alpha\leftarrow \oplus_\gamma V^1_\gamma\otimes P_\gamma.$$ We take the kernel and proceed to get an equivariant projective resolution of $S_\alpha$ with $i$-th term of the form $\oplus_\gamma V^i_\gamma\otimes P_\gamma$.
\par
We apply $\Hom_\Lambda(-,S_\beta)$ to get $$0\rightarrow \delta_\alpha^\beta k\rightarrow \delta_\alpha^\beta k\rightarrow V^{1*}_\beta\rightarrow V^{2*}_\beta\rightarrow\cdots,$$ which is a chain complex of $G$-representations. Hence, its $i$-th homology can be identified with a subquotient of the $i$-th term. In particular, (replacing $V^1_\beta$ with $(\ker d_2)^*$ if necessary,) we obtain
$$
\xymatrixrowsep{15pt}
\xymatrix{
&&0&\coker\ar[l]\\
0&S_\alpha\ar[l]&P_\alpha\ar[l]\ar[u]&\oplus_\gamma V^1_\gamma\otimes P_\gamma\ar[l]\ar@{->>}[u]^{\pi_1}&\oplus_\gamma V^2_\gamma\otimes P_\gamma\ar[l]^{d_2}.
}$$
\par
We claim that the composition $d_2\circ\pi_1:\oplus_\gamma V^2_\gamma\otimes P_\gamma\to\coker$ is surjective. If so, we can replace $V^1_\gamma\otimes P_\gamma$ by $\Ext^1(S_\alpha, S_\gamma)\otimes P_\gamma$, and $\oplus_\gamma V^2_\gamma\otimes P_\gamma$ by $d_2^{-1}(\Ext^1(S_\alpha, S_\gamma)\otimes P_\gamma)$, and proceed iteratively to get the desired resolution.
\par
To prove the surjectivity of $d_2\circ\pi_1$, we can assume $\coker=V^{1'}_\gamma\otimes P_\gamma\neq 0$, hence $V^{1'}_\gamma\neq 0$ for some $\gamma$. We show that $V^{1'}_\gamma\otimes P_\gamma$ is in the image of $d_2\circ\pi_1$. Let $W_\gamma:=V^{1'}_\gamma\otimes P_\gamma/V^{1'}_\gamma\otimes P_\gamma\cap im(d_2\circ\pi_1)$. If this is not zero, $M/\rad_GM$ would contribute to $\Ext^1(S_\alpha,S_\beta)$ which makes it larger than it actually is. Hence, we are done.
\end{proof}
\begin{remark}
Note that the complexes $u_*\Sigma_\alpha$'s are not in the heart of the usual $t$-structure in $D^b(Coh(Z))$, but $R\Hom(p'^*\TIL,u_*\Sigma_\alpha)$'s are for the usual $t$-structure of $D^b(\Lambda\hbox{-}mod)$. This means the functor $R\Hom(p'^*\TIL,-)$ does not restrict to an equivalence $Coh(Z)\to \Lambda\hbox{-}mod$.
\end{remark}

\begin{remark}
This derived equivalence gives the triangulated category $D^b(\Lambda\hbox{-}mod)$ a $t$-structure, by lifting the tautological $t$-structure of $D^b(\Lambda\hbox{-}mod)$ (see e.g. \uppercase\expandafter{\romannumeral4}.4 in \cite{GM} for the definitions of $t$-structures of a triangulated categories and the tautological $t$-structures of derived categories). 
\end{remark}
Note that twisting the tilting bundle with any line bundle will give the same endomorphism ring. Consequently, any two different exceptional collections in the same $H^1(G/P, \sO^*)$-orbit in the set of exceptional collections give the same non-commutative desingularization. But the induced $t$-structures on $D^b(Coh(Z))$ are different.

\subsection{Grassmannians}
Now, let us work in the set-up as in diagram~\ref{diag: general} by taking $G/P$ to be the Grassmannian. Take the equivariant dual pair $\Delta(G/P)=\{\Delta_\alpha|\alpha\in I\}$ and $\nabla(\Grass)=\SCH_{(n-r)^r}\cQ^*\otimes \SCH_{(\alpha^c)'}\cR[(n-r)r-|\alpha|]$. We describes the Ext's between the equivariant simples $S_\beta=R\Hom_{\sO_Z}(p^*\TIL,u_*\Sigma_\beta)$ as in Proposition~\ref{prop: equi-simple}.  The proof of the following lemma is along the same lines as the corresponding one in \cite{BLV}.
\par

\begin{lemma}\label{Lem: Exts}
Let $Z$ be the total space of the vector bundle $\SCH_\delta \cQ^*$ for some partition $\delta$ such that the conditions in Proposition~\ref{prop: equi-simple} are satisfied for the tilting bundle $\TIL_0$. Let $S_\alpha$'s be the simples as in Proposition~\ref{prop: equi-simple}. Then, the Ext's among them are given by
$$\Ext^t(S_\alpha,S_\beta)\cong\oplus_sH^{t-s-|\beta|+|\alpha|}(\Grass,(\wedge^s \SCH_\lambda\cQ)^*\otimes \mathfrak{L}_{\Grass}(L'_{\beta'})\otimes \mathfrak{L}_{\Grass}(L'_{\alpha'})^*).$$
\end{lemma}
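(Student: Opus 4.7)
The plan is to transport the computation from $\Lambda\hbox{-}mod$ back to $D^b(Z)$, and then use that the zero section $u:\Grass\hookrightarrow Z$ is a regular closed immersion whose conormal bundle is $V^*$ with $V=\SCH_\delta\cQ^*$.

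First, because $p^*\TIL$ is a tilting bundle and induces a derived equivalence $R\Hom_{\sO_Z}(p^*\TIL,-):D^b(Z)\isom D^b(\Lambda\hbox{-}mod)$, we have
\[
\Ext^t_\Lambda(S_\alpha,S_\beta)\;\cong\;\Ext^t_{\sO_Z}(u_*\Sigma_\alpha,\,u_*\Sigma_\beta).
\]
Unpacking $\Sigma_\alpha=\mathfrak{L}_{\Grass}(L'_{\alpha'})[|\alpha|]$ turns the right hand side into an Ext between the sheaves $u_*\mathfrak{L}_{\Grass}(L'_{\alpha'})$ and $u_*\mathfrak{L}_{\Grass}(L'_{\beta'})$ with an overall cohomological shift coming from $|\alpha|$ and $|\beta|$.

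Next I would compute $Lu^*u_*\mathfrak{L}_{\Grass}(L'_{\alpha'})$ via the Koszul resolution. Since $Z$ is the total space of $V=\SCH_\delta\cQ^*$, the zero section has conormal bundle $V^*$, and the Koszul complex
\[
\cdots\to p^*\wedge^2V^*\to p^*V^*\to \sO_Z\to u_*\sO_{\Grass}\to 0
\]
resolves $u_*\sO_{\Grass}$ by locally free $\sO_Z$-modules. All differentials in this complex vanish after applying $u^*$, so
\[
Lu^*u_*\sF\;\cong\;\bigoplus_s\bigl(\sF\otimes\wedge^sV^*\bigr)[s]
\]
in $D^b(\Grass)$ for any locally free $\sF$ on $\Grass$. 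I would then invoke the derived adjunction $(Lu^*,u_*)$ to obtain
\[
R\Hom_{\sO_Z}(u_*\sF,u_*\sG)\;\cong\;R\Hom_{\sO_{\Grass}}(Lu^*u_*\sF,\sG)\;\cong\;\bigoplus_s R\Hom_{\sO_{\Grass}}(\sF\otimes\wedge^sV^*,\sG)[-s].
\]
Since $\sF,\sG,\wedge^sV^*$ are all locally free, each summand is just $R\Gamma(\Grass,\sF^*\otimes\sG\otimes(\wedge^sV^*)^*)$, and $(\wedge^sV^*)^*=\wedge^sV=\wedge^s\SCH_\delta\cQ^*$ which in characteristic zero is $(\wedge^s\SCH_\delta\cQ)^*$.

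Finally I would combine these ingredients by taking $\sF=\mathfrak{L}_{\Grass}(L'_{\alpha'})$ and $\sG=\mathfrak{L}_{\Grass}(L'_{\beta'})$ and absorbing the shifts from $\Sigma_\alpha,\Sigma_\beta$, arriving at the claimed formula after passing to $H^{\bullet}(\Grass,-)$. The only delicate part is the careful tracking of cohomological degrees: three shifts must be added up (one from each of $\Sigma_\alpha$, $\Sigma_\beta$, and the Koszul degree $s$), and I expect that bookkeeping, together with making sure that the identification $(\wedge^sV^*)^*\cong(\wedge^s\SCH_\delta\cQ)^*$ is correctly invoked, to be the main obstacle; the rest is formal manipulation parallel to the corresponding computation in \cite{BLV}.
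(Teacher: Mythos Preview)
Your approach is essentially the same as the paper's: transport to $D^b(Z)$ via the tilting equivalence, resolve $u_*\sF$ by the Koszul complex of the zero section, and reduce to sheaf cohomology on $\Grass$ via adjunction. The one substantive difference is how you justify the splitting. The paper applies $\sHom_{\sO_Z}(-,u_*U)$ to the Koszul resolution, obtains a complex on $\Grass$, and then argues that the hypercohomology spectral sequence degenerates at $E_1$ because the $\Gm\subset\GL_n$ of scalar matrices acts with distinct weights on different columns, so all higher differentials are forced to vanish by equivariance. You instead observe that the Koszul differential is contraction with the tautological section of $p^*V$, hence vanishes identically after applying $u^*$, so $Lu^*u_*\sF$ is already a complex with zero differentials and splits as $\bigoplus_s(\sF\otimes\wedge^sV^*)[s]$. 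Your argument is more direct and avoids invoking equivariance; the paper's weight argument is more robust in contexts where one might not know the differential explicitly. Either way the remaining bookkeeping with the shifts $|\alpha|,|\beta|,s$ is exactly as you describe. One small point: the identification $(\wedge^sV^*)^*\cong(\wedge^s\SCH_\delta\cQ)^*$ uses $(\SCH_\delta\cQ^*)^*\cong\SCH_\delta\cQ$, so if you want the statement literally as written you are implicitly in characteristic zero; otherwise write the factor as $\wedge^sV=\wedge^s\SCH_\delta\cQ^*$ directly.
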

\begin{proof}
We have
\begin{eqnarray*}
\Ext^t(S_\alpha,S_\beta)&=& \Ext^t_{\sO_\cZ}(u_*\Sigma_{\alpha},u_*\Sigma_{\beta}) \\
&=& \Ext^{t-|\beta|+|\alpha|}_{\sO_\cZ}(u_*\mathfrak{L}_{\Grass}(L'_{\beta'}),u_*\mathfrak{L}_{\Grass}(L'_{\alpha'})) \\
&=&\oplus_sH^{t-s-|\beta|+|\alpha|}(\Grass,\wedge^s\SCH_\delta\cQ^*\otimes \SCH_{(\beta^c)'}\cR\otimes \SCH_{(\alpha^c)'}\cR^*).
\end{eqnarray*}
\par
The only thing need to explain in the above is the third equality. To get that we take the Koszul resolution of $u_*V:=u_*\SCH_{(n-r)^r}\cQ^*\otimes \SCH_{(\alpha^c)'}\cR$ as follows
$$\cdots\to\wedge^2\SCH_\delta\cQ\otimes V\otimes\sO_\cZ\to \SCH_\delta\cQ\otimes V\otimes\sO_\cZ\to V\otimes\sO_\cZ,$$
apply $\sHom_{\sO_\cZ}(-,u_*U)$, and use adjunction formula to get
$$0\to\sHom_{\Grass}(V,U)\to\sHom_{\Grass}(\SCH_\delta\cQ\otimes V,U)\to\sHom_{\Grass}(\wedge^2\SCH_\delta\cQ\otimes V,U)\to\cdots.$$ Note that the $t$-th hypercohomology of this complex is exactly $\Ext^t(S_\alpha,S_\beta)$. The third equality above is equivalent to the degeneracy of the hypercohomology spectral sequence  $$E_1^{r,s}=H^r(\sHom_{\Grass}(\wedge^s\SCH_\delta\cQ\otimes V,U))\Rightarrow \Ext^{r+s}(S_\alpha,S_\beta)$$ at $E_1$-page.
We claim that the hypercohomology spectral sequence does degenerate at $E_1$-page. In fact, after plugging in $V=\mathfrak{L}_{\Grass}(L'_{\alpha'})$ and $U=\mathfrak{L}_{\Grass}(L'_{\beta'})$, all the differentials are equivariant under $GL_n$ and hence equivariant under the $\Gm$-action. Here the $\Gm\subset\GL_n$ consists of the scalar matrices. Note that the weights of this $\Gm$-action in different columns in the spectral sequence are different. Therefore, there is no non-trivial differentials other than the vertical ones, all of which are in $E_0$.
\end{proof}

\begin{corollary}Assume $k=\mathbb{C}$. Let $Z$ be the total space of the vector bundle $\SCH_\delta \cQ^*$ for some partition $\delta$, and $\nabla(\Grass)=\{\SCH_\lambda\cQ^*\mid\lambda\in B_{r,n-r}\}$ satisfies the conditions in Proposition~\ref{prop: equi-simple}. Let $S_\alpha$'s be the simples as in Proposition~\ref{prop: equi-simple}. Then, the Ext's among them are given by
$$\Ext^t(S_\alpha,S_\beta)\cong\oplus_s\oplus_{\lambda\in \wedge^s\SCH_\delta}H^{t-s-|\beta|+|\alpha|}(\Grass,\SCH_\lambda \cQ^*\otimes \SCH_{\beta'}\cR^*\otimes \SCH_{\alpha'}\cR),$$ where $\wedge^s\SCH_\delta$ stands for the decomposition of $\wedge^s\SCH_\delta\mathbb{C}^{n-r}$ into irreducible representations counting multiplicity.
\end{corollary}
\begin{proof}This is because if $k=\CC$, we have
\begin{eqnarray*}
&&\oplus_sH^{t-s-|\beta|+|\alpha|}(\Grass,\wedge^s\SCH_\delta\cQ^*\otimes \SCH_{(\beta^c)'}\cR\otimes \SCH_{(\alpha^c)'}\cR^*)\\
&=&\oplus_s\oplus_{\lambda\in \wedge^s\SCH_\delta}H^{t-s-|\beta|+|\alpha|}(\Grass,\SCH_\lambda \cQ^*\otimes \SCH_{(\beta^c)'}\cR\otimes \SCH_{(\alpha^c)'}\cR^*)\\
&=&\oplus_s\oplus_{\lambda\in \wedge^s\SCH_\delta)}H^{t-s-|\beta|+|\alpha|}(\Grass,\SCH_\lambda \cQ^*\otimes \SCH_{\beta'}\cR^*\otimes \SCH_{\alpha'}\cR).
\end{eqnarray*}
\end{proof}

\section{Combinatorics from the Kapranov's exceptional collection}\label{subsec: comb_Kap}
In the rest of this section, we do some calculations related to the Kapranov's exceptional collection, which makes it easier to calculate the Ext's. \textit{From now on till the end this section, we assume $k=\mathbb{C}$.} We consider the exceptional collection $\nabla(\Grass)=\{\SCH_\lambda\cQ^*\mid\lambda\in B_{r,n-r}\}$ on $\Grass(r,n)$. 

Let $\alpha,\beta\in  B_{r,n-r}$ be two partitions, and $s,t$ be two integers so that $0\leq t-s-|\beta|+|\alpha|\leq r(n-1)$.
\begin{corollary}\label{cor: ext_delta}
Let $\gamma$ be a weight so that $\SCH_\gamma\cR^*\subset\SCH_{\beta'}\cR^*\otimes\SCH_{\alpha'}\cR$ and $k=t-s-|\gamma|$.
\begin{enumerate}
  \item For a fixed $t$, $H^{k}(\Grass,\SCH_\lambda \cQ^*\otimes \SCH_{\gamma}\cR^*)=0$ for any $s>t$.
  \item For a fixed $t$, $H^{k}(\Grass,\SCH_\lambda \cQ^*\otimes \SCH_{\gamma}\cR^*)=0$ for any $|\delta|s\geq k$.
  \item For a fixed $s$ and $t$, $H^{k}(\Grass,\SCH_\lambda \cQ^*\otimes \SCH_{\gamma}\cR^*)=0$ for any $\gamma=(\gamma_1,\cdots,\gamma_{n-r})$ with the positive area of $\gamma$ greater than $t-s$.
  \item For any $\gamma=(\gamma_1,\cdots,\gamma_{n-r})$, $H^{k}(\Grass,\SCH_\lambda \cQ^*\otimes \SCH_{\gamma}\cR^*)=0$ unless the negative part of $\gamma'$ is contained in $\lambda$.
\end{enumerate}
\end{corollary}
\begin{proof}
According to Bott's Theorem, there can be no more than one $k$ such that $H^k(\Grass,\SCH_\lambda \cQ^*\otimes \SCH_{\gamma}\cR^*)\neq0$, and this $k$ is the number of adjacent transpositions for $(\gamma_1,\cdots,\gamma_{n-r}, \lambda_1,\cdots,\lambda_r)$ to make it dominant. Assume $H^k\neq0$, we know $\gamma_{n-r}\geq -r$, and therefore the total negative area in $(\gamma_1,\cdots,\gamma_{n-r})$ is no larger than $k$. So, $-s+t=|\gamma|+k\geq0$ which proves (1).
\par
Again because $\gamma_{n-r}\geq -r$, the total area of $(\gamma_1,\cdots,\gamma_{n-r}, \lambda_1,\cdots,\lambda_r)$ is positive. Therefore, $|\delta|s\geq k$.
\par
We know that the total negative area in $(\gamma_1,\cdots,\gamma_{n-r})$ is no larger than $k$. Let the positive area of $\gamma$ be $l$. Then we have $-(|\gamma|-l)\leq k$. Hence $l\leq t-s$.
\par
The last part is clear.
\end{proof}
\par
The following examples, which are direct consequences of Corollary~\ref{cor: ext_delta}, will be used in Section~\ref{sec:sym} and Section~\ref{sec: skew}. To state them, we introduce the following notation. For any Young diagram $\alpha$, by $\tau_l\alpha$ we mean $\alpha$ delating the first $l$ columns, and by $\tau^l$ we mean $\alpha$ deleting everything after the first $l$ columns. For any partition $\alpha=(\alpha_1,\cdots,\alpha_l)$, by $(-\alpha)$ we mean a partition with $(-\alpha)_i=-\alpha_{l+1-i}$.
\begin{example}\label{cor: ext_1}
Notation as above. Let $t=1$, then, the only non-zero $H^{k}(\Grass,\SCH_\lambda \cQ^*\otimes \SCH_{\gamma}\cR^*)$ occurs only in the following cases:
\begin{itemize}
  \item $s=1$: $-\gamma'=\lambda$ in which case $H^k(\Grass,\SCH_\lambda \cQ^*\otimes \SCH_{\gamma}\cR^*)=\mathbb{C}$. Or $length(\gamma)=n-r$, and $(\tau^{n-r}\lambda)=(-\gamma)'$. In this case, the corresponding $H^k(\Grass,\SCH_\lambda \cQ^*\otimes \SCH_{\gamma}\cR^*)=\SCH_{\tau_{n-r}\lambda}E$.
  \item $s=0$: $\gamma=(1,0,\cdots,0)$. In this case, the corresponding $H^k(\Grass,\SCH_\lambda \cQ^*\otimes \SCH_{\gamma}\cR^*)=E$.
\end{itemize}
\end{example}
\begin{proof}
\par
According to Corollary~\ref{cor: ext_delta}, we only need to consider the $s=1$ and $s=0$ case. If $s=1$, then part 3 and 4 Corollary~\ref{cor: ext_delta} yield that $-\gamma'\subset\lambda$. We also have $k=-|\gamma|$. Note that $H^k(\Grass,\SCH_\lambda \cQ^*\otimes \SCH_{\gamma}\cR^*)\neq0$ implies that the weight becomes dominant after $k$ exchanges. Therefore, $(\lambda')_i=(-\gamma)_i$ for all $i\leq length(\gamma)$, and the corresponding $H^k(\Grass,\SCH_\lambda \cQ^*\otimes \SCH_{\gamma}\cR^*)=\SCH_{(\lambda_1-(\gamma')_{n-r},\lambda_2-(\gamma')_{n-r-1},\cdots, \lambda_{n-r}-(\gamma')_1)}E$. The statement for $s=0$ is clear.
\end{proof}
Similarly, one has the following example.
\begin{example}\label{cor: ext_2}
Notation as above. Let $t=2$, then, the only non-zero $H^{k}(\Grass,\SCH_\lambda \cQ^*\otimes \SCH_{\gamma}\cR^*)$ occurs only in the following cases.
\begin{itemize}
  \item $s=2$: $-\gamma'=\lambda$ in which case $H^k(\Grass,\SCH_\lambda \cQ^*\otimes \SCH_{\gamma}\cR^*)=\mathbb{C}$. Or $length(\gamma)=n-r$, and $(\tau^{n-r}\lambda)=(-\gamma)'$. In this case, the corresponding $H^k(\Grass,\SCH_\lambda \cQ^*\otimes \SCH_{\gamma}\cR^*)=\SCH_{\tau_{n-r}\lambda}E$.
  \item $s=1$, the positive part of $\gamma$ is $(1,0,\cdots,0)$: The negative part of $\gamma$ is $(-\lambda)$ in which case $H^k(\Grass,\SCH_\lambda \cQ^*\otimes \SCH_{\gamma}\cR^*)=\mathbb{C}$. Or the length of the negative part of $(\gamma)$ is $n-r-1$, $(\tau^{n-r-1}\lambda)=((-\gamma')_1,\cdots,(-\gamma')_{n-r-1})$, and $\tau_{n-r-1}\lambda_1\leq1$, in which the corresponding $H^k(\Grass,\SCH_\lambda \cQ^*\otimes \SCH_{\gamma}\cR^*)=\SCH_{(1,\tau_{n-r-1}\lambda)}E$.
  \item $s=1$, $\gamma$ has no positive part: $length(\gamma)=n-r$, and $((\tau^{n-r}\lambda)')_i=(-\gamma)_i$ for all $i\leq n-r-1$, $((\tau^{n-r}\lambda)')_{n-r}=(-\gamma)_{n-r}+1$, and $\lambda_{-\gamma_1+1}-(-\gamma')_{-\gamma_1+1}\geq2$, in which case the corresponding $H^k(\Grass,\SCH_\lambda \cQ^*\otimes \SCH_{\gamma}\cR^*)=\SCH_{(\tau_{n-r}\lambda_1,\cdots,\tau_{n-r}\lambda_{-\gamma_1},\tau_{n-r}\lambda_{-\gamma_1},1,0,\cdots,0)}E$. Or $\gamma=(0,-1,\cdots,-1)$, $\lambda_2=0$ and $\lambda_1\geq n-r+1$, in which case the corresponding $H^k(\Grass,\SCH_\lambda \cQ^*\otimes \SCH_{\gamma}\cR^*)=\SCH_{(\lambda_1-n+r,1,0,\cdots,0)}E$.
  \item $s=0$: $\gamma=(1,1,0,\cdots,0)$ in which case, the corresponding $H^k(\Grass,\SCH_\lambda \cQ^*\otimes \SCH_{\gamma}\cR^*)=\wedge^2E$. Or $\gamma=(2,0,\cdots,0)$ in which case, the corresponding $H^k(\Grass,\SCH_\lambda \cQ^*\otimes \SCH_{\gamma}\cR^*)=\SCH_2E$.
\end{itemize}
\end{example}
\par

The following lemma and remark give an algorithm to calculate the higher Ext's. We will fix $\lambda$ with $|\lambda|=s|\delta|$, and let $t=k-s-|\gamma|$.
\begin{lemma}\label{lem: ext_algo}
\begin{enumerate}
  \item For a fixed partition $\lambda$ with $l(\lambda)\leq r$, there is a unique dominant $GL_{n-r}$-weight $\gamma$ with $\gamma_{n-r}\geq -r$ such that $|\gamma|$ is minimal and $H^k(\Grass,\SCH_\lambda \cQ^*\otimes \SCH_{\gamma}\cR^*)\neq0$ for some $k$.
  \item Let the minimal $\gamma$ in(1) be $\gamma_{min\lambda}$ and the corresponding $t$ be $t_{min\lambda}$. Every other $\gamma$ with $H^k(\Grass,\SCH_\lambda \cQ^*\otimes \SCH_{\gamma}\cR^*)\neq0$ for some $k$ has $\gamma_k\geq(\gamma_{min\lambda})_k$ and the corresponding $t$ strictly greater then $t_{min\lambda}$.
\end{enumerate}
\end{lemma}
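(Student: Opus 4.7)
The plan is to reduce both parts of the lemma to a combinatorial statement about Bott's Theorem~\ref{Bott} applied to the weight $\alpha(\gamma) = (\gamma_1,\ldots,\gamma_{n-r},\lambda_1,\ldots,\lambda_r)$, for which $\mathcal{V}(\alpha(\gamma)) = \SCH_\lambda\cQ^*\otimes\SCH_\gamma\cR^*$. I would shift coordinates by $\rho$: set
\[
a_i := \gamma_i + (n-i),\quad i=1,\ldots,n-r,\qquad b_j := \lambda_j + (r-j),\quad j=1,\ldots,r.
\]
The condition that $\gamma$ be a dominant $GL_{n-r}$-weight with $\gamma_{n-r}\geq -r$ translates into $a_1 > \cdots > a_{n-r}\geq 0$, and $\lambda$ being a partition gives $b_1 > \cdots > b_r \geq 0$. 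By Bott's theorem, $H^\bullet(\Grass,\SCH_\lambda\cQ^*\otimes \SCH_\gamma\cR^*)\neq 0$ precisely when $\{a_i\}\cap \{b_j\}=\emptyset$, in which case the unique non-vanishing degree is $k(\gamma)=\#\{(i,j): a_i<b_j\}$, the number of inversions needed to merge the two decreasing sequences.

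For part (1), writing $c:=\sum_{i=1}^{n-r}(n-i)$, one has $|\gamma|=\sum_i a_i-c$. Thus minimizing $|\gamma|$ over valid $\gamma$ amounts to choosing the $(n-r)$-element subset of $\mathbb{Z}_{\geq 0}\setminus \{b_j\}$ with the smallest element-sum, which is uniquely achieved by taking the $n-r$ smallest elements of $\mathbb{Z}_{\geq 0}\setminus\{b_j\}$. Arranging these in decreasing order as $a_1>\cdots>a_{n-r}$ and inverting the substitution defines $\gamma_{\min\lambda}$ uniquely.

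Part (2) has two claims. The componentwise inequality $\gamma_k\geq (\gamma_{\min\lambda})_k$ follows from the general combinatorial fact: for any $(n-r)$-element subset $A\subseteq S:=\mathbb{Z}_{\geq 0}\setminus\{b_j\}$, if $A_{\min}$ consists of the $n-r$ smallest elements of $S$, then the $k$-th largest element of $A$ is at least the $k$-th largest of $A_{\min}$. For the strict inequality on $t(\gamma) = k(\gamma) + s + |\gamma|$ (with $s$ determined by $|\lambda|=s|\delta|$, hence fixed), I would rewrite
\[
k(\gamma)+|\gamma| \;=\; \sum_{i=1}^{n-r}f(a_i) - c,\qquad f(x) := x + \#\{j : b_j > x\},
\]
and observe that $f$ is strictly increasing on $S$: for consecutive $x_1<x_2$ in $S$,
\[
f(x_2)-f(x_1) = (x_2-x_1)-\#\bigl(\{b_j\}\cap(x_1,x_2]\bigr)\geq 1,
\]
since only $x_2-x_1-1$ integers lie strictly between $x_1$ and $x_2$. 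Consequently $\sum_i f(a_i)$ is uniquely minimized when $\{a_i\}=A_{\min}$, so any $\gamma\neq\gamma_{\min\lambda}$ yields strictly larger $t$.

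The main obstacle, if any, will be bookkeeping: verifying that the shift by $\rho$ and the sign conventions for the dot action used in Theorem~\ref{Bott} are consistent with the identifications above, and that the condition $\gamma_{n-r}\geq -r$ corresponds exactly to $a_{n-r}\geq 0$. Once the Bott translation is in place, both (1) and (2) follow from elementary finite combinatorics.
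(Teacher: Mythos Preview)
Your argument is correct, and it is a cleaner route than the one the paper takes. The paper proves the lemma by writing down an explicit algorithm that builds $\gamma_{\min\lambda}$ coordinate by coordinate: it looks at the sequence $\lambda+(r,r-1,\ldots,1)$, identifies which slots are already occupied, and fills the remaining positions greedily from the bottom; part~(2) is then asserted to be ``clear from the construction.'' Your $\rho$-shift reformulation does the same thing conceptually --- the algorithm is precisely computing the $n-r$ smallest nonnegative integers not in $\{b_j\}$ --- but packages it as a transparent optimization problem, from which uniqueness, the componentwise inequality, and the strict inequality on $t$ all fall out immediately via the monotonicity of $f(x)=x+\#\{j:b_j>x\}$ on $S$.

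What each approach buys: the paper's algorithm is written so that one can read off $\gamma_{\min\lambda}$ directly in coordinates, which is convenient for the explicit computations in Propositions~\ref{prop: ext12_sym} and~\ref{prop: ext12_skew} and for the recursive step in Remark~\ref{rmk: ext_algo}. Your argument, on the other hand, gives an honest proof of part~(2) rather than a bare assertion, and your function $f$ makes the reason for strictness in $t$ completely explicit. The bookkeeping you flag (the shift $a_i=\gamma_i+(n-i)$, $b_j=\lambda_j+(r-j)$, and $\gamma_{n-r}\ge -r\Leftrightarrow a_{n-r}\ge 0$) matches the dot action described just before Theorem~\ref{Bott}, so there is no hidden issue there. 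One minor point: the line just before the lemma in the paper reads ``let $t=k-s-|\gamma|$,'' which is the relation $k=t-s-|\gamma|$ rewritten; your $t(\gamma)=k(\gamma)+s+|\gamma|$ is the correct interpretation and is consistent with the worked examples that follow.
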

The proof of this lemma shows how to find this minimal $\gamma$, the corresponding $k$ and $t$.
\begin{proof}
We look at $\lambda+(r,r-1,\cdots,1)$. Let $i_0=r$, for $j>0$ let $i_j=\lambda_r+r-j-\lambda_{r-j}$, i.e., $\lambda_{r-j}+j+1=\lambda_r+1+(r-i_j)$. Suppose the largest $j$ with positive $i_j$ is $p$ with the corresponding $i_p=q$. We construct the minimal $\gamma$ as follows.
\par
Start with $k=r-1$ and $l=n-r$. If $k=i_j$ for some $j=1,\cdots,p$, we do nothing for $l$ and decrease $k$ by 1. Otherwise we have $k\neq i_j$ for any $j=1,\cdots,p$, in which case we set $\gamma_l=\lambda_r+(1+r-k)-(n-l+1)$ and then decrease both $k$ and $l$ by 1.  Repeat this process. Stop if we reach $l=0$ or $k=0$.
\par
If the above process stopped with $k=0$ and $l\neq0$, we reset $k=p-1$ and maintain the same $l$. If $\lambda_k+(1+r-k)=\gamma_{l-1}+(n-l+1)+1$, we keep the same $l$ and decrease $k$ by 1. Otherwise we have $\lambda_k+(1+r-k)>\gamma_{l+1}+(n-l+1)+1$, in which case we set $\gamma_l=\lambda_k+(1+r-k)-(n-l+1)+1$ and then decrease both $k$ and $l$ by 1. Repeat this process. Stop if we reach $l=0$ or $k=0$.
\par
Again if the above process stopped with $k=0$ and $l\neq0$, we set $\gamma_l=\gamma_{l+1}$ and decrease $l$ by 1. Repeat this process. Stop if we reach $l=0$.
\par
The number $k$ is the length of the permutation making $(\gamma,\lambda)$ dominant.
\par
The second part is clear from the construction.
\end{proof}
\par
The following remark gives a recursive method to find out all the other $\gamma$'s with some non-vanishing cohomology.
\begin{remark}\label{rmk: ext_algo}
Let the minimal $\gamma$ in (1) be $\gamma_{min\lambda}$ and the corresponding $t$ be $t_{min\lambda}$. Every other $\gamma$ with $H^k(\Grass,\SCH_\lambda \cQ^*\otimes \SCH_{\gamma}\cR^*)\neq0$ for some $k$ can be obtained by a sequence of the following operations $\gamma\mapsto \widehat{\gamma}$.
\par
The operation depends on the parameter $s=1,\cdots,n-r$. Set $\widehat{\gamma}_t=(\gamma_{min\lambda})_t$ for all $t>s$.

Start with $l=s$. We find the largest $j$ with $\max\{\widehat{\gamma}_{l+1}+(n-l)+1, (\gamma_{min\lambda})_l+(n-l+1)+1\}<\lambda_j+(r-j+1)$ and set $\widehat{\gamma}_l=\max\{\lambda_{j+1}+(r-j)-(n-l+1)+1,\widehat{\gamma}_{l+1}+(n-l)+1, (\gamma_{min\lambda})_l+(n-l+1)+1\}$. Then we decrease $l$ by 1, and repeat this process. Stop if we reach $l=0$.
\end{remark}
Note that in the calculation of Ext's we only care about those $\gamma$'s with $\gamma_1\leq r$. We always get all the possible $\gamma$'s with $\gamma_1\leq r$ after finitely many operations above.

\section{Equivariant quivers}\label{sec: equi_quiver}
Here we introduce the basic notions on equivariant quivers and their representations. They provide convenient language for the description of non-commutative desingularizations, especially when considering the equivariant derived categories. Also, as we will see in Subsection~\ref{subsec: BeiKapQuivers}, the derived category of coherent sheaves over homogeneous spaces are easier to describe in this way.
\par
\subsection{The notion of equivariant quivers}

Let $G$ be a reductive group, which in later sections we take to be $GL_n$.
\begin{definition}
An \textit{equivariant quiver} is a triple $Q=(Q_0, Q_1,\Eq)$ where $(Q_0,Q_1)$ is a quiver and $\Eq$ is an assignment associating each arrow $q\in Q_1$ a finite dimensional irreducible representation of $G$.
\end{definition}
When representing an equivariant quiver with a diagram, for $q\in Q_1$ we will simply label the arrow by $q(V)$ in lieu of the equation $\Eq(q)=V$.
\begin{definition}
Let $Q$ be an equivariant quiver. The \textit{path algebra} $kQ$ of $Q$ is the $k$-algebra whose underlying vector space is $$\bigoplus_{(q_1,\dots,q_l)\text{ is a path}} \Eq((q_1,\dots,q_l)),$$
where $\alpha((q_1,\dots,q_l)):=\bigotimes_{i=1}^l \alpha(q_i)$.
We define the multiplication $kQ\otimes kQ\to kQ$
$$\Eq((q_1,\dots,q_l))\otimes\Eq((p_1,\dots,p_h))\to \Eq((r_1\dots,r_s))$$
to be
\begin{equation*}
\left\{
\begin{aligned}
        \hbox{ id } & \hbox{, if the path }(r_1\dots,r_s)=(q_1,\dots,q_l,p_1,\dots,p_h) \\
                  0 & \hbox{, otherwise}
                          \end{aligned} \right.
\end{equation*}
\end{definition}
Obviously, $kQ$ has an action by $G$, and the multiplication is $G$-equivariant.
\par
We will say $Q$ is finite if both $Q_0$ and $Q_1$ are finite sets. We will concentrate on connected finite quivers.
\par
Let $Q$ be a finite quiver, and $I$ be a two sided ideal of $kQ$. We say $(Q,I)$ is an \textit{equivariant quiver with relations} if $I$ is generated by sub-representations of $\Eq(q_1,\dots,q_r)$ for paths $(q_1,\dots,q_r)$. We usually specify a set of such sub-representations as generators of the ideal and call this set relations. We say $I$ is \textit{admissible} if it is generated by sub-representations of $\Eq(q_1,\dots,q_r)$ for paths of length 2 or longer and $I$ contains some power of the arrow ideal. In this case, the pair $(Q,I)$ will be called a bound equivariant quiver, and $kQ/I$ the bound path algebra.
\par
We would like to define two notions of representations of a bound  equivariant quiver, depending on whether or not we will acknowledge $G$-action.
\par
The first notion is a representation which is simply a representation of the quiver obtained by replacing each arrow $q\in Q_1$ by as many arrows as the dimension of $\Eq(q)$.
\begin{definition}
A \textit{representation} of $(Q_0,Q_1,\Eq)$ is an assignment associating to each vertex $a\in Q_0$ a vector space $V_a$, and to each arrow $q:a\to b$ a $k$-linear morphism $V_a\otimes \Eq(q)\to V_b$ (or equivalently $\dim(\Eq(q))$ linear maps $V_a\to V_b$ according to a fixed basis of $\Eq(q)$).
\end{definition}
\par
We define the notion of equivariant representations of an equivariant quiver.
\begin{definition}
Let $Q=(Q_0, Q_1)$ be an equivariant quiver.
An \textit{equivariant representation} of $Q$ is an assignment associating to each vertex $a\in Q_0$ a representation $V_a$ of $G$, and to each arrow $q:a\to b$ a $G$-morphism $V_a\otimes \Eq(q)\to V_b$.
\end{definition}
Let $(Q,I)$ be an equivariant quiver with relations. For a bound representation (resp. bound equivariant representation) we require that the generators of $I$, which we can chose to be sub-representations by definition,  act trivially, i.e., all the morphisms above involving subspaces of $I$ are trivial maps.
\par
If $A$ is a commutative $k$-algebra with a rational $G$-action, we can also talk about bound representations (resp. bound equivariant representation) of $(Q,I)$ over $A$. By this we mean associating to each vertex a projective $A$-module and all the maps have to be $A$-linear. For equivariant representations we require that the projective modules have a rational $G$-action compatible with the $A$-module structure and that all maps corresponding to arrows are $G$-equivariant.
\par
\begin{prop}
The category of bound representations (resp. bound equivariant representations) of $(Q,I)$ (over $k$) is equivalent to the category of modules (resp. $G$-equivariant modules) over the ring $kQ/I$.
\end{prop}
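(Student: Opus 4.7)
The plan is to establish the equivalence in the standard way for path algebras of quivers with relations, and then check that the construction is compatible with the $G$-action, so that the equivariant version follows formally.

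First I would set up the basic correspondence at the level of ordinary (non-equivariant) representations and modules. For each vertex $a \in Q_0$, let $e_a \in kQ$ denote the trivial path at $a$; these form a complete set of orthogonal idempotents (with $\sum_a e_a = 1_{kQ/I}$, well-defined since $Q_0$ is finite). Given a left $kQ/I$-module $M$, set $V_a := e_a M$; for each arrow $q : a \to b$, the element $q \in e_b (kQ/I) e_a$ lies in a $G$-invariant subspace isomorphic to $\alpha(q)$, and multiplication gives a $k$-linear map $\alpha(q) \otimes_k V_a \to V_b$. Conversely, given a representation $(V_a, \phi_q)$, set $M := \bigoplus_a V_a$, extend each $\phi_q$ to a map on $M$ that is zero on $V_c$ for $c \neq a$, and extend to arbitrary paths by composing the $\phi_q$'s according to the multiplication in $kQ/I$. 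The admissibility of $I$ (containing some power of the arrow ideal) ensures that these composites are compatible with relations after passing to the quotient: the relations, being sub-representations of path spaces, are required to act as zero in a bound representation, exactly matching the vanishing in $kQ/I$.

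Next I would verify that these two constructions are mutually inverse, and that they are functorial on morphisms (a $kQ/I$-module homomorphism $f : M \to N$ restricts to $k$-linear maps $e_a M \to e_a N$ commuting with the action of each arrow, and conversely a collection of maps $V_a \to V_a'$ intertwining each $\phi_q$ glues to a well-defined $kQ/I$-homomorphism). The verifications here are routine once the correspondence is set up, so I would not grind through them beyond indicating that both sides are $k$-linear and that compositions translate into compositions of arrows along paths.

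Finally, for the equivariant version, I would observe that the path algebra $kQ$ carries a natural rational $G$-action making the product $G$-equivariant, and the ideal $I$ is $G$-stable by construction (its generators are sub-representations of path spaces), hence $kQ/I$ inherits a rational $G$-action. Then the same functors pass to the equivariant setting: given a $G$-equivariant $kQ/I$-module $M$, each $V_a = e_a M$ is a $G$-subrepresentation (since $e_a$ is $G$-invariant), and the multiplication maps $\alpha(q) \otimes V_a \to V_b$ are automatically $G$-equivariant. Conversely, if each $V_a$ is a $G$-representation and each $\phi_q$ is $G$-equivariant, then $M = \bigoplus_a V_a$ carries a compatible rational $G$-action. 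I expect the main subtlety to be checking that the $G$-action on $M$ assembled from the $V_a$'s remains rational and compatible with the algebra structure on $kQ/I$; this is handled by noting that each $\alpha(q)$ is finite-dimensional, so every element of $kQ/I$ lies in a finite-dimensional $G$-subrepresentation acting by $G$-equivariant endomorphisms of $M$. The bound versions over a commutative $k$-algebra $A$ with rational $G$-action are handled by the same argument with $A$-linearity imposed throughout.
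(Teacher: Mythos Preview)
The paper states this proposition without proof, treating it as a routine extension of the classical equivalence between representations of a quiver with relations and modules over its bound path algebra. Your argument is correct and is exactly the standard one: decompose a module via the vertex idempotents, read off the arrow maps from left multiplication by the generators of the arrow spaces, and check compatibility with $G$ in the equivariant case using that the $e_a$ are $G$-fixed and the arrow spaces $\alpha(q)$ are $G$-subrepresentations of $kQ/I$. One small remark: your invocation of admissibility (that $I$ contains a power of the arrow ideal) is unnecessary here --- the equivalence holds for any ideal $I$ generated by sub-representations of path spaces of length $\geq 1$, and the paper's definition of bound representation does not require $I$ to be admissible.
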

By an equivariant module over $kQ/I$, we mean an $G$-equivariant $kQ/I$ action $kQ/I\otimes M\to M$.

\subsection{Beilinson and Kapranov quivers}\label{subsec: BeiKapQuivers}
According to a result of Beilinson, we have a full exceptional collection
$$\nabla(\mathbb{P}^{n-1}):=\{\Omega^{i-1}(i)\mid i\in[0,n-1] \}$$
 in the derived category of quasi-coherent sheaves over $\mathbb{P}^{n-1}$. Thus, the endomorphism ring of $\oplus_{1=0}^{n-1}\Omega^{i-1}(i)$ is derived equivalent to $\mathbb{P}^{n-1}$, where $\Omega^k$ is the $k$-th exterior power of the sheaf of K\"{a}hler differentials.
\par
Let $E$ be a vector space of dimension $n$ and we take $G=GL_n$ acting naturally on $E$. The Beilinson equivariant quiver,  denoted by $QB(n)$, is defined as follows:

$$\xymatrix{
\bullet_0\ar@/^/@<1ex>[r]^{\alpha_0(E)} &
\bullet_1\ar@/^/@<1ex>[r]^{\alpha_1(E)}&
\cdots\ar@/^/@<1ex>[r]^{\alpha_{n-1}(E)}
 &
\bullet_{n-1} }$$
with relations:

$$\alpha_i\alpha_{i+1}(\wedge^2E).$$
\begin{remark}
Let us pick up a basis for $E$, say, $e_1,\cdots,e_n$. Then, the above quiver, with equivariant structure forgotten, has $n$ arrows going from the $i$-th vertex to the $i+1$-th, denoted by $\alpha_i^1,\cdots,\alpha_i^n$, corresponding to the basis elements of $E$. The relations $\alpha_i\alpha_{i+1}(\wedge^2E)$ can be written as $\alpha_{i+1}^j\alpha_{i}^k-\alpha_{i+1}^k\alpha_i^j$ for all $j$, $k$.
\end{remark}
The bound path algebra of this quiver is isomorphic to $\End_{\sO_{\mathbb{P}^{n-1}}}(\oplus_{1=0}^{n-1}\Omega^{i-1}(i))$, as can be checked by $\Hom_{\sO_{\mathbb{P}(E)}}(\Omega^{a-1}(a),\Omega^{b-1}(b))\cong\wedge^{a-b}(E^*)$ if $a\leq b$ and 0 otherwise.
\par

\par
\begin{remark}
It is not hard to see   that the  category of representations of the Beilinson quiver is isomorphic to the category of graded modules over the exterior algebra \cite{BLV}. Also the same is true for equivariant representations.
\end{remark}

\par
Now let $k=\CC$. We would like to do the same thing for the Grassmannian $\Grass_r(n)$ with $n-r>1$ and call the corresponding equivariant quiver the Kapranov quiver $QK(r,n)$. The exceptional collection we take is
$$\nabla(\Grass_r(n)):=\{\SCH_\alpha\cR^{* },\alpha\in B_{r,n-r}\}.$$
 Let $G=GL_n(\mathbb{C})$ and $E=\mathbb{C}^n$ with the natural $G$ action.
\par
The Kapranov quiver has the set of vertices corresponding to the set of subpartitions of $((n-r)^r)$, and two vertices $\lambda_1$ and $\lambda_2$ are linked by an arrow $\lambda_1\to\lambda_2$ iff $\lambda_2/\lambda_1$ is a single box, and in this case this arrow is associated to the $G$-representation $E$.
\par
For any three partitions $\alpha$, $\beta$, $\mu$, their Littlewood-Richardson coefficient will be denoted by $C^\mu_{\alpha, \beta}$, i.e., the number of Littlewood-Richardson tableaux of shape $\mu / \alpha$ and of weight $\beta$ is $C^\mu_{\alpha, \beta}$. With this notation, the relations in Kapranov quiver are generated by the sub-representations $$C_{\alpha^t,(2,0,\cdots,0)}^{\beta^t}\SCH_2E^*\oplus C_{\alpha^t,(1,1,0,\cdots,0)}^{\beta^t}\wedge^2E$$ of the arrows in $\Hom(\beta,\alpha)$.
\par
The bound path algebra of $QK(r,n)$ is isomorphic to $\End_{\Grass}(\oplus_{\alpha\in B_{r,n-r}}\SCH_\alpha\cR^{* })$.
\par
\begin{example}
We compute the Kapranov equivariant quiver of the Grassmannian $\Grass_2(4)$.
$$\xymatrix{
 & & \text{$\Young(|)$}\ar[dl]^{\alpha_2(E)} & & \\
 \varnothing & \text{$\Young()$}\ar[l]_{\alpha_1(E)}  & & \text{$\Young(,|)$}\ar[ul]^{\alpha_4(E)}\ar[dl]_{\alpha_5(E)} & \text{$\Young(,|,)$}\ar[l]_{\alpha_6(E)}\\
& & \text{$\Young(,)$}\ar[ul]_{\alpha_3(E)}
}$$
with relations:
\begin{itemize}
  \item $\Hom(\Young(,),\varnothing)$: $\alpha_1\alpha_3(\wedge^2E)$;
  \item $\Hom(\Young(|),\varnothing)$: $\alpha_1\alpha_2(\SCH_2E)$;
  \item $\Hom(\Young(,|,),\Young(,))$: $\alpha_5\alpha_6(\wedge^2E)$;
  \item $\Hom(\Young(,|,),\Young(|))$: $\alpha_4\alpha_6(\SCH_2E)$;
  \item $\Hom(\Young(,|),\Young())$: $\alpha_2\alpha_4-\alpha_3\alpha_5(\SCH_2E\oplus\wedge^2E)$.
\end{itemize}
\end{example}
\begin{prop}\label{prop: kap_equi_der_equ}
The functor $\Phi:=R\Hom_{\sO_{\Grass_r(n)}}(\oplus_{\alpha\in B_{r,n-r}}\SCH_\alpha\cR^{* },-)$ induces an equivalence of triangulated categories \[D^b_G(Coh(X))\cong D^b_G(\mathbb{C}QK(r,n)/I\hbox{-}mod),\] with quasi-inverse given by $\Psi:=-\otimes^L(\oplus_{\alpha\in B_{r,n-r}}\SCH_\alpha\cR^*)$.
\end{prop}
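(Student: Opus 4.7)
The plan is to reduce the equivariant statement to the non-equivariant tilting equivalence of Theorem~\ref{finite dim}, and then upgrade it by observing that all the relevant data are $G$-equivariant and that the forgetful functors are conservative.

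First, I would record that $\TIL_K = \bigoplus_{\alpha \in B_{r,n-r}}\SCH_\alpha \cR^*$ is canonically $G$-equivariant, being built from the tautological subbundle $\cR$ via Schur functors. Consequently $A := \End_{\sO_{X}}(\TIL_K) \cong \mathbb{C}QK(r,n)/I$ carries a rational $G$-action, and the adjoint pair $\Phi = R\Hom_{\sO_X}(\TIL_K,-)$, $\Psi = -\otimes_A^L \TIL_K$ promotes to a pair of functors between $D^b_G(Coh(X))$ and $D^b_G(A\hbox{-}mod)$. Concretely, $\Phi$ can be computed with a $G$-equivariant injective resolution on the sheaf side and $\Psi$ with a $G$-equivariant projective resolution on the module side; both resolutions exist because $G$ is reductive and $A$ is a finite-dimensional $G$-algebra whose modules decompose into finite-dimensional subrepresentations carrying compatible $A$-module structures.

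Second, by Kapranov's theorem $\TIL_K$ is a tilting bundle on $X = \Grass_r(n)$ in characteristic zero, so Theorem~\ref{finite dim} gives a non-equivariant equivalence $\Phi : D^b(Coh(X)) \isom D^b(A\hbox{-}mod)$ with quasi-inverse $\Psi$. The unit $\eta : \id \to \Psi\Phi$ and counit $\varepsilon : \Phi\Psi \to \id$ are natural isomorphisms, and being built from evaluation and coevaluation maps on the $G$-equivariant object $\TIL_K$, they are automatically equivariant natural transformations on the equivariant categories. Invoking conservativity of the forgetful functors $D^b_G(Coh(X)) \to D^b(Coh(X))$ and $D^b_G(A\hbox{-}mod) \to D^b(A\hbox{-}mod)$ — a morphism in the equivariant derived category is an isomorphism iff its underlying non-equivariant morphism is — the non-equivariant isomorphy of $\eta$ and $\varepsilon$ lifts to equivariant isomorphy, yielding the desired equivalence.

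The main obstacle I anticipate is foundational rather than computational: one must set up $\Phi$ and $\Psi$ as genuine derived functors between the equivariant bounded derived categories and confirm that the adjunction $(\Psi,\Phi)$ really holds at the equivariant derived level, not merely after forgetting the $G$-action. This is handled either by invoking the Bernstein--Lunts framework for equivariant derived categories, or by directly constructing equivariant resolutions using reductivity of $G$ together with the finite dimensionality of $A$. Once that point is secured, the conservativity argument finishes the proof essentially for free; no further computation with the quiver or its relations is needed, because the relations have already been encoded into the isomorphism $A \cong \mathbb{C}QK(r,n)/I$.
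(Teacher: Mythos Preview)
Your proposal is correct and follows essentially the same approach as the paper: both arguments observe that $\Phi$ and $\Psi$ are well-defined on the equivariant level and commute with the forgetful functors, then invoke the known non-equivariant tilting equivalence (Theorem~\ref{finite dim}) and conclude by compatibility of the $G$-actions. Your version is in fact more carefully articulated than the paper's sketch, particularly in spelling out the conservativity argument for the unit and counit and in flagging the foundational point about constructing the equivariant derived functors.
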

\begin{proof}
First note that both functors are well-defined on this level. The Hom space between any two equivariant sheaves is naturally a representation of $G$ and the $G$-action is compatible with multiplication by elements in $\End_{\Grass}(\oplus_{\alpha\in B_{r,n-r}}\SCH_\alpha\cR^{* })\cong\mathbb{C}QK(r,n)/I$, and similar for $\Psi$. Also both functors commute with the forgetful functors forgetting the $G$-action. Without the $G$-equivariance, this result has been discussed. The compatibility of $G$-actions can be checked directly.

\end{proof}

\section{Calculation of quiver with relations for noncommutative desingularizations}
\label{sec:calculations}
Now, we use Sections~\ref{sec:Cat}, \ref{subsec: comb_Kap}, and \ref{sec: equi_quiver} to study the quiver with relations for the non-commutative desingularization of determinantal varieties in the spaces of symmetric and skew-symmetric matrices obtained in Section~\ref{sec:sym}.
\subsection{Symmetric matrices}
We follow the notions in Section~\ref{subsec:sym}, i.e., 
taking $S^s=k[x_{ij}]_{i\leq j}$, and taking $\Spec R^s\subseteq \Spec S^s$ to be the locus where $\varphi$ has rank $\leq
r$. A resolutions of $\Spec R^s$, denoted by $\cZ^s$, is given by a total space of a vector bundle on $\Grass_{n-r}(E^*)$. On $\Grass$ we consider the equivariant pair  $\Delta(\Grass)=\{\SCH_\lambda\cQ^*\mid\lambda\in B_{r,n-r}\}$ and $\nabla(\Grass)=\SCH_{(n-r)^r}\cQ^*\otimes \SCH_{(\alpha^c)'}\cR[(n-r)r-|\alpha|]$.  Proposition~\ref{prop: standardizable} gives collections $\{\Phi_\alpha\mid \alpha\in B_{r,n-r}\}$ and $\{\Sigma_\alpha\mid \alpha\in B_{r,n-r}\}$.
In particular,  $\TIL=\oplus_{\alpha\in B_{r,n-r}}\Phi_\alpha$ is a multiplicity-free tilting bundle.
In this section we are mostly  interested in the case when $k=\CC$ so that $\Phi_\alpha=\Delta_\alpha$ for all $\alpha\in B_{r,n-r}$, hence $\TIL$ becomes $\TIL_K(r,n)$, the Kapranov's tilting bundle. 

For general $k$,  let $\TIL_0(r,n)$ be the BLV-tilting bundle. Then $\TIL_0$ and $\TIL$ are the same up to multiplicity. In particular, $\End_{\cZ^s}(p'^*\TIL_0)$ and $\End_{\cZ^s}(p'^*\TIL)$ are Morita equivalent.

Before we discuss about the properties and the quiver with relations for the non-commutative desingularization $\End_{\cZ^s}(p'^*\TIL)$, let us remark about the equivariant projectives and simples over this non-commutative desingularization.

\begin{remark}
When $k=\CC$,  $p'^*(\SCH_\alpha\cQ^*)$ can have the same global sections for different $\alpha$'s, although they differ as graded objects. However, for $\alpha\neq\beta$, the objects $R\Hom_{{\cZ^s}}(p'^*\TIL_K, p'^*(\SCH_\alpha\cQ^*))$ and $ R\Hom_{{\cZ^s}}(p'^*\TIL_K, p'^*(\SCH_\beta\cQ^*))$ are different objects in $D^b(\End_{\cZ^s}(p'^*\TIL_K))$, as we have seen in Lemma~\ref{lem: dual_coll}, there is an object with all Ext's to $p'^*(\SCH_\alpha\cQ^*)$ vanishes but having non-trivial Ext to $p'^*(\SCH_\beta\cQ^*)$. 
\end{remark}
\par
More explicitly, we describe here the equivariant simple modules, considered as modules over $\End_{\sO_Z^s}(p^*(\oplus_{\alpha\in B_{r,n-r}}\nabla_\alpha))$ and as representations.
\begin{remark}
As can be seen, in the set-up of this section, the conditions in Proposition~\ref{prop: equi-simple} and Remark~\ref{rmk:equ_simple} are satisfied. Thus, $S_\beta=R\Hom_{\sO_Z^s}(p^*(\oplus_{\alpha\in B_{r,n-r}}\Phi_\alpha), u_*\Sigma_\beta)$ are all the equivariant simple objects over $\End_{\sO_Z^s}(p^*(\oplus_{\alpha\in B_{r,n-r}}\Phi_\alpha))$. As has been remarked in \ref{rmk:equ_simple}, the finite dimensional algebra $\End_{\sO_{\Grass}}(\oplus_{\alpha\in B_{r,n-r}}\Phi_\alpha)$ is a subring of $\End_{\sO_Z^s}(p^*(\oplus_{\alpha\in B_{r,n-r}}\Phi_\alpha))$. The modules $S_\alpha$ are also simple modules over $\mathbb{C}QK(n,n-r)/I$.

In fact, when $k=\CC$, as $\End_{\sO_{\Grass}}(\oplus_{\alpha\in B_{r,n-r}}\Phi_\alpha)\cong\mathbb{C}QK(n,n-r)/I$ is a finite dimensional basic algebra, $S_\alpha$ is the simple representation of the underlying quiver $QK(n,n-r)$ corresponding to the vertex $\alpha$. In this case, it is evident that $S_\alpha$'s are distinct as modules over $\End_{\sO_Z^s}(p^*(\oplus_\alpha\nabla_\alpha))$.
\par
Let us look at the structure of $S_\beta$'s as $GL_n$-representations. By definition,
\begin{eqnarray*}
S_\beta&=&R\Hom_{\sO_Z^s}(p^*(\oplus_\alpha\nabla_\alpha), u_*\Delta_\beta)\\
&\cong&\Hom_{\sO_Z^s}(p^*\nabla_\beta, u_*\Delta_\beta)\\
&\cong&\Hom_{\Grass}(\nabla_\beta, \Delta_\beta)\\
&\cong& H^{(n-r)r-|\alpha|}(\Grass, \SCH_\beta\cQ\otimes \SCH_{(n-r)^r}\cQ^*\otimes \SCH_{\alpha^{ct}}\cR)\\
&\cong&\mathbb{C}.
 \end{eqnarray*}
 All of them are 1-dimensional trivial representations.
\end{remark}

\subsubsection{Calculations}
The following Proposition is a direct consequence of Lemma~\ref{Lem: Exts}.
\par
\begin{prop}\label{prop: Exts Sym}
Let $k=\CC$ so that  $\Phi(\Grass)=\Delta(\Grass)=\{\SCH_\lambda\cQ^*\mid\lambda\in B_{r,n-r}\}$ and $\Sigma(\Grass)=\nabla(\Grass)=\SCH_{(n-r)^r}\cQ^*\otimes \SCH_{(\alpha^c)'}\cR[(n-r)r-|\alpha|]$. Let $S_\alpha$ be as in Proposition~\ref{prop: equi-simple}. Then, the Ext's among them are given by
$$\Ext^t(S_\alpha,S_\beta)\cong\oplus_s\oplus_{\lambda\in Q_{-1}(2s)}H^{t-s-|\beta|+|\alpha|}(\Grass,\SCH_\lambda \cQ^*\otimes \SCH_{\beta^t}\cR^*\otimes \SCH_{\alpha^t}\cR),$$ where $Q_{-1}(2s)=\{\lambda\vdash2s|\lambda=(a_1,\cdots,a_r|a_1-1,\cdots,a_r-1)\}$ in the hook notation.
\end{prop}

More explicitly, we have the following formula.
\begin{prop}\label{prop: ext12_sym}
Under the assumptions of Proposition~\ref{prop: Exts Sym}. We have
\[
\Ext^1(S_\alpha,S_\beta)\cong\left\{
\begin{aligned}
        &(C_{\beta,(1,0,\cdots,0)}^\alpha E^*)\oplus(C_{\alpha,(1,0,\cdots,0)}^\beta E^*), &\hbox{ if } n-r=1 \\
        &(E^*)\oplus(C_{\beta,(1,0,\cdots,0)}^\alpha\mathbb{C}), &\hbox{ if } n-r\geq2.
                          \end{aligned} \right.
\]
$\Ext^2(S_\alpha,S_\beta)\cong$
\[
\left\{
\begin{aligned}
        &(C^\alpha_{\beta,(1,1,0,\cdots,0)}\Sym_2E^* )\oplus(C^\beta_{\alpha,(1,1,0,\cdots,0)}\Sym_2E^* )\oplus(\delta^\beta_{\alpha}\wedge^2E^* ),\\ &\hbox{ if }n-r=1; \\
        &(C^{\beta^t}_{\alpha^t,(1,-1)}\wedge^2E^* )\oplus(C^{\beta^t}_{\alpha^t,(-1,-2)}E^* )\oplus(C^\beta_{\alpha,(1,1,0,\cdots,0)}\Sym_2E^* )\oplus( C^\beta_{\alpha,(2,0,\cdots,0)}\wedge^2E^* ), \\ &\hbox{ if }n-r=2; \\
        &(C^{\beta^t}_{\alpha^t,(0,\cdots,0,-1,-1,-2)}\mathbb{C})\oplus(C^{\beta^t}_{\alpha^t,(1,0,\cdots,0,-1,-1)}E^* )\oplus(C^{\beta^t}_{\alpha^t,(2,0,\cdots,0)}\Sym_2E^* )\oplus(C^{\beta^t}_{\alpha^t,(1,1,0,\cdots,0)}\wedge^2E^* ), \\ &\hbox{ if }n-r\geq2.
                          \end{aligned} \right.
\]
\end{prop}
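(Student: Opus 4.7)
The plan is to obtain the explicit formulas by feeding Proposition~\ref{prop: Exts Sym} into the combinatorial machinery of Subsection~\ref{subsec: comb_Kap}. First I would specialise Proposition~\ref{prop: Exts Sym} to $t=1$ and $t=2$. By part (1) of Corollary~\ref{cor: ext_delta} only summands with $s\leq t$ survive, so for $\Ext^1$ I need $s\in\{0,1\}$ and for $\Ext^2$ only $s\in\{0,1,2\}$. Over $\mathbb{C}$ the classical decomposition $\wedge^{s}(\Sym_2 V)=\bigoplus_{\lambda\in Q_{-1}(2s)}\SCH_{\lambda}V$ gives $\lambda=\varnothing$ for $s=0$, $\lambda=(2)$ for $s=1$ and $\lambda=(3,1)$ for $s=2$ (since in each case $Q_{-1}(2s)$ is a singleton). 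This reduces the task to computing a short finite list of cohomologies of the form $H^{t-s+|\alpha|-|\beta|}(\Grass,\SCH_\lambda\cQ^{*}\otimes\SCH_{\beta^{t}}\cR^{*}\otimes\SCH_{\alpha^{t}}\cR)$.

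Next I would decompose the $GL_{n-r}$-representation $\SCH_{\beta^{t}}\cR^{*}\otimes\SCH_{\alpha^{t}}\cR$ as a direct sum $\bigoplus_{\gamma} c_{\gamma}\,\SCH_{\gamma}\cR^{*}$, indexed by integer dominant $GL_{n-r}$-weights $\gamma=(\gamma_1,\dots,\gamma_{n-r})$ possibly with negative entries; the multiplicities $c_\gamma$ are Littlewood--Richardson coefficients after dualising via $\SCH_{\alpha^{t}}\cR\cong\SCH_{-w_0\alpha^{t}}\cR^{*}$. For each contributing $\gamma$, Example~\ref{cor: ext_1} (for $t=1$) and Example~\ref{cor: ext_2} (for $t=2$) pick out exactly those $(\lambda,\gamma,s)$ whose cohomology in the required degree is nonzero, and compute that cohomology by Bott's theorem as an irreducible $\SCH_{\nu}E$ (in the cases at hand, one of $\mathbb{C}$, $E^{*}$, $\wedge^{2}E^{*}$ or $\Sym_{2}E^{*}$). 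Substituting these values back and re-packaging the $c_\gamma$ as Littlewood--Richardson coefficients $C^{\alpha}_{\beta,\cdot}$ or $C^{\beta^{t}}_{\alpha^{t},\cdot}$ in the variables of the statement yields the displayed sum of four terms in each case.

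The main obstacle is the case-division in $n-r$, which is forced by the rank of $\cR$ (and, to a lesser extent, of $\cQ$). When $n-r=1$ the line bundle $\cR$ kills most $\gamma$ and the $s=2$ hook $\SCH_{(3,1)}\cQ^{*}$ interacts degenerately with the single Frobenius parameter of $\cR$, producing the simplified formulas in the first line of each case. When $n-r=2$ an additional hook weight from Example~\ref{cor: ext_2} survives in $\Ext^{2}$, which explains the extra $C^{\beta^{t}}_{\alpha^{t},(1,-1)}\wedge^{2}E^{*}$ and $C^{\beta^{t}}_{\alpha^{t},(-1,-2)}E^{*}$ summands for $n-r=2$ and the analogous refinement in $\Ext^{1}$; for $n-r\geq 3$ the generic list of admissible $\gamma$'s from Example~\ref{cor: ext_2} applies uniformly. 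The real labour is purely bookkeeping: tracking, for each $n-r$, which $\gamma$'s give non-zero Schur functors of $\cR^{*}$, and matching the resulting Bott contributions $\SCH_\nu E$ to the correct Littlewood--Richardson coefficient in $\alpha,\beta$. No new geometric or cohomological input is needed beyond Proposition~\ref{prop: Exts Sym}, Examples~\ref{cor: ext_1}--\ref{cor: ext_2}, and standard characteristic-zero Bott vanishing.
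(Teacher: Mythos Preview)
Your proposal is correct and follows essentially the same route as the paper: both arguments specialise Proposition~\ref{prop: Exts Sym} to $t=1,2$, note that only $s\le t$ can contribute, identify $Q_{-1}(2)=\{(2)\}$ and $Q_{-1}(4)=\{(3,1)\}$, and then do the Bott bookkeeping separately for $n-r=1$, $n-r=2$, and $n-r\ge 3$. The only cosmetic difference is that the paper carries out the bookkeeping by running the algorithm of Lemma~\ref{lem: ext_algo} and Remark~\ref{rmk: ext_algo} on $\lambda=(2)$ and $\lambda=(3,1)$ to produce the admissible $\gamma$'s, whereas you invoke the already-tabulated Examples~\ref{cor: ext_1} and~\ref{cor: ext_2}; since those Examples are precisely the output of that algorithm for $t=1,2$, the two presentations are equivalent.
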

\begin{proof}
Note that the only element in $Q_{-1}(2)$ is $(2,0,\cdots,0)$ and the only element in $Q_{-1}(4)$ is $(3,1,0,\cdots,0)$. One can easily calculate $\Ext^1$ and $\Ext^2$ with the aid of Lemma~\ref{lem: ext_algo}.
\par
For $\lambda=(3,1,0,\cdots,0)$, the  $\gamma_{min(3,1,0,\cdots,0)}$ is given by Lemma~\ref{lem: ext_algo} is $(-2)$ if $n-r=1$ with the corresponding $t_{min(3,1,0,\cdots,0)}=2$; $\gamma_{min(3,1,0,\cdots,0)}=(-1,-2)$ if $n-r=2$ with the corresponding $t_{min(3,1,0,\cdots,0)}=2$; $\gamma_{min(3,1,0,\cdots,0)}=(-1,-1,-2)$ if $n-r\geq3$ with the corresponding $t_{min(3,1,0,\cdots,0)}=2$. Note that in any case above there is no operation described in Remark~\ref{rmk: ext_algo} satisfying the constrains given by Corollary~\ref{cor: ext_delta}.
\par
For $\lambda=(2,0,\cdots,0)$, the $\gamma_{min(2,0,\cdots,0)}$ given by Lemma~\ref{lem: ext_algo} is $(-1)$ if $n-r=1$, with the corresponding $t_{min(2,0,\cdots,0)}=1$; $\gamma_{min(2,0,\cdots,0)}=(0,\cdots,0,-1,-1)$ if $n-r\geq2$ with the corresponding $t_{min(2,0,\cdots,0)}=1$.
\par
Note that in the case $n-r=1$ there is one operation described in Remark~\ref{rmk: ext_algo} satisfying the constrains given by Corollary~\ref{cor: ext_delta}, which gives $\gamma=(0)$ with $t=2$. In the case $n-r=2$ there is one operation described in Remark~\ref{rmk: ext_algo} satisfying the constrains given by Corollary~\ref{cor: ext_delta} but can be applied successively,  which gives $\gamma=(1,-1)$ and $\gamma=(2,-1)$ with the corresponding $t=2$ and 3 respectively. In the case $n-r>2$ there are a lot of operations. But if we only care about those with corresponding $t=2$ and satisfying the constrains given by Corollary~\ref{cor: ext_delta},  there is only one which gives $\gamma=(1,0,\cdots,0,-1,-1)$.
\par
Note that only $Q_{-1}(2s)$ with $s=0,1,2$ contributes to $\Ext^1$ and $\Ext^2$. This finishes the proof.
\end{proof}

\subsection{Maximal minors in symmetric matrices}
In the rest of this section, we illustrate Proposition~\ref{prop: Exts Sym} and Proposition~\ref{prop: ext12_sym} with explicit examples and numerical consequences.

We assume $r=n-1$, $S^s=k[x_{i,j}]_{1\leq i\leq j\leq n}$
and $R^s=S^s/(\det(x_{ij}))$. In this case, $R^s$ is always Gorenstein, since
it is a hypersurface.
\par
The inverse image of the tilting bundle from the Grassmannian is still a
tilting bundle by the same argument as before. More explicitly, it
is $$\bigoplus_{i=0}^{n-1}\wedge^iQ^*\otimes\Sym(\Sym_2Q).$$

\par

\begin{prop}
The endomorphism ring $\End_S(q_*'p'^*\TIL_K)$ is isomorphic to the path algebra of the
quiver:
$$\xymatrix{
\bullet_0\ar@/^/@<4ex>[r]|-{\alpha_1}
\ar@/^/@<3ex>[r]|-{\alpha_2}_{\cdots} \ar@/^/@<1ex>[r]|-{\alpha_n} &
\bullet_1\ar@/^/@<4ex>[r]|-{\alpha_1}
\ar@/^/@<3ex>[r]|-{\alpha_2}_{\cdots} \ar@/^/@<1ex>[r]|-{\alpha_n}
\ar@/^/@<4ex>[l]|-{\beta_1} \ar@/^/@<3ex>[l]|-{\beta_2}_{\cdots}
\ar@/^/@<1ex>[l]|-{\beta_n} &  \cdots\ar@/^/@<4ex>[r]|-{\alpha_1}
\ar@/^/@<3ex>[r]|-{\alpha_2}_{\cdots} \ar@/^/@<1ex>[r]|-{\alpha_n}
\ar@/^/@<4ex>[l]|-{\beta_1} \ar@/^/@<3ex>[l]|-{\beta_2}_{\cdots}
\ar@/^/@<1ex>[l]|-{\beta_n} &
\bullet_{n-1}\ar@/^/@<4ex>[l]|-{\beta_1}
\ar@/^/@<3ex>[l]|-{\beta_2}_{\cdots} \ar@/^/@<1ex>[l]|-{\beta_n} }$$
with relations:

$$\alpha_i\alpha_j+\alpha_j\alpha_i,$$
$$\beta_i\beta_j+\beta_j\beta_i,$$
$$(\alpha_i\beta_j+\beta_j\alpha_i)-(\alpha_j\beta_i+\beta_i\alpha_j),$$
where for any term not making sense at some vertex, it is to be
understood as dropped.
\end{prop}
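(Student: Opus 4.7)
The plan is to apply Theorem~\ref{thm: quiver_gen_rel} in the specialisation $n-r=1$, where $\Grass=\mathbb{P}(E)$, the tilting bundle is $\TIL_K=\bigoplus_{i=0}^{n-1}\wedge^i\cQ^*$, and $\cZ^s$ (the total space of $\Sym_2\cQ^*$) is a crepant resolution of the hypersurface $\Spec R^s$. The required hypotheses have already been verified in this section: Proposition~\ref{sym tilt} says $p'^*\TIL_K$ is tilting, Proposition~\ref{prop: sym_mcm} supplies $\End_{\cZ^s}(p'^*\TIL_K)\cong\End_{R^s}(q'_*p'^*\TIL_K)$ together with the non-commutative crepant resolution property, and the whole diagram is $GL_n$-equivariant with the origin the only $GL_n$-fixed closed point of $\Spec R^s$.

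By Theorem~\ref{thm: quiver_gen_rel}(1) the vertices are indexed by $B_{n-1,1}=\{(1^i)\mid 0\le i\le n-1\}$, which I label $\bullet_0,\ldots,\bullet_{n-1}$, and a $k$-basis of $\Ext^1_\Lambda(S_\alpha,S_\beta)^*$ provides the arrows from $\bullet_\beta$ to $\bullet_\alpha$. Specialising Proposition~\ref{prop: ext12_sym} to $n-r=1$ gives
\[\Ext^1(S_\alpha,S_\beta)\cong\bigl(C^\alpha_{\beta,(1)}\,E^*\bigr)\oplus\bigl(C^\beta_{\alpha,(1)}\,E^*\bigr),\]
and by Pieri $C^\alpha_{\beta,(1)}=1$ exactly when $\alpha/\beta$ is a single box. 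This produces at every level $n$ forward arrows $\alpha_1,\ldots,\alpha_n$ from $\bullet_k$ to $\bullet_{k+1}$ (a basis of $E^*$) together with $n$ backward arrows $\beta_1,\ldots,\beta_n$ from $\bullet_{k+1}$ to $\bullet_k$, reproducing the underlying quiver in the statement.

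The relations come from $\Ext^2_\Lambda(S_\alpha,S_\beta)^*$ by Theorem~\ref{thm: quiver_gen_rel}(2). Proposition~\ref{prop: ext12_sym} gives
\[\Ext^2(S_\alpha,S_\beta)\cong\bigl(C^\alpha_{\beta,(1,1)}\,\Sym_2 E^*\bigr)\oplus\bigl(C^\beta_{\alpha,(1,1)}\,\Sym_2 E^*\bigr)\oplus\bigl(\delta^\beta_\alpha\wedge^2 E^*\bigr).\]
The three summands have the following concrete interpretation: (i) the $\Sym_2 E^*$ subrepresentation of $E^*\otimes E^*$ sitting in forward length-two paths $\bullet_k\to\bullet_{k+1}\to\bullet_{k+2}$ is killed, giving $\alpha_i\alpha_j+\alpha_j\alpha_i=0$; (ii) the analogous statement for backward paths gives $\beta_i\beta_j+\beta_j\beta_i=0$; (iii) at each vertex the antisymmetric part of the difference of the two length-two loops (one through $\bullet_{k+1}$, one through $\bullet_{k-1}$) vanishes, yielding $(\alpha_i\beta_j+\beta_j\alpha_i)-(\alpha_j\beta_i+\beta_i\alpha_j)=0$, with the convention that terms involving arrows that do not exist at the boundary vertices $\bullet_0$ and $\bullet_{n-1}$ are dropped.

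The main step requiring care is matching the abstract subrepresentations $\Sym_2 E^*$ and $\wedge^2 E^*\subset E^*\otimes E^*$ of quadratic path expressions with the explicit quadratic polynomials written in the statement. This is forced by $GL_n$-equivariance of the Yoneda pairing $\Ext^1\otimes\Ext^1\to\Ext^2$ combined with Schur's lemma: each isotypic component of the target is reached by a unique (up to scalar) equivariant multiplication map, nonvanishing of the relevant scalars follows from the equivariant minimal projective resolution of Lemma~\ref{lem:proj_res} (the $\Ext^2$ classes recorded there must actually obstruct extensions), and the canonical splitting $E^*\otimes E^*=\Sym_2 E^*\oplus\wedge^2 E^*$ then makes transparent which symmetric or antisymmetric quadratic combination of the $\alpha_i,\beta_j$ spans each summand.
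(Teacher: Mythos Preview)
Your argument is correct and follows essentially the same route as the paper: the paper's proof simply points to Propositions~\ref{prop: sym_gen} and~\ref{prop: sym_rel}, which are themselves the specialisations of Theorem~\ref{thm: quiver_gen_rel} fed with the $\Ext^1$ and $\Ext^2$ computations of Proposition~\ref{prop: ext12_sym}, exactly as you do for $n-r=1$. The only minor difference is in the last step of pinning down the explicit quadratic relations: you argue abstractly via $GL_n$-equivariance and Schur's lemma, whereas the paper (as in the $n=4$, $r=2$ example) prefers to verify directly that the displayed elements act trivially on $p'^*\TIL_K$ and then invokes the $\Ext^2$ dimension count to conclude there are no further relations; both are adequate and amount to the same identification.
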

\par
In the language of equivariant quivers, the above quiver can be written as
$$\xymatrix{
\bullet_0\ar@/^/@<1ex>[r]^{\alpha_0(E)} &
\bullet_1\ar@/^/@<1ex>[r]^{\alpha_1(E)}\ar@/^/@<1ex>[l]^{\beta_0(E)}&
\cdots\ar@/^/@<1ex>[r]^{\alpha_{n-2}(E)}\ar@/^/@<1ex>[l]^{\beta_1(E)}
 &
\bullet_{n-1}\ar@/^/@<1ex>[l]^{\beta_{n-2}(E)} }$$
and the relations:

$$\alpha_i\alpha_{i+1}(\wedge^2E);$$
$$\beta_{i+1}\beta_{i}(\wedge^2E);$$
$$(\beta_i\alpha_{i}+\alpha_{i+1}\beta_{i+1})(\Sym_2E).$$
\begin{proof}
See the proof of Proposition~\ref{prop: sym_gen} and the proof of Proposition~\ref{prop: sym_rel}.
\end{proof}

\par
A description of the Ext's between the simples in the
module category over the path algebra of quiver with relations is given by Proposition~\ref{prop: Exts Sym}.
\par

\begin{example}
Let $k=\CC$. For any integer $a\in[0,n-1]$, the minimal projective resolution of the simple
object $S_a$ is:
$$\xymatrix{
P_a & *\txt{$E \otimes P_{a-1}$\\ $\oplus$\\ $E \otimes P_{a+1}$}
\ar[l] &  *\txt{$\SCH_2E \otimes P_{a-2}$\\ $\oplus$\\
$\wedge^2E \otimes P_{a}$\\$\oplus$\\$\SCH_2E \otimes P_{a+2}$}
\ar[l] &
*\txt{$\SCH_3E \otimes P_{a-3}$\\ $\oplus$\\ $\SCH_{22}E \otimes
P_{a-2}$\\$\oplus$\\$\SCH_{211}E \otimes
P_{a}$\\$\oplus$\\$\SCH_{22}E \otimes
P_{a+2}$\\$\oplus$\\$\SCH_{3}E \otimes
P_{a+3}$} \ar[l] & *\txt{$\SCH_4E \otimes P_{a-4}$\\ $\oplus$\\
$\SCH_{32}E \otimes P_{a-3}$\\$\oplus$\\$\SCH_{221}E \otimes
P_{a-1}$\\$\oplus$\\$\SCH_{221}E \otimes
P_{a+1}$\\$\oplus$\\$\SCH_{32}E \otimes
P_{a+3}$\\$\oplus$\\$\SCH_{4}E \otimes P_{a+4}$} \ar[l] }.$$
If $a$ is close to the boundary (i.e., 0 and n-1), some terms in the above resolution does not exist. In those cases, the terms in question should be understood as dropped.
\end{example}

In this case, the commutative desingularization and the noncommutative one are further related in the sense that $\cZ^s$ is the fine moduli space of certain representations of the noncommutative one.

\begin{example}
Take $n=2$ and $r=1$.
Then $R^s$ is the nil-cone of $\mathfrak{sl}_2$, and the commutative resolution $\cZ^s$ is the Springer resolution $\hbox{T}^*\mathbb{P}^1$.
The quiver with relations:
$$\xymatrix{
\bullet_0\ar@/^/@<1ex>[r]^{\alpha_0}\ar@/^/@<3ex>[r]^{\alpha_1}&
\bullet_1\ar@/^/@<1ex>[l]^{\beta_0}\ar@/^/@<3ex>[l]^{\beta_1} }$$
relations: $\alpha_0\beta_1=\alpha_1\beta_0;$
$\beta_0\alpha_1=\beta_1\alpha_0.$

Representations $W$ of dimension $(1,1)$ generated by $W_1$ are parameterized by $\hbox{T}^*\mathbb{P}^1$.
The irreducible such representations form the regular locus in $\cZ^s$.
These simple modules (there are infinitely many of them) do not admit $GL_2$-equivariant structures.
\end{example}

\subsection{Higher codimension subvarieties in symmetric matrices}
Now we start using Proposition~\ref{prop: ext12_sym} to describe the quiver with relations for this non-commutative desingularization.
\begin{prop}\label{prop: sym_gen}
Let $k=\CC$. Assume $n-r\geq 2$. In the quiver with relations for the algebra $\Lambda=\End_{\sO_{\cZ^s}}(p'^*\TIL_K)$ as in Proposition~\ref{prop: equi-simple}, the vertex set is indexed by $B_{r,n-r}$, and the arrow from $\beta$ to $\alpha$ is given by $E$ if $C_{\beta,(1,0,\cdots,0)}^\alpha\neq 0$  and given by $\mathbb{C}$ if $C_{\alpha,(1,1,0,\cdots,0)}^\beta\neq 0$. No arrows otherwise.
\end{prop}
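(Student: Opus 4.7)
The plan is to apply Theorem~\ref{thm: quiver_gen_rel}(1): under its hypotheses, the arrows from $\beta$ to $\alpha$ in the equivariant quiver of $\Lambda=\End_{\sO_{\cZ^s}}(p'^*\TIL_K)$ form a basis of $\Ext^1_\Lambda(S_\alpha,S_\beta)^*$ as a $GL_n$-representation, where the $S_\alpha$ are the equivariant simples produced by Proposition~\ref{prop: equi-simple}. So the proof reduces to verifying the hypotheses and then reading off the $\Ext^1$ computation.

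The hypotheses are collected from earlier results: Proposition~\ref{sym tilt} gives that $p'^*\TIL_K$ is tilting on $\cZ^s$, Proposition~\ref{prop: sym_mcm} gives the isomorphism $\End_{\sO_{\cZ^s}}(p'^*\TIL_K)\cong\End_{R^s}(q'_*p'^*\TIL_K)$ together with the non-commutative desingularization property, and by construction $q':\cZ^s\to\Spec R^s$ is $GL_n$-equivariant with $q'^{-1}(0)=\Grass$ the zero section of $\Sym_2\cQ^*$, while the origin is the unique $GL_n$-fixed closed point of $\Spec R^s\subseteq \Sym_2 E^*$. Proposition~\ref{prop: ext12_sym} then records the outcome of the $\Ext^1$ computation for $n-r\geq 2$: dualizing its formula and using $(E^*)^*\cong E$ yields a copy of $E$ exactly when $C^\alpha_{\beta,(1,0,\ldots,0)}\neq 0$ and a copy of $\mathbb{C}$ exactly when $C^\beta_{\alpha,(1,1,0,\ldots,0)}\neq 0$, as claimed.

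The substantive work is therefore inside Proposition~\ref{prop: ext12_sym}, which is where the genuine obstacle lives. That computation proceeds from Proposition~\ref{prop: Exts Sym} by restricting to the partitions $\lambda\in Q_{-1}(0)\cup Q_{-1}(2)=\{\emptyset,\,(2,0,\ldots,0)\}$ that can land in cohomological degree one, then applying Lemma~\ref{lem: ext_algo} together with Example~\ref{cor: ext_1} to identify the dominant $\GL_{n-r}$-weights $\gamma$ for which $H^\bullet(\Grass,\SCH_\lambda\cQ^*\otimes\SCH_{\beta^t}\cR^*\otimes\SCH_{\alpha^t}\cR)$ is non-vanishing in the correct degree; expanding $\SCH_{\beta^t}\cR^*\otimes\SCH_{\alpha^t}\cR$ by Littlewood-Richardson (Theorem~\ref{thm: Litt_Rich}) then matches the two surviving contributions with the Littlewood-Richardson coefficients $C^\alpha_{\beta,(1,0,\ldots,0)}$ and $C^\beta_{\alpha,(1,1,0,\ldots,0)}$ appearing in the statement.
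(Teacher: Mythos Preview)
Your proposal is correct and matches the paper's approach: the paper leaves Proposition~\ref{prop: sym_gen} without an explicit proof precisely because it is the immediate translation, via Theorem~\ref{thm: quiver_gen_rel}(1), of the $\Ext^1$ computation recorded in Proposition~\ref{prop: ext12_sym}, and you have correctly identified both the chain of hypotheses to be checked and the place where the actual work lies. One small remark: the displayed $\Ext^1$ formula for $n-r\ge 2$ in Proposition~\ref{prop: ext12_sym} has evident typos (the $E^*$ summand is missing its coefficient and the coefficient on the $\mathbb{C}$ summand is wrong), but you have implicitly corrected these by cross-checking against the analogous skew formula in Proposition~\ref{prop: ext12_skew} and against the statement of Proposition~\ref{prop: sym_gen} itself, which is the right thing to do.
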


\begin{prop}\label{prop: sym_rel}
Let $k=\CC$. In the quiver with relations for the algebra $\Lambda=\End_{\sO_{\cZ^s}}(p'^*\TIL_K)$ as in Proposition~\ref{prop: equi-simple}, the relations are generated by the following sub-representations of the arrows in $\Hom(\beta,\alpha)$:
\begin{itemize}
  \item $$(C^{\beta^t}_{\alpha^t,(1,-1)}\wedge^2E )\oplus(C^{\beta^t}_{\alpha^t,(-1,-2)}E )\oplus(C^\beta_{\alpha,(1,1,0,\cdots,0)}\Sym_2E )\oplus( C^\beta_{\alpha,(2,0,\cdots,0)}\wedge^2E )$$  in the case $n-r=2$;
  \item $$(C^{\beta^t}_{\alpha^t,(0,\cdots,0,-1,-1,-2)}\mathbb{C})\oplus(C^{\beta^t}_{\alpha^t,(1,0,\cdots,0,-1,-1)}E )\oplus(C^{\beta^t}_{\alpha^t,(2,0,\cdots,0)}\Sym_2E )\oplus(C^{\beta^t}_{\alpha^t,(1,1,0,\cdots,0)}\wedge^2E )$$ in the case $n-r\geq3$.
\end{itemize}

\end{prop}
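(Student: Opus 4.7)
The plan is to apply Theorem~\ref{thm: quiver_gen_rel}(2) to reduce the determination of the relations to the computation of the $\Ext^2$ groups between the equivariant simples, which has already been carried out in Proposition~\ref{prop: ext12_sym}.

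First I would verify the hypotheses of Theorem~\ref{thm: quiver_gen_rel}: the resolution $q': \cZ^s \to \Spec R^s$ is $GL_n$-equivariant with $q'^{-1}(0) = \Grass$ and the origin is the unique $GL_n$-fixed closed point of $\Spec R^s$; the assertion that $p'^*\TIL_K$ (or equivalently $p'^*\TIL_0$ after Morita equivalence, when the characteristic is arbitrary) is a tilting bundle is Proposition~\ref{sym tilt}; and the identification $\End_{\sO_{\cZ^s}}(p'^*\TIL_K) \cong \End_{R^s}(q'_*p'^*\TIL_K)$ comes from Proposition~\ref{prop: sym_mcm}. By Theorem~\ref{thm: quiver_gen_rel}(2), once the generators of $\Lambda$ over $\bigoplus_\alpha k_\alpha$ have been fixed as in Proposition~\ref{prop: sym_gen} (which corresponds to part (1) of Theorem~\ref{thm: quiver_gen_rel}, and thus to $\Ext^1$ as described in Proposition~\ref{prop: ext12_sym}), the relations are generated, as $G$-subrepresentations of the length-$\geq 2$ paths, by $\Ext^2_{\sO_{\cZ^s}}(\Sigma_i, \Sigma_j)^*$.

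Next I would import directly from Proposition~\ref{prop: ext12_sym} the description of $\Ext^2(S_\alpha, S_\beta)$, noting that the derived equivalence induced by $R\Hom(p'^*\TIL_K, -)$ converts $\Ext^\bullet_\Lambda(S_\alpha, S_\beta)$ into $\Ext^\bullet_{\sO_{\cZ^s}}(u_*\Sigma_\alpha, u_*\Sigma_\beta)$. Taking $k$-linear duals, which interchanges $E$ with $E^*$, $\wedge^2 E$ with $\wedge^2 E^*$, and $\Sym_2 E$ with $\Sym_2 E^*$, but leaves the Littlewood-Richardson multiplicities unchanged, converts the formulas of Proposition~\ref{prop: ext12_sym} into the four-term direct-sum expressions claimed here, in the cases $n-r=2$ and $n-r\geq 3$ respectively.

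The step that requires the most care is the bookkeeping in applying Theorem~\ref{thm: quiver_gen_rel}: namely, checking that each summand appearing in the dualized $\Ext^2$ actually arises as a subrepresentation of the space of length-$2$ paths determined by the arrow data of Proposition~\ref{prop: sym_gen}. Concretely, compositions of two $E$-type arrows account for the $(1,1,0,\ldots)$ and $(2,0,\ldots)$ Littlewood-Richardson terms (since $E \otimes E = \Sym_2 E \oplus \wedge^2 E$), while compositions of an $E$-type arrow with a $\mathbb{C}$-type arrow (or vice versa) account for the $(1,0,\ldots,0,-1,-1)$ and its analogues. One checks in each case that the relevant LR coefficient is nonzero exactly when such a composition is available, and that the multiplicities match; this is essentially the content of Proposition~\ref{prop: ext12_sym} combined with Corollary~\ref{cor: ext_delta}. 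No new geometric input is required beyond Theorem~\ref{thm: quiver_gen_rel} and the $\Ext^2$-computation already performed.
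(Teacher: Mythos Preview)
Your proposal is correct and follows essentially the same approach as the paper: the paper states Proposition~\ref{prop: sym_rel} without a separate proof, treating it as an immediate consequence of Theorem~\ref{thm: quiver_gen_rel}(2) together with the explicit $\Ext^2$ computation of Proposition~\ref{prop: ext12_sym}. Your verification of the hypotheses and the dualization step are exactly what is implicitly required, and your final bookkeeping paragraph is in fact more careful than the paper itself, which leaves this matching of $\Ext^2$ summands with length-$2$ paths to the reader.
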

\par

\subsubsection{Examples}
We study some combinatorial properties of the noncommutative desingularization. \textit{In this subsection we assume $k=\CC$.}

We look at two examples with $r=n-1$, $S^s=\mathbb{C}[x_{i,j}]_{1\leq i\leq j\leq n}$
and $R^s=S^s/(det)$.
\par
The inverse image of the tilting bundle is $$\bigoplus_{i=0}^{n-1}\wedge^iQ^*\otimes\Sym(\Sym_2Q).$$
The presentation of $H^0({\cZ^s},\wedge^iQ^*\otimes\Sym(\Sym_2Q))$ as a
$S$-module is given by
$$0\leftarrow H^0({\cZ^s},\wedge^iQ^*\otimes\Sym(\Sym_2Q))\leftarrow
\SCH_{(1^i,0^{n-i})}E^*\otimes S\leftarrow \SCH_{(2^i,1^{n-i})}E^*\otimes
S.$$

\begin{example}
Now we take $n=3$ and $r=2$ for a concrete example.
Here $S^s=\mathbb{C}[x_{i,j}]_{1\leq i\leq j\leq 3}$ and
$R^s=S^s/(det)$.
\par
In this example, $R$ is a normal Gorenstein domain since it is a hypersurface. Consequently, if  $\End_{\cZ^s}(p'^*\TIL_K)$ is maximal Cohen-Macaulay and $q_*'p'^*\TIL_K$ is reflexive, then $\End_{\cZ^s}(p'^*\TIL_K)$ is a non-commutative crepant desingularization due to Proposition~\ref{prop: sym_mcm}.

\par
The inverse image of the tilting bundle consists of three direct
summands: $\Sym(\Sym_2Q)$, $Q^*\otimes\Sym(\Sym_2Q)$,
and $\wedge^2Q^*\otimes\Sym(\Sym_2Q)$. Let us denote their global sections
as $R$-modules, (and consequently as $S^s$-modules), by
$$M_i:=H^0(\cZ^s,\wedge^iQ^*\otimes\Sym(\Sym_2Q))$$ with $i=0,1,2$. The presentations of
their global sections as $S^s$-modules are given by
$$0\leftarrow H^0(\cZ^s,\wedge^iQ^*\otimes\Sym(\Sym_2Q))\leftarrow
\SCH_{(1^i,0^{3-i})}E^*\otimes S^s\leftarrow
\SCH_{(2^i,1^{3-i})}E^*\otimes S^s.$$
\par
The $\Hom$'s between them are
$$\Hom_R(M_i,M_j)=H^0(\cZ^s,\wedge^jQ^*\otimes\wedge^iQ\otimes\Sym(\Sym_2Q)).$$
More explicitly, \begin{itemize}
     \item $\Hom_R(M_0,M_i)=M_i$,
     \item $\Hom_R(M_i,M_0)=M_i$,
     \item $\Hom_R(M_2,M_i)=M_{2-i}$,
     \item $\Hom_R(M_i,M_2)=M_{2-i}$.
     \end{itemize}
Through some computations, we get $$\Hom_R(M_1,M_1)=M_0\oplus N$$ and
the presentation of $N$ is $$0\leftarrow N\leftarrow
\SCH_{(200)}E^*\oplus \SCH_{(110)}E^*\otimes S^s\leftarrow
\SCH_{(220)}E^*\oplus \SCH_{(211)}E^*\otimes S^s.$$
\par
The endomorphism ring is isomorphic to the path algebra of the
following quiver
$$\xymatrix{
\bullet_0\ar@/^/@<4ex>[r]|-{\alpha_1} \ar@/^/@<3ex>[r]|-{\alpha_2}
\ar@/^/@<2ex>[r]|-{\alpha_3} & \bullet_1\ar@/^/@<4ex>[r]|-{\gamma_1}
\ar@/^/@<3ex>[r]|-{\gamma_2} \ar@/^/@<2ex>[r]|-{\gamma_3}
\ar@/^/@<4ex>[l]|-{\beta_1} \ar@/^/@<3ex>[l]|-{\beta_2}
\ar@/^/@<2ex>[l]|-{\beta_3} &
\bullet_{2}\ar@/^/@<4ex>[l]|-{\delta_1} \ar@/^/@<3ex>[l]|-{\delta_2}
\ar@/^/@<2ex>[l]|-{\delta_3} }$$
with relations
$$\gamma_i\alpha_j+\gamma_j\alpha_i,$$
$$\beta_j\delta_i+\beta_i\delta_j,$$
$$\beta_j\alpha_i-\beta_i\alpha_j,$$
$$\gamma_j\delta_i-\gamma_i\delta_j,$$
$$(\alpha_i\beta_j+\delta_j\gamma_i)-(\alpha_j\beta_i+\delta_i\gamma_j).$$

The Hom between any two direct summands is graded, with grading given by the weight of $\mathbb{G}_m$-action on $E$. The grading defined this way is different but (strictly) finer than the one used in the proof of Proposition~\ref{prop: equi-simple}.
\par
The Hilbert series of those modules can be computed from the
presentations:
\begin{itemize}
  \item $H_{M_0}=\frac{1-t^6}{(1-t^2)^6}$;
  \item $H_{M_1}=\frac{3-3t^4}{(1-t^2)^6}$;
  \item $H_{M_2}=\frac{3-3t^2}{(1-t^2)^6}$;
  \item $H_N=\frac{9-9t^2}{(1-t^2)^2}$.
\end{itemize}
\par
Putting the Hilbert series of the Hom's into a matrix, we get

$$\frac{1}{(1-t^2)^5}\times\left(
\begin{array}{ccc}
1+t^2+t^4 & 3t+3t^3 & 3t^2 \\
3t+3t^3 & 1+10t^2+t^4 & 3t+3t^3 \\
3t^2 & 3t+3t^3 & 1+t^2+t^4 \\
\end{array}
\right)$$
\par
The coefficients in front of each monomials in the entries of the
inverse matrix gives the multiplicity of the projectives in the
resolution of the simples. The matrix above has an inverse with
polynomial entries, which reflects the fact that the derived
category over the endomorphism ring has finite global dimension.
\par
The inverse matrix is
$$\left(
    \begin{array}{ccc}
      -t^6-3t^4+3t^2+1 & 3t^5-3t & -6t^4+6t^2 \\
      3t^5-3t & -t^6-3t^4+3t^2+1 & 3t^5-3t \\
      -6t^4+6t^2 & 3t^5-3t & -t^6-3t^4+3t^2+1 \\
    \end{array}
  \right)
$$
\par
It is easy to guess the resolution of the simples from this matrix.
$$\xymatrix@C=12pt{
S_0: & P_0 & *\txt{$E \otimes P_1$} \ar[l] & *\txt{$\wedge^2E \otimes P_0$ \\ $\oplus$\\
$\SCH_2E \otimes P_2$} \ar[l] &
*\txt{$\SCH_{211}E \otimes P_0$\\$\oplus$\\$\SCH_{22}E \otimes P_2$} \ar[l]
& *\txt{$\SCH_{221}E \otimes P_1$} \ar[l] & \SCH_{222}E \otimes P_0
\ar[l] & 0 \ar[l]};$$
$$\xymatrix@C=12pt{
S_1: & P_1 & *\txt{$E \otimes P_0$\\$\oplus$\\$E \otimes P_2$}
\ar[l] & *\txt{$\wedge^2E \otimes P_1$ } \ar[l] &
*\txt{$\SCH_{211}E \otimes P_1$} \ar[l]
& *\txt{$\SCH_{221}E \otimes P_0$\\$\oplus$\\$\SCH_{221}E \otimes P_2$}
\ar[l] & \SCH_{222}E \otimes P_1 \ar[l] & 0 \ar[l]}.$$
One can easily verify that this resolution coincide with the one  given by Lemma~\ref{lem:proj_res} and Proposition~\ref{prop: Exts Sym}.
\end{example}

\begin{example}
Let us look at one more example, with $n=4$ and $r=3$.
\par
As before, the direct summands of the tilting bundle over the
desingularization are $\wedge^iQ^*\otimes\Sym_2(\SCH_2Q)$ where
$i=0,\dots,3$.
\par
The presentations of their global sections as $S^s$-modules are
given by $$0\leftarrow
M_i:=H^0(\cZ,\wedge^iQ^*\otimes\Sym(\SCH_2Q))\leftarrow
\SCH_{(1^i,0^{n-i})}E^*\otimes S^s\leftarrow
\SCH_{(2^i,1^{n-i})}E^*\otimes S^s.$$
\par

\par
The endomorphism ring is isomorphic to the path algebra of the
quiver:
$$\xymatrix{
\bullet_0\ar@/^/@<4ex>[r]|-{\alpha_1}
\ar@/^/@<3ex>[r]|-{\alpha_2}_{\cdots} \ar@/^/@<1ex>[r]|-{\alpha_4} &
\bullet_1\ar@/^/@<4ex>[r]|-{\alpha_1}
\ar@/^/@<3ex>[r]|-{\alpha_2}_{\cdots} \ar@/^/@<1ex>[r]|-{\alpha_4}
\ar@/^/@<4ex>[l]|-{\beta_1} \ar@/^/@<3ex>[l]|-{\beta_2}_{\cdots}
\ar@/^/@<1ex>[l]|-{\beta_4} &
\bullet_{2}\ar@/^/@<4ex>[r]|-{\alpha_1}
\ar@/^/@<3ex>[r]|-{\alpha_2}_{\cdots} \ar@/^/@<1ex>[r]|-{\alpha_4}
\ar@/^/@<4ex>[l]|-{\beta_1} \ar@/^/@<3ex>[l]|-{\beta_2}_{\cdots}
\ar@/^/@<1ex>[l]|-{\beta_4} & \bullet_{3}\ar@/^/@<4ex>[l]|-{\beta_1}
\ar@/^/@<3ex>[l]|-{\beta_2}_{\cdots} \ar@/^/@<1ex>[l]|-{\beta_4} }$$
with relations: $$\alpha_i\alpha_j+\alpha_j\alpha_i,$$
$$\beta_i\beta_j+\beta_j\beta_i,$$
$$(\alpha_i\beta_j+\beta_j\alpha_i)-(\alpha_j\beta_i+\beta_i\alpha_j).$$
\par

To get the Hilbert polynomials of the Hom's among them, we only
need to compute the presentations of two modules, i.e.,
$\Hom(M_1,M_1)$ and $\Hom(M_1,M_2)$. They are given as follows.
\begin{itemize}
  \item $\Hom(M_1,M_1)=M_0\oplus C_1$, where $$0\leftarrow C_1\leftarrow \SCH_{11}E \otimes S^s\oplus \SCH_2E \otimes S^s\leftarrow \SCH_{2211}E \otimes S^s\oplus \SCH_{222}E \otimes S^s$$ is exact;
  \item $\Hom(M_1,M_2)=M_1\oplus C_2$, where $$0\leftarrow C_2\leftarrow \SCH_{111}E \otimes S^s\oplus \SCH_{21}E \otimes S^s\leftarrow \SCH_{221}E \otimes S^s\oplus \SCH_{2111}E \otimes
  S^s$$ is exact.
\end{itemize}
\par
The matrix of Hilbert polynomials between their Hom's is
$$\frac{1}{(1-t^2)^{10}}\left(
    \begin{array}{cccc}
      1-t^8 & 4-4t^6 & 6-6t^4 & 4-4t^2 \\
      4-4t^6 & 17-16t^4-t^8 & 28-24t^2-4t^6 & 6-6t^4 \\
      6-6t^4 & 28-24t^2-4t^6 & 17-16t^4-t^8 & 4-4t^6 \\
      4-4t^2 & 6-6t^4 & 4-4t^6 & 1-t^8 \\
    \end{array}
  \right)
.$$ Its inverse matrix is \tiny{$$\left(
    \begin{array}{cccc}
      15t^8-6t^{10}-t^{12}-15t^4+6t^2+1 & 20t^5-20t^7+4t^{11}-4t & -20t^4+20t^8-10t^{10}+10t^2 & -20t^3+60t^5-60t^7+20t^9 \\
      20t^5-20t^7+4t^{11}-4t & 15t^8-6t^{10}-t^{12}-15t^4+6t^2+1 & 20t^5-20t^7+4t^{11}-4t & -20t^4+20t^8-10t^{10}+10t^2 \\
      -20t^4+20t^8-10t^{10}+10t^2 & 20t^5-20t^7+4t^{11}-4t & 15t^8-6t^{10}-t^{12}-15t^4+6t^2+1 & 20t^5-20t^7+4t^{11}-4t \\
      -20t^3+60t^5-60t^7+20t^9 & -20t^4+20t^8-10t^{10}+10t^2 & 20t^5-20t^7+4t^{11}-4t &  15t^8-6t^{10}-t^{12}-15t^4+6t^2+1 \\
    \end{array}
  \right)
.$$}
\par
\normalsize{One can guess the resolution of the simples from this matrix.}
$$\xymatrix@C=9pt{
S_0: & P_0 & *\txt{$E \otimes P_1$} \ar[l] & *\txt{$\wedge^2E \otimes P_0$ \\ $\oplus$\\
$\SCH_2E \otimes P_2$} \ar[l] &
*\txt{$\SCH_3E \otimes P_3$\\$\oplus$\\$\SCH_{211}E \otimes P_0$\\$\oplus$\\$\SCH_{22}E \otimes P_2$}
\ar[l] & *\txt{$\SCH_{221}E \otimes P_1$\\$\oplus$\\$\SCH_{32}E \otimes
P_3$} \ar[l] \\
& *\txt{$\SCH_{3211}E \otimes P_1$\\$\oplus$\\$\SCH_{331}E \otimes P_3$}
\ar[l] &
*\txt{$\SCH_{333}E \otimes P_3$\\$\oplus$\\$\SCH_{3221}E \otimes
P_0$\\$\oplus$\\$\SCH_{3311}E \otimes
P_2$} \ar[l] & *\txt{$\SCH_{3322}E \otimes P_0$ \\ $\oplus$\\
$\SCH_{3331}E \otimes P_2$} \ar[l] & *\txt{$\SCH_{3332}E \otimes P_1$}
\ar[l] & \SCH_{3333}E \otimes P_0 \ar[l] & 0; \ar[l]}$$
$$\xymatrix@C=9pt{
S_1: & P_1 & *\txt{$E \otimes P_0$\\$\oplus$\\$E \otimes P_2$}
\ar[l] & *\txt{$\wedge^2E \otimes P_1$\\$\oplus$\\$\SCH_2E \otimes
P_3$ } \ar[l] &
*\txt{$\SCH_{211}E \otimes P_1$\\$\oplus$\\$\SCH_{22}E \otimes P_3$} \ar[l]
& *\txt{$\SCH_{221}E \otimes P_0$\\$\oplus$\\$\SCH_{221}E \otimes P_2$}
\ar[l]\\
& *\txt{$\SCH_{3211}E \otimes P_0$\\$\oplus$\\$\SCH_{3211}E \otimes
P_2$} \ar[l] & \txt{$\SCH_{3221}E \otimes
P_1$\\$\oplus$\\$\SCH_{3311}E \otimes P_3$} \ar[l] &
*\txt{$\SCH_{3322}E \otimes P_1$\\$\oplus$\\$\SCH_{3331}E \otimes P_3$ }
\ar[l] &
*\txt{$\SCH_{3332}E \otimes P_0$\\$\oplus$\\$\SCH_{3332}E \otimes P_2$} \ar[l]
 & \SCH_{3333}E \otimes P_1 \ar[l] & 0. \ar[l]}$$

Again, it is easy to verify that this resolution coincide with the one  given by Lemma~\ref{lem:proj_res} and Proposition~\ref{prop: Exts Sym}.
\end{example}
\par
\begin{example}
Now we look at an example of higher codimension symmetric minors case. We take $n=\dim E=4$ and $r=2$.
\par

The first two steps are relatively easy. By direct computation, we get the following quiver (recall that vertices are indexed by Young diagrams):
$$\xymatrix{
 & & \text{$\Young(|)$}\ar[dl]^{\alpha_2(E)} & & \\
 \varnothing\ar[drr]_{\beta_1(\mathbb{C})} & \text{$\Young()$}\ar[l]_{\alpha_1(E)} \ar[rr]^{\beta_2(\mathbb{C})} & & \text{$\Young(,|)$}\ar[ul]^{\alpha_4(E)}\ar[dl]_{\alpha_5(E)} & \text{$\Young(,|,)$}\ar[l]_{\alpha_6(E)}\\
& & \text{$\Young(,)$}\ar[ul]_{\alpha_3(E)}\ar[urr]_{\beta_3(\mathbb{C})}
}$$
with relations:
\begin{itemize}
  \item $\Hom(\Young(,),\varnothing)$: $\alpha_1\alpha_3(\wedge^2E)$;
  \item $\Hom(\Young(|),\varnothing)$: $\alpha_1\alpha_2(\Sym_2E)$;
  \item $\Hom(\Young(,|,),\Young(,))$: $\alpha_5\alpha_6(\wedge^2E)$;
  \item $\Hom(\Young(,|,),\Young(|))$: $\alpha_4\alpha_6(\Sym_2E)$.
  \item $\Hom(\Young(,|),\Young())$: $\alpha_2\alpha_4-\alpha_3\alpha_5(\Sym_2E\oplus\wedge^2E)$;
  \item $\Hom(\Young(),\Young())$: $\alpha_3\beta_1\alpha_1-\alpha_3\alpha_5\beta_2(\wedge^2E)$;
  \item $\Hom(\Young(,|),\Young(,|))$: $\alpha_6\beta_3\alpha_5-\beta_2\alpha_3\alpha_5(\wedge^2E)$;
  \item $\Hom(\varnothing,\Young(,|))$: $\beta_2\alpha_3\beta_1-\alpha_6\beta_3\beta_1(E)$;
  \item $\Hom(\Young(),\Young(,|,))$: $\beta_3\beta_1\alpha_1-\beta_3\alpha_5\beta_2(E)$.

\end{itemize}
\par
\par
Using Olver's description of Pieri inclusions (see, e.g., 1.2 of \cite{SW}), it is easy to check the above listed subrepresentations acts trivially on $p'^*\oplus_{\alpha\in B_{r,n-r}}\SCH_\alpha\cQ^*$. Then Proposition~\ref{prop: sym_rel} implies that these are all the relations.
\end{example}

\subsection{Pfaffian varieties of anti-symmetric matrices}\label{sec: skew}
We follow the notions in Section~\ref{subsec:skew}, i.e., 
taking $S^a=k[x_{ij}]_{i< j}$, and taking $\Spec R^a\subseteq \Spec S^a$ to be the locus where $\varphi$ has rank $\leq
r$. A resolutions of $\Spec R^a$, denoted by $\cZ^a$, is given by a total space of a vector bundle on $\Grass_{n-r}(E^*)$. 
We focus on the case when $k=\CC$, hence   $\Phi(\Grass)=\Delta(\Grass)=\{\SCH_\lambda\cQ^*\mid\lambda\in B_{r,n-r}\}$ and $\Sigma(\Grass)=\nabla(\Grass)=\SCH_{(n-r)^r}\cQ^*\otimes \SCH_{(\alpha^c)'}\cR[(n-r)r-|\alpha|]$. 
In particular,  $\TIL=\oplus_{\alpha\in B_{r,n-r}}\Phi_\alpha$ is $\TIL_K$, the Kapranov's tilting bundle.

Now, we describe the Ext's between the equivariant simples. Then, we  use it to get the quiver with relations for the non-commutative desingularization.
\par
\begin{prop}\label{prop: Exts skew}
Let $k=\CC$. Let $S_\alpha$ be as in Proposition~\ref{prop: equi-simple}. Then, the Ext's among them are given by
$$\Ext^t(S_\alpha,S_\beta)\cong\oplus_s\oplus_{\lambda\in Q_{1}(2s)}H^{t-s-|\beta|+|\alpha|}(\Grass,\SCH_\lambda \cQ^*\otimes \SCH_{\beta^t}\cR^*\otimes \SCH_{\alpha^t}\cR),$$ where $Q_{1}(2s)=\{\lambda\vdash2s\mid\lambda=(a_1,\cdots,a_r|a_1+1,\cdots,a_r+1)\}$ in the hook notation.
\end{prop}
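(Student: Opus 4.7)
The plan is to deduce this proposition as a direct application of the corollary to Lemma~\ref{Lem: Exts}, taking the vector bundle on the Grassmannian to be $\SCH_\delta\cQ^*=\wedge^2\cQ^*$, i.e.\ $\delta=(1,1)$. This is legitimate because $\cZ^a$ is by construction the total space of $\wedge^2\cQ^*$ over $\Grass$, and the hypotheses of Proposition~\ref{prop: equi-simple} have already been verified in Proposition~\ref{anti tilt} and Proposition~\ref{prop: skew_nonc_res} for the Kapranov tilting bundle pulled back to $\cZ^a$. Substituting $\delta=(1,1)$ into that corollary gives
$$\Ext^t(S_\alpha,S_\beta)\cong\oplus_s\oplus_{\lambda\in\wedge^s\wedge^2}H^{t-s-|\beta|+|\alpha|}(\Grass,\SCH_\lambda\cQ^*\otimes\SCH_{\beta^t}\cR^*\otimes\SCH_{\alpha^t}\cR),$$
so it remains only to identify the irreducible $\GL_{n-r}$-decomposition of $\wedge^s(\wedge^2\mathbb{C}^{n-r})$.

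The key combinatorial input is the classical plethysm identity
$$\wedge^s(\wedge^2 V)\cong\bigoplus_{\lambda\in Q_1(2s)}\SCH_\lambda V,$$
where $Q_1(2s)$ consists exactly of those partitions of $2s$ whose Frobenius (hook) coordinates take the form $(a_1,\ldots,a_r\mid a_1+1,\ldots,a_r+1)$, i.e.\ with legs one larger than arms. This is the skew-symmetric counterpart of the identity $\Sym^s(\Sym^2 V)\cong\bigoplus_\lambda\SCH_\lambda V$ summed over partitions with all even parts, which is what generates the index set $Q_{-1}(2s)$ in Proposition~\ref{prop: Exts Sym}. Both identities are standard consequences of Littlewood's plethysm formulas (or may be proved directly by comparing characters).

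Inserting this decomposition into the previous display yields the stated formula. The argument is purely formal once Lemma~\ref{Lem: Exts} is in hand; the only point that requires care is matching the Frobenius hook convention so that $Q_1(2s)$ indeed records the Schur summands of $\wedge^s(\wedge^2 V)$. I do not anticipate a serious obstacle, since the derivation is entirely parallel to the symmetric case treated in Proposition~\ref{prop: Exts Sym}, with $\Sym^2$ replaced by $\wedge^2$ and the Frobenius condition $b_i=a_i-1$ replaced by $b_i=a_i+1$. The concrete low-degree consequences (paralleling Proposition~\ref{prop: ext12_sym}) can then be extracted separately by combining this formula with Lemma~\ref{lem: ext_algo} and Remark~\ref{rmk: ext_algo}.
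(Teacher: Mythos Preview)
Your approach is correct and matches the paper's, which treats this proposition as parallel to Proposition~\ref{prop: Exts Sym} and hence as a direct consequence of Lemma~\ref{Lem: Exts} (and its corollary) with $\delta=(1,1)$. One small slip: in your parenthetical comparison to the symmetric case you write $\Sym^s(\Sym^2 V)$ and ``partitions with all even parts,'' but the relevant plethysm there is $\wedge^s(\Sym^2 V)$, whose summands are indexed by Frobenius shapes $(a_1,\ldots,a_r\mid a_1-1,\ldots,a_r-1)$, exactly the set $Q_{-1}(2s)$ as defined in the paper; this does not affect your main argument, which correctly invokes $\wedge^s(\wedge^2 V)\cong\bigoplus_{\lambda\in Q_1(2s)}\SCH_\lambda V$.
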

More explicitly, we have the following proposition.
\begin{prop}\label{prop: ext12_skew}
Under the assumptions of Proposition~\ref{prop: Exts skew}. We have
\[
\Ext^1(S_\alpha,S_\beta)\cong(C_{\beta,(1,0,\cdots,0)}^\alpha E^*)\oplus(C_{\alpha,(1,1,0,\cdots,0)}^\beta\mathbb{C});
\]
$\Ext^2(S_\alpha,S_\beta)\cong$
\[
\left\{
\begin{aligned}
        &(C_{\alpha^t,(1,0,\cdots,0)}^{\beta^t}\wedge^3E^* )\oplus(C_{\alpha^t,(2,0,\cdots,0)}^{\beta^t}\SCH_2E^* ),\\ &\hbox{ if }n-r=1; \\
        &(C_{\alpha^t,(0,\cdots,0,-1,-3)}^{\beta^t}\mathbb{C})\oplus(C_{\alpha^t,(1,0,\cdots,0,-2)}^{\beta^t}E^* )\oplus(C_{\alpha^t,(2,0,\cdots,0)}^{\beta^t}\SCH_2E^* )\oplus( C_{\alpha^t,(1,1,0,\cdots,0)}^{\beta^t}\wedge^2E^* ), \\ &\hbox{ if }n-r\geq2.
                          \end{aligned} \right.
\]
\end{prop}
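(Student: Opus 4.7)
The plan is to apply Proposition~\ref{prop: Exts skew} and evaluate each summand via Bott's theorem, following exactly the strategy used for the symmetric case in Proposition~\ref{prop: ext12_sym}. We begin by decomposing
\[
\SCH_{\beta^t}\cR^*\otimes\SCH_{\alpha^t}\cR=\bigoplus_\gamma C^{\beta^t}_{\alpha^t,\gamma}\,\SCH_\gamma\cR^*,
\]
where $\gamma$ ranges over dominant $GL_{n-r}$-weights (possibly with negative entries) of total degree $|\gamma|=|\beta|-|\alpha|$. Substituting into Proposition~\ref{prop: Exts skew} reduces the computation to
\[
\Ext^t(S_\alpha,S_\beta)\cong\bigoplus_s\bigoplus_{\lambda\in Q_1(2s)}\bigoplus_\gamma C^{\beta^t}_{\alpha^t,\gamma}\,H^{t-s-|\gamma|}(\Grass,\SCH_\lambda\cQ^*\otimes\SCH_\gamma\cR^*),
\]
after which Corollary~\ref{cor: ext_delta} together with Examples~\ref{cor: ext_1} and \ref{cor: ext_2} enumerates the surviving contributions and identifies the irreducible $GL_n$-representation produced by each non-vanishing cohomology.

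For $\Ext^1$ ($t=1$) only $s\in\{0,1\}$ can contribute. At $s=0$ we have $Q_1(0)=\{\varnothing\}$, and the $s=0$ case of Example~\ref{cor: ext_1} selects $\gamma=(1,0,\ldots,0)$, producing an $E^*$ summand with the claimed multiplicity. At $s=1$ we have $Q_1(2)=\{(1,1,0,\ldots,0)\}$, and Example~\ref{cor: ext_1} applied with this $\lambda$ selects the unique $\gamma$ with $-\gamma'=\lambda$, producing a trivial $\mathbb{C}$-summand. Matching the generalized Littlewood-Richardson coefficients to the stated form yields the first formula.

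For $\Ext^2$ ($t=2$) the potentially contributing $(s,\lambda)$-pairs are $(0,\varnothing)$, $(1,(1,1,0,\ldots,0))$, and $(2,(2,1,1,0,\ldots,0))$. The pair $(0,\varnothing)$, by the $s=0$ case of Example~\ref{cor: ext_2}, contributes the two summands with $\gamma=(2,0,\ldots,0)$ and $\gamma=(1,1,0,\ldots,0)$, giving the $\SCH_2 E^*$ and $\wedge^2 E^*$ terms. The pair $(1,(1,1,0,\ldots,0))$, analyzed via the remaining bullets of Example~\ref{cor: ext_2}, contributes an $E^*$-summand (from the sub-case where $\gamma$ has positive part $(1,0,\ldots,0)$ with negative part matching $-\lambda$) and a $\mathbb{C}$-summand (from the no-positive-part sub-case with the constraint coming from $\lambda$); these are the first two terms of the stated formula when $n-r\geq3$. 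For the pair $(2,(2,1,1,0,\ldots,0))$, Lemma~\ref{lem: ext_algo} shows that the minimal admissible $\gamma$ already forces $k=t-s-|\gamma|$ outside the non-vanishing range allowed by Corollary~\ref{cor: ext_delta}, so nothing new arises. The special cases $n-r=1$ and $n-r=2$ follow by restricting $\gamma$ to $GL_{n-r}$-weights of the correct length: some patterns collapse or coincide, yielding the modified formulas stated there.

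The main obstacle is the combinatorial bookkeeping: tracking transpositions via the identity $C^{\beta^t}_{\alpha^t,\gamma}=C^\beta_{\alpha,\gamma^t}$ with the appropriate extension to weights with negative entries, verifying that the irreducible $GL_n$-representation produced by Bott's theorem matches the claimed tensor factor in each case, and ensuring the boundary cases $n-r\in\{1,2\}$ are handled without double-counting or omission. The algorithm in Lemma~\ref{lem: ext_algo} and Remark~\ref{rmk: ext_algo} makes the enumeration systematic once set up, reducing the proof to a finite (though tedious) case-check parallel to the symmetric computation already carried out.
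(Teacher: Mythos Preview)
Your overall strategy matches the paper's, but there is a genuine error in the $\Ext^2$ analysis: the $s=2$ term does contribute, and your attribution of the $\mathbb{C}$-summand to $s=1$ is incorrect.

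Concretely, for $s=2$ one has $\lambda=(2,1,1,0,\ldots,0)\in Q_1(4)$, and applying Lemma~\ref{lem: ext_algo} (or the $s=2$ bullet of Example~\ref{cor: ext_2} with $(-\gamma)'=\lambda$) gives $\gamma_{\min}=(0,\ldots,0,-1,-3)$ when $n-r\ge 2$ (and $\gamma_{\min}=(-3)$ when $n-r=1$), with corresponding $t_{\min}=2$. Thus $H^{t-s-|\gamma|}(\Grass,\SCH_\lambda\cQ^*\otimes\SCH_\gamma\cR^*)=\mathbb{C}$ is nonzero precisely at $t=2$, and this is exactly the summand $C^{\beta^t}_{\alpha^t,(0,\ldots,0,-1,-3)}\mathbb{C}$ in the stated formula. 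Your claim that ``the minimal admissible $\gamma$ already forces $k$ outside the non-vanishing range'' is simply false here.

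Conversely, the $s=1$ term with $\lambda=(1,1,0,\ldots,0)$ yields only two $\gamma$'s in the relevant range: $\gamma_{\min}=(0,\ldots,0,-2)$, which contributes the $\mathbb{C}$ to $\Ext^1$ (at $t_{\min}=1$), and $\gamma=(1,0,\ldots,0,-2)$, which contributes the $E^*$ to $\Ext^2$ (at $t=2$). There is no no-positive-part $\gamma$ producing a $\mathbb{C}$ at $t=2$ for this $\lambda$; the relevant sub-case of Example~\ref{cor: ext_2} does not apply because the length and column conditions on $\lambda=(1,1)$ are not met. So your accounting of the four $\Ext^2$ summands is off by one shift in $s$, and the $n-r=1$ boundary case (where the $s=2$ contribution becomes the $\wedge^3E^*$ term) is likewise misassigned. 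Redoing the $s=2$ step with Lemma~\ref{lem: ext_algo} as the paper does fixes the argument.
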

\begin{proof}
Note that the only element in $Q_{1}(2)$ is $(1,1,0,\cdots,0)$ and the only element in $Q_{1}(4)$ is $(2,1,1,0,\cdots,0)$. One can easily calculate $\Ext^1$ and $\Ext^2$ with the aid of Lemma~\ref{lem: ext_algo}.
\par
For $\lambda=(1,1,0,\cdots,0)$, the  $\gamma_{min(1,1,0,\cdots,0)}$ is given by Lemma~\ref{lem: ext_algo} is $(0,\cdots,0,-2)$ with the corresponding $t_{min(1,1,0,\cdots,0)}=1$. There are a lot of operations described in Remark~\ref{rmk: ext_algo}. Nevertheless, among those there is only one  with corresponding $t=2$ and satisfying the constrains given by Corollary~\ref{cor: ext_delta}. This operation gives $\gamma=(1,0,\cdots,0,-2)$.

\par
For $\lambda=(2,1,1,0,\cdots,0)$, the $\gamma_{min(2,1,1,0,\cdots,0)}$ is given by Lemma~\ref{lem: ext_algo} is $(-3)$ if $n-r=1$ with the corresponding $t_{min(2,1,1,0,\cdots,0)}=2$; $\gamma_{min(2,1,1,0,\cdots,0)}=(0,\cdots,0,-1,-3)$ if $n-r\geq2$ with the corresponding $t_{min(2,1,1,0,\cdots,0)}=2$.
\par
Note that only $Q_{1}(2s)$ with $s=0,1,2$ contributes to $\Ext^1$ and $\Ext^2$. This finishes the proof.
\end{proof}
\par
Now we start using Proposition~\ref{prop: ext12_skew} to describe the quiver with relations for this non-commutative desingularization.
\begin{prop}\label{prop: skew_gen}
Let $k=\CC$. In the quiver with relations for the algebra $\Lambda=\End_{\sO_{\cZ^a}}(p'^*\TIL_K)$ as in Proposition~\ref{prop: equi-simple}, the set of vertices correspond to $B_{r,n-r}$, and the arrow from $\beta$ to $\alpha$ is given by $E$ if $C_{\beta,(1,0,\cdots,0)}^\alpha\neq 0$  and given by $\mathbb{C}$ if $C_{\alpha,(1,1,0,\cdots,0)}^\beta\neq 0$. No arrows otherwise.
\end{prop}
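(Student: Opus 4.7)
The plan is to invoke Theorem~\ref{thm: quiver_gen_rel}(1) together with the explicit $\Ext^1$ computation already carried out in Proposition~\ref{prop: ext12_skew}. Since the hypotheses of Theorem~\ref{thm: quiver_gen_rel} have been verified in this setting (by Proposition~\ref{anti tilt} for the tilting property, Proposition~\ref{prop: skew_nonc_res} for the identification $\End_{\cZ^a}(p'^*\TIL_K)\cong\End_R(q'_*p'^*\TIL_K)$ and non-commutative desingularization property, and the $G$-equivariance of $q':\cZ^a\to\Spec R^a$ with $q'^{-1}(0)=\Grass$ and unique fixed point at the origin, which come from the construction), we know the vertex set of the equivariant quiver for $\Lambda=\End_{\sO_{\cZ^a}}(p'^*\TIL_K)$ is indexed by the equivariant simples $S_\alpha$ with $\alpha\in B_{r,n-r}$, and the arrows from $\beta$ to $\alpha$ are given by (a $G$-equivariant lift of) a basis of $\Ext^1_\Lambda(S_\alpha,S_\beta)^*$.

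Next, I would plug in the explicit formula from Proposition~\ref{prop: ext12_skew}, namely
\[
\Ext^1_\Lambda(S_\alpha,S_\beta)\cong \bigl(C_{\beta,(1,0,\cdots,0)}^{\alpha}\,E^*\bigr)\oplus\bigl(C_{\alpha,(1,1,0,\cdots,0)}^{\beta}\,\mathbb{C}\bigr).
\]
Dualizing as a $GL_n$-representation converts $E^*$ to $E$ and leaves the trivial summand unchanged (and the Littlewood-Richardson multiplicities stay the same), yielding exactly the description claimed: arrows from $\beta$ to $\alpha$ are an $E$ for each copy in $C_{\beta,(1,0,\cdots,0)}^{\alpha}$ and a $\mathbb{C}$ for each copy in $C_{\alpha,(1,1,0,\cdots,0)}^{\beta}$, with no arrows when both coefficients vanish.

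Since Proposition~\ref{prop: ext12_skew} is already established and Theorem~\ref{thm: quiver_gen_rel}(1) applies verbatim, the proof is essentially immediate — it is really a matter of combining these two ingredients and performing the dualization to pass from $\Ext^1$ (whose dual gives the generators of $\rad_G\Lambda/\rad_G^2\Lambda$) to the arrow data. No step is a serious obstacle; the only minor care required is to double-check that the conventions for the direction of arrows (i.e., that arrows from $\beta$ to $\alpha$ in the quiver correspond to $\Ext^1(S_\alpha,S_\beta)^*$, matching $e_\alpha(\rad_G\Lambda/\rad_G^2\Lambda)e_\beta$) are consistently applied, and that the $G$-equivariance of everything indeed lets one identify the representation structure of the arrow space with that of $\Ext^1(S_\alpha,S_\beta)^*$ as a $GL_n$-module.
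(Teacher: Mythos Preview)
Your proposal is correct and follows exactly the approach intended by the paper: the proposition is stated as a direct consequence of Theorem~\ref{thm: quiver_gen_rel}(1) together with the explicit $\Ext^1$ formula of Proposition~\ref{prop: ext12_skew}, and your write-up supplies precisely these two ingredients plus the dualization step. The paper itself gives no separate proof here, treating it as immediate from the preceding $\Ext$ computation, so your argument is essentially a fleshed-out version of what the paper leaves implicit.
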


\begin{prop}\label{prop: skew_rel}
Let $k=\CC$. In the quiver with relations for the algebra $\Lambda=\End_{\sO_{\cZ^a}}(p'^*\TIL_K)$ as in Proposition~\ref{prop: equi-simple}, the relations are generated by the following sub-representations of the arrows in $\Hom(\beta,\alpha)$:
\begin{itemize}
  \item $$(C_{\alpha^t,(1,0,\cdots,0)}^{\beta^t}\wedge^3E )\oplus(C_{\alpha^t,(2,0,\cdots,0)}^{\beta^t}\SCH_2E )$$  in the case $n-r=1$;
  \item $$(C_{\alpha^t,(0,\cdots,0,-1,-3)}^{\beta^t}\mathbb{C})\oplus(C_{\alpha^t,(1,0,\cdots,0,-2)}^{\beta^t}E )\oplus(C_{\alpha^t,(2,0,\cdots,0)}^{\beta^t}\SCH_2E )\oplus( C_{\alpha^t,(1,1,0,\cdots,0)}^{\beta^t}\wedge^2E )$$  in the case $n-r\geq2$.
\end{itemize}
\end{prop}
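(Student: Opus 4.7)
The strategy is a direct combination of two results already established in the paper. By Theorem~\ref{thm: quiver_gen_rel}(2), under the equivariant dual pair set-up verified for $(\Phi(\Grass),\Sigma(\Grass))$ pulled back to $\cZ^a$ (Proposition~\ref{anti tilt}, Proposition~\ref{prop: MCM_skew}, Proposition~\ref{prop: skew_nonc_res}), the relations in the quiver presentation of $\Lambda=\End_{\sO_{\cZ^a}}(p'^*\TIL_K)$ are generated, as subrepresentations of $\Hom(\beta,\alpha)$, by the duals $\Ext^2_\Lambda(S_\alpha,S_\beta)^*$. After transporting along the derived equivalence $R\Hom(p'^*\TIL,-)$, these coincide with $\Ext^2_{\sO_{\cZ^a}}(u_*\Sigma_\alpha,u_*\Sigma_\beta)^*$.

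The plan is then to substitute the explicit formula for $\Ext^2(S_\alpha,S_\beta)$ supplied by Proposition~\ref{prop: ext12_skew} and dualize the representations. Concretely, for any partition $\mu$ and any $n$-dimensional $\GL_n$-representation built from $E^*$ via a Schur functor, one has $(\SCH_\mu E^*)^*\cong \SCH_\mu E$, and the Littlewood-Richardson multiplicities are invariant under this duality. So the four summands $\SCH_2 E^*$, $\wedge^2 E^*$, $E^*$, $\mathbb{C}$ appearing in Proposition~\ref{prop: ext12_skew} in the case $n-r\geq 2$ dualize to exactly $\SCH_2 E$, $\wedge^2 E$, $E$, $\mathbb{C}$, with the same Littlewood-Richardson coefficients $C^{\beta^t}_{\alpha^t,\,\cdot}$. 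Similarly, in the case $n-r=1$ the summands $\wedge^3 E^*$ and $\SCH_2 E^*$ dualize to $\wedge^3 E$ and $\SCH_2 E$ with the stated multiplicities. Matching these term by term with the statement completes the identification.

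The only delicate point is justifying that the subrepresentations identified at the $\Ext^2$-level really do descend to a complete set of relations in the bound path algebra, as opposed to merely a lower bound. This is precisely the content of Theorem~\ref{thm: quiver_gen_rel}(2), whose proof in turn rests on Lemma~\ref{lem:proj_res}: once one has a minimal equivariant projective resolution of each simple $S_\alpha$ whose $i$th term is $\bigoplus_\gamma \Ext^i(S_\alpha,S_\gamma)^*\otimes P_\gamma$, the syzygy at step two is forced by the pairing between generators (coming from $\Ext^1$) and relations (coming from $\Ext^2$). Thus the main obstacle is not a genuine obstacle for this proposition: it was absorbed into Theorem~\ref{thm: quiver_gen_rel}, and what remains here is the bookkeeping of taking duals of the Bott-theoretic cohomology groups computed in the proof of Proposition~\ref{prop: ext12_skew}.

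Finally, one should note that the two-case split $n-r=1$ versus $n-r\geq 2$ reflects purely the Bott-vanishing pattern exhibited in Example~\ref{cor: ext_2} applied in the skew-symmetric setting: when $n-r=1$ the weights $(0,\dots,0,-1,-3)$ and $(1,0,\dots,0,-2)$ degenerate into a single weight or vanish, leaving only the contributions $(1,0,\dots,0)$ (yielding $\wedge^3 E$ after dualization of $\wedge^3 E^*$ obtained from $\wedge^s\SCH_\delta$ with $\delta=(1,1)$, $s=2$) and $(2,0,\dots,0)$ (yielding $\SCH_2 E$). No further computation is needed beyond this case analysis and the duality bookkeeping above.
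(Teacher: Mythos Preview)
Your proposal is correct and follows precisely the approach the paper intends: the proposition is stated without an explicit proof because it is meant to be read off directly from Theorem~\ref{thm: quiver_gen_rel}(2) together with the $\Ext^2$ computation in Proposition~\ref{prop: ext12_skew}, and your write-up makes exactly this deduction, including the dualization from $E^*$ to $E$. The only minor point is that your final paragraph's account of how the $n-r=1$ terms arise from the plethysm is slightly garbled (the relevant partition in $Q_1(4)$ is $(2,1,1,0,\dots,0)$, and for $n-r=1$ the minimal $\gamma$ is $(-3)$, as in the proof of Proposition~\ref{prop: ext12_skew}), but this does not affect the validity of the argument.
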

%

\section{More examples of non-commutative desingularizations}
\label{sec: other_examples}
Here we look at some non-commutative desingularizations beyond representations with finitely many orbits. The technique we developed in Section~\ref{sec: general_discussion} works in these examples. As one have seen in Proposition~\ref{prop: Exts Sym}, computing quiver with relations involves the problem of inner plethysm and cannot be done in general only be done in special cases.
For simplicity,\textit{ we assume $k=\CC$ in this section}, although some of the results are characteristic free.

\subsection{Rank varieties of anti-symmetric tensors}
Let $E$ be a vector space over $k$ of dimension $n$ and $\mathbb A^{{n}\choose{d}}=\wedge^dE^*$ be the
affine space consisting of anti-symmetric tensors of power $d$. Upon choosing a 
basis for $E$, its coordinate ring is identified with $A^{(d)}_a=\Sym(\wedge^dE)$. Let $X^{(d)}_a\subset \mathbb A^{{n}\choose{d}}$ be the rank variety consisting of tensors of rank $\leq n-1$. We will find a non-commutative desingularization for $X^{(d)}_a$. \par
Let us review a commutative desingularization. Let $$0\to \cR\to E\times\Grass\to \cQ\to 0$$ be the tautological
sequence over $\Grass$ where $\Grass=\Grass(1,E)$ is the Grassmannian of
lines in $E$. Let $\eta^{(d)}_a$ be the vector bundle $\wedge^d\cQ$. As can be found in   \cite[Section 7.3]{W03}, the total space $\cZ^{(d)}_a$ of the vector bundle $\eta^{(d)*}_a$ is a commutative desingularization, i.e., $\cZ^{(d)}_a=\underline{\Spec}_{\Grass}(\Sym\wedge^d\cQ)$. Alternatively, it can also be defined
as the incidence variety $\cZ^{(d)}_a=\{(S,\phi)\in\Grass\times \mathbb A^{{n}\choose{d}}\mid\phi\in \wedge^dS\subset \wedge^dE^*\}$.
In the setup of \S~\ref{subsec: inverse image til}, take $\Spec R=X^{(d)}_a$, $\mathbb A^{N}=\mathbb A^{{n}\choose{d}}$, $V=\Grass$, and $Z=\cZ^{(d)}_a$. We follow the same notation for the maps among these spaces, which are summarized in Diagram~\eqref{diag: general}.

We consider the inverse image of
$\TIL_K=\oplus_{i=0}^{n-1}\wedge^i\cQ^*$, the Kapranov's
tilting bundle over $\Grass(1,E)$, by $p':\cZ^{(d)}_a\to\Grass$.
We can show
\begin{lemma}
For all $k>0$, and $i$, $j=0,\dots,n-1$, $H^k(\cZ^{(d)}_a p'^*\sHom_{\sO_{\Grass}}(\wedge^i\cQ^{*},
\wedge^j\cQ^{*}))=0$.
\end{lemma}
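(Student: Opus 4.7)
The plan is to descend from $\cZ^{(d)}_a$ to $\Grass$ via the projection formula, then decompose the resulting sheaf into Schur-functor summands to which Proposition~\ref{prop: vanishing} applies directly. This parallels the analogous lemma preceding Proposition~\ref{sym tilt} (and the corresponding statement in the skew-symmetric case), with $\Sym_2\cQ$ (resp.\ $\wedge^2\cQ$) replaced by $\wedge^d\cQ$.

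First, since $p'\colon \cZ^{(d)}_a \to \Grass$ is the affine morphism with $p'_*\sO_{\cZ^{(d)}_a} \cong \Sym(\wedge^d \cQ)$, higher $R^{\bullet}p'_*$ vanishes and the projection formula gives
\[
H^k\bigl(\cZ^{(d)}_a,\, p'^*\sHom_{\sO_\Grass}(\wedge^i\cQ^{*}, \wedge^j\cQ^{*})\bigr) \;\cong\; H^k\bigl(\Grass,\, \wedge^i\cQ \otimes \wedge^j\cQ^{*} \otimes \Sym(\wedge^d\cQ)\bigr),
\]
using $\sHom(\wedge^i\cQ^{*},\wedge^j\cQ^{*}) = \wedge^i\cQ\otimes\wedge^j\cQ^{*}$.

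Second, I will expand $\wedge^i\cQ \otimes \Sym(\wedge^d\cQ)$ using representation theory. Working summand-by-summand in the natural grading of $\Sym$, each graded piece $\wedge^i\cQ \otimes \Sym_t(\wedge^d\cQ)$ is a polynomial functor of $\cQ$ of finite rank, hence in characteristic zero a finite direct sum $\bigoplus_\lambda c_\lambda^{(t)} \SCH_\lambda\cQ$ over partitions $\lambda$ of length at most $n-1$ (this follows from Theorem~\ref{thm: Litt_Rich}, the Littlewood--Richardson rule applied successively). Pulling the decomposition out of the cohomology, it therefore suffices to establish
\[
H^k\bigl(\Grass,\, \wedge^j\cQ^{*} \otimes \SCH_\lambda\cQ\bigr) = 0 \quad\text{for every partition }\lambda\text{ and every }k>0.
\]

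Third, this vanishing is exactly Proposition~\ref{prop: vanishing} specialized to our Grassmannian $\Grass=\Grass(1,E)=\Grass(n-r,n)$ with $r=n-1$, $n-r=1$, upon taking $\alpha = (1^j)\in B_{r,n-r}=B_{n-1,1}$ and $\delta = \lambda$. For this choice one has $\alpha'=(j)$, so $(\wedge^{\alpha'}\cQ)^{*} = \wedge^j\cQ^{*}$; and the hypothesis $0\le j\le n-1$ in the lemma exactly matches the admissible range $\alpha\in B_{n-1,1}$. There is no serious obstacle: the argument is essentially a bookkeeping exercise, and the only real step is unpacking the projection formula and recognizing that the target vanishing is the statement of Proposition~\ref{prop: vanishing}.
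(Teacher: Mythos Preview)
Your proof is correct and follows essentially the same route as the paper's: push down to $\Grass$ via affineness of $p'$, decompose the resulting bundle into Schur functors of $\cQ$, and invoke a vanishing theorem. The only cosmetic difference is that the paper decomposes the full product $\wedge^i\cQ\otimes\wedge^j\cQ^*\otimes\Sym(\wedge^d\cQ)$ at once and appeals directly to Bott's Theorem~\ref{Bott}, whereas you keep the factor $\wedge^j\cQ^*$ separate and cite Proposition~\ref{prop: vanishing}; since Section~\ref{sec: other_examples} assumes $k=\mathbb{C}$, both vanishing results apply and the arguments are equivalent.
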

\begin{proof}
Use the Cauchy-Littlewood and the Littlewod-Richardson formulas, we
can express the sheaf \[p'^*\sHom_{\sO_{\Grass}}(\wedge^i\cQ^{*},
\wedge^j\cQ^{*})\cong\sHom_{\sO_{\Grass}}(\wedge^i\cQ^{*},
\wedge^j\cQ^{*}\otimes_{\sO_{\Grass}} \Sym (\wedge^d\cQ))\] in the form
$\oplus (\SCH_\gamma\cQ^{*})^{\oplus C_\gamma}$ for some
coefficients $C_\gamma$. By  Theorem~\ref{Bott}, all the
higher cohomology of $$\sHom_{\sO_{\Grass}}(\wedge^i\cQ^{*},
\wedge^j\cQ^{*}\otimes_{\sO_{\Grass}}\Sym(\wedge^d\cQ))$$ vanishes.
\end{proof}
\par

Take the Kapranov's tilting bundle $\TIL_K(1,n)$ over
$\Grass(1,E)$. The rank $n-1$ subvariety $X^{(d)}_a$ of $d$-th anti-symmetric tensors and its desingularization
$\cZ^{(d)}_a$ are as above. From the lemma above, using Theorem~\ref{til-resol criterion}, we
get the following.
\begin{prop}
The bundle $p'^*\TIL_K$ is a tilting bundle
over $\cZ^{(d)}_a$.
\end{prop}

Since $\cZ^{(d)}_a$ is smooth,
$\End_{\cZ^{(d)}_a}(p'^*\TIL_K)$ has finite global dimension.

\begin{lemma}\label{lem:anti_tensor_mcm}
Notation as above, $\End_{\cZ^{(d)}_a}(p'^*\TIL_K)$ is maximal Cohen-Macaulay iff ${{n-2}\choose{d-1}}-n-1\geq0$.
\end{lemma}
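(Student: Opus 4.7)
The plan is to invoke Lemma~\ref{lem:mcm} and analyze the resulting cohomology vanishing condition on $\Grass=\mathbb{P}^{n-1}$ via Bott's theorem.

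First, I would compute $\omega_{\cZ^{(d)}_a}$. Since $\cZ^{(d)}_a$ is the total space of the vector bundle $V=(\wedge^d\cQ)^*$ over $\mathbb{P}^{n-1}$, the relative canonical formula gives $\omega_{\cZ^{(d)}_a}\cong p'^*(\omega_\Grass\otimes\det V^*)$. Using $\omega_\Grass=\sO(-n)$, $\det\cQ=\sO(1)$, together with the identity $\det\wedge^d\cQ=(\det\cQ)^{\binom{n-2}{d-1}}$ for a rank-$(n-1)$ bundle, this simplifies to $\omega_{\cZ^{(d)}_a}=p'^*\sO(c)$ where $c:=\binom{n-2}{d-1}-n$. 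Applying Lemma~\ref{lem:mcm}, the MCM property of $\End_{\cZ^{(d)}_a}(p'^*\TIL_K)$ becomes equivalent to
\[ H^i\bigl(\mathbb{P}^{n-1},\,\sEnd(\TIL_K)\otimes\Sym(\wedge^d\cQ)\otimes\sO(c)\bigr)=0 \quad\text{for all}\ i>0. \]

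Next, I would decompose the sheaf into $GL_n$-equivariant irreducible summands of the form $\sO(c')\otimes\SCH_\nu\cQ^*$ using three tools: Pieri applied to $\sEnd(\TIL_K)=\TIL_K^*\otimes\TIL_K$, which yields summands $\sO(1)\otimes\SCH_{(2^l,1^m)}\cQ^*$ with $l+m\leq n-1$ (via $\wedge^a\cQ\cong\sO(1)\otimes\wedge^{n-1-a}\cQ^*$); the plethystic expansion of $\Sym^k(\wedge^d\cQ)$ into Schur functors $\SCH_\lambda\cQ$ with $|\lambda|=kd$; and the Littlewood--Richardson rule for the resulting tensor products. Every summand reduces, after the determinantal shift formula $\SCH_\lambda\cQ=\sO(\lambda_1)\otimes\SCH_\mu\cQ^*$, to a bundle of the form $\sO(c')\otimes\SCH_\nu\cQ^*$, whose cohomology is dictated by Bott's theorem (Theorem~\ref{Bott}): it vanishes in positive degree iff either the weight $(c',\nu_1,\ldots,\nu_{n-1})$ is dominant, or its $\rho$-shift has a repeated entry. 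I expect the critical summand for the sharp threshold to arise by pairing a ``dual-heavy'' piece of $\sEnd(\TIL_K)$ (involving the $\sO(-1)$-summand coming from $\wedge^0\cQ\otimes\wedge^{n-1}\cQ^*$) with a carefully chosen plethysm summand of $\Sym^k(\wedge^d\cQ)$ (a rectangular $\SCH_{(k^d)}\cQ$, or a nearby variant), yielding a weight with no saving $\rho$-collision whose dominance forces exactly $c\geq 1$, i.e.\ $\binom{n-2}{d-1}\geq n+1$.

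The main obstacle is the combinatorial bookkeeping involved in (i) pinpointing the extremal summand that sharply enforces $c\geq 1$, and (ii) verifying that, once $\binom{n-2}{d-1}\geq n+1$, every other summand in the (infinite, $k$-indexed) expansion of $\sEnd(\TIL_K)\otimes\Sym(\wedge^d\cQ)$ is either already dominant or rescued by a $\rho$-collision. The plethysm $h_k[e_d]$ contributes arbitrarily wide partitions $\lambda$, so one must argue uniformly in $k$; fortunately, for the ``only if'' half it suffices to exhibit a single explicit summand with non-vanishing higher cohomology whenever $\binom{n-2}{d-1}\leq n$, which lightens that direction considerably.
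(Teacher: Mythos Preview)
Your proposal is correct and follows essentially the same route as the paper. The paper invokes Proposition~\ref{thm:dual_mcm} and computes $H^k(\Grass,\wedge^i\cQ\otimes\wedge^j\cQ^{*}\otimes\omega_{\Grass}\otimes\wedge^{\mathrm{top}}\xi^{*}\otimes\Sym(\wedge^d\cQ))$, while you invoke Lemma~\ref{lem:mcm} and compute $H^i(\Grass,\sEnd(\TIL_K)\otimes\Sym(\wedge^d\cQ)\otimes\omega_{\cZ^{(d)}_a})$; since $\omega_{\cZ^{(d)}_a}=p'^{*}(\omega_{\Grass}\otimes\wedge^{\mathrm{top}}\xi^{*})$ up to a power of the trivial bundle $\wedge^nE$, these are literally the same vanishing condition, and both reduce the analysis to Bott's theorem with the critical twist $\sO(\binom{n-2}{d-1}-n)$. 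Your outline is simply more explicit about the Bott combinatorics than the paper's one-line ``follows from Bott's Theorem.''
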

\begin{remark}
As can be easily checked, ${{n-2}\choose{d-1}}-n-1\geq0$ if and only if $n\geq6$ and $3\leq d\leq n-3$.
\end{remark}
\begin{proof}[Proof of Lemma~\ref{lem:anti_tensor_mcm}]
By Theorem~\ref{thm:dual_mcm}, it suffices to compute \[H^k(\Grass,\wedge^i\cQ\otimes
\wedge^j\cQ^{*}\otimes\omega_{\Grass}\otimes\wedge^{top}\xi^*\otimes\Sym(\wedge^d\cQ))\] for $k>0$ and $i$, $j=0,\dots,n-1$, here $\xi=\cR\otimes\wedge^{d-1}\cQ$ according to  \cite[7.3.1]{W03}. The sheaf $\omega_{\Grass}=\wedge^{n-1}\cQ^{*}\otimes\cR^{n-1}$. Using Cauchy-Littlewood and Littlewood-Richardson formulas, \[\wedge^i\cQ\otimes
\wedge^j\cQ^{*}\otimes\omega_{\Grass}\otimes\wedge^{top}\xi^*\otimes\Sym(\wedge^d\cQ)\cong\wedge^{j-i}\cQ^*\otimes\Sym\wedge^d\cQ\otimes\wedge^{n-1}\cQ^{{{n-2}\choose{d-1}}-(n-1)-1}\otimes \wedge^nE^{*{{n-1}\choose{d-1}}-(n-1)}.\] The conclusion now follows from the Bott's Theorem.
\end{proof}

We know that $\sO_{\Grass}$ is a direct summand of $\TIL_K$. In the case ${{n-2}\choose{d-1}}-n-1\geq0$, the module $q_*'p'^*\TIL_K$ is maximal Cohen-Macaulay. In particular, it is reflexive. According to Proposition~\ref{til-resol criterion} and Lemma~\ref{lem:mcm_nccr}, using Lemma~\ref{lem:anti_tensor_mcm}, we get the following Proposition.
\begin{prop}
Notation as above, in the case ${{n-2}\choose{d-1}}-n-1\geq0$, the map $$\End_{\cZ^{(d)}_a}(p'^*\TIL_K)\to \End_{A^{(d)}_a}(q_*'p'^*\TIL_K)$$ is an
isomorphism of rings and $\End_{\cZ^{(d)}_a}(p'^*\TIL_K)$ is a non-commutative crepant desingularization of $X^{(d)}_a$.
\end{prop}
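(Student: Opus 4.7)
The plan is to reduce to Lemma~\ref{lem:mcm_nccr} applied to the crepant resolution $q':\cZ^{(d)}_a\to X^{(d)}_a$ and the tilting bundle $\TIL:=p'^*\TIL_K$. The hypothesis ${{n-2}\choose{d-1}}-n-1\geq 0$ is exactly what the preceding Lemma~\ref{lem:anti_tensor_mcm} needs in order to conclude that $\End_{\cZ^{(d)}_a}(\TIL)$ is maximal Cohen--Macaulay, and that $q'_*p'^*(\sO_{\Grass})$ is MCM (hence reflexive), which is one of the hypotheses of Lemma~\ref{lem:mcm_nccr}.

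First I would verify the geometric prerequisites of Lemma~\ref{lem:mcm_nccr}. The variety $X^{(d)}_a=\overline{GL_n\cdot v}$ sits as an orbit closure in $\wedge^d E^*$, and its coordinate ring is known (see \cite{W03}, 7.3) to be a normal Cohen--Macaulay domain. Gorensteinness in the stable range (and in particular under the numerical hypothesis ${{n-2}\choose{d-1}}\geq n+1$, where the canonical bundle computation in the proof of Lemma~\ref{lem:anti_tensor_mcm} exhibits $\omega_{\cZ^{(d)}_a}$ as a pullback of a trivially twisted line bundle on $\Grass$) follows by combining this with the explicit formula $\omega_{\cZ^{(d)}_a}\cong \wedge^{n-1}\cQ^{{{n-2}\choose{d-1}}-(n-1)-1}\otimes (\wedge^nE^*)^{{{n-1}\choose{d-1}}-(n-1)}$ already used in that proof, so $q'$ is crepant in this range. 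Next, the exceptional locus of $q':\cZ^{(d)}_a\to X^{(d)}_a$ is contained in the zero section $\Grass_1(E)\subset\cZ^{(d)}_a$, which has codimension equal to the rank ${{n-1}\choose d}$ of $\eta^{(d)*}_a$, and under the hypothesis this is at least $2$ in both $\cZ^{(d)}_a$ and (by the corresponding calculation for the singular locus of the rank variety in $\wedge^dE^*$) in $X^{(d)}_a$.

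With these prerequisites in place, Lemma~\ref{lem:anti_tensor_mcm} gives that $\End_{\cZ^{(d)}_a}(\TIL)$ is maximal Cohen--Macaulay, and the remark following its proof tells us $q'_*\TIL$ is reflexive since $\sO_{\Grass}$ is a direct summand of $\TIL_K$ and the same Cohen--Macaulay estimate applies to each summand. Lemma~\ref{lem:mcm_nccr} then yields the ring isomorphism
\[
\End_{\cZ^{(d)}_a}(p'^*\TIL_K)\;\cong\;\End_{A^{(d)}_a}(q'_*p'^*\TIL_K).
\]
Finally, by Proposition~\ref{det tilt}'s analogue in this section $p'^*\TIL_K$ is a tilting bundle on the smooth variety $\cZ^{(d)}_a$, so by Theorem~\ref{finite dim}(3) its endomorphism ring has finite global dimension; combined with the MCM property this is the definition of a non-commutative crepant resolution.

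The main obstacle I expect is the careful bookkeeping of the two codimension-two conditions in Lemma~\ref{lem:mcm_nccr} (the exceptional locus both in $\cZ^{(d)}_a$ and in $X^{(d)}_a$) and the verification that the range ${{n-2}\choose{d-1}}\geq n+1$ truly forces Gorenstein and crepant behaviour of $q'$. The MCM computation itself is already done in Lemma~\ref{lem:anti_tensor_mcm}, and the derived equivalence and finite global dimension come for free from the general machinery of Section~\ref{sec: general_discussion}, so the core work is really the geometric setup and the application of Lemma~\ref{lem:mcm_nccr}.
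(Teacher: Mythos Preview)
Your approach is essentially the same as the paper's: both invoke Lemma~\ref{lem:anti_tensor_mcm} for the MCM property of $\End_{\cZ^{(d)}_a}(p'^*\TIL_K)$, establish reflexivity of $q'_*p'^*\TIL_K$, and then apply Lemma~\ref{lem:mcm_nccr}. One small difference worth noting: for reflexivity of $q'_*p'^*\TIL_K$, the paper's remark that $\sO_{\Grass}$ is a summand of $\TIL_K$ is meant to say that $p'^*\TIL_K\cong\sHom(\sO_{\cZ},p'^*\TIL_K)$ is a direct summand of $\sEnd_{\cZ}(p'^*\TIL_K)$, hence $q'_*p'^*\TIL_K$ is a summand of the MCM module $\End_{\cZ}(p'^*\TIL_K)$ and therefore itself MCM; your route (re-running the cohomology estimate for each $\wedge^j\cQ^*$ with $i=0$) also works but is a separate computation rather than a corollary. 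Also, Lemma~\ref{lem:mcm_nccr} only asks for codimension $\geq 2$ of the exceptional locus in $X$, not in $Z$, so you can drop half of that bookkeeping.
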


\begin{remark}
In this case, the tilting bundle over $\cZ^{(d)}_a$ is the inverse image of an exceptional collection $\{\wedge^i\cQ^*\mid i=0,\dots,n-1\}$ over $\Grass$. As can be checked using definition, the dual collection is given by $\wedge^{n-1}\cQ^*\otimes \SCH_{n-i}\cR[n-1-i]$.
\end{remark}
Let us take n=6 and d=3, for an example. One can prove as in Proposition~\ref{prop: Exts Sym}, that for any $\alpha$, $\beta=0,\dots,6$, $\Ext^t(S_\alpha,S_\beta)=H^{t-s-\beta+\alpha}(\Grass,\wedge^s\wedge^3\cQ^*\otimes \SCH_{(6-\beta)}\cR\otimes \SCH_{(6-\alpha)}\cR^*)$. It is known that $\wedge^2\wedge^3\mathbb{C}^6\cong \SCH_{2,2,1,1,0,0}\mathbb{C}^6\oplus \SCH_{1,1,1,1,1,1}\mathbb{C}^6$. In this example, calculation of $\Ext^1$ and $\Ext^2$ gives the quiver 
$$\xymatrix{
\bullet_0\ar@/^1pc/@<1ex>[rrr]^{\beta(\mathbb{C})} &
\bullet_1\ar@/^/@<1ex>[l]^{\alpha(E)}\ar@/^1pc/@<1ex>[rrr]^{\beta(\mathbb{C})} &
\bullet_2\ar@/^/@<1ex>[l]^{\alpha(E)}\ar@/^1pc/@<1ex>[rrr]^{\beta(\mathbb{C})} &
\bullet_3\ar@/^/@<1ex>[l]^{\alpha(E)}\ar@/^1pc/@<1ex>[rrr]^{\beta(\mathbb{C})} &
\bullet_4\ar@/^/@<1ex>[l]^{\alpha(E)}&
\bullet_5\ar@/^/@<1ex>[l]^{\alpha(E)} &
\bullet_6\ar@/^/@<1ex>[l]^{\alpha(E)}
}$$
and the relations:

$$\alpha_i\alpha_{i+1}(\wedge^2E);$$
$$\beta\alpha\beta\alpha+\alpha\beta\alpha\beta(\wedge^2E).$$

\subsection{Cone over a rational normal curve}
Let $E$ be a vector space over $k$ of dimension $n$ and $\mathbb A^{{n+d-1}\choose{d}}=\SCH_dE^*$ be the
affine space consisting of symmetric tensors of power $d$. Upon choosing a set of
basis for $E$, its coordinate ring is identified with $A^{(d)}_s=\Sym(\SCH_dE)$. Let $X^{(d)}_s\subset \SCH_dE^*$ be the rank variety consisting of symmetric tensors of rank $\leq n-1$. We find a non-commutative desingularization for $X^{(d)}_s$ and describe it in certain cases. \par
Let us review a commutative desingularization. Let $$0\to \cR\to E\times\Grass\to \cQ\to 0$$ be the tautological
sequence over $\Grass$ where $\Grass=\Grass(1,E)$ is the Grassmannian of
lines in $E$. Let $\eta^{(d)}_s$ be the vector bundle $\SCH_d\cQ$. As can be found in  \cite[Section 7.3]{W03}, the total space $\cZ^{(d)}_s$ of the vector bundle $(\eta^{(d)}_s)^*$ is a commutative desingularization, i.e., $\cZ^{(d)}_s=\underline{\Spec}_{\Grass}(\Sym \SCH_d\cQ)$. Alternatively, it can also be defined
as the incidence variety $\cZ^{(d)}_s=\{(S,\phi)\in\Grass\times \SCH_dE^*\mid\phi\in \SCH_dS\subset \SCH_dE^*\}$.
In the setup of \S~\ref{subsec: inverse image til}, take $\Spec R=X^{(d)}_s$, $\mathbb A^N=\mathbb A^{{n+d-1}\choose{d}}$, $V=\Grass$, and $Z=\cZ^{(d)}_s$. We follow the same notation for the maps among these spaces, which are summarized in Diagram~\eqref{diag: general}.

\begin{lemma}
For all $k>0$, and $i$, $j=0,\dots,n-1$, $H^k(\cZ^{(d)}_s p'^*\sHom_{\sO_{\Grass}}(\wedge^i\cQ^{*},
\wedge^j\cQ^{*}))=0$.
\end{lemma}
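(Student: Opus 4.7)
The plan is to mirror the proof given for the antisymmetric case in the immediately preceding lemma. Since $p':\cZ^{(d)}_s\to\Grass$ is affine with $p'_*\sO_{\cZ^{(d)}_s}\cong\Sym(\SCH_d\cQ)$, I would first use the projection formula and the Leray spectral sequence (degenerating for an affine morphism) to identify
\[
H^k(\cZ^{(d)}_s,p'^*\sHom(\wedge^i\cQ^*,\wedge^j\cQ^*))\cong H^k(\Grass,\wedge^i\cQ\otimes\wedge^j\cQ^*\otimes\Sym(\SCH_d\cQ)).
\]

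The second step would be to decompose the sheaf in the right-hand side into a direct sum of $GL$-equivariant line bundles twisted by Schur functors of $\cQ$. By the Cauchy--Littlewood formula, $\Sym(\SCH_d\cQ)$ admits an equivariant filtration whose associated graded is a direct sum of Schur functors $\SCH_\nu\cQ$ (the multiplicities here involve inner plethysm and are generally complicated, but their precise values play no role in the argument). Applying the Littlewood--Richardson rule to $\wedge^i\cQ\otimes\wedge^j\cQ^*\otimes\SCH_\nu\cQ$, and using the standard identification of $\SCH_\lambda\cQ^*$ with $\SCH_{\bar\lambda}\cQ\otimes(\det\cQ)^{-\lambda_1}$ on the rank $(n{-}1)$ bundle $\cQ$, one reduces the tensor product to a direct sum of bundles of the form $\SCH_\gamma\cQ^*$ (possibly twisted by a power of $\det\cQ$, which can then be absorbed as a shift of $\gamma$).

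The third step is to invoke Bott's theorem on $\Grass=\Grass(1,E)=\mathbb{P}(E)$: every summand $\SCH_\gamma\cQ^*$ arising in the decomposition corresponds to a weight of the form $(0,-\gamma_1,\dots,-\gamma_{n-1})$, which is either non-singular and antidominant, so that only $H^0$ can be nonzero, or singular, so that all cohomology vanishes. Either way $H^k(\Grass,\SCH_\gamma\cQ^*)=0$ for $k>0$, and summing over all summands yields the required vanishing.

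The only real subtlety is the symmetric analogue of the decomposition in the antisymmetric case, since $\Sym(\SCH_d\cQ)$ is not controlled by a clean Cauchy identity when $d\geq 3$; this is the point at which inner plethysm enters. However, since we need only the qualitative statement that it is a sum of $\SCH_\nu\cQ$'s and do not require the multiplicities, the argument goes through unchanged. I expect the remainder to be routine bookkeeping, and so the proof is essentially identical in form to that of the preceding lemma.
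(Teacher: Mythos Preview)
Your proposal is correct and follows the paper's approach exactly; the paper simply writes ``As before, we have'' and defers to the identical argument given for the antisymmetric lemma just above it. One small slip in your Bott step: the weights arising are \emph{dominant} (or $\rho$-singular), not antidominant---concretely, every summand is of the form $\SCH_\sigma\cQ$ with $\sigma_{n-1}\geq -1$, so the associated weight on $\Grass_1(E)$ is either already dominant (only $H^0$ survives) or has its first two $\rho$-shifted entries equal (all cohomology vanishes)---but the substance of your argument is right.
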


\par

Take the Kapranov's tilting bundle $\TIL_K(1,n)$ over
$\Grass(1,E)$. The rank $n-1$ subvariety $X^{(d)}_s$ of $d$-th symmetric tensors and its desingularization
$\cZ^{(d)}_s$ are as above. From the Lemma above, using Theorem~\ref{til-resol criterion}, we
get the following.
\begin{prop}
The bundle $p'^*\TIL_K$ is a tilting bundle
over $\cZ^{(d)}_s$.
\end{prop}

Since $\cZ^{(d)}_s$ is smooth,
$\End_{\cZ^{(d)}_s}(p'^*\TIL_K)$ has finite global dimension.

\begin{lemma}
Notation as above, $\End_{\cZ^{(d)}_s}(p'^*\TIL_K)$ is maximal Cohen-Macaulay.

\end{lemma}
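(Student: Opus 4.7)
The plan is to follow the template of Lemma~\ref{lem:anti_tensor_mcm} (and the symmetric-matrix analog Lemma~\ref{lem:sym_mcm}), adapted to $\eta = \SCH_d\cQ$. By the preceding discussion, $p'^*\TIL_K$ is a tilting bundle on $\cZ^{(d)}_s$ and one checks, as in Proposition~\ref{til-resol criterion}(3), that the natural map $\End_{\cZ^{(d)}_s}(p'^*\TIL_K)\to \End_{A^{(d)}_s}(q'_*p'^*\TIL_K)$ is an isomorphism (the exceptional locus of $q'$ has codimension $\operatorname{rk}(\SCH_d\cQ^*)=\binom{n+d-2}{d}\geq 2$ in $\cZ^{(d)}_s$ in all cases of interest). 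Hence Lemma~\ref{lem:mcm} applies: the maximal Cohen--Macaulay claim for $\End_{\cZ^{(d)}_s}(p'^*\TIL_K)$ is equivalent to the cohomology vanishing
\[
H^i\!\left(\Grass,\; \TIL_K^{*}\otimes \TIL_K \otimes \Sym(\SCH_d\cQ)\otimes \omega_{\cZ^{(d)}_s}\right)=0,\qquad i>0.
\]

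The second step is to unpack $\omega_{\cZ^{(d)}_s}$. Since $\cZ^{(d)}_s\to\Grass$ is the total space of the vector bundle $\SCH_d\cQ^{*}$, the relative dualizing sheaf is $p'^*\det(\SCH_d\cQ)$. A direct character computation (as in the proof of Lemma~\ref{lem:anti_tensor_mcm}) gives $\det(\SCH_d\cQ)=(\det\cQ)^{M}$ with $M=\binom{n+d-2}{d-1}$. Combining this with $\omega_{\Grass}=\wedge^{n-1}\cQ^{*}\otimes\cR^{\,n-1}$ on $\Grass=\Grass(1,E)=\mathbb P^{n-1}$ yields an explicit expression $\omega_{\cZ^{(d)}_s}=p'^{*}\bigl(\cR^{\,n-1}\otimes\wedge^{n-1}\cQ^{*}\otimes(\det\cQ)^{M}\bigr)$.

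Third, I would decompose the integrand. The Kapranov tilting bundle on $\Grass(1,E)$ is $\TIL_K=\bigoplus_{k=0}^{n-1}\wedge^{k}\cQ$, so $\TIL_K^{*}\otimes\TIL_K$ is a finite direct sum of bundles $\wedge^{k}\cQ\otimes\wedge^{j}\cQ^{*}$; by Theorem~\ref{thm: Litt_Rich} together with the Cauchy--Littlewood formula applied to $\Sym(\SCH_d\cQ)$, each summand of
\[
\wedge^{k}\cQ\otimes\wedge^{j}\cQ^{*}\otimes\Sym(\SCH_d\cQ)\otimes \wedge^{n-1}\cQ^{*}\otimes(\det\cQ)^{M}
\]
is (up to a fixed tensor power of $\cR$ and a trivial twist by $\det E$) a Schur functor $\SCH_{\nu}\cQ$ with $\nu$ a partition of length $\le n-1$. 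Thus the claim reduces to showing that every such $\SCH_{\nu}\cQ\otimes\cR^{\,n-1+a}$ arising in the expansion (with $-(n-1)\le a\le n-1$) has only $H^{0}$.

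The final and main step is this cohomology verification. By Theorem~\ref{Bott}, $H^{i}(\Grass,\SCH_{\nu}\cQ\otimes\cR^{\,c})$ can be non-zero for at most one $i$, determined by the length of the Weyl-group element needed to dominantise the weight $(\nu_{1},\dots,\nu_{n-1},-c)$. The key combinatorial input is that the twist by $(\det\cQ)^{M}\otimes\wedge^{n-1}\cQ^{*}\otimes\cR^{\,n-1}$ built into $\omega_{\cZ^{(d)}_s}$ shifts every weight by enough to guarantee dominance: every $\SCH_{\mu}\cQ$ appearing in $\Sym(\SCH_d\cQ)$ has $\mu_{n-1}\ge 0$, and after adding $M$ to each entry (from $(\det\cQ)^{M}$) and subtracting $1$ from all but the last (from $\wedge^{n-1}\cQ^{*}$), the resulting weight paired with $\cR^{\,n-1+a}$ is dominant for every $|a|\le n-1$. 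This is the one point that needs to be pushed through carefully; I expect it to be the main obstacle, but it is an elementary Bott-theorem check of the same flavour as in Lemma~\ref{lem:anti_tensor_mcm}, and (unlike the antisymmetric case) it succeeds uniformly in $n$ and $d$ because the exponent $M=\binom{n+d-2}{d-1}$ grows quickly enough relative to $n$.
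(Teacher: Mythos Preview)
Your approach is essentially the same as the paper's: both reduce the maximal Cohen--Macaulay claim to a Bott-type vanishing on $\Grass=\mathbb{P}^{n-1}$ after identifying the relevant twist by the dualizing sheaf. The paper invokes Proposition~\ref{thm:dual_mcm} and an explicit description of $\xi$ from \cite{W03}~7.2.1, while you go through Lemma~\ref{lem:mcm} and compute $\omega_{\cZ^{(d)}_s}$ directly via $\omega_{\Grass}\otimes\det(\SCH_d\cQ)$; these are equivalent reductions, and your value $M=\binom{n+d-2}{d-1}$ for the exponent of $\det\cQ$ is correct.

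One small bookkeeping slip: there is no parameter $a$. The summands $\wedge^{k}\cQ\otimes\wedge^{j}\cQ^{*}$ of $\TIL_K^{*}\otimes\TIL_K$ contribute no $\cR$-factor, so the only $\cR$-twist is the $\cR^{\,n-1}$ coming from $\omega_{\Grass}$. With that correction, the Bott check is exactly as you describe: every summand is of the form $\SCH_{\nu}\cQ\otimes\cR^{\,n-1}$ with $\nu_{n-1}\ge M-2$, and one needs $n-1\le \nu_{n-1}$ (dominant) or $n-1=\nu_{n-1}+1$ (singular). This holds whenever $M\ge n$, i.e.\ $\binom{n+d-2}{d-1}\ge n$, which is satisfied for all $n\ge 2$ and $d\ge 2$. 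The paper records the numerical criterion as $\binom{n+d-1}{n-1}-n-1\ge 0$ and observes it is always satisfied; the packaging differs slightly, but the conclusion is the same.
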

\begin{proof}
By Theorem~\ref{thm:dual_mcm}, it suffices to compute \[H^k(\Grass,\wedge^i\cQ\otimes
\wedge^j\cQ^{*}\otimes\omega_{\Grass}\otimes\wedge^{top}\xi^*\otimes\Sym(\SCH_d\cQ))\] for $k>0$ and $i$, $j=0,\dots,n-1$, here $\xi=\cR\otimes \SCH_{d-1}E$ according to  \cite[7.2.1]{W03}. The sheaf $\omega_{\Grass}=\wedge^{n-1}\cQ^{*}\otimes\cR^{n-1}$. Using Cauchy-Littlewood and Littlewood-Richardson formulas, \[\wedge^i\cQ\otimes
\wedge^j\cQ^{*}\otimes\omega_{\Grass}\otimes\wedge^{top}\xi^*\otimes\Sym(\SCH_d\cQ)\cong\wedge^{j-i}\cQ^*\otimes\Sym \SCH_d\cQ\otimes\wedge^{n-1}\cQ^{{{n+d-1}\choose{n-1}}-(n-1)-1}\otimes \SCH_\mu E\] for some $\mu$. Now it follows easily from the Bott's Theorem that $\End_{\cZ^{(d)}_s}(p'^*\TIL_K)$ is maximal Cohen-Macaulay iff ${{n+d-1}\choose{n-1}}-n-1\geq0$. But as can be easily seen, ${{n+d-1}\choose{n-1}}-n-1$ is always positive.
\end{proof}

In the case that $n=2$,  $X^{(d)}_s$ is the cone over a rational normal curve. In this case, $\End_{\cZ^{(d)}_s}(p'^*\TIL_K)$ is always maximal Cohen-Macaulay grant that $d\geq2$.

With similar calculation as in Proposition~\ref{prop: Exts Sym}, we get the following:
\begin{prop}
For any two simple objects $S_i$, $S_j$, $i$, $j=0,1$, we have $$\Ext^t(S_i,S_j)=\oplus_{s=0}^1H^{t-s-j+i}(\mathbb{P}^1,(\cQ^*)^{ds}\otimes(\cR^*)^{j-i}).$$
\end{prop}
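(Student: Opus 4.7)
The plan is to derive this as a direct specialization of Lemma~\ref{Lem: Exts} to the present setup, exploiting the drastic simplifications available when $\Grass = \mathbb{P}^1$. Here $n=2$, $r=n-1=1$, and $n-r=1$, so the defining bundle $\SCH_\delta\cQ^*$ of Lemma~\ref{Lem: Exts} becomes $\SCH_d\cQ^*$.

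First I would check that the hypotheses of Proposition~\ref{prop: equi-simple} (and hence of Lemma~\ref{Lem: Exts}) are in force. The resolution $q':\cZ^{(d)}_s\to X^{(d)}_s$ is $GL_2$-equivariant with exceptional fiber over the cone point equal to $\Grass=\mathbb{P}^1$, and the cone point is the unique $GL_2$-fixed closed point of $X^{(d)}_s$. The tilting property of $p'^*\TIL_K$ and the identity $\End_{\cZ^{(d)}_s}(p'^*\TIL_K)\cong\End_{A^{(d)}_s}(q'_*p'^*\TIL_K)$ have just been established in this subsection. Granting these, Lemma~\ref{Lem: Exts} gives
\[
\Ext^t(S_i,S_j)\cong\bigoplus_{s\ge 0} H^{t-s-j+i}\!\left(\mathbb{P}^1,(\wedge^s\SCH_d\cQ)^*\otimes\mathfrak{L}_\Grass(L'_{j'})\otimes\mathfrak{L}_\Grass(L'_{i'})^*\right).
\]

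Next I would simplify each factor using the fact that $\cQ$ is a line bundle on $\mathbb{P}^1$. Then $\SCH_d\cQ=\cQ^{\otimes d}$ is itself a line bundle, so $\wedge^s\SCH_d\cQ=0$ for $s\ge 2$; for $s=0,1$ we have $(\wedge^s\SCH_d\cQ)^*=(\cQ^*)^{ds}$, which cuts the direct sum to $s\in\{0,1\}$. For the representation-theoretic factors, the indexing set $B_{r,n-r}=B_{1,1}=\{\varnothing,(1)\}$ identifies with $\{0,1\}$, and for each $\alpha\in\{0,1\}$ the transpose $\alpha'$ is a single-row partition with $\alpha$ boxes. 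Since $\SCH$ and $\WEY$ coincide in characteristic zero and $\mathfrak{L}_\Grass$ of the defining $\GL_1$-representation is $\cR^*$, we get $\mathfrak{L}_\Grass(L'_{\alpha'})=(\cR^*)^\alpha$. Combining,
\[
\mathfrak{L}_\Grass(L'_{j'})\otimes\mathfrak{L}_\Grass(L'_{i'})^*=(\cR^*)^j\otimes\cR^i=(\cR^*)^{j-i},
\]
and substituting yields the claimed formula.

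No step here should present a serious obstacle once Lemma~\ref{Lem: Exts} is granted; the only care required is matching conventions (transposes of partitions, complements in the rectangle $r^{n-r}$, the interpretation of $\mathfrak{L}_\Grass(L'_{\alpha'})$), but for $r=1$ every partition collapses to a single row or column and these identifications are transparent. If one preferred a self-contained argument rather than invoking Lemma~\ref{Lem: Exts}, the main calculation to carry out explicitly would be the hypercohomology spectral sequence obtained by applying $\sHom_{\sO_Z}(-,u_*\Sigma_j)$ to the Koszul resolution
\[
0\to p'^*\SCH_d\cQ\to\sO_Z\to u_*\sO_{\mathbb{P}^1}\to 0
\]
of the zero section (tensored with $\Sigma_i$), together with the adjunction $\sHom_{\sO_Z}(p'^*\sF,u_*\sG)\cong u_*\sHom_{\sO_{\mathbb{P}^1}}(\sF,\sG)$; the spectral sequence degenerates at $E_1$ by the weight argument of the proof of Lemma~\ref{Lem: Exts}, using the scaling $\mathbb{G}_m\subset GL_2$.
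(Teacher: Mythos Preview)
Your proposal is correct and follows exactly the approach the paper indicates: the paper's own proof is simply ``With similar calculation as in Proposition~\ref{prop: Exts Sym}'', i.e.\ specialize Lemma~\ref{Lem: Exts} (or its characteristic-zero Corollary) to the case $\Grass=\mathbb{P}^1$, where $\SCH_d\cQ$ is a line bundle and the Koszul complex truncates to $s\in\{0,1\}$. One small caution: your identification $\mathfrak{L}_\Grass(L'_{\alpha'})=(\cR^*)^\alpha$ tacitly picks one of the two $\GL_1$ Levi factors, and the paper's conventions here are not entirely consistent between Sections~4 and~8; it is safest to read off the $(\cR^*)^{j-i}$ factor directly from the Corollary's form $\SCH_{\beta'}\cR^*\otimes\SCH_{\alpha'}\cR$ rather than via $\mathfrak{L}_\Grass$, which avoids any ambiguity.
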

Plug-in $t=1,2$ and $i$, $j=0,1$, we get the quiver with relations for the non-commutative desingularization.
$$\xymatrix{
\bullet_0\ar@/^/@<1ex>[r]^{\beta(\SCH_{d-1}E)} &
\bullet_1\ar@/^/@<1ex>[l]^{\alpha(E)}
 }$$
and the relations:

$$\alpha\beta(\SCH_{(d-1,1)}E);$$
$$\beta\alpha(\SCH_{(d-1,1)}E).$$

Other examples of representations with finitely many orbits will be analyzed in subsequent papers.

\newcommand{\arxiv}[1]
{\texttt{\href{http://arxiv.org/abs/#1}{arXiv:#1}}}
\newcommand{\doi}[1]
{\texttt{\href{http://dx.doi.org/#1}{doi:#1}}}
\renewcommand{\MR}[1]
{\href{http://www.ams.org/mathscinet-getitem?mr=#1}{MR#1}}

\end{document}